\newtheorem{theorem}{Theorem}[section]
\newtheorem{lemma}[theorem]{Lemma}
\newtheorem{proposition}[theorem]{Proposition}
\newtheorem{corollary}[theorem]{Corollary}
\newtheorem{definition}[theorem]{Definition}
\newtheorem{remark}[theorem]{Remark}
\newcommand\id{\mathop{\rm id}}
\newcommand\tr{\mathop{\rm tr}}
\newcommand\Tr{\mathop{\rm Tr}}
\newcommand\dd{\mathop{\rm d}}
\newcommand\mm{\mathop{\rm m}}
\newcommand\nph{\varphi}
\newcommand\nep{\epsilon}
\newcommand{\cl}[1]{\mathcal{#1}}
\newcommand{\bb}[1]{\mathbb{#1}}
\begin{document}

\title{Quantum no-signalling bicorrelations}

\author[M. Brannan]{Michael Brannan} 
\address{Department of Pure Mathematics and Institute for Quantum Computing, University of Waterloo, 200 University Ave West, Waterloo, Ontario, Canada, N2L 3G1
}
\email{michael.brannan@uwaterloo.ca}

\author[S. J. Harris]{Samuel J. Harris} 
\address{Department of Mathematics and Statistics, Northern Arizona University, 801 S. Osborne Dr., Flagstaff, AZ 86011, USA
}
\email{samuel.harris@nau.edu}

\author[I. G. Todorov]{Ivan G. Todorov}
\address{
School of Mathematical Sciences, University of Delaware, 501 Ewing Hall,
Newark, DE 19716, USA}

\email{todorov@udel.edu}

\author[L. Turowska]{Lyudmila Turowska}
\address{Department of Mathematical Sciences, Chalmers University
of Technology and the University of Gothenburg, Gothenburg SE-412 96, Sweden}
\email{turowska@chalmers.se}

\date{1 February 2023}

\begin{abstract}
We introduce classical and quantum no-signalling bicorrelations
and characterise the different types thereof 
in terms of states on operator system tensor products, exhibiting connections with bistochastic operator matrices and with 
dilations of quantum magic squares.  
We define concurrent bicorrelations as a quantum input-output generalisation of 
bisynchronous correlations.  
We show that concurrent bicorrelations of quantum commuting type 
correspond to tracial states on the universal C*-algebra of the projective free unitary quantum group, 
showing that in  the quantum input-output setup, quantum permutations of finite sets must be replaced by quantum automorphisms of matrix algebras.  We apply our results to study the quantum graph isomorphism game,  describing the game C*-algebra in this case, and make precise 
connections with the algebraic notions of quantum graph isomorphism, existing presently in the literature.
\end{abstract}

\maketitle

\tableofcontents

%%%%%%%%%%%%%%%%%%%%%%%%%%%%%%%%%%%%%%%%%%%%%%%%%
%%%%%%%%%%%%%%%%%%%%%%%%%%%%%%%%%%%%%%%%%%%%%%%%%

\section{Introduction}

In recent years, many fruitful interactions have emerged between {\it entanglement} and \emph{non-locality} in quantum systems, on one hand, and the theory of {\it operator algebras} and {\it operator systems}, on the other. 
At a high level, this connection stems from
the laws of quantum mechanics, 
which dictate that the input-output behaviour of local measurements on (bipartite) quantum systems is encoded by non-commutative operator algebras of observables and their state spaces.  This provides powerful means to translate between questions of a physical nature 
and questions formulated in the language of non-commutative analysis.  
At the base of these developments lie the work of Junge, Navascues, Palazuelos, Perez-Garcia, Scholz and Werner
\cite{jnpp}, where the relation 
between the Tsirelson Problem in quantum physics and the Connes Embedding Problem in 
operator algebra theory was first noticed (see also \cite{ozawa}), and that of 
Paulsen, Severini, Stahlke, Winter and the third author \cite{psstw}, where the notion of 
synchronous no-signalling correlation was first defined and characterised. 
The fruitfulness of these connections has been borne out by many recent works; see  
\cite{ozawa, lmprsstw, mps, lmr, mr, maro, amrssv, mrv, bcehpsw} for an  incomplete list.  
We specifically single out
Sloftsra's ground-breaking work  \cite{slofstra_JAMS, slofstra}, which injected ideas from geometric group theory into the theory of non-local games, showing that the set of bipartite quantum correlations is not closed, 
and the work of Helton, Meyer, Paulsen and Satriano \cite{hmps}, in which
an algebraic approach to non-local games was formulated.
All of these ideas recently culminated in the resolution of the 
weak Tsirelson problem and Connes Embedding problem in  the preprint 
\cite{jnvwy} by Ji, Natarajan, Vidick, Wright and Yuen.

In the present work, we are primarily interested in investigating the structure of {\it quantum input-quantum output} bipartite correlations which generalise the {\it bisynchronous}
correlations introduced by Paulsen and Rahaman in \cite{pr}.  
Recall that 
a no-signalling bipartite correlation over the quadruple $(X, X, A, A)$, 
where $X$ and $A$ are finite sets, 
is a family of conditional probability distributions
\[p = \{p(a,b|x,y) : (x,y) \in X\times X,  (a,b) \in A\times A\}\] 
that has well-defined marginals (see e.g. \cite{lmprsstw}).  
Operationally, in the commuting operator model of quantum mechanics, 
$p$ describes the input-output behaviour of a bipartite quantum system, 
given by a Hilbert space $H$ in state $\xi$, interpreted as a unit vector in $H$, 
on which local measurements are jointly performed: 
for each $x,y \in X$, two non-communicating parties Alice and Bob have access to 
mutually commuting local measurement systems 
$E_x = (E_{x,a})_{a \in A} \subseteq \cl B(H)$ (for Alice) and $F_y = (F_{y,b})_{b \in A} \subseteq \cl B(H)$ (for Bob).
Given input $x$, Alice uses the system $E_x$ to measure $\xi$, and similarly, given $y$, 
Bob uses $F_y$ to measure $\xi$; the resulting outcomes of Alice and Bob's measurements are 
$(a,b) \in A\times A$ with probability  
\[p(a,b|x,y) = \langle E_{x,a}F_{y,b}\xi,\xi\rangle.\]  
We say that a correlation $p$ is {\it synchronous} if $p(a,b|x,x) = 0$ for all $x \in X$ and $a \ne b$.  
Heuristically, 
Alice and Bob's behaviour is synchronised in 
that they appear to invoke the same ``virtual function'' $X \to A$ to 
obtain their outputs, depending on the given inputs.  
A correlation $p$ is called {\it bisynchronous} \cite{pr} 
if it is synchronous and has the additional property that $p(a,a|x,y) = 0$ for all $a\in A$ and
$x \ne y$.  In this case, the ``virtual function'' $X \to A$ behaves as though it were in addition injective. 

Using the language of operator algebras and non-commutative geometry, one can make the
intuition, highlighted in the previous paragraph, precise. 
Let $\cl A_{X,A} = \star_{|X|}\ell^\infty(A)$ be the unital free product of $|X|$ copies of the 
$|A|$-dimensional abelian C*-algebra $\ell^\infty(A)$. 
The C*-algebra $\cl A_{X,A}$ is a C*-cover of the universal operator system $\cl S_{X,A}$ 
with generators
$e_{x,a}$, where $x \in X$ and $a \in A$, subject to the relations $e_{x,a} = e_{x,a}^2 = e_{x,a}^*$ and 
$\sum_{a\in A} e_{x,a} = 1$, $x\in X$.
Within the framework of non-commutative geometry, $\cl A_{X,A}$ can 
be regarded as a quantisation of
the finite-dimensional C$^\ast$-algebra $C(\cl F(X,A))$ of complex-valued functions on the set $\cl F(X,A)$ 
of functions $f : X \to A$. 
It was shown in \cite{psstw} that a no-signalling 
correlation $p$ of quantum commuting type is synchronous if and only if 
there is a tracial state $\tau$ on $\cl A_{X,A}$ such that 
\begin{align} \label{corr-trace}
p(a,b|x,y) = \tau(e_{x,a}e_{y,b}), \ \ \  x,y \in X, \ a,b \in A.
\end{align} 
If the correlation $p$ is bisynchronous (and $|X| = |A|$), 
then \cite{pr} $p$ arises via (\ref{corr-trace}) 
from a tracial state $\tau$ on 
the C*-algebra $C(S_X^+)$ of the quantum permutation group \cite{wang}.  
Similarly to $\cl  A_{X,A}$, the C*-algebra 
$C(S_X^+)$ is the universal unital C*-algebra with generators $e_{x,a}$, $x,a \in X$, 
further satisfying the additional relations 
$\sum_{x \in X} e_{x,a} = 1$, $a \in A$.  
Note that $C(S_X^+)$ is a free analogue of the algebra $C(S_X)$ of complex functions on the permutation group $S_X$ of $X$, and is itself a C$^\ast$-algebraic quantum group \cite{wang}.   

Bisynchronous correlations arise in 
the analysis of certain classes of non-local games, most notably the graph isomorphism game 
\cite{amrssv,lmr,maro, bcehpsw} and 
the related metric isometry game \cite{eifler}.  Here, deep and unexpected connections emerged between 
quantum permutation groups, 
no-signalling correlations and graph theory. At the same time, connections were established 
between graph isomorphism games and 
{\it quantum graphs} \cite{mrv, mrv2, bcehpsw}.  
In particular, in the aforementioned works, a natural (operator) algebraic notion of a quantum isomorphism between quantum graphs was introduced.

One of the main motivations behind 
the present work is the desire 
to provide an operational characterisation of quantum isomorphisms between quantum graphs in terms of bipartite correlations. As the term 
suggests, the description of a quantum graph (in any of its many guises \cite{stahlke, mrv, bcehpsw, bhtt}) 
requires a suitable quantum version of 
the notion of a vertex or edge, using the language of bipartite quantum systems.  
Hence one is naturally led to consider bipartite no-signalling correlations which allow {\it quantum} states as inputs and outputs.  

Quantum input-quantum output no-signalling (QNS) correlations were introduced by Duan and Winter \cite{dw}, and subsequently systematically studied in \cite{tt-QNS,bks, bhtt}.  
Given finite sets $X$ and $A$, and denoting by $M_X$ (resp. $M_A$) the 
full matrix algebra over the $|X|$-dimensional Hilbert space, 
a QNS correlation over the quadruple $(X,X,A,A)$ is a quantum channel 
\[
\Gamma: M_X \otimes M_X \to M_A \otimes M_A
\]
satisfying a pair of additional constraints, equivalent to the existence of marginal channels
(see equations \eqref{eq_qns1} and \eqref{eq_qns2}, and the article \cite{dw} for further details).  
Since any classical no-signalling correlation $p$ over $(X,X,A,A)$ can be regarded as 
a QNS correlation $\Gamma_p$ that preserves the corresponding diagonal subalgebras,
QNS correlations constitute a genuine generalisation of their classical counterparts 
(see also equation \eqref{eq_Gammap}).

The main purpose of the present 
work is to develop a notion, and find (operational and operator algebraic) characterisations, 
of bisynchronicity in the quantum input-output setting.    
In parallel with the classical setting, here 
we focus our attention on the case where the input and output systems are of the same size, 
that is, $|A| = |X|$.  In this case, it is natural to consider \lq\lq bistochastic'' 
correlations $\Gamma:M_X \otimes M_X \to M_A \otimes M_A$, 
that is, unital QNS correlations with 
the additional property that the dual channels $\Gamma^*$ are also QNS correlations;
these channels are referred to as QNS {\it bicorrelations} (see Definition \ref{d_bic}). 
A quantisation of bisynchronicity must involve a suitable
quantum counterpart of the property of sending identical inputs to identical outputs.  
In bipartite quantum systems, this is naturally captured by how $\Gamma$ (and $\Gamma^*$) 
acts on the canonical maximally entangled state.  More precisely, if $(\epsilon_{x,y})_{x,y \in X}$ is the canonical matrix unit system 
of $M_X$, and $J_X = \frac{1}{|X|} \sum_{x,y \in X} \epsilon_{x,y} \otimes \epsilon_{x,y}$ is the maximally entangled state, then it is natural to impose the condition 
\begin{align} \label{eq-concurrent}
\Gamma(J_X) = J_A.
\end{align}
Condition \eqref{eq-concurrent} on a QNS correlation $\Gamma$ was introduced and studied in detail in our previous work \cite{bhtt},
where it was called {\it concurrency}.   For a QNC bicorrelation $\Gamma$, its concurrency 
is equivalent to concurrency for $\Gamma^*$ (see Remark \ref{r_biconc}). From an operational viewpoint,  concurrent bicorrelations $\Gamma$ are characterised by the property that 
$\Gamma$ and $\Gamma^*$ preserve the 
perfect correlation of local measurements in both directions:  the input state $J_X$ is characterised by the property that local measurements performed on $J_X$ in any fixed basis are always perfectly correlated with uniformly random outcomes.  Concurrent bicorrelations thus respect this perfect correlative  structure, and hence rightfully can be interpreted as fully quantum versions of bisynchronous correlations.       

We study the various types of QNS bicorrelations  
(quantum commuting, quantum approximate, quantum and local) in detail, 
providing operator system/algebra characterisations thereof.  
After providing necessary preliminaries in Section \ref{s_dsom}, in Section \ref{s_bom}
we exhibit {\it operator bistochastic matrices}, which 
can be viewed as quantum and operator-valued generalisations of classical bistochastic matrices.
Operator bistochastic matrices turn out to be 
the suitable mathematical objects encoding each of the parties of a 
QNS bicorrelation.    
We characterise concretely  the universal operator system $\cl T_{X}$ 
of an operator bistochastic matrix 
as the subspace spanned by natural order two products associated with the entries of a 
universal block operator {\it bi-isometry} $V: \bb C^{|X|} \otimes H \to \bb C^{|X|} \otimes K$
(that is, an isometry $V$ for which the transpose $V^t$ is also an isometry).  
We further identify the dual operator system of $\cl T_X$ and 
establish several properties of $\cl T_{X}$ and its
universal C*-algebra $\cl C_X$.  At the heart of our arguments is a factorisation result 
for bistochastic operator matrices (Theorem \ref{p_coor}). 
Our results should be compared to those of \cite{tt-QNS}, where a similar development was 
undertaken for the universal operator system $\cl T_{X,A}$ of a block operator 
isometry, and the corresponding C*-algebra $\cl C_{X,A}$.

The diagonal expectations (intuitively, the classical components) 
of bistochastic operator matrices 
coincide with {\it quantum magic squares}, introduced by De Las Cuevas, Drescher and Netzer in \cite{dlcdn}; 
contrapositively, bistochastic operator matrices can be viewed as quantum versions of quantum magic squares. 
In Section \ref{ss_cbom}, we build up on this connection and rephrase some of the results of \cite{dlcdn}
in the language of operator systems. 
Indeed, one of the main results in \cite{dlcdn} is the fact that not every quantum magic square admits 
a dilation to a quantum permutation. 
In Theorem \ref{p_dilate}, we characterise the 
dilatability of a quantum magic square in terms of the complete positivity of 
natural maps, associated with the given quantum magic square, and defined on 
the operator system 
$\cl P_X \subseteq C(S_X^+)$ spanned by the coefficients of a quantum permutation matrix. 
We demonstrate that the non-dilatability of quantum magic squares is due to the distinction between 
different operator system structures. 

In Section \ref{s_rbc}, we introduce the types of quantum no-signalling bicorrelations, 
corresponding to different physical models (local, quantum, approximately quantum, 
quantum commuting and general no-signalling), 
and characterise them in terms of states on the various operator system structures, 
with which the algebraic tensor product $\cl T_X \otimes \cl T_X$ can be endowed.  
Here we rely on the tensor product theory developed in \cite{kptt}. 
We pay a separate attention to \emph{classical} no-signalling bicorrelations, showing that
their corresponding encoding operator system $\cl S_X$ is the universal operator system 
spanned by the entries of an  $X \times X$-quantum magic square studied in Section \ref{ss_cbom}, 
and obtaining similar characterisations in terms of states on operator system tensor products on the 
algebraic tensor product ${\cl S}_X \otimes {\cl S_X}$.

In Section \ref{s_concbic}, we focus our attention on concurrent bicorrelations, 
establishing in Theorem \ref{QNSbicorrelation} a characterisation of concurrent quantum commuting 
bicorrelations in terms of tracial states.  
We show that the C*-algebra, whose tracial states are of interest here, 
is the C*-algebra  $C(\bb P\cl U_X^+)$ 
of functions on {\it projective free unitary quantum group}.  
Recall that the C*-algebra of the free unitary quantum group $C(\cl U_X^+)$ is the universal unital C*-algebra generated by the entries $u_{x,a}$ of an $X \times X$ bi-unitary matrix $U = (u_{x,a})_{x,a}$.
 
The C*-algebra $C(\bb P\cl U_{X^+})$ is the C*-subalgebra of $C(\cl U_X^+)$, 
generated length two words of the form $u_{x,a}^*u_{x',a'}$.  
Note that the C*-algebra $C(\cl U^+_X)$ is the free  analogue of $C(\cl U_X)$, 
the C*-algebra of continuous complex functions  on the unitary group $\cl U_X$.  Similarly, $C(\bb P \cl U_X^+)$ is the free analogue of the algebra of continuous complex functions on the projective unitary group $\bb P \cl U_X = \cl U_{X}/\bb T$.  Recall that the natural action of $\cl U_X$ on $M_X$ by conjugation induces an isomorphism 
of $\bb P \cl U_X$ and the group $\text{Aut}(M_X)$ 
of $\ast$-automorphisms of the matrix algebra $M_X$.  In this way, $C(\bb P\cl U_X)$ can be regarded as the quantum version of the automorphism group of $M_X$.  
In fact, using quantum group theory, this reasoning can be made precise as, by \cite[Corollary 4.1]{banica} and \cite[Theorem 1]{banica0}, 
$C(\mathbb P \cl U_X^+)$ {\it is} the quantum automorphism group of the tracial 
C*-algebra $M_X$ in the sense of Wang \cite{wang}. 

Thus, from an operator algebraic point of view,
Theorem \ref{QNSbicorrelation} provides yet another justification for our definition of concurrent bicorrelations as the appropriate 
quantum versions of bisynchronous correlations; 
indeed, at a correlation level, 
quantisation of bisynchronicity amounts to replacing classical channels on $\cl D_X \otimes \cl D_X$ with quantum channels on $M_X \otimes M_X$.  At the level of tracial states encoding these channels, 
Theorem \ref{QNSbicorrelation} shows that this quantisation amounts to replacing  $C(S_X^+)$ 
(that is, quantum automorphisms of $\cl D_X$) with $C(\bb P \cl U_X^+)$ 
(that is, quantum automorphisms of $M_X$).  
We remark here that the C$^\ast$-algebras $C(S_X^+)$ and $C(\mathbb P \cl U_X^+)$ are indeed distinct C$^\ast$-algebras, as can be seen from the K-theory computations in \cite[Theorem 4.5]{voigt}.
In summary, the operational and the algebraic notions of quantisation are in agreement.      
Our results complement a series of operator characterisations in the literature,
part of which we summarise in the following table:

\vspace{0.1cm} 

\begin{center}
{\footnotesize
\begin{tabular}{ |l|l| } 
 \hline 
 %\vspace{0.1cm}
  {\bf Correlation type:} & {\bf Encoded by states on:}  
%\vspace{0.1cm} 
\\
\hline
 Classical NS correlations $\cl C_{\rm ns}$  & $\cl S_{X,A} \otimes_{\max} \cl S_{X,A}$ \cite[Theorem 3.1]{lmprsstw} \\ 
 \hline 
 Classical qc-correlations $\cl C_{\rm qc}$ & $\cl S_{X,A} \otimes_{\rm c} \cl S_{X,A}$ 
 \cite[Theorem 3.1]{lmprsstw}   \\ 
 \hline
 Classical qa-correlations $\cl C_{\rm qa}$  & $\cl S_{X,A} \otimes_{\min} \cl S_{X,A}$ \cite[Theorem 3.1]{lmprsstw} \\ 
 \hline 
Synchronous qc-correlations $\cl C_{\rm qc}^{\rm s}$  & $\cl A_{X,A}$ (tracial) \cite[Theorem 5.5]{psstw}  \\ \hline 
Bisynchronous qc-correlations $\cl C_{\rm qc}^{\rm bis}$  & $C(S_X^+)$ (tracial) \cite[Theorem 2.2]{pr} \\ 
\hline
QNS correlations $\cl Q_{\rm ns}$ & $\cl T_{X,A} \otimes_{\max} \cl T_{X,A}$ \cite[Theorem 6.2]{tt-QNS} \\ \hline
QNS qc-correlations $\cl Q_{\rm qc}$  & $\cl T_{X,A} \otimes_{c} \cl T_{X,A}$  
\cite[Theorem 6.3]{tt-QNS} \\ \hline
QNS qa-correlations $\cl Q_{\rm qa}$  & $\cl T_{X,A} \otimes_{\min} \cl T_{X,A}$ 
\cite[Theorem 6.5]{tt-QNS} \\
\hline
QNS bicorrelations $\cl Q_{\rm ns}^{\rm bi}$ & $\cl T_{X} \otimes_{\max} \cl T_{X}$ [Theorem \ref{th_bicmax}] \\ \hline
QNS qc-bicorrelations $\cl Q_{\rm qc}^{\rm bi}$   & $\cl T_{X} \otimes_{c} \cl T_{X}$ 
[Theorem \ref{th_bicqc}]\\ \hline
QNS qa-bicorrelations $\cl Q_{\rm qa}^{\rm bi}$  & $\cl T_{X} \otimes_{\min} \cl T_{X}$ 
[Theorem \ref{th_bicqa}]\\ \hline
Classical NS bicorrelations $\cl C_{\rm ns}^{\rm bi}$  & $\cl S_{X} \otimes_{\max} \cl S_{X}$  [Theorem \ref{th_classSX}]\\ \hline
Classical qc-bicorrelations $\cl C_{\rm qc}^{\rm bi}$  & $\cl S_{X} \otimes_{\rm c} \cl S_{X}$  [Theorem \ref{th_classSX}]\\ \hline
Classical qa-bicorrelations $\cl C_{\rm qa}^{\rm bi}$  & $\cl S_{X} \otimes_{\rm min} \cl S_{X}$  [Theorem \ref{th_classSX}]\\ \hline
Concurrent qc-correlations $\cl Q_{\rm qc}^{\rm c}$ & $\cl C_{X,A}$ (tracial) \cite[Theorem 4.1]{bhtt}\\ \hline  
Concurrent qc-bicorrelations $\cl Q_{\rm qc}^{\rm bic}$ & $C(\mathbb P\cl U_{X}^+)$ (tracial) 
[Theorem \ref{QNSbicorrelation}] \\ \hline  
\end{tabular}
}
\end{center}

\vspace{0.1cm}

In Section  \ref{s_qgig}, 
we apply concurrent bicorrelations to study quantum graph isomorphisms.
We consider quantum graphs  
with respect to $M_X$, viewed as symmetric skew subspaces 
$\cl U \subseteq \bb C^X \otimes \bb C^X$ \cite{btw,stahlke, dsw,tt-QNS,bhtt}.  
We define quantum isomorphisms between quantum graphs in terms of perfect 
QNS strategies for a suitable quantum graph isomorphism game, 
building up on the approach to quantum graph homomorphisms followed in \cite{tt-QNS}. 
In Theorem \ref{iso}, we characterise quantum commuting isomorphisms between quantum graphs $\cl U,\cl V \subseteq \bb C^{X} \otimes \bb C^X$ in terms of the existence of a bi-unitary matrix $U = (u_{x,a})_{x,a} \in M_X(\cl B(H))$ such that 
$C(\bb P\cl U_{X^+})$ admits a tracial state $\tau$, and 
\begin{align} \label{eq_iso}
U(\tilde{ \cl S}_{\cl U} \otimes 1)U^* \subseteq \tilde{\cl S}_{\cl V} \otimes \cl B(H) 
\ \mbox{ and } \ U^{\rm t}(\tilde{ \cl S}_{\cl V} \otimes 1) U^{{\rm t} *} \subseteq \tilde{\cl S}_{\cl U} \otimes \cl B(H),
\end{align}
where $\tilde{\cl S}_\cl U$ and $\tilde{\cl S}_\cl V$ are the traceless, symmetric subspaces,
canonically associated to $\cl U$ and $\cl V$, respectively.  
Note that condition \eqref{eq_iso} 
is a quantum counterpart of the 
characterisation \cite{amrssv} 
of quantum isomorphisms of classical graphs in terms of quantum permutations matrices
that intertwine the relevant adjacency matrices,  
through the replacement of 
quantum permutations by bi-unitaries (see  Remark \ref{r_magic_vs_bi}). 
We further formalise the relations
\eqref{eq_iso} in Theorem \ref{th_alg}, 
where we introduce a natural game algebra $\cl A_{P,Q}$ whose tracial states encode the perfect
quantum commuting strategies for the $(\cl U,\cl V)$-isomorphism game.  We note, in particular, that when $\cl U = \cl V$, the algebra 
$\cl A_{P,Q}$ admits the structure of a compact quantum group, which seems to generalise the quantum automorphism group of a classical graph.  We leave the study of these quantum groups for future work.  

Finally, in Section \ref{s_connections}, we compare the 
operational notion of quantum graph isomorphism of Section \ref{s_qgig}
to the operator algebraic notions that have appeared previously in the literature,
and which have been based mainly on adjacency matrices 
\cite{mrv, mrv2, bcehpsw, daws}.  
We show, in Theorem \ref{th_qcalgqc},
that the algebraic quantum isomorphisms considered in the aforementioned works 
fit into our framework as special cases.  
The arguments and ideas for the proof of this theorem rely on the recent work of Daws on quantum graphs \cite{daws}.
In Theorem \ref{converse}, we establish a partial converse, exhibiting the precise 
conditions, under which the algebraic and the operational notions of quantum graph isomorphism 
coincide.

\subsection*{Acknowledgements}
M.B. was partially supported by an NSERC discovery grant.  S.H. was partially supported by an NSERC Postdoctoral Fellowship. I.T. was
partially supported by NSF grant DMS-2154459 and a Simons Foundation grant (award number 708084). \\

%%%%%%%%%%%%%%%%%%%%%%%%%%%%%%%%%%%%%%%%%%%%%%%%%
%%%%%%%%%%%%%%%%%%%%%%%%%%%%%%%%%%%%%%%%%%%%%%%%%

\section{Preliminaries}\label{s_dsom}

In this section, we collect basic preliminaries on quantum no-signalling correlations, 
set notation and introduce terminology.
Let $H$ be a Hilbert space. 
As usual, we denote by $\cl B(H)$ the space of all bounded linear operators on $H$
and sometimes write $\cl L(H)$ if $H$ is finite dimensional.
We denote by $I_H$ the identity operator on $H$ and, 
if $\xi,\eta \in H$, we let $\xi\eta^*$ be the rank one operator given by
$(\xi\eta^*)(\zeta) = \langle\zeta,\eta\rangle \xi$.
In addition to inner products, $\langle\cdot,\cdot\rangle$ will denote 
the duality between a vector space and its dual.
We let $\cl B(H)^+$ be the cone of positive operators in $\cl B(H)$, and
further denote by 
$\cl T(H)$ its ideal of trace class operators and by $\Tr$ -- the trace functional on $\cl T(H)$.

An \emph{operator system} is a self-adjoint subspace $\cl S\subseteq \cl B(H)$, for some Hilbert space $H$, 
containing $I_H$. 
If $\cl S$ is an operator system, the 
\emph{universal C*-cover} of $\cl S$ \cite{kw}
is a pair $(C_u^*(\cl S),\iota)$, where 
$C_u^*(\cl S)$ is a unital C*-algebra and $\iota : \cl S\to C_u^*(\cl S)$ is a 
unital complete order embedding, such that 
$\iota(\cl S)$ generates $C_u^*(\cl S)$ as a C*-algebra 
and, whenever $K$ is a Hilbert space and 
$\phi : \cl S\to \cl B(K)$ is a unital completely positive map, there exists a *-representation 
$\pi_{\phi} : C^*_u(\cl S) \to \cl B(K)$ such that $\pi_{\phi} \circ \iota = \phi$. 
If $\cl S$ is a finite dimensional operator system then its Banach space dual $\cl S^{\rm d}$ can be viewed as an 
operator system \cite[Corollary 4.5]{CE2}. We refer the reader to \cite{Pa} for information and background on 
operator systems and completely positive maps.

We denote by $|X|$ the cardinality of a finite set $X$,
let $H^X = \oplus_{x\in X}H$ and write
$M_X$ for the space of all complex matrices of size $|X|\times |X|$; 
we identify $M_X$ with $\cl L(\bb{C}^X)$ and 
set $I_X = I_{\bb{C}^{X}}$. 
For $n\in \bb{N}$, we let $[n] = \{1,\dots,n\}$ and $M_n = M_{[n]}$. 
We write $(e_x)_{x\in X}$ for the canonical orthonormal basis of $\bb{C}^{X}$,
$(\epsilon_{x,x'})_{x,x'\in X}$ for the canonical matrix unit system in $M_X$, and 
denote by $\cl D_X$ the subalgebra of $M_X$ of all diagonal matrices with respect to the
basis $(e_x)_{x\in X}$.
If $\cl V$ is a vector space, we write $M_X(\cl V)$ for the space of all $X\times X$ matrices with entries in $\cl V$; 
we note that there is a canonical linear 
identification between $M_X(\cl V)$ and $M_X\otimes \cl V$. 
Here, and in the sequel, we use the symbol $\otimes$ to denote the algebraic tensor product of 
vector spaces.

For an element $\omega\in M_X$, we denote by $\omega^{\rm t}$ the
transpose of $\omega$ in the canonical basis, and
write $\overline{\omega}$ for the complex conjugate of $\omega$; thus, $\overline{\omega} = (\omega^{\rm t})^*$. 
The canonical complete order isomorphism
from $M_X$ onto its dual operator system $M_X^{\rm d}$ maps
an element $\omega\in M_X$ to the linear functional $f_{\omega} : M_X\to \bb{C}$ given by 
$f_{\omega}(T) = \Tr(T\omega^{\rm t})$; see e.g. \cite[Theorem 6.2]{ptt}. 
We will thus consider $M_X$ as self-dual with the pairing 
\begin{equation}\label{eq_rdu}
(\rho,\omega) \to \langle \rho,\omega\rangle := \Tr(\rho\omega^{\rm t}).
\end{equation}
On the other hand, note that the Banach space predual $\cl B(H)_*$ can be canonically identified with 
$\cl T(H)$; every normal functional $\phi : \cl B(H) \to \bb{C}$ thus 
corresponds to a (unique) operator $S_{\phi} \in \cl T(H)$ such that 
$\phi(T) = \Tr(TS_{\phi})$, $T\in \cl B(H)$.
In the case where $X$ is a fixed finite set (which will sometimes come in the form of a Cartesian product),
we will use a mixture of the two dualities just discussed: if $\omega,\rho\in M_X$, $S\in \cl T(H)$ and $T\in \cl B(H)$, 
it will be convenient to continue writing
$$\langle \rho\otimes T, \omega\otimes S\rangle = \Tr(\rho\omega^{\rm t}) \Tr(TS).$$

If $X$ and $Y$ are finite sets, we identify $M_X\otimes M_Y$ with $M_{X\times Y}$ and
write $M_{XY}$ in its place. Similarly, we set $\cl D_{XY} = \cl D_X\otimes \cl D_Y$.
For an element $\omega_X\in M_X$ and a Hilbert space $H$, we let 
$L_{\omega_X} : M_X\otimes \cl B(H)\to \cl B(H)$ be the 
linear map given by 
$L_{\omega_X}(S\otimes T) = \langle S,\omega_X\rangle T$. 
If $H = \bb{C}^Y$ and $\omega_Y\in M_Y$, we thus have linear maps
$L_{\omega_X} : M_{XY}\to M_Y$ and $L_{\omega_Y} : M_{XY}\to M_X$; note that  
$$\langle L_{\omega_X}(R),\rho_Y\rangle = \langle R, \omega_X\otimes \rho_Y\rangle, \ \ R\in M_{XY}, \rho_Y\in M_Y,$$
and a similar formula holds for $L_{\omega_Y}$.
We let $\Tr_X : M_{XY}\to M_Y$ (resp. $\Tr_Y : M_{XY}\to M_X$) be the partial trace; thus, 
$\Tr_X = L_{I_X}$ (resp. $\Tr_Y = L_{I_Y}$).

Let $X$, $Y$, $A$ and $B$ be finite sets. 
A \emph{quantum channel} from $M_X$ into $M_A$ is a completely positive trace preserving map
$\Phi : M_X\to M_A$.
A \emph{quantum correlation over $(X,Y,A,B)$} 
(or simply a \emph{quantum correlation} if the sets are understood from the context) 
is a quantum channel $\Gamma : M_{XY}\to M_{AB}$.
Such a $\Gamma$ is called a
\emph{quantum no-signalling (QNS) correlation} \cite{dw} if
\begin{equation}\label{eq_qns1}
\Tr\hspace{-0.07cm}\mbox{}_A\Gamma(\rho_X\otimes \rho_Y) = 0 \ \mbox{ whenever } \Tr(\rho_X) = 0
\end{equation}
and
\begin{equation}\label{eq_qns2}
\Tr\hspace{-0.07cm}\mbox{}_B\Gamma(\rho_X\otimes \rho_Y) = 0 \ \mbox{ whenever } \Tr(\rho_Y) = 0.
\end{equation}
We denote by $\cl Q_{\rm ns}$ the set of all QNS correlations.

A \emph{stochastic operator matrix} over $(X,A)$,
acting on a Hilbert space $H$, is a positive block operator matrix $\tilde{E} = (E_{x,x',a,a'})_{x,x',a,a'}\in M_{XA}(\cl B(H))$ such that $\Tr_A \tilde{E} = I$. 
 
A QNS correlation $\Gamma : M_{XY}\to M_{AB}$ is \emph{quantum commuting} if 
there exist a Hilbert space $H$, a unit vector $\xi\in H$ and
stochastic operator matrices $\tilde{E} = (E_{x,x',a,a'})_{x,x',a,a'}$ and $\tilde{F} = (F_{y,y',b,b'})_{y,y',b,b'}$ 
on $H$ such that 
$$E_{x,x',a,a'} F_{y,y',b,b'} = F_{y,y',b,b'}E_{x,x',a,a'}$$
for all $x,x'\in X$, $y,y'\in Y$, $a,a'\in A$, $b,b'\in B$,
and 
\begin{equation}\label{eq_EFp}
\Gamma(\epsilon_{x,x'} \otimes \epsilon_{y,y'}) = \sum_{a,a'\in A} \sum_{b,b'\in B}
\left\langle E_{x,x',a,a'}F_{y,y',b,b'}\xi,\xi \right\rangle \epsilon_{a,a'} \otimes \epsilon_{b,b'}, 
\end{equation}
for all $x,x' \in X$ and all $y,y' \in Y$.
\emph{Quantum} QNS correlations are defined as in (\ref{eq_EFp}), but 
requiring that $H$ has the form $H_A\otimes H_B$, for some finite dimensional Hilbert spaces $H_A$ and $H_B$, and 
$E_{x,x',a,a'} = \tilde{E}_{x,x',a,a'} \otimes I_B$ and 
$F_{y,y',b,b'} = I_A \otimes \tilde{F}_{y,y',b,b'}$, for some stochastic operator matrices 
$(\tilde{E}_{x,x',a,a'})$ and 
$(\tilde{F}_{y,y',b,b'})$, acting on $H_A$ and $H_B$, respectively. 
\emph{Approximately quantum} QNS correlations are the 
limits of quantum QNS correlations, while \emph{local} QNS correlations are 
the convex combinations of the form 
$\Gamma = \sum_{i=1}^k \lambda_i \Phi_i \otimes \Psi_i$, 
where $\Phi_i : M_X\to M_A$ and $\Psi_i : M_Y\to M_B$ are quantum channels, $i = 1,\dots,k$.

We write $\cl Q_{\rm qc}$ (resp. $\cl Q_{\rm qa}$, $\cl Q_{\rm q}$, $\cl Q_{\rm loc}$) for the (convex) set of all quantum commuting (resp. approximately quantum, quantum, local) QNS correlations, and note the 
inclusions 
$$\cl Q_{\rm loc}\subseteq \cl Q_{\rm q}\subseteq \cl Q_{\rm qa}\subseteq \cl Q_{\rm qc}\subseteq \cl Q_{\rm ns}.$$

Recall that a (classical) no-signalling (NS) correlation is a family $p = \{(p(a,b|x,y))_{a,b} : (x,y)\in X\times Y\}$
of probability distributions over $A\times B$, such that 
$$\sum_{b\in B} p(a,b|x,y) = \sum_{b\in B} p(a,b|x,y'), \ \ x\in X, y,y'\in Y, a\in A,$$
and 
$$\sum_{a\in A} p(a,b|x,y) = \sum_{a\in A} p(a,b|x',y), \ \ x,x'\in X,  y\in Y, b\in B$$
(see e.g. \cite{lmprsstw, psstw}). 
We denote the (convex) set of all NS correlations by $\cl C_{\rm ns}$.
With a correlation $p \in \cl C_{\rm ns}$, we associate the classical information channel 
$\Gamma_p : \cl D_{XY}\to \cl D_{AB}$, given by 
\begin{equation}\label{eq_Gammap}
\Gamma_p(\epsilon_{x,x}\otimes\epsilon_{y,y}) = \sum_{a\in A}\sum_{b\in B} p(a,b|x,y)
\epsilon_{a,a}\otimes\epsilon_{b,b}.
\end{equation}
The subclasses $\cl C_{\rm t}$ of $\cl C_{\rm ns}$, 
for ${\rm t}\in \{{\rm loc}, {\rm q}, {\rm qa},{\rm qc}\}$, 
are defined as in the previous 
paragraph, but using \emph{classical} stochastic operator matrices, that is, stochastic operator matrices 
of the form $E = \sum_{x\in X}\sum_{a\in A} \epsilon_{x,x}\otimes\epsilon_{a,a}\otimes E_{x,a}$. 
Note that the condition for $E$ being stochastic is equivalent to the requirement that 
$(E_{x,a})_{a\in A}$ is a positive operator-valued measure (POVM) for all $x\in X$. 
We note the inclusions
$$\cl C_{\rm loc}\subseteq \cl C_{\rm q}\subseteq \cl C_{\rm qa}\subseteq \cl C_{\rm qc}\subseteq \cl C_{\rm ns},$$
all of which are strict:
$\cl C_{\rm loc} \neq \cl C_{\rm q}$ is the Bell Theorem \cite{bell},  
$\cl C_{\rm q} \neq \cl C_{\rm qa}$ is a negative answer to the weak Tsirelson Problem
\cite{slofstra} (see also \cite{dpp, slofstra_JAMS}), 
and $\cl C_{\rm qa} \neq \cl C_{\rm qc}$ -- in view of \cite{fritz, jnpp, ozawa}, 
a negative answer to the announced solution of the
Connes Embedding Problem \cite{jnvwy}.

%%%%%%%%%%%%%%%%%%%%%%%%%%%%%%%%%%%%%%%%%%%%%%%%%
%%%%%%%%%%%%%%%%%%%%%%%%%%%%%%%%%%%%%%%%%%%%%%%%%

\section{Bistochastic operator matrices}\label{s_bom}

In this section we define and examine bistochastic operator matrices, 
which constitute a specialisation of stochastic operator matrices \cite[Section 3]{tt-QNS}
to the new context to be considered herein. 
Let $X$ be a finite set, and set $A = X$. The distinct symbols $X$ and $A$ will continue to be used 
to indicate the variable with respect to which a partial trace is taken;
the symbol $X$ usually refers to the domain of a quantum channel, while $A$ -- to its codomain.

\begin{definition}\label{d_bom}
Let $H$ be a Hilbert space. A block operator matrix
$E = \left(E_{x,x',a,a'}\right)_{x,x',a,a'} \in (M_{XA}\otimes \cl B(H))^+$ is called 
a \emph{bistochastic operator matrix} if 
$$\Tr\hspace{-0.05cm}\mbox{}_A E = I_X\otimes I_H 
\ \mbox{ and } \ 
\Tr\hspace{-0.05cm}\mbox{}_X E = I_A\otimes I_H.$$
\end{definition}

%%%%%%%%%%%%%%%%%%%%%%%%%%%%%%%%%%%%%%%%%%%%%%%%%
%%%%%%%%%%%%%%%%%%%%%%%%%%%%%%%%%%%%%%%%%%%%%%%%%

\subsection{Factorisation}\label{ss_fact}

A block operator matrix $V = (V_{a,x})_{a,x\in X}$, where $V_{a,x}\in \cl B(H,K)$ for some Hilbert spaces $H$ and $K$, 
will be called a \emph{bi-isometry} if 
$V$ and $V^{\rm t} := (V_{x,a})_{a,x\in X}$ are isometries as operators in $\cl B(H^X,K^X)$.

\begin{theorem}\label{p_coor}
Let $H$ be a Hilbert space and $E \in (M_{XA}\otimes \cl B(H))^+$.
The following are equivalent:
\begin{itemize}
\item[(i)] $E$ is a bistochastic operator matrix;

\item[(ii)] there exist a Hilbert space $K$ and operators $V_{a,x}\in \cl B(H,K)$, $x, a \in X$, 
such that $(V_{a,x})_{a,x\in X}$ is a bi-isometry and
\begin{equation}\label{eq_up}
E_{x,x',a,a'} = V_{a,x}^* V_{a',x'}, \ \ \ x,x',a,a'\in X.
\end{equation}
\end{itemize}
\end{theorem}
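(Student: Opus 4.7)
My strategy is to prove (i)$\Leftrightarrow$(ii) via a GNS/Stinespring-type dilation tailored to the block structure of $E$. The key observation is that the two partial-trace conditions defining bistochasticity correspond exactly to the two isometry conditions (on $V$ and on $V^{\rm t}$).

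The direction (ii)$\Rightarrow$(i) is a direct computation. Given a bi-isometry $(V_{a,x})_{a,x}$ with entries in $\cl B(H,K)$, define $T \in \cl B(\bb C^X \otimes \bb C^A \otimes H, K)$ by $T(e_x \otimes e_a \otimes h) = V_{a,x} h$; then $T^*T$ is precisely the block operator matrix with $((x,a),(x',a'))$-entry $V_{a,x}^* V_{a',x'}$, so setting $E := T^*T$ produces a positive element of $M_{XA}\otimes\cl B(H)$ satisfying \eqref{eq_up}. Computing the partial traces, $(\Tr_A E)_{x,x'} = \sum_{a\in X} V_{a,x}^* V_{a,x'}$, which equals $\delta_{x,x'} I_H$ since $V$ is an isometry; and $(\Tr_X E)_{a,a'} = \sum_{x\in X} V_{a,x}^* V_{a',x}$, which equals $\delta_{a,a'} I_H$ since $V^{\rm t}$ is an isometry.

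For (i)$\Rightarrow$(ii), I equip the algebraic tensor product $\bb C^X \otimes \bb C^A \otimes H$ with the positive semidefinite sesquilinear form $(\xi,\eta)\mapsto \langle E\xi,\eta\rangle$, and let $K$ be the Hilbert space obtained by quotienting out its null-space and completing. Set
$$V_{a,x} h := [e_x \otimes e_a \otimes h] \in K,$$
which is bounded since $\|V_{a,x}h\|_K^2 = \langle E_{x,x,a,a}h,h\rangle \le \|E_{x,x,a,a}\|\,\|h\|^2$. A short calculation of $\langle V_{a',x'}h',\,V_{a,x}h\rangle_K$ against the definition of the inner product on $K$ yields $V_{a,x}^* V_{a',x'} = E_{x,x',a,a'}$, establishing \eqref{eq_up}. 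Substituting this back into the definitions of the partial traces, the hypothesis $\Tr_A E = I_X \otimes I_H$ becomes $\sum_{a} V_{a,x}^* V_{a,x'} = \delta_{x,x'} I_H$, so $V = (V_{a,x})_{a,x}$ is an isometry in $\cl B(H^X,K^X)$; and $\Tr_X E = I_A \otimes I_H$ becomes $\sum_{x} V_{a,x}^* V_{a',x} = \delta_{a,a'} I_H$, so $V^{\rm t}$ is also an isometry. Hence $V$ is a bi-isometry.

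No genuine analytic obstacle arises: both implications are elementary dilation-theoretic manipulations, and the only care required is bookkeeping of indices. The essential conceptual content beyond the analogous factorisation for stochastic operator matrices in \cite[Section 3]{tt-QNS} is simply the recognition that the \emph{second} marginal condition $\Tr_X E = I$ is precisely what upgrades the usual Stinespring isometry $V$ to a bi-isometry by forcing $V^{\rm t}$ to be isometric as well.
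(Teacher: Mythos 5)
Your proof is correct, and it takes a genuinely different route from the paper's in the hard direction (i)$\Rightarrow$(ii). The paper first reshuffles $E$ into a unital completely positive map $\Phi : M_A \to M_X \otimes \cl B(H)$, $\Phi(\epsilon_{a,a'}) = (E_{x,x',a,a'})_{x,x'}$, invokes Choi's theorem and Stinespring's theorem to obtain an isometry $V : \bb{C}^X\otimes H \to \bb{C}^A \otimes K$, cites \cite[Theorem 3.1]{tt-QNS} to read off the factorisation $E_{x,x',a,a'} = V_{a,x}^*V_{a',x'}$, and then performs a separate duality argument against arbitrary trace-class operators $\omega \in \cl T(H)$ to extract the identity $\sum_x V_{a,x}^*V_{a',x} = \delta_{a,a'} I$ from the condition $\Tr_X E = I$. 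You instead apply a GNS-type construction directly to the positive matrix $E$, viewed as a sesquilinear form on $\bb{C}^X \otimes \bb{C}^A \otimes H$, and obtain the factorisation $E_{x,x',a,a'} = V_{a,x}^*V_{a',x'}$ in one step from the definition of the inner product on the completion $K$. Once this identity is in hand, you observe that \emph{both} marginal conditions $\Tr_A E = I$ and $\Tr_X E = I$ translate immediately, by substitution, into the two isometry conditions $V^*V = I$ and $(V^{\rm t})^*V^{\rm t} = I$; the symmetry between the two constraints and the two parts of the bi-isometry is thus laid completely bare. Your approach is more self-contained (no appeal to Choi, Stinespring, or the earlier reference), and it avoids the trace-class duality detour, which is in fact redundant once the factorisation of $E$ is available. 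The two constructions are of course related under the hood --- Stinespring is itself built on a GNS argument, and your $K$ and the paper's $\bb{C}^A \otimes K$ are naturally isomorphic --- but as written your proof is cleaner and makes the conceptual content of the theorem more transparent.
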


\begin{proof}
(ii)$\Rightarrow$(i)
Since $V$ is an isometry, 
$$\sum_{a\in X} E_{x,x',a,a} = \sum_{a\in X} V_{a,x}^* V_{a,x'} = \delta_{x,x'} I_H,$$
and hence $\Tr\mbox{}_A E = I_X\otimes I_H$.
Since $V^{\rm t} = (V_{a,x})_{x,a}$ is an isometry, 
$$\sum_{x\in X} E_{x,x,a,a'} = \sum_{x\in X} V_{a,x}^* V_{a',x} = \delta_{a,a'} I_H,$$
and hence $\Tr\mbox{}_X E = I_A\otimes I_H$.

(i)$\Rightarrow$(ii)
Suppose that $E = (E_{x,x',a,a'})_{x,x',a,a'}$ is a bistochastic operator matrix acting on $H$ and set 
$E_{a,a'} = (E_{x,x',a,a'})_{x,x'}$, $a,a' \in A$; thus, $E_{a,a'}\in M_X\otimes \cl B(H)$. 
Let $\Phi : M_A\to M_X\otimes \cl B(H)$ be the linear map, given by 
$\Phi(\nep_{a,a'}) = E_{a,a'}$, $a,a'\in A$. 
By Choi's Theorem, $\Phi$ is a unital completely positive map and, by Stinespring's Theorem, 
there exist a Hilbert space $\tilde{K}$,
an isometry $V : \bb{C}^X\otimes H\to \tilde{K}$ and
a unital *-homomorphism $\pi : M_A\to \cl B(\tilde{K})$ such that
$\Phi(T) = V^* \pi(T) V$, $T\in M_A$.
Up to unitary equivalence, $\tilde{K} = \bb{C}^A\otimes K$ for some Hilbert space $K$
and $\pi(T) = T\otimes I_K$, $T\in M_A$. 
Write $V_{a,x} : H\to K$, $a\in A$, $x\in X$, for the entries of $V$, 
when $V$ is considered as a block operator matrix.
As in \cite[Theorem 3.1]{tt-QNS}, we conclude that $E_{x,x',a,a'} = V_{a,x}^* V_{a',x'}$, $x,x'\in X$, $a,a'\in A$. 

Note that 
$$\left(\Tr\hspace{-0.07cm}\mbox{}_X\circ\Phi\right)\left(\nep_{a,a'}\right) 
= \Tr\hspace{-0.07cm}\mbox{}_X\left(E_{a,a'}\right) 
= \sum_{x\in X} E_{x,x,a,a'} = \delta_{a,a'} I_H;$$
hence
$$\left(\Tr\hspace{-0.07cm}\mbox{}_X\circ\Phi\right)\left(\rho\right) = \Tr(\rho)I_H, \ \ \ \rho\in M_A.$$
Thus, if $\omega\in \cl T(H)$ and $\rho\in M_A$ then \begin{eqnarray}\label{eq_omerho}
\left\langle\rho,\Tr(\omega) I_A\right\rangle
& = & 
\left\langle \Tr\hspace{-0.07cm}\mbox{}_A (\rho \otimes I_H), \omega\right\rangle
= 
\left\langle \left(\Tr\hspace{-0.07cm}\mbox{}_X \circ\Phi\right)(\rho),\omega\right\rangle\\
& = & 
\left\langle\Tr\hspace{-0.07cm}\mbox{}_X\left(V^*(\rho\otimes I_K)V\right),\omega\right\rangle. \nonumber
\end{eqnarray}
On the other hand, writing $\rho = (\rho_{a,a'})_{a,a'\in X}$, we have 
$$V^*(\rho\otimes I_K)V = 
\sum_{a,a'\in X} \rho_{a,a'} V^*(\epsilon_{a,a'}\otimes I_K)V = 
\left[\sum_{a,a'\in X} \rho_{a,a'} V_{a,x}^*V_{a',x'}\right]_{x,x'},$$
implying 
\begin{eqnarray*}
\left\langle\Tr\hspace{-0.07cm}\mbox{}_X\left(V^*(\rho\otimes I_K)V\right),\omega\right\rangle
& = & 
\sum_{x\in X}\sum_{a,a'\in X} \rho_{a,a'} \Tr(V_{a,x}^*V_{a',x}\omega)\\
& = & 
\sum_{a,a'\in X} \rho_{a,a'} \Tr\left(\sum_{x\in X}V_{a,x}^*V_{a',x}\omega\right).
\end{eqnarray*}
Now (\ref{eq_omerho}) implies that 
$$\Tr\left(\sum_{x\in X}V_{a,x}^*V_{a',x}\omega\right) = \delta_{a,a'} \Tr(\omega), \ \ \ a,a'\in X.$$
The latter equality holds for every $\omega\in \cl T(H)$; thus,
$$\sum_{x\in X}V_{a,x}^*V_{a',x} = \delta_{a,a'} I_K,$$
that is, $V^{\rm t}$ is an isometry.
\end{proof}

%%%%%%%%%%%%%%%%%%%%%%%%%%%%%%%%%%%%%%%%%%%%%%%%%
%%%%%%%%%%%%%%%%%%%%%%%%%%%%%%%%%%%%%%%%%%%%%%%%%

\subsection{The universal operator system}\label{ss_uopsys}

Recall \cite{hestenes, zettl} that a \emph{ternary ring} is a complex vector space $\cl V$, equipped with a 
ternary operation $[\cdot,\cdot,\cdot] : \cl V\times \cl V\times \cl V\to \cl V$, linear on the outer variables and 
conjugate linear in the middle variable, such that 
$$[s,t,[u,v,w]] = [s,[v,u,t],w] = [[s,t,u],v,w], \ \ \ s,t,u,v,w\in \cl V.$$
A \emph{ternary representation} of $\cl V$ is a linear map $\theta: \cl V \to \cl B(H,K)$, for some 
Hilbert spaces $H$ and $K$, such that 
$$\theta\left([u,v,w]\right) = \theta(u)\theta(v)^*\theta(w), \ \ \ u,v,w \in \cl V.$$
We call $\theta$ \emph{non-degenerate} if
$\text{span}\{\theta(u)^*\eta : u\in \cl V, \eta\in K\}$ is dense in $H$. 
A (concrete) \emph{ternary ring of operators (TRO)} \cite{zettl} 
is a subspace $\cl U\subseteq \cl B(H,K)$ for some Hilbert spaces 
$H$ and $K$ such that $S,T,R\in \cl U$ implies $ST^*R\in \cl U$. 
We refer the reader to \cite[Section 4.4]{blm} for details about TRO's and their abstract versions that will be used in the sequel.

Let $\cl V^0_{X}$ be the 
ternary ring, 
generated by elements $v_{a,x}$,  $a,x\in X$, satisfying the relations
\begin{equation}\label{eq_bc}
\sum_{a\in X} [v_{a'',x''},\hspace{-0.05cm}v_{a,x},\hspace{-0.05cm}v_{a,x'}] \hspace{-0.07cm}=\hspace{-0.07cm} \delta_{x,x'}v_{a'',x''} 
\mbox{ and}
\sum_{x\in X} [v_{a'',x''},\hspace{-0.07cm}v_{a,x},\hspace{-0.07cm}v_{a',x}] 
\hspace{-0.05cm}=\hspace{-0.05cm} \delta_{a,a'}v_{a'',x''},
\end{equation}
for all $x, x',x'',a, a', a''\in X$.
Note that relations (\ref{eq_bc}) are equivalent to 
\begin{equation}\label{eq_bcu}
\sum_{a\in X} [u,\hspace{-0.05cm}v_{a,x},\hspace{-0.05cm}v_{a,x'}] \hspace{-0.05cm}=\hspace{-0.05cm} \delta_{x,x'}u
\ \mbox{ and } \ 
\sum_{x\in X} [u,\hspace{-0.05cm}v_{a,x},\hspace{-0.05cm}v_{a',x}] \hspace{-0.05cm}=\hspace{-0.05cm} \delta_{a,a'}u,
\end{equation}
for all $x, x', a, a'\in X$ and all $u\in \cl V^0_{X}$.
Conditions (\ref{eq_bcu}) imply that 
the non-degenerate ternary representations $\theta : \cl V_{X}^0\to \cl B(H,K)$ correspond to bi-isometries $V = (V_{a,x})_{a,x}$ via the assignment $V_{a,x} = \theta(v_{a,x})$; in this case, we write $\theta = \theta_V$. 
Following \cite[Section 5]{tt-QNS}, we let $\hat{\theta} = \oplus_V \theta_V$, where in the direct 
sum we have chosen one representative from each unitary equivalence class of 
bi-isometries and the cardinality of the underlying Hilbert spaces are bounded by that of $\cl V$. 
The assignment $\|u\| := \|\hat{\theta}(u)\|$ defines a semi-norm on $\cl V^0_X$; 
we set $\cl V_X := \cl V^0_X/\ker\hat{\theta}$, observe that $\cl V_X$ is a TRO, 
and continue to write $v_{a,x}$ for the images of the 
canonical generators of $\cl V^0_X$ under the quotient map $q : \cl V^0_X\to \cl V_X$. 
The maps $\hat{\theta}$ and $\theta_V$ (for a bi-isometry $V$) give rise to corresponding 
ternary representations of $\cl V_X$, which we denote in the same way.

Let $\cl C_X$ be the right C*-algebra of the TRO $\cl V_X$ (so that, 
up to a *-isomorphism, $\cl C_X \cong \overline{{\rm span}(\hat{\theta}(\cl V_X)^*\hat{\theta}(\cl V_X))}$), 
write $e_{x,x',a,a'} = v_{a,x}^*v_{a',x'}$, 
and let 
$$\cl T_X = {\rm span}\{e_{x,x',a,a'} : x,x',a,a'\in X\},$$
viewed as an operator subsystem of $\cl C_X$. 
It is immediate that
\begin{equation}\label{eq_posit}
(e_{x,x',a,a'})_{x,x',a,a'}\in M_{XX}(\cl C_X)^+ 
\end{equation}
and that the relations
\begin{equation}\label{eq_birel}
\sum_{b\in A} e_{x,x',b,b} = \delta_{x,x'} 1 \ \mbox{ and } \ \sum_{y\in X} e_{y,y,a,a'} = \delta_{a,a'} 1, 
\ \ \ x,x',a,a'\in X,
\end{equation}
hold true. 

For a bi-isometry $V$, acting on the Hilbert space $H$, 
we write $\pi_V : \cl C_X\to \cl B(H)$ for the *-representation of $\cl C_X$, given by 
\begin{equation}\label{eq_piV}
\pi_V(S^*T) = \theta_V(S)^*\theta_V(T), \ \ \ S,T\in \cl V_{X}.
\end{equation}

\begin{lemma}\label{l_rr}
The following hold true:
\begin{itemize}
\item[(i)] Every non-degenerate ternary representation of $\cl V_{X}$ has the form $\theta_V$, for some 
bi-isometry $V$.
\item[(ii)] The map $\hat{\theta}$ is a faithful ternary representation of $\cl V_{X}$.
\item[(iii)] Every unital *-representation $\pi$ of $\cl C_{X}$ has the form $\pi_V$, for some 
bi-isometry $V$.
\end{itemize}
\end{lemma}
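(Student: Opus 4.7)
The plan is to reduce each part to an essentially routine check once the construction is set up correctly. Part (i) will follow by translating the defining relations \eqref{eq_bcu} into operator identities and then cancelling an outer factor via non-degeneracy. Part (ii) is essentially book-keeping about the quotient that defines $\cl V_X$. Part (iii) will be deduced from \eqref{eq_posit}, \eqref{eq_birel} and the factorisation Theorem \ref{p_coor}. The main obstacle is the adjoint/non-degeneracy argument in (i); the remaining parts are essentially formal.

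For (i), let $\theta : \cl V_X \to \cl B(H,K)$ be a non-degenerate ternary representation and set $V_{a,x} := \theta(v_{a,x})$. The relations \eqref{eq_bcu} descend from $\cl V_X^0$ to $\cl V_X$, so applying $\theta$ to the first relation with $u = v_{a'',x''}$ gives
\begin{equation*}
\sum_{a\in X} V_{a'',x''}\, V_{a,x}^*\, V_{a,x'} = \delta_{x,x'}\, V_{a'',x''}, \qquad x,x',a'',x'' \in X.
\end{equation*}
Taking adjoints and evaluating at $\eta \in K$, we obtain
$$\sum_{a\in X} V_{a,x'}^*\, V_{a,x}\bigl(V_{a'',x''}^*\eta\bigr) = \delta_{x,x'}\, V_{a'',x''}^*\eta.$$
By non-degeneracy the vectors $V_{a'',x''}^*\eta$ span a dense subspace of $H$, whence $\sum_a V_{a,x'}^* V_{a,x} = \delta_{x,x'} I_H$; that is, $V = (V_{a,x})$ is an isometry. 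The second relation in \eqref{eq_bcu} is treated identically and shows $V^{\rm t}$ is also an isometry. Hence $V$ is a bi-isometry and $\theta = \theta_V$ by construction.

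For (ii), the map $\hat\theta = \oplus_V \theta_V$ is, as a direct sum of ternary representations of $\cl V_X^0$, itself a ternary representation. Its kernel is automatically a ternary ideal: if $\hat\theta(u) = 0$, then $\hat\theta([s,t,u]) = \hat\theta(s)\hat\theta(t)^*\hat\theta(u) = 0$, and symmetrically for the other two slots. Consequently $\hat\theta$ descends to the quotient $\cl V_X = \cl V_X^0/\ker\hat\theta$ as an injective -- and hence faithful -- ternary representation.

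For (iii), let $\pi : \cl C_X \to \cl B(H)$ be a unital $\ast$-representation and put $E := \bigl(\pi(e_{x,x',a,a'})\bigr)_{x,x',a,a'}$. By \eqref{eq_posit}, $E \in M_{XX}(\cl B(H))^+$; since $\pi$ is unital, \eqref{eq_birel} yields $\Tr_A E = I_X \otimes I_H$ and $\Tr_X E = I_A \otimes I_H$, so $E$ is a bistochastic operator matrix. Theorem \ref{p_coor} then supplies a Hilbert space $K$ and a bi-isometry $V = (V_{a,x})_{a,x}$ with $\pi(e_{x,x',a,a'}) = V_{a,x}^*\, V_{a',x'}$ for all $x,x',a,a' \in X$. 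Since the $e_{x,x',a,a'}$ generate $\cl C_X$ as a $C^\ast$-algebra, $\pi$ and $\pi_V$ coincide on a generating set and therefore on all of $\cl C_X$.
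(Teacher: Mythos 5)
Your proof is correct, but part (iii) takes a genuinely different route from the paper's. The paper proves (iii) by first lifting the unital $*$-representation $\pi$ of the right C*-algebra $\cl C_X$ to a ternary representation $\theta$ of the TRO $\cl V_X$ satisfying $\pi(S^*T)=\theta(S)^*\theta(T)$ (quoting general TRO/Hilbert-module representation theory), observing that unitality forces $\theta$ to be non-degenerate, and then invoking part (i); you instead bypass (i) entirely, check directly that $\bigl(\pi(e_{x,x',a,a'})\bigr)$ is a bistochastic operator matrix via \eqref{eq_posit} and \eqref{eq_birel}, apply the factorisation Theorem \ref{p_coor} to extract the bi-isometry, and match $\pi$ with $\pi_V$ on the generating set $\{e_{x,x',a,a'}\}$. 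Your route is more self-contained (it needs no external citation beyond the well-definedness of $\pi_V$, which the paper already presupposes in \eqref{eq_piV}) and is in fact the same mechanism the paper deploys later in the proof of Theorem \ref{th_ucp} (ii)$\Rightarrow$(iii); what it loses is that (iii) no longer illustrates the TRO-theoretic picture the authors lean on. For (i) and (ii) the paper simply defers to the analogous lemma of \cite{tt-QNS}, and your arguments supply exactly the expected details. One small point worth a sentence in (i): non-degeneracy is stated for all $u\in\cl V_X$, so to conclude that the vectors $V_{a'',x''}^*\eta$ alone have dense span you should note that every $\theta(u)^*\eta$ with $u$ an alternating word in the generators is itself of the form $V_{a,x}^*\eta'$ for some $\eta'\in K$; this is routine and not a gap.
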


\begin{proof}
The arguments are similar to the ones in \cite[Lemma 5.1]{tt-QNS} where a version of our current setup is considered for isometries (that are not necessarily bi-isometries).  We address (iii) for the 
convenience of the reader. 
Let $\pi : \cl C_{X} \to \cl B(H)$ be a unital *-representation. 
Then there exists a ternary representation $\theta : \cl V_{X}\to \cl B(H,K)$
such that $\pi(S^*T) = \theta(S)^*\theta(T)$, $S,T\in \cl V_{X,A}$
(see e.g. \cite[Theorem 3.4]{blecher} and \cite[p. 1636]{elefkak}). 
Since $\pi$ is unital, $\theta$
is non-degenerate. By the universality of $\cl V_X$ described in (i), 
there exists an operator matrix
$V = (V_{a,x})$, whose entries satisfy the relations (\ref{eq_bc}), such that 
$\theta = \theta_V$, and hence $\pi = \pi_V$.
\end{proof}

Let $\cl V_{X,A}$ be the universal TRO of an isometry 
$(\tilde{v}_{a,x})_{a,x\in X}$, 
defined similarly to the TRO $\cl V_X$ \cite[Section 5]{tt-QNS}. 
Thus, the TRO $\cl V_{X,A}$ arises from a ternary ring, whose canonical generators
$\tilde{v}_{a,x}$, $x,a\in X$, are required to satisfy only the first of the relations (\ref{eq_bc}). 
We let $\cl C_{X,A}$ be the right C*-algebra of $\cl T_{X,A}$. Letting 
$\tilde{e}_{x,x',a,a'} = \tilde{v}_{a,x}^* \tilde{v}_{a',x'}$, $x,x',a,a'\in X$, 
we write 
\begin{equation}\label{eq_TXAd}
\cl T_{X,A} = {\rm span}\{\tilde{e}_{x,x',a,a'} : x,x',a,a'\in X\},
\end{equation}
viewed as an operator subsystem of $\cl C_{X,A}$ \cite{tt-QNS}.
It was shown in \cite[Theorem 5.2]{tt-QNS} that, for a Hilbert space $H$,  
the unital completely positive maps
$\phi : \cl T_{X,A}\to \cl B(H)$ correspond to stochastic operator matrices 
$(E_{x,x',a,a'})_{x,x',a,a'}$ via the assignment $\phi(e_{x,x',a,a'}) = E_{x,x',a,a'}$. 
We next provide a bistochastic version of this fact, to be used subsequently.

\begin{theorem}\label{th_ucp}
Let $H$ be a Hilbert space and $\phi : \cl T_{X}\to \cl B(H)$ be a linear map. 
Consider the conditions
\begin{itemize}
\item[(i)] $\phi$ is a unital completely positive map;
\item[(ii)] $\left(\phi(e_{x,x',a,a'})\right)_{x,x',a,a'} \in M_{XA} \otimes \cl B(H)$ is a bistochastic operator matrix;
\item[(iii)] there exists a unital *-representation $\pi : \cl C_{X}\to \cl B(H)$ such that $\phi = \pi|_{\cl T_{X}}$,
\end{itemize}
and
\begin{itemize}
\item[(i')] $\phi$ is a completely positive map;
\item[(ii')] $\left(\phi(e_{x,x',a,a'})\right)_{x,x',a,a'} \in \left(M_{XA}\otimes \cl B(H)\right)^+$.
\end{itemize}
\noindent 
Then (i)$\Leftrightarrow$(ii)$\Leftrightarrow$(iii) and 
(i')$\Leftrightarrow$(ii').
Thus, the pair $(\cl C_{X}, \iota)$, where $\iota$ is the inclusion map of $\cl T_{X}$ into $\cl C_{X}$, 
is the universal C*-cover of $\cl T_{X}$.

Moreover, if $\left(E_{x,x',a,a'}\right)_{x,x',a,a'}$ is a bistochastic operator matrix acting on a Hilbert space $H$
then there exists a (unique) unital completely positive map 
$\phi : \cl T_{X}\to \cl B(H)$ such that $\phi(e_{x,x',a,a'}) = E_{x,x',a,a'}$ for all $x,x',a,a'$.
\end{theorem}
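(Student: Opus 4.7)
I would prove $(\mathrm{i}) \Leftrightarrow (\mathrm{ii}) \Leftrightarrow (\mathrm{iii})$ by running the cycle $(\mathrm{iii}) \Rightarrow (\mathrm{i}) \Rightarrow (\mathrm{ii}) \Rightarrow (\mathrm{iii})$, and then separately establish $(\mathrm{i}') \Leftrightarrow (\mathrm{ii}')$ by a renormalisation argument; the universal C*-cover assertion and the \emph{moreover} clause will then fall out. The implication $(\mathrm{iii}) \Rightarrow (\mathrm{i})$ is immediate because unital *-representations are unital completely positive. For $(\mathrm{i}) \Rightarrow (\mathrm{ii})$, I would combine the positivity (\ref{eq_posit}) of $(e_{x,x',a,a'})$ in $M_{XA}(\cl T_X)$ -- which gives positivity of $(\phi(e_{x,x',a,a'}))$ -- with the relations (\ref{eq_birel}), which together with unitality of $\phi$ reduce $\Tr_A E$ and $\Tr_X E$ to the required identities.

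The substantive step is $(\mathrm{ii}) \Rightarrow (\mathrm{iii})$: given a bistochastic operator matrix $E = (\phi(e_{x,x',a,a'}))$ acting on $H$, Theorem \ref{p_coor} supplies a Hilbert space $K$ and a bi-isometry $V = (V_{a,x})_{a,x \in X}$ with $\phi(e_{x,x',a,a'}) = V_{a,x}^* V_{a',x'}$. By Lemma \ref{l_rr}(i), $V$ corresponds to a non-degenerate ternary representation $\theta_V : \cl V_X \to \cl B(H,K)$, and (\ref{eq_piV}) lifts $\theta_V$ to a *-representation $\pi_V : \cl C_X \to \cl B(H)$; unitality of $\pi_V$ follows from $V^*V = I$ together with the first identity in (\ref{eq_birel}), and $\pi_V(e_{x,x',a,a'}) = V_{a,x}^* V_{a',x'} = \phi(e_{x,x',a,a'})$ by construction, so $\pi_V|_{\cl T_X} = \phi$ by linearity.

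The most delicate point is $(\mathrm{ii}') \Rightarrow (\mathrm{i}')$, because $\phi$ is no longer assumed unital and the bistochastic factorisation machinery is not immediately applicable. Writing $E = (\phi(e_{x,x',a,a'}))$ and $P := \phi(1) \geq 0$, the relations (\ref{eq_birel}) yield $\Tr_A E = I_X \otimes P$ and $\Tr_X E = I_A \otimes P$. When $P$ is invertible, the conjugate $\tilde E := (I_X \otimes P^{-1/2}) E (I_X \otimes P^{-1/2})$ is bistochastic, so the already established $(\mathrm{ii}) \Rightarrow (\mathrm{iii})$ yields a unital *-representation $\pi$ with $\pi(e_{x,x',a,a'}) = \tilde E_{x,x',a,a'}$; then $T \mapsto P^{1/2} \pi(T) P^{1/2}$ is completely positive and agrees with $\phi$ on the spanning set. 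For singular $P$, I would perturb by $\phi_\epsilon := \phi + \epsilon \psi_0$, with $\psi_0$ a fixed unital completely positive map (for instance, the one associated with the bi-isometry $V_{a,x} = \delta_{a,x} I_H$); then $\phi_\epsilon(1) = P + \epsilon I_H$ is invertible while $(\phi_\epsilon(e_{x,x',a,a'}))$ remains positive, and complete positivity passes to the limit $\epsilon \to 0^+$ because $\cl T_X$ is finite dimensional.

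Finally, $(\cl C_X,\iota)$ is the universal C*-cover of $\cl T_X$ by combining $(\mathrm{i}) \Leftrightarrow (\mathrm{iii})$ with the fact -- intrinsic to the construction of $\cl C_X$ as the right C*-algebra of $\cl V_X$ -- that $\cl T_X$ generates $\cl C_X$ as a C*-algebra. The \emph{moreover} statement is the constructive content of $(\mathrm{ii}) \Rightarrow (\mathrm{iii}) \Rightarrow (\mathrm{i})$, with uniqueness automatic since $\{e_{x,x',a,a'}\}$ spans $\cl T_X$.
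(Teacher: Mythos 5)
Your proposal is correct and follows essentially the same route as the paper: the substantive implication (ii)$\Rightarrow$(iii) rests on the factorisation Theorem \ref{p_coor} and the universal representation $\pi_V$, and (ii')$\Rightarrow$(i') uses the same renormalisation by $\phi(1)^{-1/2}$ followed by an $\epsilon$-perturbation to handle the singular case (your choice of perturbing map, built from the bi-isometry $V_{a,x}=\delta_{a,x}I_H$, plays the same role as the paper's state $f$). The only divergence is (i)$\Rightarrow$(ii): the paper routes this through Arveson's Extension Theorem and Stinespring's Theorem to realise $\phi$ as a compression of some $\pi_V$, whereas you deduce positivity of $\left(\phi(e_{x,x',a,a'})\right)$ directly from (\ref{eq_posit}) and the trace conditions from (\ref{eq_birel}) together with unitality; this is a legitimate and slightly more elementary argument, since the matrix $(e_{x,x',a,a'})$ has all entries in $\cl T_X$ and is therefore positive in $M_{XA}(\cl T_X)$, so complete positivity of $\phi$ applies to it directly.
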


\begin{proof}
(i)$\Rightarrow$(ii)
By Arveson's Extension Theorem and Stinespring's Theorem, 
there exist a Hilbert space $K$, a *-representation $\pi : \cl C_{X} \to \cl B(K)$ 
and an isometry $W\in \cl B(H,K)$, such that 
$\phi(u) = W^*\pi(u) W$, $u\in \cl T_{X}$. 
By Lemma \ref{l_rr}, $\pi = \pi_V$ for some bi-isometry $V = (V_{a,x})_{a,x}$.
By (\ref{eq_posit}), $E :=\left(\pi(e_{x,x',a,a'})\right)_{x,x',a,a'} \in (M_{XA}\otimes \cl B(K))^+$, and hence 
$$\left(\phi(e_{x,x',a,a'})\right)
= (I_X\otimes I_A\otimes W)^* E (I_X\otimes I_A\otimes W)
\in \left(M_{XA}\otimes B(H)\right)^+.$$
In addition, 
$$\sum_{b\in X} W^*V_{b,x}^* V_{b,x'} W = \delta_{x,x'} W^*W  = \delta_{x,x'} I, \ \ \ x,x'\in X,$$
and 
$$\sum_{y\in X} W^*V_{a,y}^* V_{a',y} W = \delta_{a,a'} W^*W  = \delta_{a,a'} I, \ \ \ a,a'\in X,$$
that is, the operator matrix $\left(\phi(e_{x,x',a,a'})\right)_{x,x',a,a'}$ is bistochastic.

(ii)$\Rightarrow$(iii)
By Theorem \ref{p_coor}, there exist a Hilbert space $K$ and 
a bi-isometry $V = (V_{a,x})_{a,x}\in \cl B(H^X,K^X)$ such that
$$\phi(e_{x,x',a,a'}) = V_{a,x}^*V_{a',x'}, \ \ \ x,x',a,a'\in X.$$
Recalling (\ref{eq_piV}), we have
\[
\pi_V(e_{x,x',a,a'}) = 
\theta_V(v_{a,x})^*\theta_V(v_{a',x'}) 
= V_{a,x}^*V_{a',x'} = \phi(e_{x,x',a,a'}),
\]
and hence 
the *-representation $\pi_V$ of $\cl C_{X}$ is an extension of $\phi$.

(iii)$\Rightarrow$(i) is trivial. 

%\smallskip

(i')$\Rightarrow$(ii') is a direct consequence of (\ref{eq_posit}) and the fact that $\cl T_X$ is an 
operator subsystem of $\cl C_X$. 

(ii')$\Rightarrow$(i') 
Let $T = \phi(1)$ and note that, for any $x,a\in X$, we have 
\begin{equation}\label{eq_bothT}
\sum_{b\in X}  E_{x,x,b,b} = \sum_{b\in X} \phi(e_{x,x,b,b}) = T = 
\sum_{y\in X} \phi(e_{y,y,a,a}) = \sum_{y\in X}  E_{y,y,a,a}.
\end{equation}
Assume first that $T$ is invertible. 
Following the proof of \cite[Proposition 5.4]{tt-QNS}, let
$\psi : \cl T_{X} \to \cl B(H)$ be the map given by 
\begin{equation}\label{eq_Thalf}
\psi(u) = T^{-1/2} \phi(u) T^{-1/2}, \ \ \ u\in \cl T_{X}.
\end{equation}
Setting 
$F = \left(\psi(e_{x,x',a,a'})\right)_{x,x',a,a'}$, we have that 
$$F = \left(I_{XA} \otimes T^{-1/2}\right) E \left(I_{XA} \otimes T^{-1/2}\right) \geq 0,$$
and (\ref{eq_bothT}) shows that 
$F$ is a bistochastic operator matrix. By the implication (ii)$\Rightarrow$(i), 
$\psi$ is completely positive, and hence so is $\phi$, as $\phi(\cdot) = T^{1/2} \psi(\cdot) T^{1/2}$.

Now relax the assumption that $T$ be invertible. 
Using the implication (ii)$\Rightarrow$(i), let $f : \cl T_X\to \bb{C}$ be 
the state given by $f(e_{x,x',a,a'}) = \frac{1}{|X|}\delta_{x,x'}\delta_{a,a'}$ and, 
for $\epsilon > 0$, let $\phi_{\epsilon} : \cl T_X\to \cl B(H)$ be given by 
$\phi_{\epsilon}(u) := \phi(u) + \epsilon f(u) I$.
Then 
$$\left(\phi_{\epsilon}(e_{x,x',a,a'})\right)_{x,x',a,a'} = E + \frac{\epsilon}{|X|} I_{XX}$$ 
and 
$\phi_{\epsilon}(I) = T + \epsilon I$ is invertible. By the previous paragraph, 
$\phi_{\epsilon}$ is completely positive and, since 
$\phi_{\epsilon}\to_{\epsilon \to 0} \phi$ in the point-norm topology, we conclude that $\phi$
is completely positive. 

\smallskip

Finally, suppose that $E = \left(E_{x,x',a,a'}\right)_{x,x',a,a'}$ is a bistochastic operator matrix acting on $H$. 
Letting $V$ be the bi-isometry, associated with $E$ via Theorem \ref{p_coor}, 
we have that the completely positive map 
$\phi := \pi_V|_{\cl T_{X}}$ satisfies the equalities $\phi(e_{x,x',a,a'}) = E_{x,x',a,a'}$ for all $x,x',a,a'$.
\end{proof}

We note that, 
if $\cl S$ is an operator system, its Banach space dual $\cl S^{\rm d}$ can be equipped with a 
natural matricial order structure. To this end, we recall \cite[Section 4]{CE2} that any matrix
$\phi = (\phi_{i,j})_{i,j=1}^n \in M_n(\cl S^{\rm d})$ gives rise to a linear map $F_{\phi} : \cl S\to M_n$,
defined by letting
\begin{equation}\label{eq_Fphi}
F_{\phi}(u) = \sum_{i,j=1}^n \phi_{i,j}(u) \epsilon_{i,j}, 
\end{equation}
and set
$$M_n(\cl S^{\rm d})^+ = \{\phi\in M_n(\cl S^{\rm d}) : F_{\phi} \mbox{ is completely positive}\}.$$
It was shown in \cite[Corollary 4.5]{CE2} that, if 
$\cl S$ is a finite dimensional operator system then the (matrix ordered) dual 
$\cl S^{\rm d}$ is an operator system, when equipped with a suitable 
faithful state as an Archimedean order unit. It is straightforward to verify that, in this case, 
$\cl S^{\rm dd}\cong_{\rm c.o.i.} \cl S$.

We identify an element 
$T\in M_{XA}$ with its matrix
$(\lambda_{x,x',a,a'})_{x,x',a,a'}$, where 
$$\lambda_{x,x',a,a'} = \langle T (e_{x'}\otimes e_{a'}), e_x\otimes e_a\rangle, \ \ \ 
x,x'\in X, a,a'\in A.$$
Let 
$$\cl L_{X,A} = \left\{(\lambda_{x,x',a,a'})\in M_{XA} : \exists \ c\in \bb{C} \mbox{ s.t.} 
\sum_{a\in A} \lambda_{x,x',a,a} = \delta_{x,x'} c, \ x,x'\in X\right\}$$
and consider $\cl L_{X,A}$ as an operator subsystem of $M_{XA}$. 
It was shown in \cite[Proposition 5.5]{tt-QNS} that the linear map 
$\tilde{\Lambda} : \cl T_{X,A}^{\rm d} \to \cl L_{X,A}$, 
given by 
$$\tilde{\Lambda}(\phi) = \left(\phi(\tilde{e}_{x,x',a,a'})\right)_{x,x',a,a'\in X},$$
is a unital complete order isomorphism between $\cl T_{X,A}^{\rm d}$ and $\cl L_{X,A}$. 
Let 
$$\cl L_X = 
\{(\lambda_{x,x',a,a'})_{x,x',a,a'}\in M_{XX} : \mbox{ there exists } c \in \bb{C} \mbox{ s.t. } $$
$$
\sum_{b\in X} \lambda_{x,x',b,b} = \delta_{x,x'} c \mbox{ and } \sum_{y\in X} \lambda_{y,y,a,a'} = \delta_{a,a'}c, 
\mbox{ for all } x,x',a,a'\in X\}.
$$

\begin{remark}\label{r_c1c2}
\rm 
If $C = (\lambda_{x,x',a,a'})_{x,x',a,a'}\in M_{XA}$ is a matrix and $c_1,c_2$ are scalars such that 
$\sum_{b\in X} \lambda_{x,x',b,b} = \delta_{x,x'} c_1$ for all $x,x'\in X$ and 
$\sum_{y\in X} \lambda_{y,y,a,a'} = \delta_{a,a'}c_2$ for all $a,a'\in A$, then
$$c_1 = \frac{1}{|X|} {\rm Tr}(C) = c_2.$$
\end{remark}

\begin{proposition}\label{p_dualTX}
The linear map $\Lambda : \cl T_{X}^{\rm d} \to \cl L_{X}$, given by 
\begin{equation}\label{p_Lam}
\Lambda(\phi) = \left(\phi(e_{x,x',a,a'})\right)_{x,x',a,a'\in X}
\end{equation}
is a well-defined complete order isomorphism. 
\end{proposition}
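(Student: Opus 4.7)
The plan is to mirror \cite[Proposition 5.5]{tt-QNS}, where the analogous identification $\tilde\Lambda : \cl T_{X,A}^{\rm d}\to \cl L_{X,A}$ was established in the purely stochastic setting, and then to cut that isomorphism down to the bistochastic one. For any $\phi\in\cl T_X^{\rm d}$ and $c:=\phi(1)$, applying $\phi$ to the identities \eqref{eq_birel} gives $\sum_b\phi(e_{x,x',b,b})=\delta_{x,x'}c$ and $\sum_y\phi(e_{y,y,a,a'})=\delta_{a,a'}c$, so $\Lambda(\phi)\in\cl L_X$ with common scalar $c=\phi(1)$, and $\Lambda$ is a well-defined linear map; linearity is immediate, and since $\{e_{x,x',a,a'}\}$ spans $\cl T_X$, so is injectivity.

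For surjectivity, I would introduce the canonical unital completely positive surjection $r:\cl T_{X,A}\to\cl T_X$ sending $\tilde e_{x,x',a,a'}\mapsto e_{x,x',a,a'}$, which exists because the bistochastic matrix $(e_{x,x',a,a'})\in M_{XX}(\cl C_X)^+$ is in particular stochastic, so \cite[Theorem 5.2]{tt-QNS} produces $r$. The dual $r^*:\cl T_X^{\rm d}\to\cl T_{X,A}^{\rm d}$ is then injective, and inspection on generators gives $\Lambda=\tilde\Lambda\circ r^*$. For $C\in\cl L_X$ with scalar $c$, set $\psi:=\tilde\Lambda^{-1}(C)$; then $\psi(\tilde e_{x,x',a,a'})=\lambda_{x,x',a,a'}$ and, by Remark \ref{r_c1c2}, also $\psi(1)=c$. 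Consequently $\psi$ annihilates each element $\sum_y\tilde e_{y,y,a,a'}-\delta_{a,a'}1$, which lies in $\ker r$; showing that such elements linearly span $\ker r$ allows $\psi$ to descend to a unique $\phi\in\cl T_X^{\rm d}$ with $\Lambda(\phi)=C$. This spanning claim is the main technical obstacle: one inclusion is trivial, but the reverse requires matching the codimension of $\ker r$ in $\cl T_{X,A}$ with that of $\cl L_X$ in $\cl L_{X,A}$, using the established isomorphism $\tilde\Lambda$ together with the universality of $\cl T_X$ encoded in Theorem \ref{th_ucp}.

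Finally, for the complete order isomorphism, take $(\phi_{ij})\in M_n(\cl T_X^{\rm d})$. The associated map $F_\phi:\cl T_X\to M_n$ from \eqref{eq_Fphi} is completely positive, by the equivalence \textup{(i')}$\Leftrightarrow$\textup{(ii')} of Theorem \ref{th_ucp} applied with codomain $M_n=\cl B(\bb C^n)$, exactly when $(F_\phi(e_{x,x',a,a'}))\in (M_{XA}\otimes M_n)^+$. Under the canonical identification $M_{XA}\otimes M_n\cong M_n(M_{XX})$, this is the positivity of $(\Lambda(\phi_{ij}))_{i,j}$ in $M_n(M_{XX})$, and because $\cl L_X$ inherits its matricial order from $M_{XX}$, it coincides with $(\Lambda(\phi_{ij}))\in M_n(\cl L_X)^+$. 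Applying this equivalence in both directions shows that $\Lambda$ and $\Lambda^{-1}$ are completely positive, so $\Lambda$ is the asserted complete order isomorphism.
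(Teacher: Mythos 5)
Your well-definedness argument (apply $\phi$ to the relations \eqref{eq_birel}), your injectivity argument, and your treatment of the complete order isomorphism via the equivalence (i')$\Leftrightarrow$(ii') of Theorem \ref{th_ucp} are all correct and essentially coincide with the paper's proof, which defines $\Lambda$ first on positive functionals and then checks complete positivity of $\Lambda$ and $\Lambda^{-1}$ in exactly the way you describe.

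The gap is in your surjectivity argument. You route it through the quotient $r:\cl T_{X,A}\to\cl T_X$ and need the functional $\psi=\tilde\Lambda^{-1}(C)$ to descend, which requires that $\ker r$ be \emph{linearly spanned} by the elements $\sum_{y}\tilde e_{y,y,a,a'}-\delta_{a,a'}1$. You correctly flag this as the main obstacle but do not close it: the proposed codimension count is not carried out, and it is genuinely delicate (one must verify the linear independence of the relevant relations in $\cl T_{X,A}$ and the independence of the defining constraints of $\cl L_X$ inside $\cl L_{X,A}$). Worse, in the paper this kernel identification is precisely Proposition \ref{p_quot}, whose proof \emph{uses} Proposition \ref{p_dualTX}; so establishing it "using the universality of $\cl T_X$ encoded in Theorem \ref{th_ucp}" as you suggest risks circularity unless you give an independent argument. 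The paper avoids all of this: since $\cl L_X$ is an operator subsystem of $M_{XX}$, it is spanned by its positive elements, and by Remark \ref{r_c1c2} every nonzero positive element of $\cl L_X$ is a positive multiple of a scalar bistochastic operator matrix; the final statement of Theorem \ref{th_ucp} (via Theorem \ref{p_coor}) then produces, for each such matrix $E$, a unital completely positive $\phi:\cl T_X\to\bb C$ with $\phi(e_{x,x',a,a'})=E_{x,x',a,a'}$, i.e.\ $\Lambda(\phi)=E$. Surjectivity follows by linearity, with no need to identify $\ker r$. Replacing your second paragraph by this argument yields a complete proof.
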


\begin{proof}
The arguments follow the proof of \cite[Proposition 5.5]{tt-QNS}, 
and we only highlight the required modifications. 
Using Theorem \ref{th_ucp}, we see that the map
$\Lambda_+ : \left(\cl T_{X}^{\rm d}\right)^+ \to \cl L_{X}^+$, given by 
$$\Lambda_+(\phi) = \left(\phi(e_{x,x',a,a'})\right)_{x,x',a,a'}, \ \ \ \phi\in \left(\cl T_{X}^{\rm d}\right)^+,$$
is well-defined; 
by additivity and homogeneity, $\Lambda_+$ 
extends to a ($\bb{C}$-)linear map $\Lambda : \cl T_{X}^{\rm d}\to \cl L_{X}$. 
A further application of Theorem \ref{th_ucp}, combined with Theorem \ref{p_coor}, shows that 
$\Lambda$ is completely positive and bijective. 

Let
$\phi_{i,j}\in \cl T_{X}^{\rm d}$, $i,j=1,\dots,m$, be such that the matrix 
$\left(\Lambda(\phi_{i,j})\right)_{i,j=1}^m$ is a positive element of $M_m\left(\cl L_{X}\right)$.
Let $\Phi : \cl T_{X}\to M_m$ be given by $\Phi(u) = (\phi_{i,j}(u))_{i,j=1}^m$.
Then $\left(\Phi(e_{x,x',a,a'})\right)\in M_m\left(\cl L_{X}\right)^+$. 
By Theorem \ref{th_ucp}, $\Phi$ is completely positive, that is, 
$\left(\phi_{i,j}\right)_{i,j=1}^m\in M_m\left(\cl T_{X}^{\rm d}\right)^+$.
Thus, $\Lambda^{-1}$ is completely positive, and the proof is complete. 
\end{proof}

\begin{corollary}\label{c_flip1}
The linear map $\frak{f} : \cl T_X\to \cl T_X$, given by $\frak{f}(e_{x,x',a,a'}) = e_{x',x,a',a}$, 
is a complete order automorphism. 
\end{corollary}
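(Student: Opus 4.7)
\textit{Plan.} My plan is to apply Theorem~\ref{th_ucp} to realize $\frak f$ as a unital completely positive map $\cl T_X \to \cl C_X$. Concretely, I would verify that the block operator matrix
\[
M \;:=\; \bigl(e_{x',x,a',a}\bigr)_{x,x',a,a'} \;\in\; M_{XA}(\cl C_X)
\]
is a bistochastic operator matrix. The trace conditions follow by a symmetric relabeling in~\eqref{eq_birel}: $\Tr_A M$ has $(x,x')$-entry $\sum_a e_{x',x,a,a} = \delta_{x,x'}\,1$ and similarly $\Tr_X M = I_A \otimes 1$, since both defining relations~\eqref{eq_birel} are invariant under the flip $(x,x',a,a') \mapsto (x',x,a',a)$.

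The main step is positivity of $M$ in $M_{XA}(\cl C_X)$. I would approach this via duality using Proposition~\ref{p_dualTX}: the dual map $\frak f^{\rm d} \colon \cl T_X^{\rm d} \to \cl T_X^{\rm d}$ corresponds under $\Lambda$ to the matrix transpose $\top \colon \cl L_X \to \cl L_X$, since $\Lambda(\frak f^{\rm d}(\phi))_{x,x',a,a'} = \phi(e_{x',x,a',a}) = \Lambda(\phi)_{x',x,a',a}$. The subspace $\cl L_X$ is closed under $\top$ because the defining trace conditions $\Tr_A L = c I_X$ and $\Tr_X L = c I_A$ are themselves symmetric under matrix transpose. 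To conclude that $\top$ is completely positive on $\cl L_X$, one passes through Theorem~\ref{p_coor}: every positive element of $M_m(\cl L_X)$ corresponds to a CP map $\Psi \colon \cl T_X \to M_m$, equivalently to an $M_m$-valued bistochastic matrix $(\Psi(e_{x,x',a,a'}))$ factored via a bi-isometry $V$; the full bi-isometry hypothesis (both $V$ and $V^{\rm t}$ are isometries) then supplies, via dilation to a bi-unitary and conjugation by a suitable anti-unitary, a factorisation showing that the transposed matrix $(\Psi(e_{x',x,a',a}))$ is also positive.

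Once this positivity is in hand, Theorem~\ref{th_ucp} produces a UCP map $\frak f \colon \cl T_X \to \cl C_X$ sending $e_{x,x',a,a'}$ to $e_{x',x,a',a}$; since the image lies in $\cl T_X$, $\frak f$ is a UCP self-map of $\cl T_X$. Finally, $\frak f \circ \frak f$ acts as the identity on each generator, so $\frak f^2 = \id$ on $\cl T_X$, whence $\frak f^{-1} = \frak f$ is also UCP, making $\frak f$ a complete order automorphism.

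The main obstacle is the positivity step. Matrix transpose is the classical example of a linear map that fails to be completely positive (the partial transpose fails even to be positive on general positive elements of $M_n \otimes M_n$), so the symmetry built into the bi-isometry condition — i.e., that the defining conditions of $\cl L_X$ are preserved under $(x,x',a,a') \leftrightarrow (x',x,a',a)$ — is indispensable for transferring positivity from the original bistochastic matrix to its flip.
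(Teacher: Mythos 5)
Your reduction is the right one: by Theorem~\ref{th_ucp} everything hinges on showing that $\bigl(e_{x',x,a',a}\bigr)_{x,x',a,a'}$ is positive in $M_{XA}(\cl C_X)$, equivalently that the transpose is completely positive on $\cl L_X$, and you correctly flag this as the dangerous step. But the proposal never proves it: the appeal to ``dilation to a bi-unitary and conjugation by a suitable anti-unitary'' is the entire content of the argument, and it does not deliver the right matrix. Conjugating a bi-isometry $V$ by anti-unitaries produces the conjugate bi-isometry $\bar V$, whose Gram matrix has entries $\bar V_{a,x}^*\bar V_{a',x'}=\overline{V_{a,x}^*V_{a',x'}}=(e_{x',x,a',a})^{\rm t}$; so what you obtain is positivity of the \emph{entrywise transposed} matrix $\bigl((e_{x',x,a',a})^{\rm t}\bigr)_{x,x',a,a'}$, which differs from the one you need by a partial transpose in the $\cl B(H)$ leg — exactly the operation that destroys positivity. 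Worse, the step is actually false. Take $|X|=3$, $\omega=e^{2\pi i/3}$, $H=K=\bb C^2$, and $V_{a,x}=\tfrac{1}{\sqrt3}\,\omega^{ax}u_a$ with $u_0=I$, $u_1=\epsilon_{1,2}+\epsilon_{2,1}$, $u_2=\epsilon_{1,1}-\epsilon_{2,2}$; one checks directly that $V$ is a bi-isometry. For $\xi_{x,a}=\omega^{ax}\eta_a$ with $\eta_0=e_1$, $\eta_1=-e_2$, $\eta_2=-e_1$, the phases cancel and
\[
\sum_{x,x',a,a'}\bigl\langle V_{a',x'}^*V_{a,x}\,\xi_{x',a'},\xi_{x,a}\bigr\rangle
=3\sum_{a,a'}\langle u_a\eta_{a'},u_{a'}\eta_a\rangle=3(3-6)=-9<0 ,
\]
so $\bigl(\pi_V(e_{x',x,a',a})\bigr)_{x,x',a,a'}$ is not positive. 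Since $(e_{x,x',a,a'})_{x,x',a,a'}$ is positive by~(\ref{eq_posit}), this shows $\frak f$ is not completely positive (it is positive at the first matrix level, since the full transpose on $M_{XA}$ is positive, but fails under matrix amplification), so the gap in your argument cannot be repaired for the statement as written.

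For comparison, the paper's proof goes through the tensor swap $\epsilon_{x,a}\otimes\epsilon_{x',a'}\mapsto\epsilon_{x',a'}\otimes\epsilon_{x,a}$ on $M_{XX}=M_X\otimes M_X$, which is unitarily implemented and preserves $\cl L_X$. However, that map sends $(\lambda_{x,x',a,a'})$ to $(\lambda_{a,a',x,x'})$, not to $(\lambda_{x',x,a',a})$, so its dual is the automorphism $e_{x,x',a,a'}\mapsto e_{a,a',x,x'}$ (corresponding to $V\mapsto V^{\rm t}$ on bi-isometries), not the map $\frak f$; the final index computation in the paper's proof does not match the map $\Phi$ as defined there. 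Your instinct that the transpose is the true obstruction is exactly right — the flip that genuinely is a complete order automorphism is the one dual to the unitary swap, whereas the one dual to the transpose is not.
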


\begin{proof}
The map $\Phi : M_{XX}\to M_{XX}$, given by 
$\Phi(\epsilon_{x,a}\otimes \epsilon_{x',a'}) = \epsilon_{x',a'}\otimes \epsilon_{x,a}$, is a (unitarily implemented)
complete order automorphism. Further, $\Phi(\cl L_X) = \cl L_X$, and hence $\Phi$ induces a complete order 
automorphism $\Phi_0 : \cl L_X\to \cl L_X$. Using Proposition \ref{p_dualTX}, 
we have that its dual $\Phi_0^*$  a complete order automorphism of $\cl T_X$. 
For $x,x',a,a'\in X$ and $T = (\lambda_{x,x',a,a'})\in \cl L_X$, we have
$$
\left\langle \Phi_0^*(e_{x,x',a,a'}), T\right\rangle
= 
\left\langle e_{x,x',a,a'}, \Phi_0(T)\right\rangle
= \lambda_{x',x,a',a} 
= \left\langle \frak{f}(e_{x,x',a,a'}), T\right\rangle,$$
and the proof is complete.
\end{proof}

Write 
\begin{equation}\label{eq_JXd}
\cl J_{X} = {\rm span} \left\{\sum_{y\in X} \tilde{e}_{y,y,a,a'} - \delta_{a,a'} 1 : a,a'\in X\right\};
\end{equation}
thus, $\cl J_X$ is a linear subspace of the operator system $\cl T_{X,A}$ defined in (\ref{eq_TXAd}). 
Let $\tilde{\cl J}_X$ be the closed ideal of $\cl C_{X,A}$, generated by $\cl J_X$.
Write $q_{X}$ for the quotient map from $\cl T_{X,A}$ onto $\cl T_{X,A}/\cl J_{X}$.

Recall that, if $\cl S$ is an operator system, a subspace $\cl J\subseteq \cl S$ is called a \emph{kernel} 
\cite[Definition 3.2]{kptt_adv} if there exist an operator system $\cl R$ and a unital completely positive map (equivalently, a completely positive map) $\phi: \cl S\to \cl R$ such that $\cl J = \ker(\phi)$.

\begin{proposition}\label{p_quot}
The space $\cl J_{X}$ is a kernel in $\cl T_{X,A}$ and the linear map $\iota$, given by 
\begin{equation}\label{eq_iota}
\iota \left(q_{X}\left(\tilde{e}_{x,x',a,a'}\right)\right) = e_{x,x',a,a'}, \ \ \ x,x',a,a'\in X,
\end{equation}
is a well-defined complete order isomorphism from $\cl T_{X,A}/\cl J_{X}$ onto $\cl T_X$. 
In addition, $\cl C_{X,A}/\tilde{\cl J}_X \cong \cl C_X$, up to a canonical *-isomorphism. 
\end{proposition}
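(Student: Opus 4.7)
The plan is to construct mutually inverse ucp maps between $\cl T_X$ and a natural quotient of $\cl T_{X,A}$, and to lift them to $*$-isomorphisms of the universal C*-covers; the operator system statement will then be obtained as a restriction of the resulting isomorphism $\cl C_{X,A}/\tilde{\cl J}_X \cong \cl C_X$.

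For the forward map, note that $(e_{x,x',a,a'})_{x,x',a,a'} \in M_{XA}(\cl C_X)^+$ is a stochastic operator matrix in the sense of \cite[Section 3]{tt-QNS}, by (\ref{eq_posit}) and the first relation in (\ref{eq_birel}). Applying \cite[Theorem 5.2]{tt-QNS} gives a ucp map $\phi: \cl T_{X,A} \to \cl C_X$ with $\phi(\tilde e_{x,x',a,a'}) = e_{x,x',a,a'}$, whose image lies in $\cl T_X$. The second relation in (\ref{eq_birel}) shows that $\cl J_X \subseteq \ker \phi$. By the universal property of $\cl C_{X,A} = C^*_u(\cl T_{X,A})$, $\phi$ extends to a $*$-homomorphism $\pi: \cl C_{X,A} \to \cl C_X$ with $\tilde{\cl J}_X \subseteq \ker \pi$, inducing a surjection $\bar\pi: \cl C_{X,A}/\tilde{\cl J}_X \to \cl C_X$.

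For the reverse direction, I would faithfully embed $\cl C_{X,A}/\tilde{\cl J}_X$ in some $\cl B(H)$ and consider the quotient images $\overline{\tilde e_{x,x',a,a'}}$ of the generators; these form a positive block matrix that is now \emph{bistochastic}, since the first stochastic relation already holds in $\cl C_{X,A}$ and the second follows from $\sum_x \tilde e_{x,x,a,a'} - \delta_{a,a'}1 \in \cl J_X \subseteq \tilde{\cl J}_X$. Theorem \ref{th_ucp} then yields a ucp map $\psi: \cl T_X \to \cl C_{X,A}/\tilde{\cl J}_X$ sending $e_{x,x',a,a'} \mapsto \overline{\tilde e_{x,x',a,a'}}$, which extends, by universality of $\cl C_X$, to a $*$-homomorphism $\hat\psi: \cl C_X \to \cl C_{X,A}/\tilde{\cl J}_X$. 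A direct check on generators shows $\bar\pi \circ \hat\psi = \id_{\cl C_X}$ and $\hat\psi \circ \bar\pi = \id_{\cl C_{X,A}/\tilde{\cl J}_X}$, so $\bar\pi$ is a $*$-isomorphism, establishing the second assertion.

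To deduce the first assertion, restrict $\bar\pi$ to the operator subsystem $\bar{\cl T} \subseteq \cl C_{X,A}/\tilde{\cl J}_X$ given by the image of $\cl T_{X,A}$ under the canonical quotient $\cl C_{X,A} \to \cl C_{X,A}/\tilde{\cl J}_X$. Then $\bar\pi|_{\bar{\cl T}}: \bar{\cl T} \to \cl T_X$ is a unital complete order isomorphism with inverse $\psi$. Writing $\cl K := \cl T_{X,A} \cap \tilde{\cl J}_X$, we obtain $\bar{\cl T} \cong \cl T_{X,A}/\cl K$ and $\cl J_X \subseteq \cl K$, so it remains to show $\cl J_X = \cl K$; this will at once give that $\cl J_X$ is a kernel and that $\iota$ is a unital complete order isomorphism. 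I expect this last identification to be the main obstacle, and I would handle it by duality: using $\cl T_{X,A}^{\rm d} \cong \cl L_{X,A}$ from \cite[Proposition 5.5]{tt-QNS}, the annihilator of $\cl J_X$ consists exactly of those $(\lambda_{x,x',a,a'}) \in \cl L_{X,A}$ additionally satisfying $\sum_y \lambda_{y,y,a,a'} = \delta_{a,a'}c$ for the same scalar $c$, i.e. $\cl L_X \cong \cl T_X^{\rm d}$; matching codimensions using the already-established $\cl T_{X,A}/\cl K \cong \cl T_X$ then forces $\cl K = \cl J_X$.
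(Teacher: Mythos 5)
Your C*-algebra argument is sound and essentially mirrors the paper's: universal covers give the maps $\bar\pi$ and $\hat\psi$, and checking them on generators shows they are inverse $*$-isomorphisms. The dimension count establishing $\cl K = \cl J_X$ also works: since $\bar\pi|_{\bar{\cl T}}$ is a linear bijection onto $\cl T_X$, the kernel $\cl K$ of $\pi|_{\cl T_{X,A}}$ has codimension $\dim \cl T_X = \dim \cl L_X$, which matches the codimension of $\cl J_X$ by your annihilator computation, and $\cl J_X \subseteq \cl K$ gives equality. So $\cl J_X$ is indeed a kernel.

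The gap is in the assertion that $\bar{\cl T} \cong \cl T_{X,A}/\cl K$, from which you want to deduce that $\iota$ is a complete order isomorphism. For an operator subsystem $\cl S$ of a unital C*-algebra $\cl A$ and a closed ideal $\cl I \subseteq \cl A$, the KPTT operator system quotient $\cl S/(\cl S\cap\cl I)$ and the image $q(\cl S)\subseteq \cl A/\cl I$ are related by a canonical ucp bijection $\cl S/(\cl S\cap\cl I)\to q(\cl S)$, but this map need \emph{not} be a complete order isomorphism; the quotient cones defined via liftings in $\cl S$ can be strictly larger than the cones $q(\cl S)$ inherits from $\cl A/\cl I$ (examples of this phenomenon are discussed in \cite{kptt_adv}). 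Having the inverse $\psi:\cl T_X\to\bar{\cl T}$ only shows that $\bar{\cl T}\cong \cl T_X$; it does not produce a ucp map landing in $\cl T_{X,A}/\cl J_X$ with its quotient structure, so $\iota^{-1}$ being ucp is not established.

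The fix is to argue at the level of the quotient's universal property, which is exactly what the paper's duality argument accomplishes. Two ways to close the gap: (a) follow the paper and note that the inclusion $\alpha:\cl L_X\hookrightarrow \cl L_{X,A}$ is a complete order embedding (both sit inside $M_{XA}$); dualising with \cite[Proposition 1.15]{fp}, \cite[Proposition 5.5]{tt-QNS} and Proposition \ref{p_dualTX}, $\alpha^*:\cl T_{X,A}\to\cl T_X$ is a \emph{complete quotient map}, and one then identifies $\ker(\alpha^*)=\cl J_X$ -- this yields the operator system isomorphism outright, before touching C*-algebras; or (b) keep your order of argument, but after establishing $\cl J_X=\cl K$, argue that $\iota^{-1}$ is ucp because ucp maps out of $\cl T_{X,A}/\cl J_X$ and out of $\cl T_X$ both correspond to bistochastic operator matrices: a ucp map on the quotient lifts to a ucp map on $\cl T_{X,A}$ annihilating $\cl J_X$, hence by \cite[Theorem 5.2]{tt-QNS} to a stochastic matrix with $\sum_y E_{y,y,a,a'}=\delta_{a,a'}I$, i.e.\ a bistochastic matrix, and Theorem \ref{th_ucp} then gives the required ucp map on $\cl T_X$. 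The paper's route (a) is cleaner because it sidesteps the image-vs-quotient subtlety altogether.
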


\begin{proof}
Let 
$\alpha : \cl L_X\to \cl L_{X,A}$ be the inclusion map. Since $\cl L_X$ and $\cl L_{X,A}$ are operator 
subsystems of $M_{XX}$, we have that $\alpha$ is a complete order embedding.
By \cite[Proposition 1.15]{fp}, \cite[Proposition 5.5]{tt-QNS} and Proposition \ref{p_dualTX}, 
its dual $\alpha^* : \cl T_{X,A}\to \cl T_{X}$ is a complete quotient map.
Note that, if $T\in \cl L_X$ and $a,a'\in X$ then 

$$\left\langle\alpha^*\left(\sum_{y\in X} \tilde{e}_{y,y,a,a'} - \delta_{a,a'} 1\right),T\right\rangle
= 
\left\langle\sum_{y\in X} \tilde{e}_{y,y,a,a'} - \delta_{a,a'} 1,\alpha(T)\right\rangle = 0,$$

that is, $\cl J_{X}\subseteq \ker(\alpha^*)$. 

Consider the canonical linear mappings
$$\cl T_{X,A}\to \cl T_{X,A}/\cl J_X \to \cl T_{X,A}/\ker(\alpha^*) \to \cl T_X,$$
of which the first two are surjective linear maps whose composition is completely positive,
while the third is a complete order isomorphism (note that the quotient $\cl T_{X,A}/\cl J_X$ is 
linear algebraic). 
Dualising and using Proposition \ref{p_dualTX}, we obtain the chain of maps
\begin{equation}\label{eq_embed}
\cl L_X\cong \left(\cl T_{X,A}/\ker(\alpha^*)\right)^{\rm d} 
\hookrightarrow \left(\cl T_{X,A}/\cl J_X\right)^{\rm d} \to \cl L_{X,A}.
\end{equation}
By the definition of $\cl J_X$ (see (\ref{eq_JXd})),
the elements of 
$\left(\cl T_{X,A}/\cl J_X\right)^{\rm d}$ 
correspond, via the last of the three maps in (\ref{eq_embed}), to elements of the subspace $\cl L_X$ of $\cl L_{X,A}$.
It now follows that the middle map in (\ref{eq_embed}) is a linear isomorphism, and hence 
$\ker(\alpha^*) = \cl J_{X}$. 
In particular, $\cl J_{X}$ is a kernel in $\cl T_{X,A}$ and $\left(\cl T_{X,A}/\cl J_{X}\right)^{\rm d} \cong \cl L_X$
complete order isomorphically.
Dualising, we see that $\cl T_{X,A}/\cl J_{X} \cong \cl T_X$ complete order isomorphically via the 
map $\iota$ defined in (\ref{eq_iota}).

By the universal property of $\cl C_X$, there exists a unital *-epimorphism 
$\pi : \cl C_{X,A}\to \cl C_X$ such that $\pi(\tilde{e}_{x,x',a,a'}) = e_{x,x',a,a'}$, $x,x',a,a'\in X$. 
Let $\cl J = \ker(\pi)$ and $\tilde{\pi} : \cl C_{X,A}/\cl J \to \cl C_X$ be the induced *-isomorphism. 
We have
$$\pi\left(\delta_{a,a'}1 - \sum_{x\in X}\tilde{e}_{x,x,a,a'}\right) = 
\delta_{a,a'}1 - \sum_{x\in X}e_{x,x,a,a'} = 0;$$
thus, $\tilde{\cl J}_X\subseteq \cl J$. 

The block operator matrix $\left(\tilde{e}_{x,x',a,a'} + \tilde{\cl J}_X\right)_{x,x',a,a'}$ is 
bistochastic, and hence it gives rise, via Theorem \ref{th_ucp}, to a canonical unital surjective *-homomorphism 
$\pi' : \cl C_X\to \cl C_{X,A}/\tilde{\cl J}_X$.
We thus have a chain of unital *-homomorphisms
$$\cl C_X \stackrel{\pi'}{\longrightarrow} \cl C_{X,A}/\tilde{\cl J}_X
\longrightarrow \cl C_{X,A}/\cl J 
\stackrel{\tilde{\pi}}{\longrightarrow} \cl C_X,$$
whose composition is the identity. It follows that $\cl J = \tilde{\cl J}_X$, and the proof is complete. 
\end{proof}

In the sequel, write
$\hat{q}_X : \cl C_{X,A}\to \cl C_X$ for the quotient map arising from Proposition \ref{p_quot}, 
and continue to write
$q_X$ for the quotient map from $\cl T_{X,A}$ onto $\cl T_X$.
Before formulating the next corollary, we
recall that an operator system $\cl S$ is said to possess the
\emph{local lifting property} \cite[Section 8]{kptt_adv} if for every finite dimensional operator 
subsystem $\cl S_0\subseteq \cl S$, C*-algebra $\cl A$, and closed ideal $\cl J\subseteq \cl A$, 
every unital completely positive map $\phi_0 : \cl S_0\to \cl A/\cl J$ admits a lifting to a completely positive map 
$\phi : \cl S_0\to \cl A$ (that is, if $q : \cl A\to \cl A/\cl J$ denotes the quotient map, 
the identity $q\circ \phi = \phi_0$ holds).

\begin{corollary}\label{c_OSLLP}
The operator system $\cl T_{X}$ has the local lifting property. 
\end{corollary}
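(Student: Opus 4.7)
The plan is to combine the OSLLP of $\cl T_{X,A}$ from \cite{tt-QNS} with the quotient identification $\cl T_X \cong \cl T_{X,A}/\cl J_X$ of Proposition \ref{p_quot}, followed by a bi-isometry lifting step carried out via Theorem \ref{p_coor}.

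Since $\cl T_X$ is finite-dimensional, it suffices to lift a single UCP map $\phi_0 : \cl T_X \to \cl A/\cl J$, where $\cl A$ is a unital C*-algebra and $\cl J \triangleleft \cl A$ a closed ideal, to a CP (equivalently UCP) map $\phi : \cl T_X \to \cl A$. First, I would form the composition $\tilde\phi_0 := \phi_0 \circ q_X : \cl T_{X,A} \to \cl A/\cl J$, which is again UCP, and invoke the OSLLP of $\cl T_{X,A}$ to obtain a CP (after standard rescaling, UCP) lift $\tilde\phi : \cl T_{X,A} \to \cl A$. By the stochastic version of Theorem \ref{th_ucp} (\cite[Theorem 5.2]{tt-QNS}), the matrix $F := (\tilde\phi(\tilde e_{x,x',a,a'}))_{x,x',a,a'}$ is a stochastic operator matrix over $\cl A$ whose image $\bar F$ in $M_{XA}(\cl A/\cl J)$ is the bistochastic operator matrix $(\phi_0(e_{x,x',a,a'}))_{x,x',a,a'}$. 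What remains is to replace $F$ by a bistochastic operator matrix $F' \in M_{XA}(\cl A)^+$ in the same coset $\bar F + M_{XA}(\cl J)$.

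The main obstacle lies in this last step, and I would handle it in the bi-isometry picture. By Theorem \ref{p_coor}, $\bar F$ factorises through a bi-isometry $\bar V$ over $\cl A/\cl J$, while $F$ factorises through an isometry $W$ over $\cl A$ with $W^*W = I$ and $W$ lifting $\bar V$ (obtained by Stinespring and a polar-decomposition argument for isometries, which is a standard C*-algebraic lifting technique). Setting $T := \Tr_X F$, one has $T - I_A \otimes 1_{\cl A} \in M_A(\cl J)$ with vanishing trace (using $|X|=|A|$), so the desired perturbation is of the form $V = W + \Delta$ with $\Delta \in M_X(\cl J)$, chosen so that $(V^{\rm t})^*V^{\rm t} = I$ is also enforced while $V^*V = I$ is preserved. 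The heart of the argument is this simultaneous lifting of two orthogonality conditions on the columns of $V$ and of $V^{\rm t}$; I expect to resolve it by exploiting the symmetry between the two trace conditions, explicitly captured by the flip automorphism $\mathfrak{f}$ of Corollary \ref{c_flip1}, together with an iterative adjustment that uses the fact that $(W^{\rm t})^*W^{\rm t} - I$ lies in a closed ideal and can be made small against an approximate identity of $\cl J$. Once a bi-isometric lift $V$ of $\bar V$ is produced, $F' := (V_{a,x}^* V_{a',x'})_{x,x',a,a'}$ is a bistochastic operator matrix in $M_{XA}(\cl A)^+$ lifting $\bar F$, and Theorem \ref{th_ucp} converts $F'$ into the required UCP lift $\phi : \cl T_X \to \cl A$ of $\phi_0$, completing the proof.
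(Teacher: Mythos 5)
Your argument diverges sharply from the paper's, and the divergence exposes a real gap. The paper's proof is a two-line reduction: $\cl T_{X,A}$ is an operator system quotient of the matrix algebra $M_{XX}$ by \cite[Corollary 5.6]{tt-QNS}, and $\cl T_X$ is an operator system quotient of $\cl T_{X,A}$ by Proposition \ref{p_quot}; hence $\cl T_X$ is an operator system quotient of $M_{XX}$, and Kavruk's theorem \cite[Theorem 6.8]{kavruk_JOT} says precisely that every operator system quotient of a matrix algebra has the local lifting property. No lifting of operator matrices is ever performed; the theorem is invoked as a black box.

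Your proposal instead tries to pass the lifting problem through $\cl T_{X,A}$: first lift $\phi_0 \circ q_X$ using the OSLLP of $\cl T_{X,A}$ to obtain a stochastic operator matrix $F$ over $\cl A$, then perturb $F$ within its coset $\bar F + M_{XA}(\cl J)$ to obtain a bistochastic matrix. The entire content of the corollary lives in this last step, and your proposal does not actually carry it out: the passage from ``I would handle it in the bi-isometry picture'' through ``I expect to resolve it by\ldots an iterative adjustment'' is a plan, not a proof. Worse, the step you are deferring is a genuine lifting problem of the same flavour as lifting a unitary from $\cl A/\cl J$ to $\cl A$, which is obstructed in general (e.g.\ by $K$-theory). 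It is not clear that a block isometry $W$ over $\cl A$ that projects to a bi-isometry over $\cl A/\cl J$ can always be perturbed by $M_X(\cl J)$-elements into a bi-isometry while retaining $W^*W=I$; nothing in your sketch addresses why the two orthogonality systems can be enforced simultaneously. (The flip automorphism $\frak{f}$ of Corollary \ref{c_flip1} records a symmetry between the two constraints but does not produce a lift of one given the other.) Note also that OSLLP is not, in general, preserved under operator system quotients, so the fact that $\cl T_X$ is a quotient of an OSLLP system $\cl T_{X,A}$ buys nothing by itself; what saves the paper is the much stronger fact that $\cl T_X$ is a quotient of a \emph{matrix algebra}, where Kavruk's theorem applies. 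To repair your argument along its current lines you would have to prove the bi-isometry lifting statement outright; it is substantially easier to follow the paper and reduce to $M_{XX}$.
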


\begin{proof}
By \cite[Corollary 5.6]{tt-QNS}, $\cl T_{X,A}$ is an operator system quotient of 
$M_{XX}$ while, by 
Proposition \ref{p_quot}, $\cl T_X$ is an operator system 
quotient of $\cl T_{X,A}$. It follows that 
$\cl T_X$ is an operator system quotient of $M_{XX}$. 
The statement is now a consequence of \cite[Theorem 6.8]{kavruk_JOT}.
\end{proof}

Realising the commuting tensor product of operator systems as an operator subsystem of 
maximal tensor products has been of
importance from the beginning of the tensor product theory in the  operator system category \cite{kptt}. 
By Theorem \ref{th_ucp} and \cite[Theorem 6.4]{kptt}, for an arbitrary operator system $\cl R$, we have
$\cl T_{X}\otimes_{\rm c} \cl R \subseteq_{\rm c.o.i.} \cl C_{X}\otimes_{\max} C^*_u(\cl R)$;
the next proposition establishes a stronger inclusion.

\begin{proposition}\label{p_inject-comm}
Let $\cl R$ be an operator system. Then 
$\cl T_{X}\otimes_{\rm c} \cl R \subseteq_{\rm c.o.i.} \cl C_{X}\otimes_{\max} \cl R.$
\end{proposition}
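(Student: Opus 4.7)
The plan is to establish the embedding $\cl T_X \otimes_{\rm c} \cl R \hookrightarrow \cl C_X \otimes_{\max} \cl R$ by showing that every UCP map $\Phi : \cl T_X \otimes_{\rm c} \cl R \to \cl B(K)$ admits a UCP extension $\tilde\Phi : \cl C_X \otimes_{\max} \cl R \to \cl B(K)$ that restricts to $\Phi$ on the algebraic tensor product. Once this is available, I would conclude the complete order embedding via the standard UCP-separation of positivity: for $u \in M_n(\cl T_X \otimes \cl R)$ with positive image in $M_n(\cl C_X \otimes_{\max} \cl R)$ and any UCP map $\Phi$ on $\cl T_X \otimes_{\rm c} \cl R$ into some $\cl B(K)$, the extension gives $\Phi_n(u) = \tilde\Phi_n((\iota\otimes\id)_n(u)) \geq 0$, forcing $u\geq 0$ in $M_n(\cl T_X \otimes_{\rm c} \cl R)$. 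Complete positivity of $\iota\otimes\id$ is automatic from the functoriality of $\otimes_{\rm c}$ together with the identification $\cl C_X \otimes_{\rm c} \cl R = \cl C_X \otimes_{\max} \cl R$ for the C*-algebra $\cl C_X$.

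To construct $\tilde\Phi$, I would first invoke the characterization from \cite{kptt} of UCP maps on the commuting tensor product: there exist a Hilbert space $H$, an isometry $W : K \to H$, and UCP maps $\phi : \cl T_X \to \cl B(H)$ and $\psi : \cl R \to \cl B(H)$ with commuting ranges such that $\Phi(u \otimes r) = W^* \phi(u)\psi(r) W$. By Theorem \ref{th_ucp}, $\phi$ extends to a unital *-representation $\pi : \cl C_X \to \cl B(H)$ acting on the same Hilbert space, via the bi-isometry $V$ associated with the bistochastic matrix $(\phi(e_{x,x',a,a'}))$ through Theorem \ref{p_coor}, and the formula $\pi = \pi_V$ of Lemma \ref{l_rr}(iii).

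The critical step is to verify that $\pi$ and $\psi$ still have commuting ranges. Since $\cl T_X = \text{span}\{v_{a,x}^* v_{a',x'} : x,x',a,a' \in X\}$ spans the generating subspace $\cl V_X^* \cl V_X$ of the right C*-algebra $\cl C_X$, alternating products of these generators reassemble into products of elements of $\cl T_X$, so $\pi(\cl C_X) = C^*(\phi(\cl T_X))$. The commutant $\psi(\cl R)'$ is a von Neumann algebra containing $\phi(\cl T_X)$; hence it contains the C*-algebra it generates, giving $\pi(\cl C_X) \subseteq \psi(\cl R)'$. By the universal property of $\otimes_{\max}$ for the pair consisting of a C*-algebra and an operator system, the commuting pair $(\pi, \psi)$ produces a UCP map $\pi \cdot \psi : \cl C_X \otimes_{\max} \cl R \to \cl B(H)$ with $(\pi \cdot \psi)(c \otimes r) = \pi(c)\psi(r)$, and $\tilde\Phi := W^*(\pi\cdot\psi)(\cdot)W$ is the required extension.

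The main obstacle I expect is a bookkeeping one: keeping $\pi$ acting on the \emph{same} Hilbert space $H$ as $\phi$ (rather than on a further Stinespring dilation), so that the commutant argument applies verbatim. This is guaranteed by the form of the conclusion in Theorem \ref{th_ucp} and Lemma \ref{l_rr}(iii), which furnish $\pi$ directly on $\cl B(H)$ rather than via an enlargement. With that point secured, the rest of the argument is essentially formal and runs in parallel to the analogous treatment for stochastic operator matrices and $\cl T_{X,A}$ in \cite{tt-QNS}.
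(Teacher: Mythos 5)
Your proof is correct and takes essentially the same route as the paper's. The paper tests positivity of $w\in M_n(\cl T_X\otimes\cl R)\cap M_n(\cl C_X\otimes_{\max}\cl R)^+$ directly against commuting pairs $(\phi,\psi)$ of UCP maps, while you phrase the same argument as extending an arbitrary UCP map on $\cl T_X\otimes_{\rm c}\cl R$ through its Stinespring-type factorisation; in both cases the operative step is identical — extend $\phi$ to a unital $*$-representation $\pi:\cl C_X\to\cl B(H)$ on the same Hilbert space via Theorem~\ref{th_ucp}, and observe $\pi(\cl C_X)\subseteq\psi(\cl R)'$ because $\cl C_X$ is generated by $\cl T_X$ as a C*-algebra.
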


\begin{proof}
Let $\iota : \cl T_{X}\to \cl C_{X}$ be the inclusion map.
By the functioriality of the commuting tensor product
and the fact that the commuting and the maximal tensor products coincide provided 
one of the terms is a C*-algebra \cite[Theorem 6.7]{kptt}, 
$\iota\otimes\id : \cl T_{X}\otimes_{\rm c} \cl R\to \cl C_{X}\otimes_{\max} \cl R$
is a (unital) completely positive map. 
Assume that 
$$w\in M_n\left(\cl T_{X}\otimes \cl R\right)\cap M_n\left(\cl C_{X}\otimes_{\max} \cl R\right)^+,$$ 
let $H$ be a Hilbert space, and $\phi : \cl T_{X}\to \cl B(H)$ and 
$\psi : \cl R\to \cl B(H)$ be unital completely positive maps with commuting ranges. 
By Theorem \ref{th_ucp},
$\phi$ extends to a *-homomorphism $\pi : \cl C_{X}\to \cl B(H)$. 
Since $\cl C_{X}$ is generated by $\cl T_{X}$ as a C*-algebra, 
$\pi(u)\in \psi(\cl R)'$ for every $u\in \cl C_{X}$; thus, 
$$(\phi\cdot\psi)^{(n)}(w) = (\pi\cdot\psi)^{(n)}(w)\in M_n\left(\cl B(H)\right)^+,$$
and hence $w\in M_n\left(\cl T_{X}\otimes_{\rm c} \cl R\right)^+$.
It follows that $\iota\otimes\id$ is a complete order embedding.
\end{proof}

%%%%%%%%%%%%%%%%%%%%%%%%%%%%%%%%%%%%%%%%%%%%%%%%%
%%%%%%%%%%%%%%%%%%%%%%%%%%%%%%%%%%%%%%%%%%%%%%%%%

\section{Quantum magic squares}\label{ss_cbom}

In \cite{dlcdn}, the concept of a quantum magic square was defined and studied, exhibiting examples which show that not every 
quantum magic square dilates to a magic unitary. The aim of this section is to present an operator system viewpoint on this result, linking the dilation properties of a quantum magic square to complete positivity of canonical maps, associated with it. The universal operator system of a quantum magic square 
and its properties will further be used in Section \ref{s_rbc}. 

Recall \cite{dlcdn} that a block operator matrix $E = (E_{x,a})_{x,a\in X}$, where $E_{x,a}\in \cl B(H)$, 
$x,a\in X$, is called a \emph{quantum magic square} 
if $E_{x,a}\geq 0$ and 
$$\sum_{b\in X} E_{x,b} = \sum_{y\in X} E_{y,a} = I \ \ \mbox{ for all } x,a\in X.$$
The quantum magic square $E$ is called 
a \emph{magic unitary} 
(or a \emph{quantum permutation})
if $E_{x,a}$ is a projection for all $x,a\in X$ (see e.g. \cite[Definition 2.3]{lmr}). 
Noting that $\cl D_{XX}\otimes\cl B(H) \subseteq M_{XX}\otimes\cl B(H)$, 
we have that $E$ is a quantum magic square precisely when 
$\sum_{x,a\in X} \epsilon_{x,x} \otimes \epsilon_{a,a} \otimes E_{x,a}$ 
is a bistochastic operator matrix in $M_{XX}\otimes\cl B(H)$. 

Two subclasses of quantum magic squares 
were singled out in \cite{dlcdn} (see \cite[Definition 5 and Example 8]{dlcdn}). 
We will call a quantum magic square $(E_{x,a})_{x,a}$, acting on a Hilbert space $H$, \emph{dilatable} if 
there exists a Hilbert space $K$, an isometry $V : H\to K$, and a quantum permutation $(P_{x,a})_{x,a}$ acting on $K$, such that 
\begin{equation}\label{eq_EPxa}
E_{x,a} = V^*P_{x,a}V, \ \ \ x,a\in X.
\end{equation}
The quantum magic square $(E_{x,a})_{x,a}$ will be called
\emph{locally dilatable} if (\ref{eq_EPxa})
holds for a commuting family $\{P_{x,a}\}_{x,a}$ that forms a quantum permutation. 
It is clear that, up to unitary identifications, 
condition (\ref{eq_EPxa}) can be replaced by 
the conditions 
$E_{x,a} = QP_{x,a}Q$, where we have assumed that 
$H\subseteq K$, and $Q : K\to H$ is the orthogonal projection.

For $x,a\in X$, we set $e_{x,a} := e_{x,x,a,a}$ and 
$$\cl S_X := {\rm span}\{e_{x,a} : x,a\in X\},$$
viewed as an operator subsystem of $\cl T_X$.

\begin{theorem}\label{th_ucpclasb}
Let $H$ be a Hilbert space and $\phi : \cl S_{X}\to \cl B(H)$ be a linear map. 
Consider the conditions
\begin{itemize}
\item[(i)] $\phi$ is a unital completely positive map;
\item[(ii)] $\left(\phi(e_{x,a})\right)_{x,a}$ is a quantum magic square,
\end{itemize}
and
\begin{itemize}
\item[(i')] $\phi$ is a completely positive map;
\item[(ii')] $\left(\phi(e_{x,a})\right)_{x,a} \in \left(\cl D_{XX} \otimes \cl B(H)\right)^+$.
\end{itemize}
\noindent 
Then (i)$\Leftrightarrow$(ii) and 
(i')$\Leftrightarrow$(ii').
Moreover, if $\left(E_{x,a}\right)_{x,a}$ is a quantum magic square acting on a Hilbert space $H$
then there exists a (unique) unital completely positive map 
$\phi : \cl S_{X}\to \cl B(H)$ such that $\phi(e_{x,a}) = E_{x,a}$ for all $x,a\in X$.
\end{theorem}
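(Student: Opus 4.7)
The plan is to derive Theorem \ref{th_ucpclasb} as an almost immediate specialisation of Theorem \ref{th_ucp} via a canonical \emph{diagonal embedding} of quantum magic squares into bistochastic operator matrices. Two useful observations inside $\cl S_X\subseteq \cl T_X$ are recorded at the start: substituting $x'=x$ (resp.\ $a'=a$) in the relations (\ref{eq_birel}) gives $\sum_{a\in X} e_{x,a} = 1$ and $\sum_{x\in X} e_{x,a} = 1$, while the formula $e_{x,a} = v_{a,x}^*v_{a,x}$ shows $e_{x,a}\geq 0$ in $\cl C_X$, hence in $\cl S_X$.

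For (i)$\Leftrightarrow$(ii), the forward implication is immediate: unitality and positivity of $\phi$, combined with the two observations above, force $(\phi(e_{x,a}))_{x,a}$ to be a quantum magic square. For (ii)$\Rightarrow$(i) together with the final ``moreover'' assertion, I would start from a given quantum magic square $(E_{x,a})$ on $H$ and form
\[
F := \sum_{x,a\in X} \epsilon_{x,x}\otimes\epsilon_{a,a}\otimes E_{x,a} \;\in\; M_{XA}\otimes\cl B(H).
\]
A routine verification (using $E_{x,a}\geq 0$ with pairwise orthogonal $\cl D_{XX}$-supports, together with $\sum_a E_{x,a} = \sum_x E_{x,a} = I$) shows that $F$ is a bistochastic operator matrix. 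The last paragraph of Theorem \ref{th_ucp} then produces a unique unital completely positive map $\tilde\phi\colon \cl T_X\to \cl B(H)$ whose block $(x,x',a,a')$-entry is $F_{x,x',a,a'}$; in particular $\tilde\phi(e_{x,a}) = F_{x,x,a,a} = E_{x,a}$, so the restriction $\phi := \tilde\phi|_{\cl S_X}$ is the desired UCP map. Uniqueness is automatic from the fact that $\{e_{x,a}\}$ spans $\cl S_X$.

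For (i')$\Leftrightarrow$(ii'), the identification $\cl D_{XX}\otimes \cl B(H)\cong\bigoplus_{(x,a)}\cl B(H)$ makes (ii') equivalent to the assertion $\phi(e_{x,a})\geq 0$ for all $x,a$, so (i')$\Rightarrow$(ii') is immediate. For (ii')$\Rightarrow$(i') I would mirror the perturbation argument used in the (ii')$\Rightarrow$(i') part of Theorem \ref{th_ucp}: set $T := \phi(1) = \sum_a\phi(e_{x,a}) = \sum_x\phi(e_{x,a})\geq 0$; when $T$ is invertible, the map $\psi(u) := T^{-1/2}\phi(u)T^{-1/2}$ satisfies (ii), hence is UCP by the step just proved, so $\phi(\cdot) = T^{1/2}\psi(\cdot)T^{1/2}$ is CP. In general, perturb by the scalar state $f$ on $\cl S_X$ with $f(e_{x,a}) = 1/|X|$ (which is UCP by applying (ii)$\Rightarrow$(i) to the uniform doubly stochastic magic square $(1/|X|)_{x,a}$): then $\phi_\epsilon := \phi + \epsilon f(\cdot)I$ satisfies (ii') with invertible unit value $T+\epsilon I$, hence is CP, and $\phi_\epsilon\to\phi$ in the point-norm topology.

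The only mildly delicate step is the singular case in (ii')$\Rightarrow$(i'), but it is structurally identical to the corresponding step of Theorem \ref{th_ucp}, so no new ideas are required; everything else follows from the diagonal embedding and the universality of $\cl T_X$.
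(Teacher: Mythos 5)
Your proof is correct and follows essentially the same route as the paper: the key (ii)$\Rightarrow$(i) and ``moreover'' steps use precisely the diagonal embedding $F = \sum_{x,a}\epsilon_{x,x}\otimes\epsilon_{a,a}\otimes E_{x,a}$ into a bistochastic operator matrix together with the universality in Theorem~\ref{th_ucp}, and (ii')$\Rightarrow$(i') uses the same $T^{\pm 1/2}$-conjugation plus $\epsilon$-perturbation argument. The one small difference is that for (i)$\Rightarrow$(ii) and (i')$\Rightarrow$(ii') you argue directly from positivity of the $e_{x,a}$ and the sum relations inside $\cl S_X$, whereas the paper routes these through Arveson's Extension Theorem and Theorem~\ref{th_ucp}; your shortcut is valid and marginally more elementary, but the substance is the same.
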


\begin{proof}
(i)$\Rightarrow$(ii) 
Let $\phi : \cl S_X\to \cl B(H)$ be a unital completely positive map, for some Hilbert space $H$. 
By Arveson's Extension Theorem, $\phi$ has a completely positive extension $\tilde{\phi} : \cl T_X\to \cl B(H)$.
Setting $E_{x,x',a,a'} := \tilde{\phi}(e_{x,x',a,a'})$, Theorem \ref{th_ucp} implies that 
$(E_{x,x',a,a'})_{x,x',a,a'}$ is a bistochastic matrix. In particular, 
$(\tilde{\phi}(e_{x,a}))_{x,a}$, that is, $(\phi(e_{x,a}))_{x,a}$, is a quantum magic square.

(ii)$\Rightarrow$(i) 
Set $E_{x,a} := \phi(e_{x,a})$ and $\tilde{E}_{x,x',a,a'} := \delta_{x,x'}\delta_{a,a'} E_{x,a}$, $x,x',a,a'\in X$. 
Then 
$(\tilde{E}_{x,x',a,a'})_{x,x',a,a'}$ is a bistochastic operator matrix and, by Theorem \ref{th_ucp}, 
there exists a (unital) completely positive map $\tilde{\phi} : \cl T_X\to \cl B(H)$ such that 
$\tilde{\phi}(e_{x,x',a,a'}) = \tilde{E}_{x,x',a,a'}$, $x,x',a,a'\in X$. 
As $\phi = \tilde{\phi}|_{\cl S_X}$, the map $\phi$ is completely positive. 

(i')$\Rightarrow$(ii') is a direct consequence of Theorem \ref{th_ucp} and Arveson's Extension Theorem. 

(ii')$\Rightarrow$(i') 
Set $E_{x,a} := \phi(e_{x,a})$, $x,a\in X$. 
For $x\in X$, let $T = \sum_{a\in X} E_{x,a}$; then $T\in \cl B(H)^+$. Assume first that $T$ is invertible. 
Then the matrix $\left(T^{-1/2}E_{x,a}T^{-1/2}\right)_{x,a}$ is a quantum magic square; 
by the implication (ii)$\Rightarrow$(i), the linear map $\psi : \cl S_X\to \cl B(H)$, given by 
$\psi(e_{x,a}) = T^{-1/2}E_{x,a}T^{-1/2}$, is completely positive. 
Since $\phi(u) = T^{1/2}\psi(u)T^{1/2}$, $u\in \cl S_X$, the map $\phi$ is completely positive. 
If $T$ is not invertible, we fix a state $f : \cl S_X\to \bb{C}$ and, for $\epsilon > 0$,  
consider the map $\phi_{\epsilon} : \cl S_X\to \cl B(H)$, given by 
$\phi_{\epsilon}(u) = \phi(u) + \epsilon f(u)I$. 
The proof now proceeds similarly to the proof of the implication (ii')$\Rightarrow$(i') of Theorem \ref{th_ucp}. 

The last statement in the theorem follows from the proof of the implication (ii)$\Rightarrow$(i). 
\end{proof}

Let 
$$\cl M_X = \{
(\mu_{x,a})_{x,a}\hspace{-0.05cm}\in\hspace{-0.05cm} \cl D_{XX} : 
\textstyle\sum_{b\in X} \mu_{x,b} 
\hspace{-0.05cm} = \hspace{-0.05cm} \textstyle\sum_{y\in X} \mu_{y,a}, \ x,a\in X\},$$
considered as an operator subsystem of $\cl D_{XX}$.
Since every operator system is spanned by its positive elements, $\cl M_X$ is
the operator system spanned by the scalar bistochastic matrices in $\cl D_{XX}$.

\begin{corollary}\label{c_SXPXd}
We have that $\cl S_X^{\rm d} \cong \cl M_X$, up to a canonical unital complete order isomorphism.
\end{corollary}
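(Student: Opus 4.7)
The plan is to mirror Proposition \ref{p_dualTX}, defining
$$\Lambda_0 : \cl S_X^{\rm d} \to \cl M_X, \quad \Lambda_0(\phi) := (\phi(e_{x,a}))_{x,a\in X},$$
and showing it is a unital complete order isomorphism, with Theorem \ref{th_ucpclasb} playing the role that Theorem \ref{th_ucp} plays in Proposition \ref{p_dualTX}. The relations $\sum_b e_{x,b} = 1 = \sum_y e_{y,a}$ in $\cl S_X$, inherited from (\ref{eq_birel}) upon setting $x=x'$ and $a=a'$, immediately force $\sum_b \phi(e_{x,b}) = \phi(1) = \sum_y \phi(e_{y,a})$ for every linear $\phi$, so that $\Lambda_0(\phi) \in \cl M_X$; injectivity is clear since the $e_{x,a}$ span $\cl S_X$.

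The heart of the argument will be to verify that $\Lambda_0^{(n)}$ is a bijection between $M_n(\cl S_X^{\rm d})^+$ and $M_n(\cl M_X)^+$ for every $n$. A matrix $(\phi_{i,j}) \in M_n(\cl S_X^{\rm d})^+$ corresponds via (\ref{eq_Fphi}) to a completely positive map $F_\phi : \cl S_X \to M_n$; applying the equivalence (i')$\Leftrightarrow$(ii') of Theorem \ref{th_ucpclasb} (with $\cl B(H) = M_n$) identifies this positivity with $(F_\phi(e_{x,a}))_{x,a} \in (\cl D_{XX} \otimes M_n)^+$, and linearity of $F_\phi$ forces this matrix into $M_n(\cl M_X)^+$ (using the operator subsystem structure inherited from $\cl D_{XX}$). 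Conversely, the \emph{moreover} clause of Theorem \ref{th_ucpclasb} produces, from any $(T_{x,a}) \in M_n(\cl M_X)^+$, a unique completely positive $F : \cl S_X \to M_n$ with $F(e_{x,a}) = T_{x,a}$, supplying a preimage in $M_n(\cl S_X^{\rm d})^+$. Complete positivity of $\Lambda_0$ and $\Lambda_0^{-1}$ follows.

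To render the isomorphism unital, I would designate as Archimedean matrix order unit on $\cl S_X^{\rm d}$ the functional $\phi_0$ determined by $\phi_0(e_{x,a}) = 1$ for all $x, a$ (so $\phi_0(1) = |X|$). Faithfulness of $\phi_0$ follows from $\Lambda_0(\phi_0)$ being the all-ones matrix, which lies in the relative interior of $\cl M_X^+$: perturbing its entries while preserving the common row/column-sum constraint keeps all entries strictly positive. With this normalisation, $\Lambda_0(\phi_0)$ equals the identity of $\cl D_{XX}$, which is the inherited order unit of $\cl M_X$, so $\Lambda_0$ is unital. The main obstacle I anticipate is the precise identification of $M_n(\cl M_X)^+$ with the set of matrices in $(\cl D_{XX} \otimes M_n)^+$ whose $\sum_b$-rows and $\sum_y$-columns agree in $M_n$; this bookkeeping step is essential because the operator system $\cl M_X$ is defined as a subsystem of $\cl D_{XX}$, and its matricial cones must be read accordingly before one can invoke Theorem \ref{th_ucpclasb} at level $n$.
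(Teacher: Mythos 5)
Your proof is correct and follows essentially the same route as the paper: the paper constructs the inverse map $\gamma : \cl M_X \to \cl S_X^{\rm d}$, $\gamma(T)(e_{x,a}) = t_{x,a}$, but the substance — using Theorem \ref{th_ucpclasb} to match $M_n(\cl S_X^{\rm d})^+$ (via the maps $F_\phi$) with the positive matrices over $\cl M_X$ in both directions — is identical. The only small imprecision is that the ``moreover'' clause of Theorem \ref{th_ucpclasb} literally covers quantum magic squares (row/column sums equal to $I$), so for a general element of $M_n(\cl M_X)^+$ one needs the normalisation/perturbation step from the proof of (ii$'$)$\Rightarrow$(i$'$); the paper elides this in exactly the same way.
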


\begin{proof}
Let $\cl M_{X,1}^+$ be the convex set of all scalar bistochastic matrices, that is, matrices
$T = (t_{x,a})_{x,a}\in \cl M_X^+$ with $\sum_{a\in X} t_{x,a} = 1$, $x\in X$. 
By Theorem \ref{th_ucpclasb}, if $T\in \cl M_{X,1}^+$ then 
the map $\gamma(T) : \cl S_X\to \bb{C}$, given by $\gamma(T)(e_{x,a}) = t_{x,a}$, is a (well-defined)
state on $\cl S_X$. Writing an arbitrary element $T\in \cl M_X$ as a linear combination 
$T = \sum_{i=1}^k \lambda_i T_i$, where $T_i\in \cl M_{X,1}^+$, $i = 1,\dots,k$, set 
$\gamma(T) := \sum_{i=1}^k \lambda_i \gamma(T_i)$. The map $\gamma$ is (linear and) well-defined: 
if $T_i = (t_{x,a}^{(i)}) \in \cl M_{X,1}^+$, $i = 1,\dots,k$, and $\sum_{i=1}^k \lambda_i T_i = 0$, then 
$\sum_{i=1}^k \lambda_i t_{x,a}^{(i)} = 0$ for all $x,a\in X$, which implies that $\sum_{i=1}^k \lambda_i \gamma(T_i) = 0$. 

%It is straightforward that the assignment $\gamma$, given by 
%$$\langle\gamma(\epsilon_{x,y}),e_{x',y'}\rangle = \delta_{x,x'} \delta_{y,y'}, \ \ \ x,x',y,y'\in X,$$
%defines a linear map $\gamma : \cl M_X\longrightarrow \cl S_X^{\rm d}$.
%We claim that $\gamma$ is completely positive. 
Let $E = (E^{(i,j)})_{i,j=1}^n \in M_n(\cl M_X)^+$ and, using the canonical shuffle, write 
$E = (E_{x,y})_{x,y}$, where $E_{x,y}\in M_n$, $x,y\in X$, are such that 
\begin{equation}\label{eq_EW}
\sum_{y'\in X} E_{x,y'} = \sum_{x'\in X} E_{x',y}, \ \ \ x,y\in X.
\end{equation}
Using Theorem \ref{th_ucpclasb}, we see that there exists a completely positive map 
$\phi : \cl S_X\to M_n$ such that $\phi(e_{x,y}) = E_{x,y}$, $x,y\in X$. 
On the other hand, the element $\gamma^{(n)}(E)$ of $M_n(\cl S_X^{\rm d})$ gives rise, 
via (\ref{eq_Fphi}), to a linear map $F_{\gamma^{(n)}(E)} : \cl S_X\to M_n$. 
We have that
$$F_{\gamma^{(n)}(E)}(e_{x,a}) = \sum_{i,j=1}^n \gamma\left(E^{(i,j)}\right)(e_{x,a}) \epsilon_{i,j}
= E_{x,y} = \phi(e_{x,a}), \ \ \ x,a\in X,$$
that is, $F_{\gamma^{(n)}(E)} = \phi$. 
In particular, $F_{\gamma^{(n)}(E)}$ is completely positive, and
it follows that the map $\gamma$ is completely positive. 

It follows from Theorem \ref{th_ucpclasb} that the (linear) map $\gamma$ is surjective; 
thus, it is injective. We show that $\gamma^{-1}$ is completely positive. 
Assume that $W\in M_n(\cl S_X^{\rm d})^+$; this means that the linear map $F_W : \cl S_X\to M_n$, 
canonically associated with $W$, is completely positive. 
Set $E_{x,y} := F_W(e_{x,y})$, $x,y\in X$; by Theorem \ref{th_ucpclasb}, 
$E := (E_{x,y})_{x,y}\in (M_n\otimes M_X)^+$.
This, in turns, means that $(\gamma^{-1})^{(n)}(W)\in (M_n\otimes M_X)^+$. 
Since relations (\ref{eq_EW}) are satisfied for the matrices $E_{x,y}$, we have that, in fact, 
$(\gamma^{-1})^{(n)}(W)\in (M_n\otimes \cl M_X)^+$, and the proof is complete. 
\end{proof}

Let 
$$\cl J_{X}^{\neq} = {\rm span} \left\{e_{x,x',a,a'} : x\neq x' \mbox{ or } a\neq a'\right\};$$
note that $\cl J_X^{\neq}$ is a linear subspace of the operator system $\cl T_{X}$.

\begin{proposition}\label{p_SXPXd}
The space $\cl J_{X}^{\neq}$ is a kernel in $\cl T_{X}$ and, up to a unital complete order isomorphism, 
$\cl S_X \cong \cl T_X/\cl J_X^{\neq}$.
\end{proposition}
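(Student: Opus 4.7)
The plan is to dualise the inclusion $\cl S_X \hookrightarrow \cl T_X$ using the two duality identifications already at our disposal. By Corollary \ref{c_SXPXd} we have $\cl S_X^{\rm d} \cong \cl M_X$, and by Proposition \ref{p_dualTX} we have $\cl T_X^{\rm d} \cong \cl L_X$, both through canonical unital complete order isomorphisms. Inspecting the definitions, $\cl M_X \subseteq \cl L_X$: indeed, for $(\mu_{y,b}) \in \cl M_X$, the corresponding element of $M_{XX}$ has entries $\lambda_{y,y',b,b'} = \delta_{y,y'}\delta_{b,b'}\mu_{y,b}$, and the bistochastic constraint on $(\mu_{y,b})$ is precisely what is required for this element to lie in $\cl L_X$. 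Since both $\cl M_X$ and $\cl L_X$ are operator subsystems of the same ambient operator system $M_{XX}$, the inclusion $\iota : \cl M_X \to \cl L_X$ is a unital complete order embedding.

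Next I would invoke \cite[Proposition 1.15]{fp} in the finite-dimensional setting to conclude that the dual map $\iota^* : \cl L_X^{\rm d} \to \cl M_X^{\rm d}$ is a unital complete quotient map. Under the identifications above, this yields a unital complete quotient map $\iota^* : \cl T_X \to \cl S_X$.

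It then remains to identify $\iota^*$ on generators. Tracking the two pairings, for $(\mu_{y,b}) \in \cl M_X$,
$$\langle \iota^*(e_{x,x',a,a'}), (\mu_{y,b}) \rangle = \langle e_{x,x',a,a'}, \iota((\mu_{y,b}))\rangle = \delta_{x,x'}\delta_{a,a'}\mu_{x,a},$$
using that by Proposition \ref{p_dualTX}, $e_{x,x',a,a'}$ reads off the $(x,x',a,a')$-entry of an element of $\cl L_X$, and that the embedding of $(\mu_{y,b})$ into $\cl L_X \subseteq M_{XX}$ is the diagonal matrix written above. Hence $\iota^*(e_{x,x',a,a'}) = \delta_{x,x'}\delta_{a,a'}e_{x,a}$, from which $\ker(\iota^*) = \cl J_X^{\neq}$ is immediate. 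Consequently, $\cl J_X^{\neq}$ is a kernel in $\cl T_X$, and the induced map $\cl T_X/\cl J_X^{\neq} \to \cl S_X$ is a unital complete order isomorphism.

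The main step that requires care is the identification of $\iota^*$ on generators: one must check that the two distinct pairings (from Proposition \ref{p_dualTX} and Corollary \ref{c_SXPXd}) are compatible with the inclusion $\cl M_X \subseteq \cl L_X$. This is a concrete coordinate verification but the bookkeeping between the two different duality conventions is where slips are easiest. Everything else reduces to general operator system duality for finite-dimensional systems, combined with the two dualities already established earlier in the paper.
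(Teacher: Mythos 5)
Your argument is correct and is, at heart, the same as the paper's: both hinge on the dual inclusion $\cl M_X \hookrightarrow \cl L_X$ of operator subsystems of $M_{XX}$, passed back through the identifications of Proposition \ref{p_dualTX} and Corollary \ref{c_SXPXd}. The paper constructs the map $\beta : \cl T_X \to \cl S_X$ directly via Theorem \ref{th_ucp} and then builds two chains of completely positive maps to verify that $\ker(\beta) = \cl J_X^{\neq}$ and that the induced map on the quotient is a complete order isomorphism; you instead invoke \cite[Proposition 1.15]{fp} at the outset to obtain the complete quotient property for free, which collapses the second chain argument into a single line. This is a modest and welcome streamlining of the same underlying idea, and your explicit verification that the two pairings are compatible with the inclusion $\cl M_X \subseteq \cl L_X$ is exactly the coordinate check that needs to be done.

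One claim is slightly too quick: you assert that $\ker(\iota^*) = \cl J_X^{\neq}$ is ``immediate'' from the formula $\iota^*(e_{x,x',a,a'}) = \delta_{x,x'}\delta_{a,a'}e_{x,a}$. The inclusion $\cl J_X^{\neq} \subseteq \ker(\iota^*)$ is indeed immediate, but the generators $e_{x,x',a,a'}$ satisfy the linear relations \eqref{eq_birel} and so are not a basis of $\cl T_X$; the formula on generators therefore does not by itself determine the kernel. One more line is needed: $\iota^*$ restricts to the identity on $\cl S_X = {\rm span}\{e_{x,x,a,a}\} \subseteq \cl T_X$ (since $\iota^*(e_{x,x,a,a}) = e_{x,a}$), and since $\cl T_X = \cl S_X + \cl J_X^{\neq}$, any $u \in \ker(\iota^*)$ written as $u = s + j$ with $s \in \cl S_X$ and $j \in \cl J_X^{\neq}$ satisfies $0 = \iota^*(u) = s$, forcing $u = j \in \cl J_X^{\neq}$. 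With that line inserted, the proof is complete.
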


\begin{proof}
By Theorem \ref{th_ucp}, there exists a unital completely positive map
$\beta : \cl T_X\to \cl S_X$, such that $\beta(e_{x,x',a,a'}) = \delta_{x,x'}\delta_{a,a'} e_{x,a}$, $x,x',a,a'\in X$. 
It is clear that $\cl J_X^{\neq}\subseteq \ker(\beta)$. 
On the other hand, by Proposition \ref{p_dualTX} and Corollary \ref{c_SXPXd}, 
we have a chain of four canonical linear maps
\begin{equation}\label{eq_MXduali}
\cl M_X\cong \cl S_X^{\rm d}
\longrightarrow \left(\cl T_X/\ker(\beta)\right)^{\rm d} 
\longrightarrow \left(\cl T_X/\cl J_X^{\neq}\right)^{\rm d}
\longrightarrow \cl T_X^{\rm d} \cong \cl L_X;
\end{equation}
of which the first, the second and the fourth are completely positive. 
In addition, 
the image of $\cl M_X$ in $\cl L_X$ under the composition of these maps coincides with itself; 
thus, $\ker(\beta) \subseteq \cl J_X^{\neq}$ 
and hence $\cl J_X^{\neq}$ is a kernel in $\cl T_X$. 
Dualising the second map in  (\ref{eq_MXduali}), we further obtain a chain 
$$\cl T_X/\cl J_X^{\neq} \to \cl S_X\to \cl T_X\to \cl T_X/\cl J_X^{\neq}$$
of completely positive maps, whose composition is the identity map on $\cl T_X/\cl J_X^{\neq}$. 

On the other hand, we have a chain of canonical completely positive maps
$$\cl S_X\to \cl T_X\to \cl T_X/\cl J_X^{\neq} \to \cl S_X,$$
whose composition is the identity map on $\cl S_X$. 
It follows that $\cl S_X \cong \cl T_X/\cl J_X^{\neq}$, up to a canonical complete order isomorphism.
\end{proof}

In Theorem \ref{p_dilate} below, we characterise the dilatable and locally dilatable quantum magic squares in 
operator system terms. 
Let $C(S_X^+)$ be the universal C*-algebra 
generated by projections $p_{x,a}$, $x,a\in X$, with the properties 
$$\sum_{b\in X} p_{x,b} = \sum_{y\in X} p_{y,a} = 1, \ \ \ x,a\in X$$
(thus, $C(S_X^+)$ is the universal C$^\ast$-algebra of functions on the quantum permutation group on 
$X$; see e.g. \cite{bcehpsw}). 
Write 
$$\cl P_X = {\rm span}\{p_{x,a} : x,a\in X\},$$
viewed as an operator subsystem of $C(S_X^+)$. 

Recall \cite[Section 3]{ptt} that the 
\emph{minimal operator system} based on $\cl P_X$
has matricial cones $M_n({\rm OMIN}(\cl P_X))^+$, given by 
$$\hspace{-1cm} M_n({\rm OMIN}(\cl P_X))^+
= \{(t_{i,j})_{i,j} \in M_n(\cl P_X) : \sum_{i,j=1}^n \lambda_i\overline{\lambda_j} t_{i,j}\in \cl P_X^+,$$ 
$$\hspace{7cm} \mbox{ for all }
\lambda_i\in \bb{C}, i\in [n]\},$$
and that the corresponding 
\emph{maximal operator system} based on $\cl P_X$ has
matricial cones $M_n({\rm OMAX}(\cl P_X))^+$ generated,
as cones with an Archimedean order unit, 
by the elementary tensors of the form $T\otimes u$, where $T\in M_n^+$ and $u\in \cl P_X^+$.

\begin{proposition}\label{p_PXSXdd}
There exist canonical unital completely positive maps
\begin{equation}\label{eq_OMAXPXSX}
{\rm OMAX}(\cl P_X) \longrightarrow \cl S_X \longrightarrow \cl P_X.
\end{equation}
\end{proposition}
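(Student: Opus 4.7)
The second map is immediate from Theorem \ref{th_ucpclasb}: the magic unitary $(p_{x,a})_{x,a}$ is a quantum magic square, so the theorem yields a unital completely positive map $\cl S_X \to C(S_X^+)$ sending $e_{x,a} \mapsto p_{x,a}$, whose range lies in $\cl P_X$.

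For the first map, the plan is to define a linear map $\psi : \cl P_X \to \cl S_X$ by $p_{x,a} \mapsto e_{x,a}$, to verify that it is well defined, unital and positive, and then to invoke the defining property of OMAX, namely that a positive unital map out of ${\rm OMAX}(\cl P_X)$ is automatically completely positive, since its matricial cones are generated by elementary tensors $T \otimes u$ with $T \in M_n^+$ and $u \in \cl P_X^+$. The key tool is the duality $\cl S_X^{\rm d} \cong \cl M_X$ from Corollary \ref{c_SXPXd}: since $\cl M_X$ is spanned by the scalar bistochastic matrices, which by Birkhoff--von Neumann form the convex hull of the permutation matrices $P_\sigma$, the dual $\cl S_X^{\rm d}$ is linearly spanned by the functionals $f_\sigma$ with $f_\sigma(e_{x,a}) = \delta_{\sigma(x),a}$, and its state space is the convex hull of $\{f_\sigma : \sigma \in S_X\}$. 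In parallel, each $\sigma \in S_X$ yields a character $\chi_\sigma : C(S_X^+) \to \bb C$ with $\chi_\sigma(p_{x,a}) = \delta_{\sigma(x),a}$, coming from the universal property of $C(S_X^+)$ applied to the classical magic unitary of $\sigma$.

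If $\sum_{x,a} c_{x,a} p_{x,a} = 0$ in $C(S_X^+)$, applying $\chi_\sigma$ gives $\sum_x c_{x,\sigma(x)} = 0$ for every $\sigma$, whence $f_\sigma\bigl(\sum_{x,a} c_{x,a} e_{x,a}\bigr) = 0$; since the $f_\sigma$ span $\cl S_X^{\rm d}$, we conclude $\sum_{x,a} c_{x,a} e_{x,a} = 0$, so $\psi$ is well defined. Unitality follows from $\psi(\sum_b p_{x,b}) = \sum_b e_{x,b} = 1$ via \eqref{eq_birel}. For positivity, if $u = \sum c_{x,a} p_{x,a} \in \cl P_X^+$, then $\chi_\sigma(u) \geq 0$ for every $\sigma$, since each $\chi_\sigma$ is a state, and hence every state on $\cl S_X$ --- being a convex combination of the $f_\sigma$ --- evaluates $\psi(u)$ nonnegatively, giving $\psi(u) \in \cl S_X^+$. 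The main subtlety is the well-definedness of $\psi$, which would fail for a generic operator subsystem; here the specific linear structure of the $p_{x,a}$ combined with the duality of Corollary \ref{c_SXPXd} and Birkhoff--von Neumann reduces the check to evaluations at classical characters.
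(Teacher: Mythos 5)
Your proof is correct. The second map is obtained exactly as in the paper, from Theorem \ref{th_ucpclasb} applied to the quantum magic square $(p_{x,a})_{x,a}$. For the first map your argument is, in substance, the mirror image of the paper's: the paper works on the dual side, using Corollary \ref{c_SXPXd} and Birkhoff's theorem to send a positive functional on $\cl S_X$ (a positive multiple of a bistochastic matrix, hence a positive combination of permutation matrices) to the corresponding combination of permutation characters of $C(S_X^+)$, producing a positive map $r : \cl S_X^{\rm d}\to \cl P_X^{\rm d}$, hence a completely positive map $\cl S_X^{\rm d}\to {\rm OMIN}(\cl P_X^{\rm d})$, and then dualises to obtain ${\rm OMAX}(\cl P_X)\to \cl S_X$. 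You instead build $\psi : p_{x,a}\mapsto e_{x,a}$ directly on the primal side and invoke the defining property of ${\rm OMAX}$ that positive unital maps out of it are automatically completely positive. The ingredients are identical (Birkhoff, the characters $\chi_\sigma$, the identification $\cl S_X^{\rm d}\cong \cl M_X$, the ${\rm OMIN}/{\rm OMAX}$ universal properties), and your $\psi$ is precisely the adjoint of the paper's $r$. What your version buys is that the well-definedness of the assignment $p_{x,a}\mapsto e_{x,a}$ --- a genuine issue, since values are prescribed only on a linearly dependent spanning set --- is made explicit and checked against the separating family $\{f_\sigma\}$, whereas the paper's dual formulation absorbs this point into the identification of $\cl S_X^{\rm d}$ with $\cl M_X$; the paper's version, in exchange, avoids any primal-side verification by appealing once to the duality between ${\rm OMIN}$ of the dual and ${\rm OMAX}$ of the primal. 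One tiny point you leave tacit: to conclude $\psi(u)\in \cl S_X^+$ from $f(\psi(u))\geq 0$ for all states $f$, you need $\psi(u)$ self-adjoint, which holds because $\psi$ is well defined and sends self-adjoint generators to self-adjoint generators.
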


\begin{proof}
By Theorem \ref{th_ucpclasb}, the linear map 
$\frak{q} : \cl S_X\to \cl P_X$, given by $\frak{q}(e_{x,a}) = p_{x,a}$, $x,a\in X$, 
is (unital and) completely positive.
%By the universal property of the maximal operator system \cite[Theorem 3.22]{ptt}, 
%it suffices to show that $\frak{q}^{-1} : \cl P_X\to \cl S_X$ is positive. 

Suppose that $\phi\in (\cl S_X^{\rm d})^+$; by Proposition \ref{p_SXPXd}, 
$\phi$ can be canonically identified with a matrix $(\lambda_{x,a})_{x,a}$ in $\cl M_X^+$. 
By Birkhoff's Theorem and the argument in the proof of Corollary \ref{c_SXPXd}, 
we can further assume that
there exists a permutation $f : X\to X$ such that $\lambda_{x,a} = \delta_{f(x),a}$, $x,a\in X$. 
By the universal property of $C(S_X^+)$, the permutation $f$ gives rise to a canonical 
*-representation $\pi : C(S_X^+)\to \bb{C}$. It follows that $\pi|_{\cl P_X} : \cl P_X\to \bb{C}$ is (completely) positive.
We thus obtain a canonical positive map 
$r : \cl S_X^{\rm d}\to \cl P_X^{\rm d}$ which, by the universal property of the minimal operator system structure, 
gives rise to a canonical completely positive map
$\cl S_X^{\rm d}\to {\rm OMIN}(\cl P_X^{\rm d})$;
dualising, we have a canonical completely positive map 
${\rm OMAX}(\cl P_X) \to \cl S_X$. 

Note that the composition of the maps in (\ref{eq_OMAXPXSX}) is the identity map on $\cl P_X$; 
hence $\frak{q}$ is invertible. Since $\frak{q}^{-1} = r$, we have that $\frak{q}^{-1}$ is positive,
completing the proof. 
%For $x,y\in X$ we have that 
%$$(\frak{q}^{-1})^{\rm d}(\phi)(p_{x,y}) = \phi(\frak{q}^{-1}(p_{x,y})) = \phi(e_{x,y}) 
%= \delta_{f(x),y} = \pi(p_{x,y}).$$
%Therefore $\frak{q}^{\rm d}(\phi) = \pi|_{\cl P_X}$, implying that $\frak{q}^{\rm d}(\phi)\in (\cl P_X^{\rm d})^+$, 
%thus completing the proof. 
\end{proof}

%{p_SXPXd}

\begin{theorem}\label{p_dilate}
Let $H$ be a Hilbert space and 
$E = (E_{x,a})_{x,a}$ be a quantum magic square acting on $H$. Then
\begin{itemize}
\item[(i)] $E$ is dilatable if and only if there exists a completely positive map
$\phi : \cl P_X\to \cl B(H)$, such that 
$\phi(p_{x,a}) = E_{x,a}$, $x,a\in X$;

\item[(ii)] 
%If $H$ is finite dimensional, 
$E$ is locally dilatable if and only if there exists a completely positive map
$\phi : {\rm OMIN}(\cl P_X)\to \cl B(H)$, such that 
$\phi(p_{x,a}) = E_{x,a}$, $x,a\in X$.
\end{itemize}
\end{theorem}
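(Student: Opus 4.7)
The strategy is parallel for parts (i) and (ii), combining Stinespring's and Arveson's dilation theorems with a suitable universal $C^*$-algebra: $C(S_X^+)$ itself for part (i), and its abelianisation $C(S_X)$, the commutative $C^*$-algebra of functions on the classical permutation group $S_X$, for part (ii). Write $\mathrm{ab} : C(S_X^+) \to C(S_X)$ for the abelianisation map.

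In part (i), the forward direction is immediate: a dilation $E_{x,a} = V^*P_{x,a}V$ by a quantum permutation on $K$ produces, via the universal property of $C(S_X^+)$, a unital $*$-representation $\pi : C(S_X^+)\to \cl B(K)$ with $\pi(p_{x,a}) = P_{x,a}$, so that $\phi := V^*\pi(\cdot)V|_{\cl P_X}$ is the required completely positive map. Conversely, any $\phi$ with $\phi(p_{x,a}) = E_{x,a}$ satisfies $\phi(1) = \sum_{a\in X}\phi(p_{x,a}) = \sum_{a\in X} E_{x,a} = I_H$ and is hence unital; Arveson's theorem provides a unital completely positive extension $\tilde\phi : C(S_X^+)\to \cl B(H)$, and Stinespring then yields an isometry $V : H\to K$ and a $*$-representation $\pi : C(S_X^+)\to \cl B(K)$ with $\tilde\phi = V^*\pi(\cdot)V$; the projections $P_{x,a} := \pi(p_{x,a})$ form the desired dilating quantum permutation.

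Part (ii) proceeds along the same lines, once I establish the key technical ingredient that the restriction $\alpha := \mathrm{ab}|_{\cl P_X}$ is a complete order embedding
\[ \alpha : {\rm OMIN}(\cl P_X) \hookrightarrow C(S_X) \]
sending each $p_{x,a}$ to the $\{0,1\}$-valued projection $g \mapsto \delta_{g(x),a}$ in $C(S_X)$. The forward direction then follows because the commuting projections of a local dilation generate a commutative $C^*$-subalgebra of $\cl B(K)$, so the associated $*$-representation of $C(S_X^+)$ factors through $\mathrm{ab}$ as $\sigma \circ \mathrm{ab}$ for some $*$-representation $\sigma$ of $C(S_X)$, and then $V^*\sigma(\cdot)V \circ \alpha$ is the sought completely positive map. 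For the converse, Arveson extends $\phi$ through $\alpha$ to a unital completely positive map $\tilde\phi : C(S_X) \to \cl B(H)$; Stinespring yields an isometry $V$ and a $*$-representation $\sigma : C(S_X) \to \cl B(K)$, and since $\sigma(C(S_X))$ is commutative and each $\alpha(p_{x,a})$ is a projection in $C(S_X)$, the elements $P_{x,a} := \sigma(\alpha(p_{x,a}))$ form a commuting quantum permutation on $K$ that dilates $E$.

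The principal obstacle is the construction of the complete order embedding $\alpha$. I would first identify the state space of $\cl P_X$ with the set of doubly stochastic matrices, combining the scalar case of Theorem \ref{th_ucpclasb} with the argument in the proof of Corollary \ref{c_SXPXd}, and then invoke Birkhoff's theorem to identify its extreme states with the permutations in $S_X$. The defining condition for the matricial cones of ${\rm OMIN}(\cl P_X)$ — scalar positivity of $\sum_{i,j} \lambda_i\overline{\lambda_j}\, u_{i,j}$ in $\cl P_X$ for all $\lambda \in \bb C^n$ — is equivalent to pointwise matrix positivity $(\omega(u_{i,j})) \in M_n^+$ at every state $\omega$; a convex decomposition into extreme states reduces this to testing at permutations, which is exactly pointwise matrix positivity of the image of $(u_{i,j})$ under $\alpha^{(n)}$ in $M_n(C(S_X))$. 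Once $\alpha$ is in hand as a complete order embedding, the rest of the proof reduces to standard dilation theory.
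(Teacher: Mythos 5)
Your proof of part (i) coincides with the paper's. For part (ii), you take a genuinely different route. The paper first restricts to finite-dimensional $H$, passes to the positive functional $f_\phi$ associated with $\phi$, identifies it with an element of $M_n({\rm OMAX}(\cl P_X^{\rm d}))^+$ via the OMIN/OMAX duality of \cite{ptt}, decomposes into elementary tensors supported on permutation matrices (using Birkhoff and the identification $\cl P_X^{\rm d}\cong\cl M_X$), and then invokes \cite[Theorem 12 and Remark 7]{dlcdn}; infinite-dimensional $H$ is handled separately by approximating with finite-rank compressions. You instead establish up front that the abelianisation $C(S_X^+)\to C(S_X)$ restricts to a complete order embedding $\alpha:{\rm OMIN}(\cl P_X)\hookrightarrow C(S_X)$, after which both directions of (ii) become mirror images of (i) with $C(S_X)$ in place of $C(S_X^+)$. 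Your verification of the embedding is sound: the state space of $\cl P_X$ is identified with the simplex of doubly stochastic matrices via Corollary \ref{c_SXPXd} and Proposition \ref{p_PXSXdd} (noting that the level-one cones of $\cl P_X$, $\cl S_X$, ${\rm OMIN}(\cl P_X)$, ${\rm OMAX}(\cl P_X)$ all agree), so by Birkhoff the extreme states are exactly the permutation characters of $C(S_X)$; since the ${\rm OMIN}$ matricial cone at level $n$ consists of the self-adjoint matrices $(u_{i,j})$ with $(\omega(u_{i,j}))\in M_n^+$ for every state $\omega$, convexity reduces this to extreme states, which is precisely positivity of $\alpha^{(n)}((u_{i,j}))$ in $M_n(C(S_X))$. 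The advantages of your route are uniformity in $\dim H$, independence from the dilation result of \cite{dlcdn}, and a transparent realisation of ${\rm OMIN}(\cl P_X)$ as the operator system that $\cl P_X$ inherits from the commutative $C^*$-algebra $C(S_X)$, in keeping with the general description of ${\rm OMIN}$ in \cite[Section 3]{ptt}.
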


\begin{proof}
(i) 
Let $P = (P_{x,a})_{x,a}$ be a magic unitary on a Hilbert space $K$ containing $H$ such that, if $Q$ is the projection 
from $K$ onto $H$, then  $E_{x,a} = QP_{x,a}Q$, $x,a\in X$. 
By the universal property of $C(S_X^+)$, there exists a unital *-homomorphism $\pi : C(S_X^+)\to \cl B(K)$ 
such that $\pi(p_{x,a}) = P_{x,a}$, $x,a\in X$. 
Let $\phi : \cl P_X\to \cl B(H)$ be the linear map, defined by $\phi(u) = Q\pi(u)Q$, $u\in \cl P_X$. As a compression of a completely positive map, $\phi$ is completely positive; by construction, $\phi(p_{x,a}) = E_{x,a}$, $x,a\in X$. 

For the converse direction, let $\tilde{\phi} : C(S_X^+) \to \cl B(H)$ be a unital completely positive extension of $\phi$, 
whose existence is guaranteed by Arveson's Extension Theorem. 
Using Stinespring's Theorem, let $K$ be a Hilbert space, $\pi :C(S_X^+)\to \cl B(K)$ be a unital *-representation, and $V : H\to K$ be an isometry, such that $\tilde{\phi}(u) = V^*\pi(u)V$, $u\in C(S_X^+)$. 
Letting $P_{x,a} = \pi(p_{x,a})$, we have that $(P_{x,a})_{x,a}$ is a magic unitary that dilates $E$.

(ii) 
We first consider the case where $n := {\rm dim}(H)$ is finite. 
Identifying $\cl B(H)$ with $M_n$, 
suppose that $\phi : {\rm OMIN}(\cl P_X)\to M_n$ is a unital completely positive map. 
Let 
$$f_{\phi} : M_n({\rm OMIN}(\cl P_X))\to \bb{C}$$ 
be the canonical functional, associated with $\phi$ as in \cite[Chapter 6]{Pa}; 
thus, 
$$f_{\phi}((w_{i,j})_{i,j}) = \frac{1}{n} \sum \langle\phi(w_{i,j})e_j,e_i\rangle, \ \ \ (w_{i,j})_{i,j}\in M_n(\cl P_X).$$
By \cite[Theorem 6.1]{Pa}, $f_{\phi}$ is positive. By \cite[Theorem 4.8]{ptt}, 
$f_{\phi}$ can be canonically identified with an element of $M_n({\rm OMAX}(\cl P_X^{\rm d}))^+$
(see \cite[Section 3]{ptt}). 
By Proposition \ref{p_PXSXdd}, Corollary \ref{c_SXPXd} and 
the definition of the maximal operator system structure, 
\begin{equation}\label{eq_sumr}
f_{\phi} \equiv \sum_{l=1}^{r_{\phi}} \alpha_l\otimes\beta_l, 
\end{equation}
where $\alpha_l\in \cl M_X^+$ and $\beta_l\in M_n^+$, $l\in [r_{\phi}]$. 

Assume that the representation (\ref{eq_sumr}) has the form 
$f_{\phi} \equiv \alpha \otimes \beta$, where 
$\alpha \in \cl M_X$ and $\beta \in M_n$.
In this case, $\phi$ is given by 
$$\phi(p_{x,y}) = \alpha_{x,y}\beta, \ \ \ x,y\in X.$$
In particular, if $P_{\theta}$ is the permutation unitary corresponding to the permutation $\theta$ on $X$, 
and $f_{\phi} \equiv P_{\theta} \otimes \beta$, where $\beta \in M_n$, then 
$$\phi(p_{x,y}) = 
\begin{cases}
\beta & \text{if } \theta(x) = y\\
0 & \text{otherwise,}
\end{cases} 
\ \ \ x,y\in X.$$

Returning to the representation (\ref{eq_sumr}), 
use Birkhoff's Theorem to write
$\alpha_l = \sum_{\theta} \lambda_{\theta}^{(l)} P_{\theta}$, 
where the summation is over the permutation group of $X$, 
the coefficients $\lambda_{\theta}^{(l)}$ are non-negative. 
Thus, 
\begin{equation}\label{eq_sumr2}
f_{\phi} \equiv \sum_{\theta} P_{\theta}\otimes\gamma_{\theta}, 
\end{equation}
where $\gamma_{\theta}\in M_n^+$ and the summation is over the permutation group of $X$.
By the previous paragraph, 
$$E_{x,y} = \sum\{\gamma_{\theta} : \theta(x) = y\}, \ \ \ x,a\in X.$$
Now
\cite[Theorem 12 and Remark 7]{dlcdn} implies that $(\phi(p_{x,a}))_{x,a}$ is locally dilatable, 
after noticing that the
matrix convex hull of the set denoted $\mathcal{CP}^{(|X|)}$ therein coincides with the 
locally dilatable magic quantum squares over $M_n$. 
The converse direction follows by reversing the given arguments. 

We now relax the assumption on the finite dimensionality of $H$. For simplicity, we 
consider only the case where $H$ is separable. 
Fix a sequence $(Q_n)_{n\in \bb{N}}$ of projections of finite rank such that $Q_n\to_{n\to \infty} I$ 
in the strong operator topology. 
Assuming that $E$ is locally dilatable, so is $(I_X\otimes Q_n)E(I_X\otimes Q_n)$ for every $n\in \bb{N}$ and hence, by the assumption, 
the map $\phi_n : {\rm OMIN}(\cl P_X)\to \cl B(Q_nH)$,
given by
$\phi_n(p_{x,a}) = Q_nE_{x,a}Q_n$, $x,a\in X$, $i\in \bb{I}$, is completely positive. 
Since $\phi(u) = \lim_{n\to\infty} \phi_n(u)$, in the weak operator topology, $u\in \cl P_X$, we have that 
$\phi$ is completely positive.

Conversely, assuming that $\phi : {\rm OMIN}(\cl P_X)\to \cl B(H)$ is completely positive, let 
$\phi_n : {\rm OMIN}(\cl P_X)\to \cl B(Q_nH)$ be the (completely positive) map, given by 
$\phi_n(u) = Q_n\phi(u)Q_n$, $u\in \cl P_X$. 
%Set $C := \sum_{x,y\in X}\|E_{x,y}\|$. 
%It is clear that $\|\phi_n\|_{\rm cb}\leq \|\phi\|_{\rm cb}$; therefore, $\|f_{\phi_n}\|\leq \|\phi\|_{\rm cb}$, 
%$n\in \bb{N}$. 
Write $f_{\phi_n} = \sum_{\theta} P_{\theta}\otimes\gamma_{\theta}^{(n)}$, 
where $\gamma_{\theta}^{(n)} \in \cl B(Q_nH)^+$ and the summation is over the permutation group of $X$. 
Let $E_{x,y}^{(n)} = \sum\{\gamma_{\theta}^{(n)} : \theta(x) = y\}$; then $E_{x,y}^{(n)} = Q_nE_{x,y}Q_n$. 
Since $\|E_{x,y}\| \leq 1$ for every $x,y\in X$, we therefore have that 
$\|\gamma_{\theta}^{(n)}\|\leq 1$ for every $n\in \bb{N}$. 
We can now choose successively weak* cluster points of the sequences 
$\left(\gamma_{\theta}^{(n)}\right)_{n\in \bb{N}}$,
and assume that
$$f_{\phi_n}\to f := \sum_{\theta} P_{\theta}\otimes \gamma_{\theta},$$ 
where $\gamma_{\theta} \in \cl B(H)^+$ for every permutation $\theta$ of $X$, 
in the weak* topology of $M_X\otimes \cl B(H)$.
We further have that 
$E_{x,y} = \sum\{\gamma_{\theta} : \theta(x) = y\}$. 
The proof of the implication (a)$\Rightarrow$(b) of \cite[Theorem 12]{dlcdn} implies, after replacing the
identity operator denoted $I_s$ therein with $I_H$, 
that $E$ is locally dilatable.
\end{proof}

%\noindent {\bf Remark. }
%Suppose that the Hilbert space $H$ in Theorem \ref{p_dilate} (ii) is arbitrary, 
%fix a net $(Q_i)_{i\in \bb{I}}$ consisting of projections of finite rank such that $Q_i\to_{i\in \bb{I}} I$ in the strong operator topology. 
%Assuming that $E$ is locally dilatable, so is $(I_X\otimes Q_i)E(I_X\otimes Q_i)$ for every $i\in \bb{I}$ and hence, by the assumption, 
%the map $\phi_i : {\rm OMIN}(\cl P_X)\to \cl B(Q_iH)$, given by
%$\phi_i(p_{x,a}) = Q_iE_{x,a}Q_i$, $x,a\in X$, $i\in \bb{I}$, is completely positive. 
%Since $\phi(u) = \lim_{i\in \bb{I}} \phi_i(u)$, $u\in \cl P_X$, in the weak operator topology, we have that 
%$\phi$ is completely positive.
%We do not know if the converse direction of Theorem \ref{p_dilate} (ii) holds true for a Hilbert space of an arbitrary dimension. 

%%%%%%%%%%%%%%%%%%%%%%%%%%%%%%%%%%%%%%%%%%%%%%%%%
%%%%%%%%%%%%%%%%%%%%%%%%%%%%%%%%%%%%%%%%%%%%%%%%%

\section{Representations of bicorrelations}\label{s_rbc}

In this section, we define the notion of a bicorrelation and obtain representations of the 
different bicorrelation types in terms of operator system tensor products. We will use the main 
operator system tensor products, introduced in \cite{kptt}: the minimal ($\min$), 
the commuting (${\rm c}$), and the maximal ($\max$). 
If $\tau \in \{\min, {\rm c}, \max\}$ and $\phi_i : \cl S_i\to \cl T_i$ are completely positive maps between 
operator systems, $i = 1,2$, we write $\phi_1\otimes_{\tau}\phi_2$ for the corresponding tensor product map 
from $\cl S_1\otimes_{\tau}\cl S_2$ into $\cl T_1\otimes_{\tau}\cl T_2$ 
(note that this map is well-defined by \cite[Theorems 4.6, 5.5. and 6.3]{kptt}).

We fix throughout this section 
finite sets $X$ and $Y$, and let $A = X$ and $B = Y$. 
The symbols $A$ and $B$ will continue to be used for clarity, as needed.

%%%%%%%%%%%%%%%%%%%%%%%%%%%%%%%%%%%%%%%%%%%%%%%%%
%%%%%%%%%%%%%%%%%%%%%%%%%%%%%%%%%%%%%%%%%%%%%%%%%

\subsection{Quantum bicorrelations}\label{s_quantumbc}

If $\Gamma : M_{XY}\to M_{XY}$ is a unital quantum channel then, after the canonical identification 
$M_{XY}^{\rm d} \equiv M_{XY}$, its dual $\Gamma^* : M_{XY}\to M_{XY}$, 
defined via the formula
$$\langle \Gamma^*(\omega),\rho\rangle = \langle \omega,\Gamma(\rho)\rangle 
= \Tr\left(\omega\Gamma(\rho)^{\rm t}\right), \ \ \ \omega,\rho\in M_{XY},$$
is also a (unital) quantum channel.

\begin{definition}\label{d_bic}
A QNS correlation $\Gamma : M_{XY}\to M_{XY}$ is called 
a \emph{QNS bicorrelation} if $\Gamma(I_{XY}) = I_{XY}$ and 
$\Gamma^*$ is a QNS correlation. 
\end{definition}

We let $\cl Q^{\rm bi}_{\rm ns}$ be the set of all QNS bicorrelations. 
We next define different types of QNS bicorrelations, motivated by the analogous definitions of QNS correlation types. 
A QNS bicorrelation $\Gamma : M_{XY}\to M_{XY}$ is \emph{quantum commuting} if 
there exist a Hilbert space $H$, a unit vector $\xi\in H$ and
bistochastic operator matrices $\tilde{E} = (E_{x,x',a,a'})_{x,x',a,a'}$ and $\tilde{F} = (F_{y,y',b,b'})_{y,y',b,b'}$ 
on $H$ with mutually commuting entries, 
such that the Choi matrix of $\Gamma$ coincides with 
\begin{equation}\label{eq_Choima}
\left(\left\langle E_{x,x',a,a'}F_{y,y',b,b'}\xi,\xi \right\rangle\right)_{x,x',a,a'}^{y,y',b,b'}
\end{equation}
(equivalently, relation (\ref{eq_EFp}) holds true).
%\begin{equation}\label{eq_EFpbi}
%\Gamma(\epsilon_{x,x'} \otimes \epsilon_{y,y'}) = \sum_{a,a'\in A} \sum_{b,b'\in B}
%\left\langle E_{x,x',a,a'}F_{y,y'b,b'}\xi,\xi \right\rangle \epsilon_{a,a'} \otimes \epsilon_{b,b'}, 
%\end{equation}
%for all $x,x' \in X$ and all $y,y' \in Y$.
\emph{Quantum} QNS bicorrelations are defined 
similarly, but 
requiring that $H$ has the form $H_A\otimes H_B$, for some finite dimensional Hilbert spaces $H_A$ and $H_B$, and 
$E_{x,x',a,a'} = \tilde{E}_{x,x',a,a'} \otimes I_B$, and 
$F_{y,y',b,b'} = I_A \otimes \tilde{F}_{y,y',b,b'}$, for some bistochastic operator matrices 
$(\tilde{E}_{x,x',a,a'})$ and 
$(\tilde{F}_{y,y',b,b'})$, acting on $H_A$ and $H_B$, respectively. 
\emph{Approximately quantum} QNS bicorrelations are the limits of quantum QNS bicorrelations, while \emph{local} QNS bicorrelations are 
the convex combinations of the form 
$\Gamma = \sum_{i=1}^k \lambda_i \Phi_i \otimes \Psi_i$, where
$\Phi_i : M_X\to M_A$ and $\Psi_i : M_Y\to M_B$ are unital quantum channels, $i = 1,\dots,k$.

For  ${\rm t}\in \{{\rm loc}, {\rm q}, {\rm qa}, {\rm qc}\}$, we let  
$\cl Q_{\rm t}^{\rm bi}$ 
%= \cl Q_{\rm t}\cap \cl Q^{\rm bi}_{\rm ns}$. 
be the set of all QNS bicorrelations of type ${\rm t}$. 

\begin{remark}\label{r_dualal}
\rm 
If ${\rm t}\in \{{\rm loc}, {\rm q}, {\rm qa}, {\rm qc}, {\rm ns}\}$ and $\Gamma\in \cl Q_{\rm t}^{\rm bi}$ then $\Gamma^*\in \cl Q_{\rm t}^{\rm bi}$. 
The claim is part of the definition in the case where ${\rm t} = {\rm ns}$ and straightforward 
in the case where ${\rm t} = {\rm loc}$. 
For the case ${\rm t} = {\rm qc}$, suppose that 
$E = (E_{x,x',a,a'})_{x,x',a,a'}$ and $F = (F_{y,y',b,b'})_{y,y',b,b'}$ are bistochastic operator matrices
with mutually commuting entries, such that 
the Choi matrix of $\Gamma$ coincides with (\ref{eq_Choima}).
Let $\tilde{E}_{a,a',x,x'} := E_{x,x',a,a'}$ and 
$\tilde{F}_{b,b',y,y'} := F_{y,y',b,b'}$, and set 
$\tilde{E} = (\tilde{E}_{a,a',x,x'})_{a,a',x,x'}$ and $\tilde{F} = (\tilde{F}_{b,b',y,y'})_{b,b',y,y'}$.
We have that 
$\tilde{E} = \sum_{x,x',a,a'} \epsilon_{a,a'}\otimes \epsilon_{x,x'}\otimes E_{x,x',a,a'}$ and hence 
$\tilde{E}$ is a unitary conjugation of $E$, implying that $\tilde{E}\geq 0$; similarly, $\tilde{F}\geq 0$. 
The claim now follows from the fact that
the Choi matrix of $\Gamma^*$ is equal to 
$\left(\left\langle \tilde{E}_{a,a',x,x'}\tilde{F}_{b,b',y,y'}\xi,\xi \right\rangle\right)_{a,a',x,x'}^{b,b',y,y'}$.
The case ${\rm t} = {\rm q}$ is analogous, while ${\rm t} = {\rm qa}$ is a consequence of the continuity of
taking the dual channel.
\end{remark}

\begin{remark}\label{r_localbi}
\rm 
Suppose that $\Gamma\in \cl Q_{\rm loc}$ is unital. Write 
\begin{equation}\label{eq_Glocb}
\Gamma = \sum_{i=1}^k \lambda_i \Phi_i\otimes \Psi_i
\end{equation}
as a convex combination, 
where $\Phi_i : M_X\to M_X$ and $\Psi_i : M_Y\to M_Y$ are quantum channels, $i = 1,\dots, k$. 
We have that 
$\sum_{i=1}^k \lambda_i \Phi_i(I_X)\otimes \Psi_i(I_Y) = I_{XY}$.
It follows that $0\leq \Phi_i(I_X)\otimes \Psi_i(I_Y) \leq I_{XY}$;  
since $I_{XY}$ is an extreme point in the unit ball of $M_{XY}^+$, we have that 
$\Phi_i(I_X)\otimes \Psi_i(I_Y) = I_{XY}$, for every $i = 1,\dots,k$.
Thus, there exist $c_i > 0$ such that 
$\Phi_i(I_X) = c_i I_X$ and $\Psi_i(I_Y) = \frac{1}{c_i}I_Y$, for all $i = 1,\dots,k$. 
Replacing $\Phi_i$ (resp. $\Psi_i$) with $\frac{1}{c_i}\Phi_i$ 
(resp. $c_i \Psi_i$), we conclude that the representation (\ref{eq_Glocb}) can be chosen with the 
property that $\Phi_i$ and $\Psi_i$ are unital quantum channels, $i = 1,\dots,k$, that is, 
$\Gamma$ is automatically a local bicorrelation.
\end{remark}

We write $f_{y,y',b,b'}$ (resp. $\tilde{f}_{y,y',b,b'}$), $y,y',b,b'\in Y$, 
for the canonical generators of the operator system $\cl T_Y$ (resp. $\cl T_{Y,B}$). 
If $s$ is a linear functional on $\cl T_{X} \otimes \cl T_{Y}$ or on $\cl C_{X} \otimes \cl C_{Y}$, we write 
$\Gamma_s : M_{XY}\to M_{XY}$ for the linear map, given by 
\begin{equation}\label{eq_gammas}
\Gamma_s\left(\epsilon_{x,x'}\otimes \epsilon_{y,y'}\right)
= \sum_{a,a'\in X}\sum_{b,b'\in Y} s\left(e_{x,x',a,a'} \otimes f_{y,y',b,b'}\right) \epsilon_{a,a'}\otimes \epsilon_{b,b'}.
\end{equation}
We note that $\Gamma_s^*$ is given by the identities 
\begin{equation}\label{eq_gammas3}
\Gamma_s^*\left(\epsilon_{a,a'}\otimes \epsilon_{b,b'}\right)
= \sum_{x,x'\in X}\sum_{y,y'\in Y} s\left(e_{x,x',a,a'} \otimes f_{y,y',b,b'}\right) \epsilon_{x,x'}\otimes \epsilon_{y,y'}.
\end{equation}
Clearly, the correspondence $s\to \Gamma_s$ is a linear map 
from the vector space dual $(\cl T_{X,A} \otimes \cl T_{Y,B})^{\rm d}$ 
into the space 
$\cl L(M_{XY})$ of all linear transformations on $M_{XY}$.

\begin{theorem}\label{th_bicmax}
Let $X$ and $Y$ be finite sets and $\Gamma : M_{XY}\to M_{XY}$ be a linear map. The following are equivalent:
\begin{itemize}
\item[(i)] $\Gamma$ is a QNS bicorrelation;

\item[(ii)] there exists a state $s : \cl T_{X} \otimes_{\max} \cl T_{Y} \to \bb{C}$ such that $\Gamma = \Gamma_s$.
\end{itemize}
\end{theorem}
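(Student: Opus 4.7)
The plan is to reduce to the already-established correspondence between QNS correlations and states on $\cl T_{X,A}\otimes_{\max}\cl T_{Y,B}$ from \cite[Theorem 6.2]{tt-QNS}, and then exploit the operator-system-quotient realisations $\cl T_{X}\cong \cl T_{X,A}/\cl J_{X}$ and $\cl T_{Y}\cong \cl T_{Y,B}/\cl J_{Y}$ supplied by Proposition \ref{p_quot}. The heuristic is that, passing from $\cl T_{X,A}$ to $\cl T_{X}$, the new relations imposed are exactly $\sum_{y}\tilde e_{y,y,a,a'}=\delta_{a,a'}1$, and these additional bistochastic identities encode, on the correlation side, precisely the unitality of $\Gamma$ together with the no-signalling of $\Gamma^*$ in the $X/A$ direction (and symmetrically for $Y/B$).

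For (ii)$\Rightarrow$(i), given a state $s$ on $\cl T_X\otimes_{\max}\cl T_Y$, I would compose with the canonical completely positive quotient $\cl T_{X,A}\otimes_{\max}\cl T_{Y,B}\to \cl T_X\otimes_{\max}\cl T_Y$ (induced by functoriality of $\otimes_{\max}$ applied to the operator system quotient maps of Proposition \ref{p_quot}) to obtain a state $\tilde s$ on $\cl T_{X,A}\otimes_{\max}\cl T_{Y,B}$. By \cite[Theorem 6.2]{tt-QNS}, $\Gamma_{\tilde s}=\Gamma_{s}$ is a QNS correlation. Direct substitution into (\ref{eq_gammas}) and (\ref{eq_gammas3}), followed by applying the relations (\ref{eq_birel}) inside the state, shows on the one hand that $\Gamma_s(I_{XY})=\sum_{x,y}\Gamma_s(\epsilon_{x,x}\otimes\epsilon_{y,y})=I_{XY}$, and on the other that $\Tr_X\Gamma_s^*(\epsilon_{a,a'}\otimes\epsilon_{b,b'})=\delta_{a,a'}\sum_{y,y'}s(1\otimes f_{y,y',b,b'})\epsilon_{y,y'}$, giving the required no-signalling identity for $\Gamma_s^*$, with the analogous identity in the other direction. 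Both checks are essentially one-line computations once the bistochastic relations are available, so this direction is routine.

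For (i)$\Rightarrow$(ii), I would start by regarding the bicorrelation $\Gamma$ simply as a QNS correlation, obtaining from \cite[Theorem 6.2]{tt-QNS} a unique state $\tilde s$ on $\cl T_{X,A}\otimes_{\max}\cl T_{Y,B}$ with $\Gamma=\Gamma_{\tilde s}$. The core of the argument is to show that $\tilde s$ annihilates both $\cl J_{X}\otimes \cl T_{Y,B}$ and $\cl T_{X,A}\otimes\cl J_{Y}$. For a typical generator $\sum_{x}\tilde e_{x,x,a,a'}-\delta_{a,a'}1$ of $\cl J_{X}$, the value of $\tilde s$ against $\tilde f_{y,y',b,b'}$ unpacks, via the definition of $\tilde s$ and the dual formula (\ref{eq_gammas3}), into exactly the quantity $\Tr_X\Gamma^*(\epsilon_{a,a'}\otimes\epsilon_{b,b'})_{y,y'}-\delta_{a,a'}\tilde s(1\otimes\tilde f_{y,y',b,b'})$, which vanishes by the no-signalling of $\Gamma^*$ in the $X/A$ direction. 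The $\cl J_{Y}$-case is symmetric. Once this vanishing is in hand, I would invoke the quotient property of the maximal tensor product — specifically, the fact that if $\cl J_i\subseteq \cl S_i$ are kernels ($i=1,2$) then the canonical map $\cl S_1\otimes_{\max}\cl S_2\to (\cl S_1/\cl J_1)\otimes_{\max}(\cl S_2/\cl J_2)$ is a complete quotient map (a standard consequence of the universal property of $\otimes_{\max}$ in the operator-system category of \cite{kptt,kptt_adv}) — to descend $\tilde s$ to the desired state $s$ on $\cl T_X\otimes_{\max}\cl T_Y$. The main obstacle here is the simultaneous invocation of this quotient property on both factors; if this causes friction one can instead redo the proof of \cite[Theorem 6.2]{tt-QNS} ab initio in the bistochastic setting, replacing the use of \cite[Theorem 5.2]{tt-QNS} by Theorem \ref{th_ucp} and the stochastic Stinespring factorisation by Theorem \ref{p_coor}, which avoids any reference to quotients of max tensor products altogether.
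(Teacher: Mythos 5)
Your direction (ii)$\Rightarrow$(i) matches the paper's proof essentially verbatim: both compose the given state with $q_X\otimes_{\max}q_Y$, invoke \cite[Theorem 6.2]{tt-QNS} to get $\Gamma\in\cl Q_{\rm ns}$, and then verify unitality and no-signalling of $\Gamma^*$ by direct substitution into (\ref{eq_gammas}), (\ref{eq_gammas3}) and the bistochastic relations (\ref{eq_birel}).

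For (i)$\Rightarrow$(ii), your argument is correct but runs dual to the paper's. The paper works entirely on the Choi-matrix side: $C \in (\cl L_{X,A}\otimes_{\min}\cl L_{Y,B})_+$ by \cite[Theorem 6.2]{tt-QNS}; all slice maps of $C$ land in $\cl L_X$ (resp.\ $\cl L_Y$) by the no-signalling of $\Gamma$ and $\Gamma^*$ together with Remark~\ref{r_c1c2}; by injectivity of $\otimes_{\min}$ one then gets $C\in(\cl L_X\otimes_{\min}\cl L_Y)_+$, and the identification $(\cl T_X\otimes_{\max}\cl T_Y)^{\rm d}\cong\cl L_X\otimes_{\min}\cl L_Y$ produces the state. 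You instead work on the state side: $\tilde s$ annihilates $\cl J_X\otimes\cl T_{Y,B}+\cl T_{X,A}\otimes\cl J_Y$, and then the assertion that $q_X\otimes_{\max}q_Y$ is a complete quotient map lets $\tilde s$ descend. The underlying computation is identical — your reconciliation of the constant $c_{y,y'}^{b,b'}$ with $\tilde s(1\otimes\tilde f_{y,y',b,b'})$ is exactly Remark~\ref{r_c1c2}. The trade-off is that the paper's route only needs injectivity of $\otimes_{\min}$, which holds by definition, whereas yours needs that $\otimes_{\max}$ carries a pair of complete quotient maps to a complete quotient map; in this finite-dimensional setting that is available, but the cleanest way to see it is to dualize the $\otimes_{\min}$-injectivity statement via \cite[Corollary 4.5]{CE2} and \cite[Proposition 1.16]{fp} (the same route the paper follows in Theorem~\ref{th_classSX}), so the quotient lemma is pulling in duality through the back door. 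Your fallback plan of redoing the argument of \cite[Theorem 6.2]{tt-QNS} ab initio, with Theorem~\ref{th_ucp} and Theorem~\ref{p_coor} in place of their stochastic counterparts, would also work but is heavier than the paper's slice-map argument.
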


\begin{proof}
(ii)$\Rightarrow$(i)
Suppose that $s$ is a state of $\cl T_X\otimes_{\max} \cl T_Y$ such that $\Gamma = \Gamma_s$, 
and let $\tilde{s} = s\circ (q_X\otimes_{\max} q_Y)$, where $q_X : \cl T_{X,A}\to \cl T_X$
(resp. $q_Y : \cl T_{Y,B}\to \cl T_Y$) is the quotient map (see the paragraph of equation (\ref{eq_JXd}));
we have that $\tilde{s}$ is a state of 
$\cl T_{X,A}\otimes_{\max} \cl T_{Y,B}$. Since $\Gamma = \Gamma_{\tilde{s}}$, by \cite[Theorem 6.2]{tt-QNS},
$\Gamma\in \cl Q_{\rm ns}$. 
In addition, 
\begin{eqnarray*}
\Gamma(I_{XY}) 
& = & 
\sum_{x,a,a'\in X}
\sum_{y,b,b'\in Y} s(e_{x,x,a,a'}\otimes f_{y,y,b,b'}) \epsilon_{a,a'}\otimes 
\epsilon_{b,b'}\\
& = & \sum_{a,a'\in X} \sum_{b,b'\in Y}\delta_{a,a'}\delta_{b,b'} \epsilon_{a,a'}\otimes 
\epsilon_{b,b'} = I_{XY}.
\end{eqnarray*}
We verify that $\Gamma^*$ is no-signalling: for any $\omega_X = (\lambda_{a,a'})_{a,a'} \in M_X$ and any $\omega_Y = (\mu_{b,b'})_{b,b'}\in M_Y$ with ${\rm Tr}(\omega_Y) = 0$, by (\ref{eq_gammas3}) we have

\begin{eqnarray*}
& & {\rm Tr}\mbox{}_{Y} \Gamma^*(\omega_X\otimes\omega_Y)\\
& = & 
{\rm Tr}\mbox{}_{Y} \sum_{x,x',a,a'\in X} \sum_{y,y',b,b'\in Y} 
\lambda_{a,a'} \mu_{b,b'} s(e_{x,x',a,a'}\otimes f_{y,y',b,b'}) \epsilon_{x,x'}\otimes \epsilon_{y,y'}\\
& = & 
\sum_{x,x',a,a'\in X} \sum_{b,b'\in Y} 
\lambda_{a,a'} \mu_{b,b'} \sum_{y\in Y} s(e_{x,x',a,a'}\otimes f_{y,y,b,b'}) \epsilon_{x,x'}\\
& = & 
\sum_{x,x',a,a'\in X} \sum_{b,b'\in Y} 
\lambda_{a,a'} \mu_{b,b'} \delta_{b,b'} s(e_{x,x',a,a'}\otimes 1) \epsilon_{x,x'}\\
& = & 
\left(\sum_{b\in Y} \mu_{b,b}\right) \sum_{x,x',a,a'\in X} \lambda_{a,a'} s(e_{x,x',a,a'}\otimes 1) \epsilon_{x,x'} = 0.
\end{eqnarray*}
Similarly, if $\omega_X\in M_X$ has trace zero and $\omega_Y\in M_Y$ is arbitrary then 
${\rm Tr}\mbox{}_{X} \Gamma^*(\omega_X\otimes\omega_Y) = 0$ and hence $\Gamma^*$ is no-signalling.

(i)$\Rightarrow$(ii)
Let $C = \left(C_{x,x',y,y'}^{a,a',b,b'}\right)_{x,x',y,y'}^{a,a',b,b'}$ be the Choi matrix of $\Gamma$; thus, 
the entries of $C$ are given by
$$C_{x,x',y,y'}^{a,a',b,b'} = \left\langle \Gamma(\epsilon_{x,x'}\otimes \epsilon_{y,y'}),
\epsilon_{a,a'}\otimes \epsilon_{b,b'}\right\rangle.$$
By \cite[Theorem 6.2]{tt-QNS}, $C\in (\cl L_{X,A}\otimes_{\min}\cl L_{Y,B})_+$. 

Let $\tilde C=\left(\tilde C_{a,a',b,b'}^{x,x',y,y'}\right)_{a,a',b,b'}^{x,x',y,y'}$ be the Choi matrix of $\Gamma^*$. As both $\Gamma$ and $\Gamma^*$ are no-signalling,
there exists scalars $\tilde{c}_{y,y'}^{b,b'}$, 
$\tilde{d}_{x,x'}^{a,a'}$, $c_{y,y'}^{b,b'}$ and 
$d_{x,x'}^{a,a'}$, such that 
$$\sum_{x\in X} \tilde C_{a,a',b,b'}^{x,x,y,y'} = \delta_{a,a'}\tilde{c}_{y,y'}^{b,b'}, \ \ \ y,y',b,b'\in Y,$$
$$\sum_{y\in Y} \tilde C_{a,a',b,b'}^{x,x',y,y} = \delta_{b,b'}\tilde{d}_{x,x'}^{a,a'}, \ \ \ x,x',a,a'\in X,$$
$$\sum_{a\in X} C_{a,a,b,b'}^{x,x',y,y'}=\delta_{x,x'} c_{y,y'}^{b,b'}, \ \ \ y,y',b,b'\in Y$$ 
and 
$$\sum_{b\in Y} C_{a,a',b,b}^{x,x',y,y'}=\delta_{y,y'} d_{x,x'}^{a,a'}, \ \ \ x,x',a,a'\in X.$$
Observe that the equalities $\tilde C_{a,a',b,b'}^{x',x,y,y'}=C_{x,x',y,y'}^{a,a',b,b'}$ hold. 
The relations, together with Remark \ref{r_c1c2}, 
now imply that $L_{\omega}(C)\in\cl L_X$ and $L_{\omega'}(C)\in\cl L_Y$ for all $\omega\in M_{YY}$ and all $\omega'\in M_{XX}$ (recall that $L_{\sigma}$ denotes the slice map along a functional $\sigma$). 
Thus, $C\in (\cl L_X\otimes_{\min}\cl L_Y)_+$. 
Statement (ii) now follows from the canonical identification 
$(\cl T_{X} \otimes_{\max} \cl T_{Y})^{\rm d}\cong \cl L_{X} \otimes_{\min} \cl L_{Y}.$
\end{proof}

\begin{theorem}\label{th_bicqc}
Let $X$ and $Y$ be finite sets and $\Gamma : M_{XY}\to M_{XY}$ be a linear map. The following are equivalent:
\begin{itemize}
\item[(i)] $\Gamma\in \cl Q_{\rm qc}^{\rm bi}$;
\item[(ii)] there exists a state $s : \cl T_{X} \otimes_{\rm c} \cl T_{Y} \to \bb{C}$ such that $\Gamma = \Gamma_s$;
\item[(iii)] there exists a state $s : \cl C_X \otimes_{\max} \cl C_{Y} \to \bb{C}$ such that 
$\Gamma = \Gamma_s$.
\end{itemize}
\end{theorem}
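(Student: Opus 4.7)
The plan is to prove (i)$\Rightarrow$(iii)$\Rightarrow$(ii)$\Rightarrow$(i), mirroring the pattern of Theorem \ref{th_bicmax} but leveraging Theorem \ref{th_ucp} and Proposition \ref{p_inject-comm} to transfer between the operator systems $\cl T_{X}$, $\cl T_{Y}$ and their universal C*-covers $\cl C_X$, $\cl C_Y$. For (i)$\Rightarrow$(iii), starting from bistochastic operator matrices $\tilde E$ and $\tilde F$ with commuting entries, acting on a Hilbert space $H$ with unit vector $\xi$, that witness $\Gamma \in \cl Q_{\rm qc}^{\rm bi}$, I would use Theorem \ref{th_ucp} to extend the assignments $e_{x,x',a,a'}\mapsto E_{x,x',a,a'}$ and $f_{y,y',b,b'}\mapsto F_{y,y',b,b'}$ to unital $*$-representations $\pi : \cl C_X \to \cl B(H)$ and $\sigma : \cl C_Y \to \cl B(H)$. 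Since $\cl T_X$ generates $\cl C_X$ as a C*-algebra (similarly for $\cl T_Y$ and $\cl C_Y$), the commutation of the generators of $\tilde E$ and $\tilde F$ lifts to commutation of $\pi(\cl C_X)$ with $\sigma(\cl C_Y)$; the universal property of $\otimes_{\max}$ for C*-algebras then produces a $*$-homomorphism $\pi\cdot\sigma: \cl C_X \otimes_{\max} \cl C_Y \to \cl B(H)$, whose vector state $s(w) := \langle(\pi\cdot\sigma)(w)\xi,\xi\rangle$ is the desired functional; comparing \eqref{eq_EFp} with \eqref{eq_gammas} gives $\Gamma = \Gamma_s$.

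For (iii)$\Rightarrow$(ii), the plan is to establish the complete order embedding $\cl T_X \otimes_{\rm c} \cl T_Y \subseteq_{\rm c.o.i.} \cl C_X \otimes_{\max} \cl C_Y$, so that the state in (iii) restricts to the required functional on $\cl T_X \otimes_{\rm c} \cl T_Y$, still implementing the same $\Gamma$ via \eqref{eq_gammas}. One application of Proposition \ref{p_inject-comm} yields $\cl T_X \otimes_{\rm c} \cl T_Y \subseteq_{\rm c.o.i.} \cl C_X \otimes_{\max} \cl T_Y$; by \cite[Theorem 6.7]{kptt}, the right-hand side coincides with $\cl C_X \otimes_{\rm c} \cl T_Y$ because $\cl C_X$ is a C*-algebra; applying the analogue of Proposition \ref{p_inject-comm} (with $Y$ in the role of $X$, and the symmetry of the c-tensor product) then embeds this further into $\cl C_X \otimes_{\max} \cl C_Y$.

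For (ii)$\Rightarrow$(i), the defining property of the commuting tensor product from \cite{kptt} provides, given a state $s$ on $\cl T_X \otimes_{\rm c} \cl T_Y$, a Hilbert space $H$, commuting unital completely positive maps $\phi : \cl T_X \to \cl B(H)$ and $\psi : \cl T_Y \to \cl B(H)$, and a unit vector $\xi \in H$ such that $s(u \otimes v) = \langle \phi(u)\psi(v)\xi, \xi \rangle$. I would then invoke Theorem \ref{th_ucp} to extend $\phi$ and $\psi$ to unital $*$-representations $\pi$ and $\sigma$ of $\cl C_X$ and $\cl C_Y$; their ranges commute by the same C*-algebra-generation argument used in (i)$\Rightarrow$(iii), so setting $E_{x,x',a,a'} := \pi(e_{x,x',a,a'})$ and $F_{y,y',b,b'} := \sigma(f_{y,y',b,b'})$ produces bistochastic operator matrices with commuting entries whose Choi matrix \eqref{eq_Choima} matches $\Gamma_s$, exhibiting $\Gamma_s \in \cl Q_{\rm qc}^{\rm bi}$. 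The main obstacle, as I see it, is the bookkeeping in (iii)$\Rightarrow$(ii), where one must chain two applications of Proposition \ref{p_inject-comm} around the identification $\cl C_X \otimes_{\max} \cl T_Y = \cl C_X \otimes_{\rm c} \cl T_Y$ in order to reach the embedding into $\cl C_X \otimes_{\max} \cl C_Y$; the remaining steps are direct consequences of the universal properties built up in Section \ref{s_bom}.
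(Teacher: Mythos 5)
Your proposal is correct and takes essentially the same route as the paper. The paper obtains the complete order embedding $\cl T_X \otimes_{\rm c} \cl T_Y \subseteq_{\rm c.o.i.} \cl C_X \otimes_{\max} \cl C_Y$ in one step by combining Theorem~\ref{th_ucp} (identifying $\cl C_X$ as the universal C*-cover of $\cl T_X$) with~\cite[Theorem 6.4]{kptt}, rather than chaining Proposition~\ref{p_inject-comm} twice around $\cl C_X \otimes_{\max} \cl T_Y = \cl C_X \otimes_{\rm c} \cl T_Y$ as you do, and uses Krein's Extension Theorem for the direction (ii)$\Rightarrow$(iii). Your cycle (i)$\Rightarrow$(iii)$\Rightarrow$(ii)$\Rightarrow$(i), with the direct (ii)$\Rightarrow$(i) via commuting ucp maps extended by Theorem~\ref{th_ucp} to commuting $*$-representations, is a harmless rearrangement of the paper's (ii)$\Leftrightarrow$(iii) plus (i)$\Leftrightarrow$(iii), built from the same ingredients: the universal property of $\cl C_X$, GNS, and the embedding of the commuting tensor product into the maximal C*-tensor product of the covers.
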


\begin{proof}
By Theorem \ref{th_ucp} and \cite[Theorem 6.4]{kptt}, 
$\cl T_{X} \otimes_{\rm c} \cl T_{Y} \subseteq \cl C_X \otimes_{\max} \cl C_{Y}$ completely 
order isomorphically and hence, by Krein's Extension Theorem, (ii) and (iii) are equivalent. 

(i)$\Rightarrow$(iii) 
follows from the universal property of $\cl C_X$ detailed in Theorem \ref{th_ucp} and arguments, similar to the ones in 
\cite[Theorem 6.3]{tt-QNS}. 

(iii)$\Rightarrow$(i)
The GNS representation of $s$ and the universal property of the maximal C*-algebraic tensor product
yield *-representations $\pi_X : \cl C_X\to \cl B(H)$ and $\pi_Y : \cl C_Y\to \cl B(H)$ with commuting ranges, 
and a unit vector $\xi\in H$, such that 
$s(u\otimes v) = \langle \pi_X(u)\pi_Y(v)\xi,\xi\rangle$, $u\in \cl C_X$,  $v\in \cl C_{Y}$. 
The claim follows by setting $E_{x,x',a,a'} = \pi_X(e_{x,x',a,a'})$ and $F_{x,x',a,a'} = \pi_Y(f_{y,y',b,b'})$,
and appealing to Theorem \ref{th_ucp}. 
\end{proof}

\begin{theorem}\label{th_bicqa}
Let $X$ and $Y$ be finite sets and $\Gamma : M_{XY}\to M_{XY}$ be a linear map. The following are equivalent:
\begin{itemize}
\item[(i)] $\Gamma\in \cl Q_{\rm qa}^{\rm bi}$;
%\item[(ii)] $\Gamma\in \overline{\cl Q_{\rm q}^{\rm bi}}$;
\item[(ii)] there exists a state $s : \cl T_{X} \otimes_{\min} \cl T_{Y} \to \bb{C}$ such that $\Gamma = \Gamma_s$;
\item[(iii)] there exists a state $s : \cl C_{X}\otimes_{\min}\cl C_{Y}\to \bb{C}$ such that 
$\Gamma = \Gamma_s$.
\end{itemize}
%Moreover, if $\Gamma\in \cl Q_{\rm qa}^{\rm bi}$ then $\Gamma^*\in \cl Q_{\rm qa}^{\rm bi}$. 
\end{theorem}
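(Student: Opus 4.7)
My plan is to mirror the proof of Theorem \ref{th_bicqc}, with $\otimes_{\min}$ in place of $\otimes_{\rm c}$, and to adapt the finite-dimensional approximation strategy of \cite[Theorem 6.5]{tt-QNS} via the bistochastic version of the universal property of $\cl T_X$ provided by Theorem \ref{th_ucp}. The equivalence (ii)$\Leftrightarrow$(iii) follows because Theorem \ref{th_ucp} provides the complete order embeddings $\cl T_X\hookrightarrow\cl C_X$ and $\cl T_Y\hookrightarrow\cl C_Y$, and injectivity of $\otimes_{\min}$ yields $\cl T_X\otimes_{\min}\cl T_Y\subseteq_{\rm c.o.i.}\cl C_X\otimes_{\min}\cl C_Y$. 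Krein's Extension Theorem then extends states from the smaller to the larger operator system, restriction is automatic, and since $s\mapsto\Gamma_s$ depends only on the values of $s$ at the finite set $\{e_{x,x',a,a'}\otimes f_{y,y',b,b'}\}$, both conditions produce the same $\Gamma$.

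For (i)$\Rightarrow$(iii), a quantum bicorrelation $\Gamma$ is realised by finite-dimensional Hilbert spaces $H_A,H_B$, bistochastic operator matrices $\tilde E$ on $H_A$ and $\tilde F$ on $H_B$, and a unit vector $\xi\in H_A\otimes H_B$. Theorem \ref{th_ucp} turns $\tilde E,\tilde F$ into unital $*$-representations $\pi_X:\cl C_X\to\cl B(H_A)$ and $\pi_Y:\cl C_Y\to\cl B(H_B)$, and since $\pi_X\otimes\pi_Y$ realises $\cl C_X\otimes_{\min}\cl C_Y$ as a $*$-subalgebra of $\cl B(H_A\otimes H_B)$, the functional $s(w):=\langle(\pi_X\otimes\pi_Y)(w)\xi,\xi\rangle$ is a state on $\cl C_X\otimes_{\min}\cl C_Y$ with $\Gamma=\Gamma_s$. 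An approximately quantum $\Gamma=\lim_n\Gamma_n$ lifts to a sequence of such states $(s_n)$, and any weak*-cluster point $s$ (which exists by compactness of the state space) will satisfy $\Gamma=\Gamma_s$ by continuity of evaluation at the finitely many generators.

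The main obstacle is (iii)$\Rightarrow$(i). I would fix faithful unital $*$-representations $\pi_X:\cl C_X\to\cl B(H_X)$, $\pi_Y:\cl C_Y\to\cl B(H_Y)$ on separable Hilbert spaces, so that $\cl C_X\otimes_{\min}\cl C_Y\hookrightarrow\cl B(H_X\otimes H_Y)$ is a complete order embedding; extend $s$ by Hahn-Banach to a state $\tilde s$ on $\cl B(H_X\otimes H_Y)$; and use weak*-density of finite-rank normal states to approximate $\tilde s$, to within any prescribed $\epsilon>0$ on the finitely many generators $\pi_X(e_{x,x',a,a'})\otimes\pi_Y(f_{y,y',b,b'})$, by a convex combination $\sum_{i=1}^r\lambda_i\omega_{\xi_i}$ of vector states for unit vectors $\xi_i\in H_X\otimes H_Y$. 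For each $i$, I would approximate $\xi_i$ in norm by a unit vector $\xi_i'\in K_X^{(i)}\otimes K_Y^{(i)}$ in a finite-dimensional product subspace, and form the compressions $\phi_X^{(i)}:\cl C_X\to\cl B(K_X^{(i)})$ and $\phi_Y^{(i)}:\cl C_Y\to\cl B(K_Y^{(i)})$ of $\pi_X,\pi_Y$ to these subspaces. These compressions are unital completely positive; by Theorem \ref{th_ucp} their restrictions to $\cl T_X,\cl T_Y$ yield bistochastic operator matrices on $K_X^{(i)},K_Y^{(i)}$, and because $\xi_i'$ lies inside the compressing subspaces, the outer projections act trivially and the compressed vector-state values on the generators agree with those of $\pi_X,\pi_Y$ at $\xi_i'$ (hence approximate those at $\xi_i$). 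Consolidating $\{(\phi_X^{(i)},\phi_Y^{(i)},\xi_i')\}_{i=1}^r$ via the block-diagonal unital completely positive maps $\bigoplus_i\phi_X^{(i)}$, $\bigoplus_i\phi_Y^{(i)}$ on $\bigoplus_iK_X^{(i)}$, $\bigoplus_iK_Y^{(i)}$ and the unit vector $\eta:=\sum_i\sqrt{\lambda_i}\xi_i'$ (whose cross terms in the vector-state computation vanish by orthogonality of the summands) produces a genuine quantum bicorrelation within $\epsilon$ of $\Gamma_s$ on the generators; sending $\epsilon\to 0$ then yields the desired convergent sequence of quantum bicorrelations, placing $\Gamma_s$ in $\cl Q_{\rm qa}^{\rm bi}$.
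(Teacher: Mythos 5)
Your proposal is correct and follows essentially the same route as the paper: (ii)$\Leftrightarrow$(iii) by injectivity of $\otimes_{\min}$, (i)$\Rightarrow$(iii) by passing to representations of $\cl C_X$, $\cl C_Y$ and taking a weak* cluster point of the resulting vector states, and (iii)$\Rightarrow$(i) by approximating the state by convex combinations of vector states and compressing to finite dimensions via Theorem \ref{th_ucp}. The only differences are cosmetic: you reach the vector-state approximation via a Hahn--Banach extension to $\cl B(H_X\otimes H_Y)$ and density of normal states (the paper cites the analogous fact from Kadison--Ringrose directly), and you truncate each vector to a finite-dimensional product subspace before block-diagonally consolidating, whereas the paper first forms the direct sum and then compresses by a net of finite-rank projections.
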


\begin{proof}
(ii)$\Leftrightarrow$(iii) follows from the injectivity of the minimal tensor product. 

(i)$\Rightarrow$(iii) 
Given $\varepsilon > 0$, 
let $E$ and $F$ be bistochastic operator matrices acting on finite dimensional Hilbert spaces 
$H_X$ and $H_Y$, respectively, 
and $\xi \in H_X\otimes H_Y$ be a unit vector, such that 
$$\left|\left\langle\Gamma(\epsilon_{x,x'}\otimes \epsilon_{y,y'}),
\epsilon_{a,a'}\otimes \epsilon_{b,b'}\right\rangle
- \left\langle \left(E_{x,x',a,a'}\otimes F_{y,y',b,b'}\right)\xi,\xi\right\rangle\right| < \varepsilon,$$
for all $x,x',a,a'\in X$, $y,y',b,b'\in Y$.
By Lemma \ref{l_rr}, there exists a *-representation $\pi_X$ (resp. $\pi_Y$) 
of $\cl C_{X}$ (resp. $\cl C_{Y}$) on $H_X$ (resp. $H_Y$) such that
$E_{x,x',a,a'} = \pi_X(e_{x,x',a,a'})$ (resp. $F_{y,y',b,b'} = \pi_Y(f_{y,y',b,b'})$), 
$x,x',a,a'\in X$ (resp. $y,y',b,b'\in Y$).
Let $s_\varepsilon$ be the state on $\cl C_{X}\otimes_{\min}\cl C_{Y}$ given by
$$s_\varepsilon\left(u\otimes v\right) = \left\langle \left(\pi_X(u)\otimes \pi_Y(v)\right)\xi,\xi\right\rangle,$$
and $s$ be a cluster point of the sequence $\{s_{1/n}\}_n$ in the weak* topology. Then
\begin{eqnarray*}
s\left(e_{x,x',a,a'}\otimes f_{y,y',b,b'}\right)
& =& 
\lim_{n\to\infty}s_{1/n}\left(e_{x,x',a,a'}\otimes f_{y,y',b,b'}\right)\\
& =& 
\left\langle\Gamma(e_{x} e_{x'}^*\otimes e_{y}e_{y'}^*), e_{a}e_{a'}\otimes e_{b}e_{b'}^* \right\rangle,
\end{eqnarray*}
giving $\Gamma = \Gamma_s$.

(iii)$\Rightarrow$(i) 
Let $s$ be a state satisfying (iv) and $\varepsilon > 0$. 
By \cite[Corollary 4.3.10]{kadison-ringrose}, 
there exist faithful *-representations
$\pi_X : \cl C_{X}\to\cl B(H_X)$ and $\pi_Y : \cl C_{Y}\to\cl B(H_Y)$, unit vectors 
$\xi_1,\ldots,\xi_n\in H_X\otimes H_Y$ and positive scalars $\lambda_1,\ldots,\lambda_n$, with $\sum_{i=1}^n\lambda_i=1$ such that
$$\left|s(e_{x,x',a,a'}\otimes f_{y,y',b,b'})
- \sum_{i=1}^n\lambda_i\left\langle \left(\pi_X(e_{x,x',a,a'})\otimes \pi_Y(f_{y,y',b,b'})\right)\xi_i,\xi_i\right\rangle\right|
< \varepsilon,$$
for all $x,x',a,a'\in X$, $y,y',b,b'\in Y$.
Let $\xi = \oplus_{i=1}^n\sqrt{\lambda_i}\xi_i\in \mathbb C^{n}\otimes(H_X\otimes H_Y)$; then $\|\xi\| = 1$.  
Set $E_{x,x',a,a'} = I_n \otimes \pi_X(e_{x,x',a,a'})$ and $F_{y,y',b,b'} = \pi_Y(f_{y,y',b,b'})$. 
Then $(E_{x,x'a,a'})_{x,x',a,a'}$ (resp. $(F_{y,y',b,b'})_{y,y',b,b'}$)
is a bistochastic operator matrix on $\bb{C}^n\otimes H_X$ (resp. $H_Y$), and 
$$\left|s\left(e_{x,x',a,a'}\otimes f_{y,y',b,b'}\right) 
- \left\langle E_{x,x'a,a'}\otimes F_{y,y',b,b'}\xi,\xi\right\rangle\right| < \varepsilon.$$
Let 
$(P_{\alpha})_{\alpha}$ (resp. $(Q_{\beta})_{\beta}$) be a net of finite rank projections on 
$H_X$ (resp. $H_B$), converging to the identity in the strong operator topology. 
Set $H_{\alpha} = P_{\alpha}H_A$ (resp. $K_{\beta} = Q_{\beta}H_B$), 
$E_{\alpha} = (I\otimes P_{\alpha})E(I\otimes P_{\alpha})$ 
(resp. $F_{\beta} = (I\otimes Q_{\beta})F(I\otimes Q_{\beta})$), 
and $\xi_{\alpha,\beta} = \frac{1}{\|(P_{\alpha}\otimes Q_{\beta})\xi\|}(P_{\alpha}\otimes Q_{\beta})\xi$
(note that $\xi_{\alpha,\beta}$ is eventually well-defined). 
Then $E_{\alpha}$ and $F_{\beta}$ are bistochastic operator matrices acting on $P_{\alpha}H$ and $Q_{\beta}K$, respectively, 
and the QNS correlation associated with the 
the triple $(E_{\alpha},F_{\beta},\xi_{\alpha,\beta})$ is a quantum bicorrelation. 
\end{proof}

\begin{remark}\label{r_qnsti}
\rm 
By Remark \ref{r_dualal}, 
\begin{equation}\label{eq_Qqcnew}
\cl Q_{\rm qc}^{\rm bi}\subseteq \cl Q_{\rm qc} \cap \cl Q_{\rm ns}^{\rm bi}. 
\end{equation}
We do not know if equality holds in (\ref{eq_Qqcnew}). 
The problem reduces to a question about the equality of canonical operator system structures. 
%Let $\cl S$ (resp. $\cl T$) be a finite dimensional operator system,
%$\cl I\subseteq\cl S$ (resp. $\cl J\subseteq\cl T$) be a kernel, and $q_{\cl S} : \cl S\to \cl S/\cl I$
%(resp. $q_{\cl T} : \cl T\to \cl T/\cl J$) be the quotient map. It is easy to verify that, 
%the subspace $\cl N := \cl I\otimes\cl T + \cl S\otimes\cl J$ is the kernel in 
%$\cl S \otimes_{\rm c} \cl T$ of the completely positive map $q_{\cl S}\otimes_{\rm c} q_{\cl T}$.
%It follows that,  
Indeed, it is not difficult to verify that the subspace
$\cl J_{XY} := \cl T_{X,A}\otimes\cl J_Y + \cl J_X \otimes \cl T_{Y,B}$
of the operator system $\cl T_{X,A}\otimes_{\rm c} \cl T_{Y,B}$ is a kernel, 
%$(\cl T_{X,A}\otimes_{\rm c} \cl T_{Y,B})/\cl J_{XY}$ is well-defined (see the definition of 
%$\cl J_X$ and $\cl J_Y$ in (\ref{eq_JXd})). 
and that the states on  
$(\cl T_{X,A}\otimes_{\rm c} \cl T_{Y,B})/\cl J_{XY}$ correspond precisely to the elements of 
$\cl Q_{\rm qc} \cap \cl Q_{\rm ns}^{\rm bi}$. 
However, 
while there is a canonical bijective unital completely positive map $(\cl T_{X,A}\otimes_{\rm c} \cl T_{Y,B})/\cl J_{XY}\to \cl T_X\otimes_{\rm c}\cl T_Y$, it is unclear whether its inverse is completely positive. 
If this is the case then Theorem \ref{th_bicqc} will imply the reverse inclusion in (\ref{eq_Qqcnew}). 
\end{remark}

%%%%%%%%%%%%%%%%%%%%%%%%%%%%%%%%%%%%%%%%%%%%%%%%%
%%%%%%%%%%%%%%%%%%%%%%%%%%%%%%%%%%%%%%%%%%%%%%%%%

\subsection{Classical bicorrelations}\label{s_classical}

In this subsection, we consider a class of correlations that constitute a natural classical counterpart 
of the quantum bicorrelations defined in Subsection \ref{s_quantumbc}. We fix finite sets $X$ and $Y$, and set $A = X$ and $B = Y$.

\begin{definition}\label{d_cbic}
An NS correlation $p = \{(p(a,b|x,y))_{a,b} : (x,y)\in X \times Y\}$ over the quadruple $(X,Y,X,Y)$ is called 
an \emph{NS bicorrelation} if the family 
$$p^* := \left\{(p(a,b|x,y))_{x,y} : (a,b)\in X \times Y\right\}$$ 
is an NS correlation. 
\end{definition}

We let $\Delta : M_{XY}\to \cl D_{XY}$ be the canonical diagonal expectation.
Given an NS correlation $p$ over $(X,Y,X,Y)$, we let 
$\cl E_p : \cl D_{XY}\to \cl D_{XY}$ be the (classical) information channel, given by 
$$\cl E_p(\epsilon_{x,x}\otimes\epsilon_{y,y}) = \sum_{a\in X}\sum_{b\in Y}
p(a,b|x,y) \epsilon_{a,a}\otimes\epsilon_{b,b}.$$
Further, for a given classical information channel $\cl E : \cl D_{XY}\to \cl D_{XY}$, let 
$\Gamma_{\cl E} : M_{XY}\to M_{XY}$
be the quantum channel, given by 
$$\Gamma_{\cl E}(\omega) = (\cl E\circ\Delta)(\omega), \ \ \ \omega\in M_{XY},$$
and set $\Gamma_p = \Gamma_{\cl E_p}$ for brevity. 
In the reverse direction, given a quantum channel $\Gamma : M_{XY}\to M_{XY}$, let $\cl E_{\Gamma} : \cl D_{XY}\to \cl D_{XY}$ be the classical information channel, defined by letting
$\cl E_{\Gamma}(\omega) = (\Delta\circ\Gamma)(\omega)$, $\omega\in \cl D_{XY}$. 
We note the relation $\cl E_{\Gamma_{\cl E}} = \cl E$.

\begin{proposition}\label{l_pstar}
Let $p$ be an NS bicorrelation over $(X,Y,X,Y)$. Then $\Gamma_{p^*} = \Gamma_p^*$.
Thus, if $p\in \cl C_{\rm ns}^{\rm bi}$ then 
$\Gamma_p\in \cl Q_{\rm ns}^{\rm bi}$. 
\end{proposition}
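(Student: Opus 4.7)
The plan is to check $\Gamma_{p^\ast}=\Gamma_p^\ast$ by a direct comparison on the matrix-unit basis of $M_{XY}$, and then read off the QNS-bicorrelation property from the defining no-signalling conditions on $p$ and $p^\ast$.

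I would begin by expanding both sides explicitly. Since $\Gamma_p=\cl E_p\circ\Delta$, one has
\[
\Gamma_p(\epsilon_{x,x'}\otimes\epsilon_{y,y'})=\delta_{x,x'}\delta_{y,y'}\sum_{a,b}p(a,b|x,y)\,\epsilon_{a,a}\otimes\epsilon_{b,b},
\]
while, using $p^\ast(x,y|a,b)=p(a,b|x,y)$,
\[
\Gamma_{p^\ast}(\epsilon_{a,a'}\otimes\epsilon_{b,b'})=\delta_{a,a'}\delta_{b,b'}\sum_{x,y}p(a,b|x,y)\,\epsilon_{x,x}\otimes\epsilon_{y,y}.
\]
With the pairing $\langle\rho,\omega\rangle=\Tr(\rho\omega^{\rm t})$ of (\ref{eq_rdu}), the matrix units satisfy $\langle\epsilon_{a,a'}\otimes\epsilon_{b,b'},\,\epsilon_{x,x'}\otimes\epsilon_{y,y'}\rangle=\delta_{a,x}\delta_{a',x'}\delta_{b,y}\delta_{b',y'}$; substituting this into the defining relation $\langle\Gamma_p^\ast(\omega),\rho\rangle=\langle\omega,\Gamma_p(\rho)\rangle$ collapses all sums and yields
\[
\langle\Gamma_p^\ast(\epsilon_{a,a'}\otimes\epsilon_{b,b'}),\,\epsilon_{x,x'}\otimes\epsilon_{y,y'}\rangle=\delta_{a,a'}\delta_{b,b'}\delta_{x,x'}\delta_{y,y'}\,p(a,b|x,y),
\]
which is exactly the value obtained by pairing $\Gamma_{p^\ast}(\epsilon_{a,a'}\otimes\epsilon_{b,b'})$ against $\epsilon_{x,x'}\otimes\epsilon_{y,y'}$. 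Non-degeneracy of the pairing gives $\Gamma_{p^\ast}=\Gamma_p^\ast$.

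For the last assertion, assume $p\in\cl C^{\rm bi}_{\rm ns}$. Unitality of $\Gamma_p$ reduces to $\sum_{x,y}p(a,b|x,y)=1$ for every $(a,b)$, which is exactly the statement that $p^\ast(\cdot,\cdot|a,b)$ is a probability distribution. The no-signalling conditions on $p$ translate into (\ref{eq_qns1})--(\ref{eq_qns2}) for $\Gamma_p$: for instance, if $\Tr(\rho_X)=0$, then using $\sum_a p(a,b|x,y)=:q(b|y)$, which is independent of $x$,
\[
\Tr\hspace{-0.05cm}\mbox{}_A\Gamma_p(\rho_X\otimes\rho_Y)=\Tr(\rho_X)\sum_{y,b}(\rho_Y)_{y,y}q(b|y)\,\epsilon_{b,b}=0,
\]
with a symmetric verification for (\ref{eq_qns2}). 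Applying the same reasoning to $p^\ast$ (which is an NS correlation by hypothesis) shows $\Gamma_{p^\ast}\in\cl Q_{\rm ns}$, and by the first part $\Gamma_{p^\ast}=\Gamma_p^\ast$. Hence $\Gamma_p$ is unital, lies in $\cl Q_{\rm ns}$, and has a dual in $\cl Q_{\rm ns}$; that is, $\Gamma_p\in\cl Q^{\rm bi}_{\rm ns}$.

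The calculation is essentially bookkeeping; the one mild pitfall is the non-standard duality (\ref{eq_rdu}) involving the transpose, which is precisely why the diagonal constraint $\delta_{x,x'}$ produced by $\Delta$ on the primal side becomes the constraint $\delta_{a,a'}$ on the dual side through the matrix-unit pairing. No further ingredient beyond the definitions of $\Gamma_p$, $p^\ast$, and the duality is needed.
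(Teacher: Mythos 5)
Your proof is correct and follows essentially the same route as the paper: both amount to a direct verification of $\Gamma_{p^*}=\Gamma_p^*$ from the definitions and the pairing \eqref{eq_rdu}, the only cosmetic difference being that the paper factors the computation through the intermediate identity $\cl E_{p^*}=\cl E_p^*$ together with the self-adjointness and idempotency of $\Delta$, whereas you evaluate both sides entrywise on matrix units. You also spell out the unitality and no-signalling checks for the second assertion, which the paper leaves implicit; those details are correct.
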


\begin{proof}
For $x,a\in X$ and $y,b\in Y$, we have 
\begin{eqnarray*}
\left\langle \cl E_p^*(\epsilon_{x,x}\otimes\epsilon_{y,y}),\epsilon_{a,a}\otimes\epsilon_{b,b}\right\rangle
& = & 
\left\langle \epsilon_{x,x}\otimes\epsilon_{y,y},\cl E_p(\epsilon_{a,a}\otimes\epsilon_{b,b})\right\rangle\\
& = & 
p(x,y|a,b) 
= 
p^*(a,b|x,y)\\
& = & 
\left\langle \cl E_{p^*}(\epsilon_{x,x}\otimes\epsilon_{y,y}),\epsilon_{a,a}\otimes\epsilon_{b,b}\right\rangle,
\end{eqnarray*}
implying that 
$\cl E_{p^*} = \cl E_p^*$. 
For $\omega_1,\omega_2\in M_{XY}$, we thus have 
\begin{eqnarray*}
\left\langle \Gamma_p^*(\omega_1),\omega_2\right\rangle
& = & 
\left\langle \omega_1,(\cl E_p\circ \Delta)(\omega_2)\right\rangle
= 
\left\langle \omega_1,(\Delta\circ \cl E_p\circ \Delta)(\omega_2)\right\rangle\\
& = &  
\left\langle \Delta(\omega_1),(\cl E_p\circ \Delta)(\omega_2)\right\rangle
=  
\left\langle (\cl E_p^*\circ \Delta)(\omega_1),\Delta(\omega_2)\right\rangle\\
& = & 
\left\langle (\cl E_{p^*}\circ \Delta)(\omega_1),\Delta(\omega_2)\right\rangle
= 
\left\langle \Gamma_{p^*}(\omega_1),\Delta(\omega_2)\right\rangle\\
& = &  
\left\langle \Gamma_{p^*}(\omega_1),\omega_2\right\rangle,
\end{eqnarray*}
completing the proof. 
\end{proof}

For ${\rm t}\in \{{\rm loc}, {\rm q}, {\rm qa}, {\rm qc}\}$, let 
$$\cl C_{\rm t}^{\rm bi} = \left\{p\in \cl C_{\rm ns}^{\rm bi} : \Gamma_p\in \cl Q_{\rm t}^{\rm bi}\right\}.$$
It is straightforward to verify that 
an NS bicorrelation $p$ over $(X,Y,X,Y)$ belongs to $\cl C_{\rm qc}^{\rm bi}$ precisely when 
there exist a Hilbert space $H$, a unit vector $\xi\in H$ and 
quantum magic squares $(E_{x,a})_{x,a\in X}$ and $(F_{y,b})_{y,b\in Y}$ with 
commuting entries, such that 
\begin{equation}\label{eq_bicore}
p(a,b|x,y) = \langle E_{x,a} F_{y,b}\xi,\xi\rangle, \ \ \ x,a\in X, y,b\in Y.
\end{equation}
Similarly, 
$p\in \cl C_{\rm q}^{\rm bi}$ precisely when
the representation (\ref{eq_bicore}) is achieved for $H = H_A\otimes H_B$, where $H_A$ and $H_B$ are 
finite dimensional Hilbert spaces, $E_{x,a} = E_{x,a}' \otimes I_{H_B}$ and 
$F_{y,b} = I_{H_A}\otimes F_{y,b}'$, $x,a\in X$, $y,b\in Y$.  
Finally, $p\in \cl C_{\rm loc}^{\rm bi}$ precisely when $p$ is 
the convex combinations of correlations of the form 
$p^{(1)}(a|x) p^{(2)}(b|y)$, where $(p^{(1)}(a|x))_{x,a}$ and 
$(p^{(2)}(b|y))_{y,b}$ are (scalar) bistochastic matrices.

For a linear functional $s : \cl S_X\otimes \cl S_Y\to \bb{C}$, let 
$p_s : X\times Y\times X\times Y\to \bb{C}$ be the function given by 
$$p_s(a,b|x,y) = s(e_{x,a}\otimes e_{y,b}), \ \ \ x,a\in X, y,b\in Y.$$

\begin{theorem}\label{th_classSX}
Let $X$ and $Y$ be finite sets and 
$p$
be an NS correlation over $(X,Y,X,Y)$. 
Consider the statements
\begin{itemize}
\item[(i)] $p$ is an NS bicorrelation;

\item[(ii)] there exists a state $s : \cl S_{X} \otimes_{\max} \cl S_{Y} \to \bb{C}$ such that $p = p_s$;
\end{itemize}

\begin{itemize}
\item[(i')] $p\in \cl C_{\rm qc}^{\rm bi}$;
\item[(ii')] there exists a state $s : \cl S_{X} \otimes_{\rm c} \cl S_{Y} \to \bb{C}$ such that $p = p_s$;
\end{itemize}

\begin{itemize}
\item[(i'')] $p\in \cl C_{\rm qa}^{\rm bi}$;
\item[(ii'')] there exists a state $s : \cl S_{X} \otimes_{\min} \cl S_{Y} \to \bb{C}$ such that $p = p_s$.
\end{itemize}
Then (i)$\Leftrightarrow$(ii), (i')$\Leftrightarrow$(ii') and (i'')$\Leftrightarrow$(ii'').
\end{theorem}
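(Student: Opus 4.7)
All three equivalences will be proved in parallel with their quantum-input counterparts from Theorems \ref{th_bicmax}--\ref{th_bicqa}, with the operator system $\cl S_X$ replacing $\cl T_X$ and Theorem \ref{th_ucpclasb} playing the role of Theorem \ref{th_ucp}. In each backward direction (ii) $\Rightarrow$ (i), I will verify directly that $p_s(a,b|x,y) = s(e_{x,a}\otimes e_{y,b})$ is an NS bicorrelation for any state $s$: positivity is immediate from $e_{x,a}\otimes e_{y,b}\geq 0$, while the normalisation $\sum_{a,b}p_s(a,b|x,y)=1$, both no-signalling marginals of $p_s$, and both marginals of $p_s^*$ all collapse to $s(1\otimes 1)=1$ once the bistochastic relations $\sum_a e_{x,a} = \sum_y e_{y,a} = 1$ in $\cl S_X$ (and their analogues in $\cl S_Y$) are applied.

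For (i) $\Rightarrow$ (ii) in the max case, I will go through the quantisation: by Proposition \ref{l_pstar}, $\Gamma_p \in \cl Q_{\rm ns}^{\rm bi}$, so Theorem \ref{th_bicmax} provides a state $\tilde s$ on $\cl T_X\otimes_{\max}\cl T_Y$ with $\Gamma_p = \Gamma_{\tilde s}$; pulling $\tilde s$ back along the unital completely positive map $\cl S_X\otimes_{\max}\cl S_Y \to \cl T_X\otimes_{\max}\cl T_Y$ given by the functoriality of $\otimes_{\max}$ applied to the inclusions $\cl S_X\hookrightarrow \cl T_X$, $\cl S_Y\hookrightarrow \cl T_Y$ produces the required state. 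For (ii') $\Rightarrow$ (i'), the universal property of the commuting tensor product yields a Hilbert space $H$, UCP maps $\phi:\cl S_X\to\cl B(H)$, $\psi:\cl S_Y\to\cl B(H)$ with commuting ranges, and a unit vector $\xi$ with $s(u\otimes v)=\langle\phi(u)\psi(v)\xi,\xi\rangle$; Theorem \ref{th_ucpclasb} converts $\phi,\psi$ into commuting quantum magic squares $E_{x,a}=\phi(e_{x,a})$, $F_{y,b}=\psi(e_{y,b})$, placing $p_s$ in $\cl C_{\rm qc}^{\rm bi}$. The converse (i') $\Rightarrow$ (ii') reconstructs $\phi,\psi$ from given commuting magic squares via Theorem \ref{th_ucpclasb} and builds $s$ from the universal property of $\otimes_{\rm c}$.

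For (i'') $\Rightarrow$ (ii''), writing each approximating $p^{(n)}\in\cl C_{\rm q}^{\rm bi}$ in the form $\langle(E^{(n)}_{x,a}\otimes F^{(n)}_{y,b})\xi_n,\xi_n\rangle$ and applying Theorem \ref{th_ucpclasb} factor-by-factor gives UCP maps $\phi_n:\cl S_X\to\cl B(H_X^{(n)})$ and $\psi_n:\cl S_Y\to\cl B(H_Y^{(n)})$; the vector state of $\phi_n\otimes_{\min}\psi_n$ at $\xi_n$ (UCP by \cite[Theorem 4.6]{kptt}) is a state $s^{(n)}$ on $\cl S_X\otimes_{\min}\cl S_Y$ with $p_{s^{(n)}}=p^{(n)}$, and since $\cl S_X\otimes_{\min}\cl S_Y$ is finite-dimensional, any norm-cluster point $s$ of $(s^{(n)})$ satisfies $p_s=p$. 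For (ii'') $\Rightarrow$ (i''), I extend $s$ by Krein's theorem through the complete order embedding $\cl S_X\otimes_{\min}\cl S_Y \subseteq \cl C_X\otimes_{\min}\cl C_Y$ (injectivity of $\otimes_{\min}$), decompose the extension as $\sum_i\lambda_i\langle(\pi_X(\cdot)\otimes\pi_Y(\cdot))\xi_i,\xi_i\rangle$ via \cite[Corollary 4.3.10]{kadison-ringrose} with $*$-representations $\pi_X,\pi_Y$ of $\cl C_X$, $\cl C_Y$, and bundle into the direct-sum vector $\xi=\bigoplus_i\sqrt{\lambda_i}\xi_i$ exactly as in the proof of Theorem \ref{th_bicqa} to manufacture approximating $\cl C_{\rm q}^{\rm bi}$-correlations.

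The principal obstacle is the max direction (i) $\Rightarrow$ (ii): the non-injectivity of $\otimes_{\max}$ on operator subsystems rules out any direct extension argument from within $\cl S_X\otimes_{\max}\cl S_Y$, so the detour through $\cl T_X$ and Proposition \ref{l_pstar} is essential. A subsidiary technical check, in the commuting and minimal cases, is that the ranges produced by Theorem \ref{th_ucpclasb} indeed commute (respectively, that the ensuing tensor map is unital completely positive into the desired target), which follows from the entries of a quantum magic square linearly generating the corresponding image operator system.
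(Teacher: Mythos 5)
Your proposal is correct in substance and, for the commuting and minimal cases, follows essentially the same architecture as the paper: everything is reduced to Theorems \ref{th_bicqc} and \ref{th_bicqa} by moving states between the $\cl S$- and the $\cl T$-level. The paper implements the backward directions (ii')$\Rightarrow$(i') and (ii'')$\Rightarrow$(i'') by lifting $s$ along the quotient maps $\beta_X\otimes\beta_Y : \cl T_X\otimes_{\tau}\cl T_Y\to \cl S_X\otimes_{\tau}\cl S_Y$ of Proposition \ref{p_SXPXd} and then invoking the corresponding $\cl T$-theorem, whereas you unpack the state directly into commuting (resp.\ tensor-form) quantum magic squares via Theorem \ref{th_ucpclasb}; both routes are valid, and the commuting-ranges point you flag is correctly resolved by noting that the entries of a quantum magic square, together with $1$, span the image of $\cl S_X$. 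For (i)$\Leftrightarrow$(ii) the paper argues instead through the duality $(\cl S_X\otimes_{\max}\cl S_Y)^{\rm d}\cong \cl M_X\otimes_{\min}\cl M_Y$ (Proposition \ref{p_SXPXd}, Corollary \ref{c_SXPXd} and \cite[Proposition 1.16]{fp}) and a Choi-matrix computation modelled on Theorem \ref{th_bicmax}; your detour through $\cl T_X\otimes_{\max}\cl T_Y$ via Proposition \ref{l_pstar} followed by restriction along the unital completely positive map $\iota_X\otimes_{\max}\iota_Y$ is a legitimate alternative that has the advantage of making all three cases uniform.

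One step needs repair. In (i'')$\Rightarrow$(ii'') you begin by ``writing each approximating $p^{(n)}\in\cl C_{\rm q}^{\rm bi}$ \dots'', but the hypothesis $p\in\cl C_{\rm qa}^{\rm bi}$ says only that $\Gamma_p$ is a limit of quantum QNS bicorrelations $\Gamma_n : M_{XY}\to M_{XY}$; it does not directly hand you classical approximants in $\cl C_{\rm q}^{\rm bi}$. You must first produce them, for instance by setting $q^{(n)}(a,b|x,y)=\langle\Gamma_n(\epsilon_{x,x}\otimes\epsilon_{y,y}),\epsilon_{a,a}\otimes\epsilon_{b,b}\rangle$ and observing that the diagonal entries $(\tilde E^{(n)}_{x,x,a,a})_{x,a}$ of the bistochastic operator matrices underlying $\Gamma_n$ form quantum magic squares, so that $q^{(n)}\in\cl C_{\rm q}^{\rm bi}$ and $q^{(n)}\to p$; alternatively, restrict the state supplied by Theorem \ref{th_bicqa} along $\iota_X\otimes_{\min}\iota_Y$, which is in effect what the paper does. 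With that one-line fix the argument is complete.
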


\begin{proof}
(i)$\Leftrightarrow$(ii)
By Proposition \ref{p_SXPXd} and \cite[Proposition 1.16]{fp}, 
the states of the maximal tensor product $\cl S_{X} \otimes_{\max} \cl S_{Y}$ correspond in a canonical fashion 
to the elements of $\cl M_X\otimes_{\min} \cl M_Y$. 
The proof of the claim can now be completed using a 
straightforward modification of the proof of Theorem \ref{th_bicmax}. 

(i')$\Rightarrow$(ii') 
Write 
$\iota_X : \cl S_X\to \cl T_X$ and $\iota_Y : \cl S_Y\to \cl T_Y$ for the inclusion maps and 
let $p\in \cl C_{\rm qc}^{\rm bi}$. 
By Theorem \ref{th_bicqc}, there exists a state $s : \cl T_X\otimes_{\rm c}\cl T_Y\to \bb{C}$
such that $\Gamma_p = \Gamma_s$. Let 
$\tilde{s} = s\circ(\iota_X\otimes\iota_Y)$; then $\tilde{s}$ is a state on $\cl S_X\otimes_{\rm c}\cl S_Y$
for which $p = p_{\tilde{s}}$.

(ii')$\Rightarrow$(i') 
Let $s : \cl S_{X} \otimes_{\rm c} \cl S_{Y} \to \bb{C}$ be such that $p = p_s$, and  
let $\beta_X : \cl T_X\to \cl S_X$ (resp. $\beta_Y : \cl T_Y\to \cl S_Y$) be the quotient map, as defined in the 
proof of Proposition \ref{p_SXPXd}. We have that 
$$\tilde{s} := s \circ (\beta_X\otimes\beta_Y) : \cl T_X\otimes_{\rm c} \cl T_Y \to \bb{C}$$
is a state. 
By Theorem \ref{th_bicqc}, the map $\Gamma_{\tilde{s}} : M_{XY}\to M_{XY}$, 
corresponding to $\tilde{s}$ via (\ref{eq_gammas}), 
is a quantum commuting QNS bicorrelation. 
Since $\Gamma_{\tilde{s}} = \Gamma_p$, we have that $p\in \cl C_{\rm qc}^{\rm bi}$.

(i'')$\Leftrightarrow$(ii'') follows in a similar way as the equivalence (i')$\Leftrightarrow$(ii'), using 
Theorem \ref{th_bicqa} in the place of Theorem \ref{th_bicqc}. 
\end{proof}

%%%%%%%%%%%%%%%%%%%%%%%%%%%%%%%%%%%%%%%%%%%%%%%%
%%%%%%%%%%%%%%%%%%%%%%%%%%%%%%%%%%%%%%%%%%%%%%%%

\section{Concurrent bicorrelations}\label{s_concbic}

Throughout the section, let $X$ be a finite set and $Y = A = B = X$.
Let $J_X = \frac{1}{|X|} \sum_{x,y\in X} \nep_{x,y}\otimes \nep_{x,y}$ be the 
canonical maximally entangled state in $M_{XX}$. 
We specialise the definition of a concurrent QNS correlation from \cite{bhtt}:

\begin{definition}\label{d_biconc}
A QNS bicorrelation 
$\Gamma : M_{XX}\to M_{AA}$ is called \emph{concurrent} if 
$\Gamma(J_X) = J_A$.
\end{definition}

For ${\rm t}\in \{{\rm loc}, {\rm q}, {\rm qa}, {\rm qc}, {\rm ns}\}$, we let $\cl Q_{\rm t}^{\rm bic}$
be the set of all concurrent bicorrelations that belong to $\cl Q_{\rm t}^{\rm bi}$.

\begin{remark}\label{r_biconc}
\rm 
Note that if $\Gamma\in \cl Q_{\rm ns}^{\rm bic}$, then $\Gamma^* \in  \cl Q_{\rm ns}^{\rm bic}$ as well.  Indeed, since  $\Gamma$ is unital, its dual map $\Gamma^* : M_{AA}\to M_{XX}$
is trace-preserving; thus, $\Tr(\Gamma^*(J_A)) = 1$. 
Therefore
\begin{eqnarray*}
1 
=  
\left\langle \Gamma(J_X),J_A\right\rangle 
\hspace{-0.2cm}& = &\hspace{-0.2cm} \left\langle J_X,\Gamma^*(J_A)\right\rangle
= \left|\left\langle J_X,\Gamma^*(J_A)\right\rangle\right|\\
& \leq & \hspace{-0.2cm} \|J_X\|_2\|\Gamma^*(J_A)\|_2
\leq  
\|\Gamma^*(J_A)\|_1 = 1.
\end{eqnarray*}
The equality clause in the Cauchy-Schwarz inequality
now implies that $\Gamma^*(J_A)$ is a multiple of $J_X$. If $\Gamma^*(J_A)=\alpha J_X$ for some $\alpha \in \bb{C}$, then $\alpha\langle J_X,J_X\rangle=\langle \Gamma^*(J_A),J_X\rangle=\langle J_A,\Gamma(J_X)\rangle=\langle J_A,J_A\rangle$,  giving $\alpha=1$.
\end{remark}

The universal C*-algebra generated by the entries of a unitary matrix $(\tilde{u}_{a,x})_{a,x\in X}$ (known as the 
\emph{Brown algebra}) was 
first studied by  L. G. Brown \cite{b}. 
We will introduce a subquotient of the Brown algebra, whose traces will be shown to 
represent concurrent bicorrelations of different types. 
First, set 
$$\tilde{u}_{x,x',a,a'} = \tilde{u}_{a,x}^*\tilde{u}_{a',x'}, \ \ \ 
x,x',a,a'\in X,$$
and let $\cl U_{X,A}$ be the 
$C^*$-subalgebra of the Brown algebra, generated by the set $\{\tilde{u}_{x,x',a,a'} : x,x',a,a'\in X\}$.

\begin{lemma}\label{l_bhtt}
If $\pi : \cl U_{X,A}\to \cl B(H)$ is a unital *-representation then there exists a block operator unitary $U = (U_{a,x})_{a,x}$ such that 
$\pi(\tilde{u}_{x,x',a,a'}) = U_{a,x}^*U_{a',x'}$, $x,x',a,a'\in X$. 
\end{lemma}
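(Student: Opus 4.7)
The plan is to encode $\pi$ as a unital $*$-homomorphism from $M_X$ into $M_X \otimes \cl B(H) = \cl B(\bb{C}^X \otimes H)$, and then read the block operator unitary off the standard structure theory for $*$-representations of matrix algebras. Concretely, I would define $\Phi : M_X \to M_X \otimes \cl B(H)$ by
$$\Phi(\epsilon_{a,a'}) = \sum_{x,x'\in X} \epsilon_{x,x'} \otimes \pi(\tilde{u}_{x,x',a,a'}),$$
extended linearly, and show $\Phi$ is a unital $*$-homomorphism. Once this is established, the classification of unital $*$-representations of $M_X$ yields a Hilbert space $K$ and a unitary $U : \bb{C}^X \otimes H \to \bb{C}^X \otimes K$ with $\Phi(T) = U^*(T \otimes I_K) U$, and computing the $(x,x')$-block of the right-hand side for $T = \epsilon_{a,a'}$ gives exactly $U_{a,x}^* U_{a',x'} = \pi(\tilde{u}_{x,x',a,a'})$, as required.

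Verifying the three properties of $\Phi$ comes down to translating each of the two Brown algebra relations through $\pi$. Unitality reduces, via $\sum_{a} \tilde{u}_{a,x}^* \tilde{u}_{a,x'} = \delta_{x,x'}\cdot 1$ (encoding $U^*U = I$), to $\Phi(I_X) = I_X \otimes I_H$. The $*$-preservation is immediate from $\tilde{u}_{x,x',a,a'}^* = \tilde{u}_{x',x,a',a}$ and the fact that $\pi$ is a $*$-representation. For multiplicativity, I would expand $\Phi(\epsilon_{a,a'})\Phi(\epsilon_{b,b'})$, use $\epsilon_{x,x'}\epsilon_{y,y'} = \delta_{x',y}\epsilon_{x,y'}$, and reduce to
$$\sum_{x'} \pi(\tilde{u}_{a,x}^* \tilde{u}_{a',x'} \tilde{u}_{b,x'}^* \tilde{u}_{b',y'}) = \pi\Bigl(\tilde{u}_{a,x}^* \Bigl(\sum_{x'} \tilde{u}_{a',x'} \tilde{u}_{b,x'}^*\Bigr) \tilde{u}_{b',y'}\Bigr);$$
the second Brown algebra relation $\sum_{x'} \tilde{u}_{a',x'} \tilde{u}_{b,x'}^* = \delta_{a',b}\cdot 1$ (encoding $UU^* = I$) then collapses the inner sum to the identity, yielding $\delta_{a',b}\pi(\tilde{u}_{x,y',a,b'})$, which matches $\Phi(\epsilon_{a,a'}\epsilon_{b,b'}) = \delta_{a',b}\Phi(\epsilon_{a,b'})$.

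The conceptually important step is the multiplicativity verification, and it is precisely here that both defining relations of the Brown algebra enter simultaneously. This is also why the construction produces a genuine block unitary rather than merely a block isometry: using only $U^*U = I$, one would obtain a stochastic operator matrix and a Stinespring-type factorisation delivering an isometry in the spirit of \cite[Theorem 3.1]{tt-QNS}, whereas the relation $UU^* = I$ is exactly what promotes $\Phi$ to a $*$-homomorphism, and consequently the factorisation to a block unitary $U$.
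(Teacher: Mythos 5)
Your proof is correct, and it takes a genuinely different route from the paper's. The paper factors $\pi$ through the universal C*-algebra $\cl C_{X,A}$ of a block operator isometry: it invokes the *-epimorphism $\rho : \cl C_{X,A}\to \cl U_{X,A}$ with kernel $\cl J$ from \cite[Lemma 4.2]{bhtt}, applies \cite[Lemma 5.1]{tt-QNS} to realise $\pi\circ\rho$ via a block operator \emph{isometry} $U$, and then uses the vanishing of $\pi\circ\rho$ on $\cl J$, together with a non-degeneracy argument for the associated ternary representation and a square-root trick, to upgrade $U$ to a unitary. You instead assemble the operators $\pi(\tilde{u}_{x,x',a,a'})$ directly into a map $\Phi : M_A\to M_X\otimes\cl B(H)$ and verify that it is a unital *-homomorphism, after which the structure theory of representations of matrix algebras hands you the block unitary in one step. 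Your verification is sound: the identities you use, namely $\sum_a \tilde u_{a,x}^*\tilde u_{a,x'}=\delta_{x,x'}1$ and $\sum_{x'}\tilde u_{x,x',a,a'}\tilde u_{x',y',b,b'}=\delta_{a',b}\tilde u_{x,y',a,b'}$, are equalities between elements of $\cl U_{X,A}$ that hold in the ambient Brown algebra (even though the intermediate factor $\sum_{x'}\tilde u_{a',x'}\tilde u_{b,x'}^*$ is not itself in $\cl U_{X,A}$, the collapsed identity is), so applying $\pi$ is legitimate. Your closing remark is also the right diagnosis: with only $\tilde U^*\tilde U=I$ one gets a u.c.p.\ map and hence a stochastic operator matrix and an isometry, exactly as in Theorem \ref{p_coor}; the relation $\tilde U\tilde U^*=I$ is what promotes $\Phi$ to a homomorphism and the factorisation to a unitary. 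What your approach buys is brevity and self-containment — no TRO or ternary-representation machinery and no external lemmas; what the paper's approach buys is consistency with the quotient framework ($\cl C_{X,A}/\cl J$) that is reused immediately afterwards in Lemmas \ref{l_UXAquo} and \ref{l_J1J2}, where the explicit role of the ideal $\cl J$ matters.
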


\begin{proof}
Let $\cl V_{X,A}$ be the universal TRO of an isometry $(v_{a,x})_{a,x}$, 
as defined in \cite[Section 5]{tt-QNS}. In the sequel, we will consider products
$v_{a_1,x_1}^{\varepsilon_1}v_{a_2,x_2}^{\varepsilon_2}\cdots v_{a_k,x_k}^{\varepsilon_k}$, where 
$\varepsilon_i$ is either the empty symbol or $\ast$, and $\varepsilon_i\neq \varepsilon_{i+1}$ for all $i$, as elements of either $\cl V_{X,A}$, $\cl V_{X,A}^*$, $\cl C_{X,A}$ or the left C*-algebra corresponding to the TRO $\cl V_{X,A}$. 
Let $\cl J$ be the closed ideal of $\cl C_{X,A}$, generated by the elements 
$$\sum_{x\in X} \tilde{e}_{y,x,b,a}\tilde{e}_{x,y,a,b} - \tilde{e}_{y,y,b,b}, \ \ \ y, a, b\in X.$$
By \cite[Lemma 4.2]{bhtt}, the map  $\rho : \tilde{e}_{x,x',a,a'}\mapsto \tilde{u}_{x,x',a,a'}$, $x,x',a,a'\in X$ extends to  a surjective *-homomorphism $\rho : \cl C_{X,A}\to \cl U_{X,A}$ with $\ker\rho = \cl J$.
Let $\pi : \cl U_{X,A}\to \cl B(H)$ be a *-representation. Then 
$\pi\circ \rho : \cl C_{X,A}\to \cl B(H)$ is a *-representation that annihilates $\cl J$. 
By \cite[Lemma 5.1]{tt-QNS}, there exists a block operator isometry $U = (U_{a,x})_{a,x\in X}$, 
where $U_{a,x}\in \cl B(H,K)$ for some Hilbert space $K$, $x,a\in X$, such that 
$(\pi\circ \rho)(\tilde{e}_{x,x',a,a'}) = U_{a,x}^*U_{a',x'}$, $x,x',a,a'\in X$.

By the definition of $\cl V_{X,A}$, the operator matrix $U$ gives rise to a canonical ternary representation 
$\theta_U : \cl V_{X,A}\to \cl B(H,K)$. Without loss of generality, 
we can assume that $K = \overline{{\rm span} (\theta_U(\cl V_{X,A})H)}$.
The fact that $(\pi\circ \rho)(\cl J) = \{0\}$ now implies that 
\begin{equation}\label{eq_sqrag}
U_{b,y}^*\left(I - \sum_{x\in X} U_{a,x} U_{a,x}^*\right) U_{b,y} = 0, \ \ \ y, a, b\in X.
\end{equation}
Since $UU^*\leq I$, we have that $I - \sum_{x\in X} U_{a,x} U_{a,x}^*\geq 0$, and hence
(\ref{eq_sqrag}) reads
$$\left(I - \sum_{x\in X} U_{a,x} U_{a,x}^*\right)^{1/2} U_{b,y} = 0, \ \ \ y, a, b\in X,$$
showing further that 
$$\left\langle \left(I - \sum_{x\in X} U_{a,x} U_{a,x}^*\right) T\xi,T\xi\right\rangle = 0, 
\ \ a\in X, \xi\in H, T\in \theta_U(\cl V_{X,A}).$$
By polarisation, we have $\sum_{x\in X} U_{a,x} U_{a,x}^* = I$, $a\in X$.
As $I-UU^*$ is a positive block-diagonal operator with the zero diagonal, $I-UU^*=0$; thus, $U$ is unitary. 
Since $U_{a,x}^*U_{b,y} = \pi(\tilde{u}_{x,y,a,b})$, $x,y,a,b\in X$, the proof is complete. 
\end{proof}

Recall that $\tilde{e}_{x,x',a,a'}$ are the canonical generators of the C*-algebra $\cl C_{X,A}$ (so that the matrix $\left(\tilde{e}_{x,x',a,a'}\right)_{x,x',a,a'}$ is a universal stochastic operator matrix).  Let 
$$\tilde{g}_{y,z,b,c}^{x,x'} = 
\delta_{x,x'} \tilde{e}_{y,z,b,c} - \sum_{a\in X} \tilde{e}_{y,x,b,a} \tilde{e}_{x',z,a,c}$$ 
and
$$\tilde{h}_{y,z,b,c}^{a,a'} = 
\delta_{a,a'} \tilde{e}_{y,z,b,c} - \sum_{x\in X} \tilde{e}_{y,x,b,a} \tilde{e}_{x,z,a',c},$$
and $\tilde{\cl J}_1$ (resp. $\tilde{\cl J}_2$) be the closed ideal of $\cl C_{X,A}$, generated by 
$\tilde{g}_{y,z,b,c}^{x,x'}$ (resp. $\tilde{h}_{y,z,b,c}^{a,a'}$), 
$y, z, b, c, x, x'\in X$ (resp. $y, z, b, c, a, a'\in X$).

\begin{lemma}\label{l_UXAquo}
Up to a canonical *-isomorphism, $\cl C_{X,A}/\tilde{\cl J}_2 \simeq \cl U_{X,A}$.
\end{lemma}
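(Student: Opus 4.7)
The plan is to leverage Lemma \ref{l_bhtt}, which supplies a surjective $\ast$-homomorphism $\rho:\cl C_{X,A}\to\cl U_{X,A}$ determined by $\rho(\tilde e_{x,x',a,a'}) = \tilde u_{a,x}^*\tilde u_{a',x'}$, and which identifies $\ker\rho$ with the closed ideal $\cl J\subseteq\cl C_{X,A}$ generated by $\sum_{x\in X}\tilde e_{y,x,b,a}\tilde e_{x,y,a,b} - \tilde e_{y,y,b,b}$ for $y,a,b\in X$. Consequently it suffices to prove $\cl J=\tilde{\cl J}_2$, for then $\rho$ factors through the desired $\ast$-isomorphism $\cl C_{X,A}/\tilde{\cl J}_2\to\cl U_{X,A}$.

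The inclusion $\cl J\subseteq\tilde{\cl J}_2$ is essentially a bookkeeping observation. Specialising the defining formula for $\tilde h_{y,z,b,c}^{a,a'}$ to $y=z$, $b=c$ and $a=a'$ yields
\[
\tilde h_{y,y,b,b}^{a,a} \;=\; \tilde e_{y,y,b,b} - \sum_{x\in X}\tilde e_{y,x,b,a}\tilde e_{x,y,a,b},
\]
which is the negative of the canonical generator of $\cl J$. Hence every generator of $\cl J$ already lies in $\tilde{\cl J}_2$.

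For the reverse inclusion $\tilde{\cl J}_2\subseteq\ker\rho$, I would simply evaluate $\rho$ on the generators of $\tilde{\cl J}_2$. A direct substitution gives
\[
\rho\bigl(\tilde h_{y,z,b,c}^{a,a'}\bigr)
\;=\; \tilde u_{b,y}^*\Bigl(\delta_{a,a'} 1 - \sum_{x\in X}\tilde u_{a,x}\tilde u_{a',x}^*\Bigr)\tilde u_{c,z}.
\]
Since the block matrix $U = (\tilde u_{a,x})_{a,x}$ is a unitary inside the Brown algebra, the identity $UU^*=I$ reads $\sum_{x\in X}\tilde u_{a,x}\tilde u_{a',x}^* = \delta_{a,a'} 1$, and the bracketed term vanishes. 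Thus $\rho(\tilde h_{y,z,b,c}^{a,a'})=0$, so $\tilde{\cl J}_2 \subseteq \ker\rho = \cl J$.

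Combining the two inclusions gives $\cl J=\tilde{\cl J}_2$, and the induced map $\cl C_{X,A}/\tilde{\cl J}_2\to\cl U_{X,A}$ is the canonical $\ast$-isomorphism. There is no serious obstacle beyond matching the generators of the two ideals and performing the one-line $UU^*=I$ computation; the substantive content has already been extracted in Lemma \ref{l_bhtt}, and the present lemma merely repackages its kernel in a form adapted to the $\tilde h_{y,z,b,c}^{a,a'}$ relations that will be convenient in later sections.
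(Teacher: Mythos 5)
Your proposal is correct and is essentially the same argument as the paper's: both hinge on the identification $\cl C_{X,A}/\cl J\simeq\cl U_{X,A}$ coming from \cite[Lemma 4.2]{bhtt} (with $\cl J$ generated by $-\tilde h_{y,y,b,b}^{a,a}$) followed by the observation that unitarity of $(\tilde u_{a,x})_{a,x}$ already forces $\rho(\tilde h_{y,z,b,c}^{a,a'})=0$. The paper phrases the second inclusion by passing to a faithful representation of $\cl C_{X,A}/\tilde{\cl J}_2^0$ and invoking Lemma \ref{l_bhtt} to extract a unitary, whereas you compute directly in the Brown algebra via the canonical surjection $\rho$; this is a mild streamlining, not a different route.
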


\begin{proof}
Denote by $\tilde{\cl J}_2^0$ the closed ideal of $\cl C_{X,A}$, generated by the elements
$\tilde{h}_{y,y,b,b}^{a,a}$, where $a, b, y\in X$.
It was shown in \cite[Lemma 4.2]{bhtt} that
$$\cl C_{X,A}/\tilde{\cl J}_2^0 \simeq \cl U_{X,A}.$$
Let  
$\rho : \cl C_{X,A}\to \cl B(K)$ be a unital *-representation that annihilates $\tilde{\cl J}_2^0$, with the property that the corresponding induced representation of $\cl C_{X,A}/\tilde{\cl J}_2^0$ is faithful.  
By Lemma \ref{l_bhtt},
there exists a unitary $\tilde{U} = (\tilde{U}_{a,x})_{a,x\in X}$ such that, if 
$\tilde{U}_{x,x',a,a'} = \tilde{U}_{a,x}^*\tilde{U}_{a',x'}$, then 
$$\rho(\tilde{e}_{x,x',a,a'}) = \tilde{U}_{x,x',a,a'}, \ \ \ x,x',a,a'\in X.$$
But then, since $\tilde{U}$ is unitary, 
\begin{eqnarray*}
\rho\left(\tilde{h}_{y,z,b,c}^{a,a'}\right) 
& = & 
\delta_{a,a'} \tilde{U}_{y,z,b,c} - \sum_{x\in X} \tilde{U}_{y,x,b,a} \tilde{U}_{x,z,a',c}\\
& = & 
\delta_{a,a'} \tilde{U}_{y,z,b,c} - \sum_{x\in X} \tilde{U}_{b,y}^*\tilde{U}_{a,x} \tilde{U}_{a',x}^*\tilde{U}_{c,z}\\
& = & 
\delta_{a,a'} \tilde{U}_{y,z,b,c} - \delta_{a,a'} \tilde{U}_{b,y}^*\tilde{U}_{c,z} = 0.
\end{eqnarray*}
Thus, $\rho$ automatically annihilates $\tilde{\cl J}_2$. 
The proof is complete. 
\end{proof}

We say that a block operator matrix $U = (u_{a,x})_{a,x}\in M_X(\cl B(H))$ is a \emph{bi-unitary} if both $U$ and 
$U^{\rm t}$ are unitary. 
Let $C(\cl U_X^+)$ be the universal C*-algebra, generated by the entries of a 
bi-unitary $(u_{a,x})_{a,x\in X}$, and  
$C(\mathbb P \cl U_X^+)$ be the subalgebra of $C(\cl U_X^+)$ generated by the length two words of the form 
$$u_{x,x',a,a'} := u_{a,x}^*u_{a',x'}, \ \ \ x,x',a,a'\in X.$$
Further, recall that $e_{x,x',a,a'}$, 
$x,x',a,a'\in X$, denote the canonical generators of the C*-algebra $\cl C_X$
(so that $(e_{x,x',a,a'})_{x,x',a,a'}$ is a universal bistochastic operator matrix), 
set 
\begin{equation}\label{eq_gyzbc}
g_{y,z,b,c}^{x,x'} = 
\delta_{x,x'}e_{y,z,b,c} - \sum_{a\in X} e_{y,x,b,a}e_{x',z,a,c}
\end{equation}
and
\begin{equation}\label{eq_hyzbc}
h_{y,z,b,c}^{a,a'} = 
\delta_{a,a'}e_{y,z,b,c} - \sum_{x\in X} e_{y,x,b,a}e_{x,z,a',c},
\end{equation}
and let $\cl J_1$ (resp. $\cl J_2$) be the closed ideal of $\cl C_X$, generated by 
the elements $g_{y,z,b,c}^{x,x'}$ (resp. $h_{y,z,b,c}^{a,a'}$), where
$y, z, b, c, x, x'\in X$ (resp. $y, z, b, c, a, a'\in X$). 

We note that the universal C$^\ast$-algebra $C(\cl U_X^+)$ and its subalgebra 
$C(\mathbb P\cl U_X^+)$ have been well-studied in the compact quantum group literature.  
The C$^\ast$-algebra $C(\cl U_X^+)$ was introduced by Wang in \cite{wang0}, where it was shown 
to have the structure of a C$^\ast$-algebraic compact quantum group.  In particular, $C(\cl U_X^+)$ comes equipped with a co-associative comultiplication making it into a non-commutative analogue of the C$^\ast$-algebra of continuous functions of the unitary group $\cl U_X$.  The structure of the quantum group 
$C(\cl U_X^+)$ was later studied in detail by Banica in \cite{banica0}.  On the other hand, the subalgebra $C(\mathbb P \cl U_X^+) \subseteq C(\cl U_X^+)$ can be naturally interpreted as a non-commutative version of the space of continuous functions on the projective unitary group $\mathbb P \cl U_X/\mathbb T$.  In the classical setting, the conjugation action of $\cl U_X^+$ on $M_X$ induces a group isomorphism $\mathbb P \cl U_X \cong \text{Aut}(M_X)$, where $\text{Aut}(M_X)$ is the group of $\ast$-automorphisms of $M_X$. 

In the quantum setting, it is natural to expect that a similar identification between $\mathbb P\cl U_X^+$ and quantum automorphisms of $M_X$ should hold, and indeed this is the case:  In \cite{wang}, the quantum automorphism group $\text{Aut}^+(M_X)$ was introduced by Wang (via an abstract universal C$^\ast$-algebra $C(\text{Aut}^+(M_X))$ with generators and relations), and later Banica showed in \cite{banica} that the natural quantum group C$^\ast$-algebra morphism $C(\text{Aut}^+(M_X)) \to  C(\mathbb P \cl U_X^+)$ is actually an isomorphism.  In Lemma \ref{l_J1J2} below, we extend Banica's result by showing that in fact {\it any} ``concrete''  quantum automorphism of $M_X$ 
(that is, a $\ast$-homomorphism $\pi: C(\text{Aut}^+(M_X)) \cong C(\mathbb P \cl U_X^+)\to \cl B(H)$) is implemented by a ``concrete'' conjugation of $M_X$ by a bi-unitary 
(that is, $\pi$ is the restriction of a representation $C( \cl U_X^+)\to \cl B(H)$).

\begin{lemma}\label{l_J1J2}
\begin{itemize}
\item[(i)]
We have $\cl C_X/\overline{\cl J_1 + \cl J_2}\simeq C(\mathbb P \cl U_X^+)$.

\item[(ii)]
If  $\pi : C(\mathbb P \cl U_X^+) \to \cl B(H)$ is a unital *-representation then there exists a bi-unitary 
$(U_{a,x})_{a,x}\in M_X(\cl B(H))$ such that  $\pi(u_{x,x',a,a'}) = U_{a,x}^*U_{a',x'}$.
\end{itemize}
\end{lemma}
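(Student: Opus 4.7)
The plan is to prove part (ii) first and then deduce (i) from it. Both parts pivot on the $\ast$-homomorphism $\rho : \cl C_X \to C(\bb P \cl U_X^+)$ obtained by applying Theorem \ref{th_ucp} to the matrix $(u_{x,x',a,a'})_{x,x',a,a'}$, which is bistochastic precisely because $(u_{a,x})_{a,x}$ is a bi-unitary. Using $u_{x,x',a,a'} = u_{a,x}^*u_{a',x'}$, one computes
$$\rho\bigl(g_{y,z,b,c}^{x,x'}\bigr) = u_{b,y}^*\Bigl(\delta_{x,x'} I - \sum_a u_{a,x} u_{a,x'}^*\Bigr) u_{c,z},$$
which vanishes since $U^{\rm t}(U^{\rm t})^* = I$, and symmetrically $\rho(h_{y,z,b,c}^{a,a'}) = 0$ follows from $UU^* = I$; hence $\rho$ annihilates $\cl J_1 + \cl J_2$.

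For (ii), given a unital $\ast$-representation $\pi : C(\bb P \cl U_X^+) \to \cl B(H)$, the composition $\pi \circ \rho : \cl C_X \to \cl B(H)$ is a unital $\ast$-representation. By Lemma \ref{l_rr}(iii) it has the form $\pi_V$ for some bi-isometry $V = (V_{a,x})_{a,x} \in M_X(\cl B(H,K))$, i.e., $(\pi\circ\rho)(e_{x,x',a,a'}) = V_{a,x}^* V_{a',x'}$. Restricting $K$ to the closed span of $\{V_{a,x}\xi : a,x \in X,\ \xi \in H\}$ does not affect the resulting representation of $\cl C_X$, so we may assume this span is all of $K$. The vanishing of $\pi\circ\rho$ on $\cl J_1$ then yields
$$V_{b,y}^*\Bigl(\delta_{x,x'} I - \sum_a V_{a,x} V_{a,x'}^*\Bigr) V_{c,z} = 0$$
for all indices, and density in both arguments of the sandwich forces $\sum_a V_{a,x} V_{a,x'}^* = \delta_{x,x'} I_K$; combined with the bi-isometry identity $(V^{\rm t})^* V^{\rm t} = I$, this makes $V^{\rm t}$ unitary. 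The parallel argument using $\cl J_2$ together with $V^*V = I$ makes $V$ unitary, so $V$ is a bi-unitary. Setting $U_{a,x} := V_{a,x}$ finishes (ii).

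For (i), $\rho$ is surjective since its image contains the generators of $C(\bb P \cl U_X^+)$, and the annihilation above shows that it descends to a surjective $\ast$-homomorphism $\bar\rho : \cl C_X/\overline{\cl J_1 + \cl J_2} \to C(\bb P \cl U_X^+)$. To invert $\bar\rho$, fix a faithful unital $\ast$-representation $\pi_0 : \cl C_X/\overline{\cl J_1 + \cl J_2} \to \cl B(H_0)$. Applying the argument of (ii) to $\pi_0 \circ q$, where $q$ is the quotient map, produces a bi-unitary $V$ on $H_0$. The universal property of $C(\cl U_X^+)$ then supplies a $\ast$-homomorphism $\tilde\sigma : C(\cl U_X^+) \to \cl B(H_0)$ with $\tilde\sigma(u_{a,x}) = V_{a,x}$, whose restriction $\sigma$ to $C(\bb P \cl U_X^+)$ satisfies $\sigma \circ \bar\rho = \pi_0$ on the generators $q(e_{x,x',a,a'})$, and hence on all of $\cl C_X/\overline{\cl J_1 + \cl J_2}$. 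Faithfulness of $\pi_0$ forces $\bar\rho$ to be injective, so it is a $\ast$-isomorphism.

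The main technical hurdle is justifying the reduction to $K = \overline{{\rm span}}\{V_{a,x}\xi\}$, since the decisive step converts a sandwiched relation $V_{b,y}^*(\,\cdot\,)V_{c,z} = 0$ into the outright identity $\sum_a V_{a,x} V_{a,x'}^* = \delta_{x,x'} I$ via density. This is handled by returning to the ternary representation $\theta : \cl V_X \to \cl B(H,K)$ underlying the bi-isometry in the proof of Lemma \ref{l_rr}(iii): the codomain may be replaced by the closed span of $\theta(\cl V_X)H$ without altering the associated $\ast$-representation $\pi_V = \theta(\cdot)^*\theta(\cdot)$ of $\cl C_X$. Up to this point, the overall template parallels the passage from stochastic operator matrices to $\cl U_{X,A}$ in Lemmas \ref{l_bhtt} and \ref{l_UXAquo}, now carried out in the bi-isometry/bi-unitary setting.
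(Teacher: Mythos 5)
Your proof is correct, but it takes a genuinely different route from the paper's. The paper proves (i) first by passing through $\cl C_{X,A}$, the Brown-algebra subquotient $\cl U_{X,A}\simeq \cl C_{X,A}/\tilde{\cl J}_2$ (Lemmas \ref{l_bhtt} and \ref{l_UXAquo}) and the ideal $\tilde{\cl J}=\overline{\tilde{\cl J}_X+\tilde{\cl J}_1+\tilde{\cl J}_2}$: there one starts from a genuine \emph{unitary} $\tilde U$ and upgrades $\tilde U^{\rm t}$ from isometry to unitary via a positivity argument (from $V_{b,y}^*TV_{b,y}=0$ with $T\geq 0$ one gets $T^{1/2}V_{b,y}=0$, then multiplies by $V_{b,y}^*$ and sums, exploiting $\tilde U\tilde U^*=I$). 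You instead stay entirely inside $\cl C_X$, invoke Lemma \ref{l_rr}(iii) to realise $\pi\circ\rho$ as $\pi_V$ for a \emph{bi-isometry} $V$, cut the codomain down to $\overline{{\rm span}}\{V_{a,x}\xi\}$, and convert the sandwiched relations coming from $\cl J_1$ and $\cl J_2$ into $V^{\rm t}(V^{\rm t})^*=I$ and $VV^*=I$ by a density argument; you also reverse the logical order, deriving (i) from (ii) via a faithful representation rather than the paper's norm-supremum argument (the two are equivalent). Your reduction of the codomain is legitimate: every element of $\theta_V(\cl V_X)H$ lies in ${\rm span}\{V_{a,x}\xi\}$ because odd-length ternary words begin with some $V_{a_1,x_1}$, and compressing $K$ to this span changes neither the bi-isometry relations nor $\pi_V$. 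What your route buys is a shorter, self-contained argument that avoids the Brown-algebra machinery; what it loses is the intermediate identification $\cl C_{X,A}/\tilde{\cl J}_2\simeq\cl U_{X,A}$, which the paper reuses elsewhere. One small point you should make explicit: the bi-unitary you produce has entries in $\cl B(H,K_0)$, so to land in $M_X(\cl B(H))$ as the statement requires (and to invoke the universal property of $C(\cl U_X^+)$ in your proof of (i)) you must first identify $K_0$ with $H$ via a unitary $W$ and replace $V_{a,x}$ by $WV_{a,x}$, which leaves the products $V_{a,x}^*V_{a',x'}$ unchanged; this is routine and the paper is equally silent about it.
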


\begin{proof}
(i) 
Set $\cl J=\overline{\cl J_1 + \cl J_2}$, 
recall that $\tilde{ \cl J}_X$  is the closed ideal of $\cl C_{X,A}$ generated by the elements
$$\sum_{y\in X} \tilde{e}_{y,y,a,a'}-\delta_{a,a'}1, \ \ \ a,a'\in X$$ 
(see the paragraph containing equation (\ref{eq_JXd})) and, recalling the ideals  
$\tilde{\cl J}_1$ and $\tilde{\cl J}_2$ of $\cl C_{X,A}$ defined before Lemma \ref{l_UXAquo}, 
let 
\begin{equation}\label{eq_JXti}
\tilde{\cl J} = \overline{\tilde{\cl J}_X + \tilde{\cl J}_1 + \tilde{\cl J}_2}.
\end{equation}
According to Proposition \ref{p_quot}, $\cl C_{X,A}/\tilde {\cl J}_X\simeq \cl C_X$; thus, 
$\cl C_{X,A}/\tilde{\cl J}\simeq \cl C_X/\cl J$. 

Recall that 
$\cl U_{X,A}$ is the universal $C^*$-algebra with generators $\tilde{u}_{x,x',a,a'} := \tilde{u}_{a,x}^*\tilde{u}_{a',x'}$, 
$x,x',a,a'\in X$, where the matrix $(\tilde{u}_{a,x})_{a,x}$ is unitary. 
By Lemma \ref{l_UXAquo}, we have the canonical *-isomorphism $\cl C_{X,A}/\tilde{\cl J}_2 \simeq \cl U_{X,A}$.

We have that $\cl C_{X,A}/\tilde{\cl J}\simeq (\cl C_{X,A}/\tilde{\cl J}_2)/(\tilde{\cl J}/\tilde{\cl J}_2)$.  
Using the identification in Lemma \ref{l_UXAquo}, we have that 
$\tilde{\cl J}/\tilde{\cl J}_2$ is generated by the elements 
$$\sum_{y\in X} \tilde{u}_{y,y,a,a'} - \delta_{a,a'}1, \ \ \ a,a'\in X,$$ 
and 
$$\sum_{a\in X} \tilde{u}_{y,x,b,a} \tilde{u}_{x',z,a,c} - \delta_{x,x'} \tilde{u}_{y,z,b,c}, \ \ \ y,z,b,c,x,x'\in X.$$ 

Let $\rho :\cl U_{X,A} \simeq \cl C_{X,A}/\tilde{\cl J}_2 \to \cl B(K)$ be a unital 
*-representation that annihilates $\tilde{\cl J}/\tilde{\cl J}_2$.
By Lemma \ref{l_bhtt}, there exists a unitary 
$\tilde{U} = (\tilde{U}_{a,x})_{a,x}$ such that 
$\rho(\tilde{u}_{x,x',a,a'}) = \tilde{U}_{a,x}^* \tilde{U}_{a',x'}$, $ x,x',a,a'\in X,$
\begin{equation}\label{eq_Ut}
\sum_{y\in X} \tilde{U}_{a,y}^* \tilde{U}_{a',y} = \delta_{a,a'}I, \ \ \ a,a'\in X,
\end{equation}
and 
\begin{equation}\label{eq_Ut2}
\tilde{U}_{b,y}^* \left(\sum_{a\in X} \tilde{U}_{a,x} \tilde{U}_{a,x'}^* - \delta_{x,x'}I\right) \tilde{U}_{c,z} = 0, \ \ \ x,x',y,z,b,c\in X.
\end{equation}
By (\ref{eq_Ut}), $\tilde{U}^{\rm t} = (\tilde{U}_{y,a})_{a,y}$ is an isometry. 
But then $\tilde{U}^{\rm t}(\tilde{U}^{\rm t})^*\leq I$, implying, by comparing the $(x,x)$-entries of the matrices, 
that $\sum_{a\in X} \tilde{U}_{a,x}\tilde{U}_{a,x}^* \leq I$, $x\in X$. 
On the other hand, (\ref{eq_Ut2}) implies 
$\tilde{U}_{b,y}^*\left(\sum_{a\in X} \tilde{U}_{a,x} \tilde{U}_{a,x}^* - I\right) \tilde{U}_{b,y} = 0$. 
Thus, 
$$\left(I - \sum_{a\in X} \tilde{U}_{a,x} \tilde{U}_{a,x}^*\right)^{1/2} \tilde{U}_{b,y} = 0$$ 
and hence
$\left(\sum_{a\in X} \tilde{U}_{a,x} \tilde{U}_{a,x}^* - I\right)\tilde{U}_{b,y} = 0$. 
Since $\tilde{U}$ is unitary, this implies 
$$0 = \left(\sum_{a\in X} \tilde{U}_{a,x}\tilde{U}_{a,x}^* - I\right)
\sum_{y\in X} \tilde{U}_{b,y} \tilde{U}_{b,y}^*
= \sum_{a\in X} \tilde{U}_{a,x} \tilde{U}_{a,x}^*-I.$$ 
Now, 
\begin{equation}\label{eq_conjUby}
\left(\tilde{U}_{b,y}^*\otimes I\right) \left (I-\tilde{U}^{\rm t}\tilde{U}^{{\rm t}*}\right) \left(\tilde{U}_{b,y}\otimes I\right)
\end{equation}
is a positive block matrix in $M_X(\cl B(H))$ and has zeros on its main diagonal.
It follows that the matrix (\ref{eq_conjUby}) is zero and hence 
\begin{equation}\label{eq_I-}
\left (I - \tilde{U}^{\rm t}\tilde{U}^{{\rm t}*}\right)^{1/2} \left(\tilde{U}_{b,y}\otimes I\right) = 0, \ \ \ b,y\in X.
\end{equation}
Multiplying (\ref{eq_I-}) by 
$\tilde{U}_{b,y}^*\otimes I$ on the right and 
adding up along the variable $y$, we obtain
$\tilde{U}^{\rm t}\tilde{U}^{{\rm t}*} = I$; thus, $U^{\rm t}$ is unitary. 
Therefore, $U$ gives rise to a unital *-representation of $C(\cl U_X^+)$ and, after restriction, to a 
unital *-representation of $C(\mathbb P \cl U_X^+)$. 
We have thus shown that 
every unital $*$-representation $\rho:\cl C_{X,A}/\tilde{\cl J_{2}} \to \cl B(K)$ 
that annihilates $\tilde{\cl J}/\tilde{\cl J}_2$
induces a unital $*$-homomorphism from $C(\mathbb P \cl U_X^+)$ to $\cl B(K)$.

By \cite[Theorem 5.2]{tt-QNS}, there exists a $*$-homomorphism
$\varphi: \cl C_{X,A}\to C(\mathbb P \cl U_X^+)$, such that 
$\varphi(\tilde e_{x,x',a,a'}) = u_{x,x',a,a'}$, $x,x',a,a'\in X$. 
A straightforward verification shows that $\varphi$ annihilates $\tilde{\cl J}_2$ and hence gives rise to a $*$-homomorphism $\tilde\varphi: \cl C_{X,A}/\tilde{\cl J}_2\to C(\mathbb P \cl U_X^+)$, $\tilde e_{x,x',a,a'}+\tilde{\cl J}_2\mapsto u_{x,x',a,a'}$. It is easy to see that $\tilde{\cl J}/\tilde{\cl J}_2\subseteq\ker\tilde\varphi$. 
The previous paragraph shows that 
if $T\in \cl C_{X,A}/\tilde{\cl J}_2$ then 
\begin{eqnarray*}
\|T+\tilde{\cl J}/\tilde{\cl J}_2\|
& = &
\sup\{\|\rho(T)\|: \rho \mbox{ a *-rep. of }\cl C_{X,A}/\tilde{\cl J}_2 
\mbox{ with } 
\rho(\tilde{\cl J}/\tilde{\cl J}_2)=0\}\\
& \leq &
\|\tilde\varphi(T)\|,
\end{eqnarray*}
giving the inclusion $\ker(\tilde\varphi)\subseteq \tilde{\cl J}/\tilde{\cl J}_2$ and hence the equality $\ker(\tilde\varphi) = \tilde{\cl J}/\tilde{\cl J}_2$. As $\tilde\varphi$ is surjective we obtain the statement.

(ii)
Let $\pi : C(\mathbb P \cl U_X^+) \to \cl B(H)$ be a unital *-representation. Letting 
$\rho : \cl C_{X,A} \to \cl C_{X,A}/\tilde{\cl J}$ be the quotient map, the proof of (i) allows us to consider $\rho$ as a *-epimorphism from $\cl C_{X,A}$ onto $C(\mathbb P \cl U_X^+)$. It further exhibits a bi-unitary $\tilde{U} = (\tilde{U}_{a,x})_{a,x}$ such that $(\pi\circ\rho)(\tilde{e}_{x,x',a,a'}) = \tilde{U}_{a,x}^*\tilde{U}_{a',x'}$, $x,x',a,a'\in X$. 
We now see that 
$\pi(u_{x,x',a,a'}) = \tilde{U}_{a,x}^*\tilde{U}_{a',x'}$, $x,x',a,a'\in X$, and the proof is complete. 
\end{proof}

We recall that the \emph{opposite} C*-algebra $\cl A^{\rm op}$ of a C*-algebra $\cl A$ has the same set, linear structure and involution as $\cl A$, and multiplication given by $u^{\rm op}v^{\rm op} = (vu)^{\rm op}$, where $u^{\rm op}$ denotes the element $u\in \cl A$ when viewed as an element of $\cl A^{\rm op}$. 
Given a Hilbert space $H$, let $H^{\rm d}$ denote its dual Banach space and, for an operator $T\in \cl B(H)$, let $T^{\rm d} : H^{\rm d}\to H^{\rm d}$ be its dual. 
We note the identity 
\begin{equation}\label{eq_dstar}
(T^{*{\rm d}})^* = T^{\rm d}.
\end{equation}
If $\pi : \cl A\to \cl B(H)$ is a faithful *-representation, then the map $\pi^{\rm op} : \cl A^{\rm op}\to \cl B(H^{\rm d})$, given by 
$\pi^{\rm op}(u^{\rm op}) = \pi(u)^{\rm d}$, is a faithful *-representation. 

The following result can be proved using the existence of the antipode for compact quantum groups together with the fact that $\mathbb P \cl U_X^+$, the antipode is known to be a $\ast$-anti-automorphism of $C(\mathbb P \cl U_X^+)$ (see e.g., \cite[Proposition 1.7.9]{nt}). For the sake of those unacquainted with quantum group technicalities, we supply a self-contained proof.

\begin{lemma}\label{l_opos}
Let $X$ be a finite set. The map 
$$\partial(u_{x,x',a,a'}) = u_{x',x,a',a}^{\rm op}, \ \ \ x,x',a,a'\in X,$$
extends to a *-isomorphism $\partial : C(\mathbb P \cl U_X^+)\to C(\mathbb P \cl U_X^+)^{\rm op}$.
\end{lemma}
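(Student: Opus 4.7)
The plan is to lift the construction to the Brown-type C*-algebra $C(\cl U_X^+)$, whose universal property is available, and then restrict to the subalgebra $C(\mathbb P \cl U_X^+)$. Specifically, I will build a unital *-homomorphism $\Psi: C(\cl U_X^+) \to C(\cl U_X^+)^{\rm op}$ defined on generators by $\Psi(u_{a,x}) = (u_{a,x}^*)^{\rm op}$, show that $\Psi$ is a *-isomorphism, and verify that its restriction to $C(\mathbb P\cl U_X^+)$ takes values in $C(\mathbb P \cl U_X^+)^{\rm op}$ and coincides with the map $\partial$ given in the statement.

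For the existence of $\Psi$, the universal property of $C(\cl U_X^+)$ reduces the task to exhibiting a bi-unitary $V = (V_{a,x}) \in M_X(C(\cl U_X^+)^{\rm op})$, namely $V_{a,x} := (u_{a,x}^*)^{\rm op}$. Using $p^{\rm op} \cdot_{\rm op} q^{\rm op} = (qp)^{\rm op}$ entrywise, the four bi-unitarity conditions for $V$ over the opposite algebra are, after a harmless relabelling of summation indices, precisely the four bi-unitarity identities satisfied by $(u_{a,x})$ in $C(\cl U_X^+)$ (specifically, $\sum_a u_{a,x}^* u_{a,x'} = \delta_{x,x'}$, $\sum_x u_{a,x} u_{a',x}^* = \delta_{a,a'}$, $\sum_a u_{a,x} u_{a,x'}^* = \delta_{x,x'}$, $\sum_x u_{a,x}^* u_{a',x} = \delta_{a,a'}$). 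This bookkeeping is the only computational step and is essentially symbolic.

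To show $\Psi$ is bijective, let $\kappa: C(\cl U_X^+) \to C(\cl U_X^+)^{\rm op}$ denote the canonical (linear) anti-isomorphism and set $\tau := \kappa^{-1}\circ \Psi$. Then $\tau$ is a *-antihomomorphism of $C(\cl U_X^+)$ with $\tau(u_{a,x}) = u_{a,x}^*$, so $\tau\circ\tau$ is a *-homomorphism which fixes every generator, hence equals the identity. Therefore $\tau$ is a *-anti-automorphism and $\Psi = \kappa\circ\tau$ is a *-isomorphism.

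Finally, using $u_{x,x',a,a'} = u_{a,x}^* u_{a',x'}$ together with the fact that $\Psi$ is a *-homomorphism into the opposite algebra,
\begin{equation*}
\Psi(u_{x,x',a,a'}) = \Psi(u_{a,x})^* \cdot_{\rm op} \Psi(u_{a',x'}) = u_{a,x}^{\rm op} \cdot_{\rm op} (u_{a',x'}^*)^{\rm op} = (u_{a',x'}^* u_{a,x})^{\rm op} = u_{x',x,a',a}^{\rm op}.
\end{equation*}
Hence $\Psi$ carries $C(\mathbb P\cl U_X^+)$ into $C(\mathbb P\cl U_X^+)^{\rm op}$; the restriction $\partial := \Psi|_{C(\mathbb P\cl U_X^+)}$ is injective (since $\Psi$ is) and surjective onto $C(\mathbb P\cl U_X^+)^{\rm op}$ (since the images of the $u_{x,x',a,a'}$ exhaust the generators of the target), and satisfies the required formula. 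The main obstacle, then, is the careful verification of bi-unitarity of $V$, which amounts to tracking how the reversal of multiplication in the opposite algebra interchanges the pairs of bi-unitarity relations for $(u_{a,x})$.
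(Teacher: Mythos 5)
Your proof is correct, but it follows a genuinely different route from the paper's. The paper fixes a faithful $*$-representation $\pi$ of $C(\mathbb P\cl U_X^+)$ on a Hilbert space $H$, implemented by a concrete bi-unitary $U=(U_{a,x})$, passes to the dual space $H^{\rm d}$ and checks that $V_{a,x}:=U_{a,x}^{*{\rm d}}$ is again bi-unitary; the universal property then yields a representation $\rho$ with $\rho(u_{x',x,a',a})=\pi^{\rm op}(u_{x,x',a,a'}^{\rm op})$, and bijectivity is disposed of ``by symmetry considerations.'' You instead stay entirely at the universal-algebra level: you apply the universal property of the Brown-type algebra $C(\cl U_X^+)$ to the bi-unitary $\bigl((u_{a,x}^*)^{\rm op}\bigr)$ with entries in $C(\cl U_X^+)^{\rm op}$ itself, and your bi-unitarity bookkeeping (each of the four conditions for $V$ over $\cdot_{\rm op}$ reduces, after transposing the free indices, to one of the four conditions for $U$) is accurate. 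The computation $\Psi(u_{x,x',a,a'})=u_{a,x}^{\rm op}\cdot_{\rm op}(u_{a',x'}^*)^{\rm op}=(u_{a',x'}^*u_{a,x})^{\rm op}=u_{x',x,a',a}^{\rm op}$ is also correct, and the restriction to $C(\mathbb P\cl U_X^+)$ clearly hits the generators of the target. What your approach buys is an explicit and self-contained bijectivity argument: factoring $\Psi=\kappa\circ\tau$ with $\tau(u_{a,x})=u_{a,x}^*$ an anti-endomorphism satisfying $\tau^2=\mathrm{id}$ on generators exhibits the inverse directly (this $\tau$ is of course the antipode of $\cl U_X^+$, which the paper alludes to just before the lemma but deliberately avoids invoking). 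What the paper's approach buys is that it never leaves $C(\mathbb P\cl U_X^+)$ and reuses the representation-theoretic machinery of Lemma \ref{l_J1J2}(ii) already in place, at the cost of a somewhat terser justification of invertibility. Both arguments hinge on the same underlying fact, namely that the adjoint entries of a bi-unitary form a bi-unitary over the opposite multiplication.
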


\begin{proof}
Let $\pi : C(\mathbb P \cl U_X^+)\to \cl B(H)$ be a faithful *-representation and $U = (U_{a,x})_{a,x} \in M_X(\cl B(H))$ 
be a bi-unitary such that $\pi(u_{x,x',a,a'}) = U_{a,x}^*U_{a',x'}$, $x,x',a,a'\in X$. 

Set $V_{a,x} = U_{a,x}^{*\rm d}$, $x,a\in X$. We observe that $V := (V_{a,x})_{a,x}$ is a bi-unitary. 
Indeed, using (\ref{eq_dstar}), we have 
$$\sum_{a\in X} V_{a,x}^*V_{a,x'} = \sum_{a\in X} U_{a,x}^{\rm d}U_{a,x'}^{*\rm d} = 
\left(\sum_{a\in X} U_{a,x'}^*U_{a,x}\right)^{\rm d} = \delta_{x,x'} I_{H^{\rm d}}$$
and 
$$\sum_{x\in X} V_{a,x}^*V_{a',x} = \sum_{x\in X} U_{a,x}^{\rm d}U_{a',x}^{*\rm d} = 
\left(\sum_{x\in X} U_{a',x}^*U_{a,x}\right)^{\rm d} = \delta_{a,a'} I_{H^{\rm d}},$$
that is, $V^*V = I$ and $V^{\rm t *}V^{\rm t} = I$; the relations 
$VV^* = I$ and $V^{\rm t}V^{\rm t *} = I$ follow analogously.
It follows that there exists a *-representation $\rho : C(\mathbb P \cl U_X^+)\to \cl B(H^{\rm d})$ such that
$\rho(u_{x',x,a',a}) =  \pi^{\rm op}(u_{x,x',a,a'}^{\rm op})$, $x,x',a,a'\in X$;  
note that $\rho$ is a (well-defined) *-homomorphism from $C(\mathbb P \cl U_X^+)$ into $C(\mathbb P \cl U_X^+)^{\rm op}$. 
By symmetry considerations, $\rho$ is a *-isomorphism. 
\end{proof}

Before formulating the next theorem, we introduce some notation and terminology. 
If $\Phi : M_X\to M_X$ is a quantum channel, we write $\Phi^{\sharp} : M_X\to M_X$ for the quantum channel 
given by 
$$\Phi^{\sharp}(\omega) = \Phi(\omega^{\rm t})^{\rm t}, \ \ \ \omega\in M_X.$$
We call a channel $\Phi : M_X\to M_X$ a
\emph{unitary channel} if there exists a 
unitary $U = (\lambda_{a,x})_{a,x\in X}\in M_X$, 
such that 
$\Phi(\omega) = U^*\omega U$, $\omega\in M_X$.
Finally, a trace $\tau : \cl B\to \bb{C}$ of a C*-algebra $\cl B$ is called \emph{abelian} if
there exists an abelian C*-algebra $\cl A$, a *-homomorphism 
$\pi : \cl B\to \cl A$ and a state $\phi : \cl A \to\mathbb C$ such that 
$\tau = \phi\circ\pi$.

\begin{theorem}\label{QNSbicorrelation}
Let $X$ be a finite set and $\Gamma: M_{XX}\to M_{XX}$ be a QNS bicorrelation. Then 

\begin{itemize}
\item[(i)] $\Gamma \in \cl Q_{\rm qc}^{\rm bic}$ if and only if
there exists a trace $\tau: C(\mathbb P \cl U_X^+) \to\mathbb C$ such that 
\begin{equation}\label{eq_UXrep}
\Gamma(\epsilon_{x,x'}\otimes \epsilon_{y,y'}) = (\tau(u_{x,x',a,a'}u_{y',y,b',b}))_{a,a',b,b'}, \ \ \ x,x',y,y'\in X;
\end{equation}

\item[(ii)] 
$\Gamma \in \cl Q_{\rm q}^{\rm bic}$ if and only if 
(\ref{eq_UXrep}) holds for a trace of $C(\mathbb P \cl U_X^+)$ that factors through a finite dimensional C*-algebra;

\item[(iii)] 
$\Gamma \in \cl Q_{\rm loc}^{\rm bic}$ if and only if (\ref{eq_UXrep}) holds for an abelian trace of 
$C(\mathbb P \cl U_X^+)$, 
if and only if there exist unitary channels $\Phi_i$, $i = 1,\dots,k$, such that 
$\Gamma = \sum_{i=1}^k \lambda_i \Phi_i\otimes \Phi_i^{\sharp}$ as a convex combination.
\end{itemize}
\end{theorem}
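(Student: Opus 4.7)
The backbone of (i) is the characterisation in Theorem \ref{th_bicqc} of $\cl Q_{\rm qc}^{\rm bi}$ via states on $\cl C_X\otimes_{\max}\cl C_X$, together with the quotient identification $C(\mathbb P\cl U_X^+)\cong \cl C_X/\overline{\cl J_1+\cl J_2}$ from Lemma \ref{l_J1J2}(i) and the $*$-anti-isomorphism $\partial$ from Lemma \ref{l_opos}. Parts (ii) and (iii) will then follow by specialising the GNS representation of the trace produced in (i).

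For the $(\Leftarrow)$ direction of (i), I will take the tracial GNS construction of $\tau$ on a Hilbert space $H_\tau$ with cyclic vector $\xi_\tau$, yielding a left-regular representation $\pi:C(\mathbb P\cl U_X^+)\to\cl B(H_\tau)$ and a right-regular representation $\pi_r:C(\mathbb P\cl U_X^+)^{\rm op}\to\cl B(H_\tau)$ whose ranges commute by traciality. Setting $E_{x,x',a,a'}:=\pi(u_{x,x',a,a'})$ and $F_{y,y',b,b'}:=\pi_r(\partial(u_{y,y',b,b'}))=\pi_r(u_{y',y,b',b}^{\rm op})$, Lemma \ref{l_J1J2}(ii) together with the fact that $\partial$ is a $*$-isomorphism ensures that these are commuting bistochastic operator matrices, and a short direct computation yields $\langle E_{x,x',a,a'}F_{y,y',b,b'}\xi_\tau,\xi_\tau\rangle=\tau(u_{x,x',a,a'}u_{y',y,b',b})$. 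The concurrency $\Gamma(J_X)=J_X$ reduces to the identity
\[
\sum_{x,y}u_{a,x}^*u_{a',y}u_{b',y}^*u_{b,x}=\delta_{a,b}\delta_{a',b'}\cdot 1
\]
in $C(\mathbb P\cl U_X^+)$, which follows by first summing over $y$ using $\sum_y u_{a',y}u_{b',y}^*=\delta_{a',b'}I$ (i.e., $UU^*=I$) and then over $x$ using $\sum_x u_{a,x}^*u_{b,x}=\delta_{a,b}I$ (i.e., $U^{\rm t*}U^{\rm t}=I$).

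For the harder $(\Rightarrow)$ direction of (i), my plan is to leverage \cite[Theorem 4.1]{bhtt}: since $\cl Q_{\rm qc}^{\rm bic}\subseteq \cl Q_{\rm qc}^{\rm c}$, this produces a tracial state $\tilde\tau$ on $\cl C_{X,A}$ such that $\Gamma(\epsilon_{x,x'}\otimes\epsilon_{y,y'})_{a,a',b,b'}=\tilde\tau(\tilde e_{x,x',a,a'}\tilde e_{y',y,b',b})$. The remaining task is to show that $\tilde\tau$ annihilates the closed ideal $\tilde{\cl J}=\overline{\tilde{\cl J}_X+\tilde{\cl J}_1+\tilde{\cl J}_2}$ of (\ref{eq_JXti}), so that it descends along $\cl C_{X,A}\twoheadrightarrow C(\mathbb P\cl U_X^+)$. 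Unitality of $\Gamma$ gives, by pairing against $I_{XX}$, the identity $\tilde\tau(P_{a,a'}P_{b',b})=\delta_{a,a'}\delta_{b,b'}$ for the self-adjoint elements $P_{a,a'}:=\pi\bigl(\sum_x\tilde e_{x,x,a,a'}\bigr)=\sum_x V_{a,x}^*V_{a',x}$ coming from the Stinespring isometry $V$ in the GNS representation of $\tilde\tau$; a Cauchy--Schwarz estimate combined with the faithfulness of $\tilde\tau$ on the generated von Neumann algebra then forces $P_{a,a'}=\delta_{a,a'}I$, which is precisely the vanishing of $\tilde\tau$ on $\tilde{\cl J}_X$. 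A parallel derivation from the two no-signalling conditions for $\Gamma^*$ is to produce the analogous operator identities $\sum_x V_{a,x}V_{a',x}^*=\delta_{a,a'}I$ and $\sum_a V_{a,x}V_{a,x'}^*=\delta_{x,x'}I$, i.e., $VV^*=I$ and $V^{\rm t}V^{{\rm t}*}=I$, which force $\tilde\tau$ to vanish on $\tilde{\cl J}_1$ and $\tilde{\cl J}_2$ respectively. The main technical obstacle will be to carefully unravel each of the two $\Gamma^*$-no-signalling equations, via the dual Choi matrix pairing, into these operator identities on the generators.

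For (ii), the $(\Leftarrow)$ direction runs the $(\Leftarrow)$ of (i) inside the finite-dimensional GNS space $H_\tau=L^2(\cl B,\phi)$ associated with $\phi$ on $\cl B$; after reducing to $\cl B=M_n$ via the standard decomposition of tracial states of finite-dimensional C*-algebras together with the convexity of $\cl Q_{\rm q}^{\rm bic}$, the identification $H_\tau\cong\bb C^n\otimes\bb C^n$ furnishes the required $\tilde E\otimes I$ and $I\otimes\tilde F$ form. The converse follows because a finite-dimensional representation of $C(\mathbb P\cl U_X^+)$ generates a finite-dimensional $*$-subalgebra through which the vector-state trace factors. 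For (iii), the abelianisation of $C(\mathbb P\cl U_X^+)$ is $C(\mathbb P\cl U_X)$ (its characters correspond to classical unitaries modulo global phase via Lemma \ref{l_J1J2}(ii)), so abelian traces are probability measures $\mu$ on $\mathbb P\cl U_X$; each point-evaluation $\tau_{[U]}$ gives, via the formula in (i), exactly $\Phi_U\otimes\Phi_U^\sharp$ with $\Phi_U(\omega)=\overline U\omega\overline U^*$, and Carath\'eodory's theorem applied in the finite-dimensional channel space reduces $\mu$ to a finite convex combination. For the remaining converse, Remark \ref{r_localbi} yields a decomposition $\Gamma=\sum_i\lambda_i\Phi_i\otimes\Psi_i$ by unital channels; the purity of $J_X$ in the set of density matrices forces each $(\Phi_i\otimes\Psi_i)(J_X)=J_X$, and a Kraus/Schmidt-rank analysis of this equation --- using the identity $(A\otimes B)|\psi\rangle=(AB^{\rm t}\otimes I)|\psi\rangle$ for the maximally entangled vector $|\psi\rangle$ --- forces each $\Phi_i=\mathrm{Ad}(V_i)$ to be a unitary channel and $\Psi_i=\mathrm{Ad}(\overline{V_i})=\Phi_i^\sharp$.
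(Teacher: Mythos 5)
Your overall architecture is reasonable, and several pieces are sound: the backward direction of (i) via the tracial GNS left/right regular representations is essentially the paper's construction of the state $\nu(u\otimes v^{\rm op})=\tau(uv)$ on $C(\mathbb P\cl U_X^+)\otimes_{\max}C(\mathbb P\cl U_X^+)^{\rm op}$, your direct verification of concurrency from bi-unitarity is correct, and your extraction of $\pi(P_{a,a'})=\delta_{a,a'}I$ from unitality (via $\tilde\tau(P_{a,a'}P_{a',a})=\delta_{a,a'}$ and faithfulness of the GNS trace) is a clean way to kill $\tilde{\cl J}_X$. Your Kraus/Schmidt-rank argument in (iii) is also a legitimate alternative to the paper's route through abelian traces.

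The genuine gap is in the forward direction of (i), at the step where you must kill $\tilde{\cl J}_1$ and $\tilde{\cl J}_2$, i.e.\ upgrade the bi-isometry $V$ to a bi-unitary. You propose to extract the identities $VV^*=I$ and $V^{\rm t}V^{{\rm t}*}=I$ from ``the two no-signalling conditions for $\Gamma^*$.'' This cannot work: those conditions are linear marginal constraints on the Choi matrix, amounting to identities of the form $\tilde\tau\bigl(P_{a,a'}\,\tilde e_{y',y,b',b}\bigr)=\delta_{a,a'}(\cdots)$ and $\tilde\tau\bigl(\tilde e_{x,x',a,a'}\,P_{b',b}\bigr)=\delta_{b,b'}(\cdots)$, which are already consequences of $\pi(P_{a,a'})=\delta_{a,a'}I$ and therefore carry no new information once unitality has been used. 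Indeed, unitality plus no-signalling of $\Gamma$ and $\Gamma^*$ characterise membership in $\cl Q_{\rm qc}^{\rm bi}$, which by Theorem \ref{th_bicqc} corresponds only to states on $\cl C_X\otimes_{\rm c}\cl C_X$ --- that is, to bi-\emph{isometries}, not bi-unitaries; a general qc-bicorrelation need not annihilate $\tilde{\cl J}_1,\tilde{\cl J}_2$. The input that actually forces unitarity is \emph{concurrency} of $\Gamma$ and of $\Gamma^*$ (the latter obtained from the former via Remark \ref{r_biconc}), and the derivation is not a ``parallel'' one-line Cauchy--Schwarz: the paper needs the swap identity $s(e_{x,y,a,b}\otimes f_{x,y,a,b})=s(e_{x,y,a,b}e_{y,x,b,a}\otimes 1)$ from the proof of \cite[Theorem 4.1]{bhtt}, the operator inequalities $\sum_a e_{y,x,b,a}e_{x,y,a,b}\le e_{y,y,b,b}$ summed into $|X|^2\cdot 1$, the concurrency normalisation $\sum_{x',y'}s(e_{x',y',a,b}\otimes f_{x',y',a,b})=1$ to force equality in these inequalities, and then the positivity of the $2\times2$ block matrices $\tilde G_{y,z,b,c}$ and $\tilde H_{y,z,b,c}$ to propagate the vanishing of the diagonal traces to the whole ideals. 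Without replacing your appeal to the no-signalling of $\Gamma^*$ by this concurrency-driven argument, the trace you construct does not descend to $C(\mathbb P\cl U_X^+)$, and the same omission undermines the forward direction of (ii), which you reduce to (i).
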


\begin{proof}
(i) 
Let $U := (u_{a,x})_{a,x}$ be the universal bi-unitary and $\Gamma : M_{XX}\to M_{XX}$ be given via (\ref{eq_UXrep}).
There exists a state
$\nu : C(\mathbb P \cl U_X^+)\otimes_{\max} C(\mathbb P \cl U_X^+)^{\rm op}\to \bb{C}$, given by 
\begin{equation}\label{eq_nu}
\nu(u\otimes v^{\rm op}) = \tau(uv), \ \ \ u,v\in C(\mathbb P \cl U_X^+)
\end{equation}
(see \cite[p. 219]{bo}). 
Let $s = \nu\circ (\id\otimes \partial)$; thus, $s$ is a state on 
$C(\mathbb P \cl U_X^+)\otimes_{\max} C(\mathbb P \cl U_X^+)$. 
We have 
$$
s(u_{x,x',a,a'}\otimes u_{y,y',b,b'}) 
= 
\nu(u_{x,x',a,a'}\otimes u_{y',y,b',b}^{\rm op})
= 
\tau(u_{x,x',a,a'} u_{y',y,b',b}),
$$
implying that 
$\Gamma = \Gamma_s$. 
By Lemma \ref{l_J1J2} (i) and Theorem \ref{th_bicqc},
$\Gamma\in \cl Q_{\rm qc}^{\rm bi}$. 
Since $U$ is unitary, by the proof of \cite[Theorem 4.3]{bhtt}, $\Gamma$ is concurrent.

Conversely, let $\Gamma\in \cl Q_{\rm qc}^{\rm bic}$.
By Theorem \ref{th_bicqc}, there exists a state
$s : \cl C_X\otimes_{\rm max} \cl C_X\to\mathbb C$ such that
$\Gamma = \Gamma_s$.  
Let $V = (v_{a,x})_{a,x}$ be a universal bi-isometry (see Subsection \ref{ss_uopsys}) and 
denote by $f_{y,y',b,b'}$ the canonical generators of the second copy of $\cl C_X$ in the tensor product.  
The concurrency of $\Gamma$ implies the validity of the condition
\begin{equation}\label{eq_onec}
\sum_{x',y'\in X} s\left(e_{x',y',a,b}\otimes f_{x',y',a,b}\right) = 1, \ \ \ a,b\in X.
\end{equation}
Let $\tau : \cl C_X\to \bb{C}$ be the functional, given by $\tau(u) = s(u\otimes 1)$, $u\in \cl C_X$.
By \cite[Lemma 4.2]{bhtt}, there exists a canonical 
*-epimorphism $\pi : \cl C_{X,A}\to \cl C_X$; let $\tilde{\tau} = \tau\circ\pi$. 
Letting $\tilde{s} : \cl C_{X,A}\otimes_{\max} \cl C_{X,A}\to \bb{C}$ be given by 
$\tilde{s}(w) = (s\circ (\pi\otimes \pi))(w)$, we have that 
$$
\sum_{x',y'\in X} \tilde{s}\left(\tilde{e}_{x',y',a,b}\otimes \tilde{f}_{x',y',a,b}\right) = 1,$$
and now, by the proof of \cite[Theorem 4.1]{bhtt}, that 
%\begin{equation}\label{eq_swap3}
$$\tilde{s}\left(\tilde{e}_{x,y,a,b}\otimes \tilde{f}_{x,y,a,b}\right) 
= \tilde{s}\left(\tilde{e}_{x,y,a,b} \tilde{e}_{y,x,b,a}\otimes 1\right), \ \ \ x,y,a,b\in X,$$
%\end{equation}
and that $\tilde{\tau}$ is a tracial state. 
After passing to quotients, we conclude that 
\begin{equation}\label{eq_swap}
s\left(e_{x,y,a,b}\otimes f_{x,y,a,b}\right) 
= s\left(e_{x,y,a,b} e_{y,x,b,a}\otimes 1\right), \ \ \ x,y,a,b\in X,
\end{equation}
and that $\tau$ is a tracial state. 

Recalling notation (\ref{eq_gyzbc}) and (\ref{eq_hyzbc}), set 
$$
G_{y,z,b,c} = \left(g_{y,z,b,c}^{x,x'}\right)_{x,x'} 
\ \mbox{ and } \ 
H_{y,z,b,c} = \left(h_{y,z,b,c}^{a,a'}\right)_{a,a'}.$$
We claim that
\begin{equation}\label{eq_posm}
\tilde{G}_{y,z,b,c} := \left[\begin{matrix}
G_{y,y,b,b} & G_{y,z,b,c}\\
G_{z,y,c,b} & G_{z,z,c,c}
\end{matrix}\right]
\in M_2\left(M_X(\cl C_X) \right)^+.
\end{equation}
Indeed, set $Z_{y,z,b,c} := \left[\begin{matrix}
v_{b,y}\otimes I_{X} & 0\\
0 & v_{c,z} \otimes I_{X}
\end{matrix}\right]$. 
After applying the canonical shuffle $M_2(M_X(\cl C_X)) \simeq M_X(M_2(\cl C_X))$, we obtain 
\begin{eqnarray*}
& & \tilde{G}_{y,z,b,c} 
\hspace{-0.05cm} = \hspace{-0.05cm} 
\left[\begin{matrix}
g_{y,y,b,b}^{x,x'} & g_{y,z,b,c}^{x,x'}\\
g_{z,y,c,b}^{x,x'} & g_{z,z,c,c}^{x,x'}
\end{matrix}\right]_{x,x'}\\
& \hspace{-0.1cm} = \hspace{-0.1cm} & 
\left[\begin{matrix}
\delta_{x,x'}e_{y,y,b,b} - \sum_{a} e_{y,x,b,a}e_{x',y,a,b} & \delta_{x,x'}e_{y,z,b,c} - \sum_{a} e_{y,x,b,a}e_{x',z,a,c}\\
\delta_{x,x'}e_{z,y,c,b} - \sum_{a} e_{z,x,c,a}e_{x',y,a,b} & \delta_{x,x'}e_{z,z,c,c} - \sum_{a} e_{z,x,c,a}e_{x',z,a,c} 
\end{matrix}\right]_{x,x'}\\
& \hspace{-0.1cm} = \hspace{-0.1cm} & 
Z_{y,z,b,c}^*
\left[\begin{matrix}
\delta_{x,x'}1 - \sum_{a} v_{a,x}v_{a,x'}^* & \delta_{x,x'}1 - \sum_{a} v_{a,x}v_{a,x'}^*\\
\delta_{x,x'}1 - \sum_{a} v_{a,x}v_{a,x'}^* & \delta_{x,x'}1 - \sum_{a} v_{a,x}v_{a,x'}^*
\end{matrix}\right]_{x,x'}
Z_{y,z,b,c}.
\end{eqnarray*}
Since $V^{\rm t}$ is an isometry, $V^{\rm t}V^{{\rm t}*}\leq I$ and hence 
$\left(\delta_{x,x'}1 - \sum_{a} v_{a,x}v_{a,x'}^*\right)_{x,x'} \geq 0$, implying (\ref{eq_posm}), 
along with the relations $\sum_{a\in X} v_{a,x} v_{a,x}^* \leq 1$, $x\in X$.
Identity (\ref{eq_posm}) now shows that $G_{y,y,b,b}\in M_X(\cl C_X)^+$, and hence 
\begin{equation}\label{eq_Gyybb}
\tau^{(X)}\left(G_{y,y,b,b}\right)\in M_X^+.
\end{equation}
We have that
\begin{equation}\label{eq_anoo}
\sum_{a\in X} e_{y,x,b,a}e_{x,y,a,b} 
=  
\sum_{a\in X} v_{b,y}^* v_{a,x} v_{a,x}^* v_{b,y}\\
\leq e_{y,y,b,b}
\end{equation}
and hence 
\begin{equation}\label{eq_less1}
\sum_{x,y \in X} \sum_{a,b\in X} e_{y,x,b,a}e_{x,y,a,b} 
\leq 
\sum_{x,y \in X} \sum_{b\in A} e_{y,y,b,b}
= 
|X|^2 1;
\end{equation}
similarly, 
\begin{equation}\label{eq_less2}
\sum_{x,y \in X} \sum_{a,b\in X}  f_{x,y,a,b}f_{y,x,b,a} \leq |X|^21.
\end{equation}
By (\ref{eq_onec}), (\ref{eq_less1}) and (\ref{eq_less2}), 
\begin{eqnarray*} 
0 
& \leq & 
\sum_{x,y,a,b} 
s\left(\left(e_{x,y,a,b}\otimes 1 - 1 \otimes f_{y,x,b,a}\right)^* \left(e_{x,y,a,b}\otimes 1 - 1 \otimes f_{y,x,b,a}\right)\right)\\
& = & 
\sum_{x,y,a,b} 
s\left(\left(e_{y,x,b,a}\otimes 1 - 1 \otimes f_{x,y,a,b}\right) \left(e_{x,y,a,b}\otimes 1 - 1 \otimes f_{y,x,b,a}\right)\right)\\
& = & 
\sum_{x,y,a,b} s\left(e_{y,x,b,a}e_{x,y,a,b}\otimes 1  + 1 \otimes f_{x,y,a,b}f_{y,x,b,a}\right)\\
& - & \sum_{x,y,a,b} s\left(e_{y,x,b,a} \otimes f_{y,x,b,a} + e_{x,y,a,b} \otimes f_{x,y,a,b}\right)
\leq 
2|X|^2  - 2|X|^2 = 0.
\end{eqnarray*}
Applying $\tau$ to (\ref{eq_less1}),
we have
$$
\sum_{x,y \in X} \sum_{a,b\in X} \tau(e_{y,x,b,a}e_{x,y,a,b}) 
\leq 
\sum_{x,y \in X} \sum_{b\in A} 
\tau(e_{y,y,b,b})
= |X|^2.
$$
On the other hand, by (\ref{eq_swap}), 
$$
\sum_{x,y \in X} \sum_{a,b\in X} \tau(e_{y,x,b,a}e_{x,y,a,b}) = 
|X|^2.$$ 
Using (\ref{eq_anoo}), we now have that 
$$\tau\left(e_{y,y,b,b} - \sum_{a\in X} e_{y,x,b,a}e_{x,y,a,b}\right) = 0 \ \ \mbox{ for all } x,y,b\in X.$$
Thus the diagonal entries of $\tau^{(X)}\left(G_{y,y,b,b}\right)$ are zero; the
positivity condition (\ref{eq_Gyybb}) implies that the off-diagonal entries of 
$\tau^{(X)}\left(G_{y,y,b,b}\right)$ are also zero. 
Now the positivity condition (\ref{eq_posm}) implies that 
$$\tau^{(2X)}\left(\tilde{G}_{y,z,b,c}\right) = 0, \ \ \ y,z,b,c\in X.$$
Condition (\ref{eq_posm}) 
and the Cauchy-Schwarz inequality imply $\tau^{(2X)}\left(Q\tilde{G}_{y,z,b,c}^{1/2}\right) = 0$, 
for all $Q\in M_2(M_{X}(\cl C_{X}))$, and hence
$\tau^{(2X)}$ annihilates the closed ideal of $M_2(M_{X}(\cl C_{X}))$ generated by 
$\tilde{G}_{y,z,b,c}^{1/2}$. In particular, 
$\tau^{(2X)}$ annihilates the closed ideal of $M_2(M_{X}(\cl C_{X}))$ generated by 
$\tilde{G}_{y,z,b,c}$; since $\cl C_X$ is unital, this implies that 
$\tau$ annihilates the closed ideal of $\cl C_{X}$ generated by the elements 
$g_{y,z,b,c}^{x,x'}$, $x,x',y,z,b,c\in X$, that is, $\cl J_1$.

Similarly, observe that 
$$\sum_{x\in X} e_{y,x,b,a}e_{x,y,a,b} 
=  
\sum_{x\in X} v_{b,y}^* v_{a,x} v_{a,x}^* v_{b,y}\\
\leq e_{y,y,b,b}, \ \ \ y,a,b\in X.$$
By Remark \ref{r_biconc}, 
$$\sum_{a',b'\in X} s\left(e_{x,y,a',b'}\otimes f_{x,y,a',b'}\right) = 1, \ \ \ x,y\in X.$$
Using (\ref{eq_swap}) yields similarly
$$\tau\left(e_{y,y,b,b} - \sum_{x\in X} e_{y,x,b,a}e_{x,y,a,b}\right) = 0 \ \ \ x,y,b\in X,$$
leading to the relations 
$$\tau^{(2X)}\left(\tilde{H}_{y,z,b,c}\right) = 0, \ \ \ y,z,b,c\in X.$$
It follows that $\tau$ annihilates the ideal $\cl J_2$, generated by $h_{y,z,b,c}^{a,a'}$, where $a,a'$, $y,z,b,c\in X$, and hence 
it annihilates $\overline{\cl J_1 + \cl J_2}$. 
Hence $\tau$
induces a tracial state (denoted in the same fashion) on the quotient $\cl C_X/\cl J$. 
An application of Lemma \ref{l_J1J2} (i) completes the proof.

\smallskip

(ii) 
Suppose that $\Gamma : M_{XX}\to M_{XX}$ is a quantum concurrent QNS bicorrelation.
By \cite[Theorem 4.3]{bhtt}, there exists a finite dimensional C*-algebra $\cl A$, a trace $\frak{t}$ on $\cl A$,
and a *-homomorphism $\alpha : \cl U_{X,A}\to \cl A$, such that
$\Gamma = \Gamma_{\frak{t}\circ\alpha}$.  After taking a quotient, 
we may assume that $\frak{t}$ is faithful.
Let $\rho : \cl C_{X,A}\to \cl U_{X,A}$ be the canonical quotient map, 
whose existence is guaranteed by \cite[Lemma 4.2]{bhtt}.
Let $\tilde{\tau} : \cl C_{X,A}\to \bb{C}$ be the functional, given by
$\tilde{\tau}(u) = (\frak{t}\circ\alpha\circ\rho)(u)$, $u\in \cl C_{X,A}$; clearly, $\tilde{\tau}$ is a trace on $\cl C_{X,A}$.
Note, further, that $\Gamma = \Gamma_{\tilde{\tau}}$
(for brevity here, and in the sequel, $\Gamma_{\tilde{\tau}}$ is 
used to denote $\Gamma_{s_{\tilde{\tau}}}$, where $s_{\tilde{\tau}}$ is the state, canonically associated
with the trace $\tilde{\tau}$).
By the proof of (i), $\tilde{\tau}$ annihilates the ideal $\tilde{\cl J}$ defined in (\ref{eq_JXti}); thus,
as $\frak{t}$ is faithful, $(\alpha\circ\rho)(\tilde{\cl J}) = 0$ and hence we get a $*$-homomorphism $\tilde\rho: C(\mathbb P \cl U_X^+)\to \cl A$ and the trace $\tau=\frak{t}\circ\tilde\rho$ on $C(\mathbb P \cl U_X^+)$ which factors through $\cl A$.

Conversely, suppose that $\cl B$ is a finite dimensional C*-algebra. 
Let $\pi : C(\mathbb P \cl U_X^+)\to \cl B$ be a unital *-homomorphism and $\tilde{\tau} : \cl B\to \bb{C}$ be a trace such that, if 
$\tau = \tilde{\tau}\circ \pi$, then $\Gamma = \Gamma_{\tau}$. 
By Lemma \ref{l_J1J2} (ii), there exists a finite dimensional Hilbert space $K$ and 
a bi-unitary matrix $U = (U_{a,x})_{a,x}\in M_X(\cl B(K))$, such that 
$\pi(u_{x,x',a,a'}) = U_{a,x}^*U_{a',x'}$, $x,x',a,a'\in X$. 
Now a straightforward verification shows that $\Gamma\in \cl Q_{\rm q}^{\rm bic}$.

\smallskip

(iii) 
Suppose that $\Gamma \in \cl Q_{\rm loc}^{\rm bic}$. By \cite[Theorem 4.3 (iii)]{bhtt}, there exists an abelian C*-algebra 
$\cl A$, a *-homomorphism $\tilde{\pi} : \cl U_{X,A}\to \cl A$
and a state $\phi : \cl A\to \bb{C}$ such that, if $\tilde{\tau} = \phi\circ\tilde{\pi}$ 
then ($\tilde{\tau}$ is a trace on $\cl U_{X,A}$ such that) 
$\Gamma = \Gamma_{\tilde{\tau}}$. 
Realise $\cl A = C(\Omega)$ for some compact Hausdorff space $\Omega$ and let $\mu$ be a regular Borel measure
on $\Omega$ such that $\phi(h) = \int_{\Omega} h d\mu$. 
Writing 
$\tilde{U}_{x,x',a,a'} = \tilde{\pi}(\tilde{u}_{x,x'a,a'})$, $x,x'a,a'\in X$, we have 
$$\phi(\tilde{U}_{x,x',a,a'}\tilde{U}_{y',y,b',b}) = \int_{\Omega} \tilde{U}_{x,x',a,a'}(t)\tilde{U}_{y',y,b',b}(t) d\mu(t), \ \ \ x,x',a,a'\in X.$$
As $\mu$ can be approximated by convex combinations of point mass evaluations,
$\Gamma$ can be approximated by convex combinations 
$\sum_{i=1}^k \lambda_i \Gamma_i$, where 
$$\Gamma_i(\epsilon_{x,x'}\otimes \epsilon_{y,y'}) = \left(\mu_{x,x',a,a'}^{(i)}\mu_{y',y,b',b}^{(i)}\right)_{a,a',b,b'}, 
\ \ \ x,x',y,y'\in X,$$
for some scalar matrices $M_i = \left(\mu_{x,x',a,a'}^{(i)}\right)_{x,x',b,b'}$.
Since the matrices $M_i$ give rise to (one-domensional) *-representations of $\cl U_{X,A}$, 
by Lemma \ref{l_bhtt}, they admit factorisations of the form 
$\mu_{x,x',a,a'}^{(i)} = \bar{\lambda}_{a,x}^{(i)}\lambda_{a',x'}^{(i)}$, $x,x',a,a'\in X$, for a unitary matrix 
$U_i = (\lambda_{a,x}^{(i)})_{a,x}$, $i = 1,\dots,k$. 
Note that $\Gamma_i = \Phi_i\otimes \Phi_i^{\sharp}$, where 
$\Phi_i$ is the (unital) quantum channel with Choi matrix $\left(\mu_{x,x',a,a'}^{(i)}\right)_{x,x',a,a'}$. 
By the Carath\'eodory Theorem and compactness, we have that $\Gamma$ is itself a convex combination of this form. 
We further have  that 
$$\Phi_i(\omega) = U_i^{{\rm t} *}\omega U_i^{\rm t}, \ \ \ \omega\in M_X, \ i = 1,\dots,k,$$
and in particular $\Phi_i$ is a unitary channel, $i = 1,\dots,k$. 

Suppose that $\Phi : M_X\to M_X$ is a unitary channel. Let $U = (\lambda_{a,x})_{a,x}\in M_X$ 
be a unitary (and hence a bi-unitary) 
such that $\Phi(\omega) = U^*\omega U$, $\omega\in M_X$. 
We have that 
\begin{eqnarray*}
\left(\Phi\otimes\Phi^{\sharp}\right)(J_X)
& = & 
\frac{1}{|X|} \sum_{x,y\in X} \Phi(\epsilon_{x,y})\otimes \Phi(\epsilon_{y,x})^{\rm t}\\
& = & 
\frac{1}{|X|} \sum_{x,y\in X} (U^*e_x) (U^*e_y)^* \otimes ((U^*e_y) (U^*e_x)^*)^{\rm t}\\
& = & 
\frac{1}{|X|} \sum_{x,y\in X} \sum_{a,b\in X} \sum_{a',b' \in X} 
\lambda_{y,b}\overline{\lambda}_{x,a}\lambda_{x,a'}\overline{\lambda}_{y,b'}
(\epsilon_{a,b}\otimes \epsilon_{a',b'})\\
& = & 
\frac{1}{|X|} \sum_{a,b\in X} \sum_{a',b' \in X} \delta_{a,a'} \delta_{b,b'}
(\epsilon_{a,b}\otimes \epsilon_{a',b'}) 
= J_X.
\end{eqnarray*} 
Thus, $\Phi\otimes\Phi^{\sharp}$ is a concurrent correlation and, since $\Phi$ is unital, 
it is a concurrent bicorrelation. 
Since $\cl Q_{\rm loc}^{\rm bic}$ is convex, we have that all convex combinations of elementary 
tensors of the form $\Phi\otimes\Phi^{\sharp}$ belong to $\cl Q_{\rm loc}^{\rm bic}$. 

Now assume that 
$\Gamma = \sum_{i=1}^k \lambda_i \Phi_i\otimes \Phi_i^{\sharp}$ as a convex combination, 
where $\Phi_i$ is a unitary channel, $i = 1,\dots,k$. 
Assume that $\Phi_i(\omega) = U_i^*\omega U_i$, $\omega\in M_{X}$, 
where $U_i\in M_X$ is a unitary. Since $U_i$ has scalar entries, it is automatically a bi-unitary, 
and hence gives rise to a canonical (one-dimensional) unital *-representation of $C(\mathbb P \cl U_X^+)$.
A standard argument now shows that $\Gamma = \Gamma_{\tau}$ for a trace on the (finite dimensional)
abelian C*-algebra $\cl D_k$.

Finally, if $\Gamma = \Gamma_{\tau}$, where $\tau$ factors through an abelian C*-algebra then 
the argument in the first paragraph of (iii) shows that $\Gamma \in \cl Q_{\rm loc}^{\rm bic}$. 
\end{proof}

\begin{remark}\label{r_amenable}
\rm 
Assume that $\tau$ is an amenable trace of $C(\mathbb P \cl U_X^+)$. By \cite[Theorem 6.2.7]{bo}, 
the functional 
$\mu : C(\mathbb P \cl U_X^+)\otimes_{\min} C(\mathbb P \cl U_X^+)^{\rm op}\to \bb{C}$, given by 
$\mu(u\otimes v^{\rm op}) = \tau(uv)$, is a well-defined state. 
Letting $s = \mu\circ (\id\otimes \partial)$ (a state on 
$C(\mathbb P \cl U_X^+)\otimes_{\min} C(\mathbb P \cl U_X^+)^{\rm op}$), 
one can proceed similarly to the first paragraph of the proof of Theorem \ref{QNSbicorrelation}
to conclude that $\Gamma\in \cl Q_{\rm qa}^{\rm bic}$. 
We do not know if, conversely, every $\Gamma\in \cl Q_{\rm qa}^{\rm bic}$ 
arises from an amenable trace on $C(\mathbb P \cl U_X^+)$. 
\end{remark}

Recall \cite{pr} that an NS correlation $p$ over $(X,X,X,X)$ is called \emph{bisynchronous} if 
$$p(a,b|x,x)\neq 0 \ \Longrightarrow \ a = b
\ \ \mbox{ and } \ \ p(a,a|x,y)\neq 0 \ \Longrightarrow \ x = y.$$
It was shown in \cite[Remark 2.1]{pr} that bisynchronous correlations of type ${\rm t}\neq {\rm ns}$
are (classical) bicorrelations. 
The next statement describes the relation between 
bisynchronicity and concurrency. 

\begin{proposition}\label{p_bysco}
Let ${\rm t}\in \{{\rm loc}, {\rm q}, {\rm qc}\}$. If $p\in \cl C_{\rm t}$ is a bisynchronous NS correlation over the quadruple $(X,X,X,X)$ then there exists $\Gamma \in \cl Q_{\rm t}^{\rm bic}$ such that 
\begin{equation}\label{eq_Ep}
\cl E_p = \Delta\circ \Gamma|_{\cl D_{XX}}.
\end{equation}
\end{proposition}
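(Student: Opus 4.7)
The plan is to pass the tracial-state data furnished by a bisynchronous correlation through a natural *-homomorphism relating the C*-algebra of the quantum permutation group to that of the projective free unitary quantum group, and then to invoke Theorem \ref{QNSbicorrelation}.

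First, I would invoke the bisynchronous analogue of the Paulsen-Severini-Stahlke-Todorov-Winter synchronous correspondence: by \cite[Theorem 2.2]{pr} (and the parallel statements for the types $\rm q$ and $\rm loc$, obtained by tracking the standard Gelfand-Naimark-Segal analysis used for synchronous correlations), there exists a tracial state $\tau$ on $C(S_X^+)$ satisfying $p(a,b|x,y) = \tau(p_{x,a}p_{y,b})$, with $\tau$ factoring through an abelian C*-algebra when $p \in \cl C_{\rm loc}$, through a finite-dimensional C*-algebra when $p\in \cl C_{\rm q}$, and through $\cl B(H)$ (for some Hilbert space $H$) in the general quantum commuting case.

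Second, I would verify that the magic unitary $P := (p_{x,a})_{x,a}\in M_X(C(S_X^+))$ is in fact a bi-unitary: the projection identities $p_{x,a}p_{x,a'}=\delta_{a,a'}p_{x,a}$ and $p_{x,a}p_{x',a}=\delta_{x,x'}p_{x,a}$, together with the magic-unitary row and column sum relations, immediately give $PP^* = P^*P = I$ and $P^{\rm t}P^{{\rm t}*} = P^{{\rm t}*}P^{\rm t} = I$. By the universal property of $C(\cl U_X^+)$ and the definition of $C(\mathbb P \cl U_X^+)$ as the C*-subalgebra generated by the length-two words $u_{a,x}^*u_{a',x'}$, this yields a unital *-homomorphism $\pi : C(\mathbb P \cl U_X^+) \to C(S_X^+)$ with $\pi(u_{x,x',a,a'}) = p_{x,a}p_{x',a'}$. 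Let $\tau' := \tau\circ\pi$; then $\tau'$ is a tracial state on $C(\mathbb P \cl U_X^+)$, and inherits from $\tau$ the property of factoring through an abelian (resp. finite-dimensional) C*-algebra in the ${\rm loc}$ (resp. ${\rm q}$) case.

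Third, I would apply Theorem \ref{QNSbicorrelation} to produce $\Gamma = \Gamma_{\tau'}\in \cl Q_{\rm t}^{\rm bic}$ defined by $\Gamma(\epsilon_{x,x'}\otimes\epsilon_{y,y'}) = (\tau'(u_{x,x',a,a'}u_{y',y,b',b}))_{a,a',b,b'}$. To establish (\ref{eq_Ep}), I would compute directly on the diagonal generators $\epsilon_{x,x}\otimes\epsilon_{y,y}\in \cl D_{XX}$: using $u_{x,x,a,a} = u_{a,x}^*u_{a,x}$, one has $\pi(u_{x,x,a,a}) = p_{x,a}^2 = p_{x,a}$ and, similarly, $\pi(u_{y,y,b,b}) = p_{y,b}$, so that
\[
(\Delta\circ\Gamma)(\epsilon_{x,x}\otimes\epsilon_{y,y}) = \sum_{a,b\in X} \tau(p_{x,a}p_{y,b})\, \epsilon_{a,a}\otimes\epsilon_{b,b} = \cl E_p(\epsilon_{x,x}\otimes\epsilon_{y,y}),
\]
as required.

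The main potential obstacle is the preparatory step: having a clean statement that bisynchronous correlations of each of the prescribed types correspond to tracial states of the corresponding type on $C(S_X^+)$. For ${\rm qc}$ this is \cite[Theorem 2.2]{pr} verbatim; for ${\rm loc}$ the tracial state arises from a probability measure on the classical permutation group $S_X$ and hence factors through $C(S_X)$; and for ${\rm q}$ the trace is obtained by evaluating against a finite-dimensional representation of the magic unitary, hence factors through a finite-dimensional C*-algebra. Once these analogues are in hand, the remaining steps are formal and the verification of (\ref{eq_Ep}) is immediate.
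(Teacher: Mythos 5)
Your proposal is correct and follows essentially the same route as the paper: invoke the Paulsen--Rahaman tracial-state representation on $C(S_X^+)$, use the fact that a quantum permutation is a bi-unitary to obtain a unital $*$-homomorphism from $C(\mathbb P\cl U_X^+)$, compose to get a trace of the appropriate type, and apply Theorem \ref{QNSbicorrelation} together with the diagonal computation. The paper likewise treats the ${\rm q}$ and ${\rm loc}$ cases as "similar," relying on the same type-tracking of the trace that you outline.
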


\begin{proof}
We consider first the case ${\rm t} = {\rm qc}$.
Let $p\in \cl C_{\rm qc}$ be a bisynchronous correlation. 
By \cite[Theorem 2.2]{pr}, there exists a tracial state $\tau : C(S_X^+)\to \bb{C}$ such that 
\begin{equation}\label{eq_pabxybitau}
p(a,b|x,y) = \tau(p_{a,x}p_{b,y}), \ \ \ x,y,a,b\in X.
\end{equation}
Let 
$$p_{x,x'a,a'} := p_{a,x}^*p_{a',x'} = p_{a,x}p_{a',x'},
\ \ \ x,x',a,a'\in X,$$
and let
$C(\mathbb P S_X^+)$ be the subalgebra of 
$C(S_X^+)$, generated by the elements of the form 
$p_{x,x',a,a'}$, $x,x',a,a'\in X$. 
Since every quantum permutation is a bi-unitary, 
there exists a unital *-homomorphism
$\pi : C(\mathbb P \cl U_X^+)\to C(\mathbb P S_X^+)$ with 
$$\pi(e_{x,x',a,a'}) = p_{x,x',a,a'}, \ \ \ 
x,x',a,a'\in X.$$
Let $\tilde{\tau} = \tau\circ\pi$; thus, $\tilde{\tau}$
is a tracial state on $C(\mathbb P \cl U_X^+)$ and hence, 
by Theorem \ref{QNSbicorrelation}, $\Gamma_{\tilde{\tau}}$
is a quantum commuting concurrent QNS bicorrelation. 
Moreover, if $x,y\in X$ then 
\begin{eqnarray*}
(\Delta\circ \Gamma_{\tilde{\tau}}) (\epsilon_{x,x}\otimes \epsilon_{y,y}) 
& = & 
\sum_{a,b\in X}\tilde{\tau}(e_{x,x,a,a}e_{y,y,b,b})
\epsilon_{a,a}\otimes \epsilon_{b,b}\\
& = & 
\sum_{a,b\in X}\tau(p_{x,x,a,a}p_{y,y,b,b})
\epsilon_{a,a}\otimes \epsilon_{b,b}\\
& = & 
\sum_{a,b\in X}\tau(p_{a,x}^*p_{a,x}p_{b,y}^*p_{b,y})
\epsilon_{a,a}\otimes \epsilon_{b,b}\\
& = & 
\sum_{a,b\in X}\tau(p_{a,x}p_{b,y})
\epsilon_{a,a}\otimes \epsilon_{b,b}
= \cl E_p(\epsilon_{x,x}\otimes \epsilon_{y,y}),
\end{eqnarray*}
and (\ref{eq_Ep}) follows. 

The cases ${\rm t} = {\rm q}$ and ${\rm t} = {\rm loc}$ are similar.
\end{proof}

%%%%%%%%%%%%%%%%%%%%%%%%%%%%%%%%%%%%%%%%%%%%%%%%
%%%%%%%%%%%%%%%%%%%%%%%%%%%%%%%%%%%%%%%%%%%%%%%%

\section{The quantum graph isomorphism game}\label{s_qgig}

In this section, we view the concurrent bicorrelations studied in Section \ref{s_concbic} as strategies for 
the non-commutative graph isomorphism game. This allows us to define quantum information versions of 
quantum isomorphisms of non-commutative graphs of different types, which we characterise in terms of 
relations arising from the underlying graphs.

%%%%%%%%%%%%%%%%%%%%%%%%%%%%%%%%%%%%%%%%%%%%%%%%

\subsection{Quantum commuting isomorphisms}\label{ss_qci}

Several related concepts of quantum graphs have been studied in the literature (see 
\cite{bcehpsw, daws, dw}). Here we work with the notion 
that is used in \cite{tt-QNS}, \cite{stahlke} and \cite{bhtt}.
Let $X$ be a finite set, $H = \bb{C}^X$, and recall that 
$H^{\rm d}$ stands for the dual (Banach) space of $H$. 
Note that, as an additive group, $H^{\rm d}$ can be identified with $H$; we 
write $\bar{\zeta}$ for the element of $H^{\rm d}$, corresponding to the vector $\zeta$ in $H$
(so that $\bar{\zeta} : H\to \bb{C}$ is given by $\bar{\zeta}(\xi) = \langle \xi,\zeta\rangle$). 
Let $\theta : H\otimes H\to \cl L(H^{\dd},H)$ be the linear map given by 
$$\theta(\xi\otimes\eta)(\bar{\zeta}) = \langle \xi,\zeta\rangle\eta, \ \ \ \zeta\in H.$$
We have
\begin{equation}\label{eq_MNtheta}
\theta((S\otimes T)\zeta) = T\theta(\zeta)S^{\rm d}, \ \ \ \zeta\in H\otimes H, \ S,T\in \cl L(H).
\end{equation}
For a subspace $\cl U\subseteq \bb{C}^X\otimes \bb{C}^X$, set 
$$\cl S_{\cl U} = \{\theta(\zeta) : \zeta\in \cl U\}.$$
We let $\partial_X : (\mathbb C^X)^{\rm d}\to\mathbb C^X$ be the linear mapping given by 
$\partial_X(\bar{e}_x) = e_x$, $x\in X$, and we set $\tilde{\cl S}_{\cl U} := \cl S_{\cl U}\partial_X^{-1}$; 
thus, $\tilde{\cl S}_{\cl U} \subseteq \cl L(\bb{C}^X)$.

We denote by $\mm : \bb{C}^X\otimes \bb{C}^X\to \bb{C}$ the map, given by 
$$\mm(\zeta) = \left\langle \zeta, \sum_{x\in X} e_x\otimes e_x\right\rangle, \ \ \ \zeta\in \bb{C}^X\otimes \bb{C}^X.$$
Let also $\frak{f} : \bb{C}^X\otimes \bb{C}^X \to \bb{C}^X\otimes \bb{C}^X$ be the flip operator, given by $\frak{f}(\xi\otimes\eta) = \eta\otimes\xi$.

\begin{definition}\label{d_ss}
A \emph{quantum graph} with vertex set $X$ 
is a linear subspace $\cl U\subseteq \bb{C}^X \otimes \bb{C}^X$ that is \emph{skew} in that $\mm(\cl U) = \{0\}$ and 
\emph{symmetric} in that $\frak{f}(\cl U) = \cl U$. 
\end{definition}

In the sequel, for a subspace $\cl U\subseteq \bb{C}^X \otimes \bb{C}^X$, we denote by $P_{\cl U}$ the 
orthogonal projection from $\bb{C}^X \otimes \bb{C}^X$ onto $\cl U$; thus, $P_{\cl U}\in M_{XX}$. 
For a classical (simple, undirected) graph $G$ with vertex set $X$, we use $\sim$ (or $\sim_G$ when a clarification is needed)
to denote the adjacency relation of $G$. The graph $G$ gives rise to the quantum graph
$$\cl U_G = {\rm span}\{e_x\otimes e_y : x\sim y\},$$
and we write $P_G = P_{\cl U_G}$; note that $P_G\in \cl D_{XX}$, and that 
$$ \tilde{\cl S}_{\cl U_G}
= {\rm span}\{\epsilon_{x,y} : x\sim y\}$$
is a traceless self-adjoint subspace of $M_X$.  More generally,  $\tilde{\cl S}_{\cl U} \subseteq M_X$  is always a traceless transpose-invariant subspace for any quantum graph $\cl U$; this is the 
suitable version arising in our setting of Stahlke's quantum graphs \cite{stahlke}, 
where tracelessness and self-adjointness are assumed as part of the definition.

To motivate Definition \ref{d_qgig} below, we first recall the graph isomorphism game \cite{amrssv}
for graphs $G$ and $H$, both with vertex set $X$. 
For elements $x,y\in X$, we denote by ${\rm rel}_G(x,y)$ the element of the set $\{=, \sim, \not\simeq\}$,
which describes the adjacency relation in the pair $(x,y)$, in the graph $G$. 
A correlation $p\in \cl C_{\rm t}$ is said to be a perfect ${\rm t}$-strategy for the $(G,H)$-isomorphism game,
provided $p$ is bisynchronous and
\begin{equation}\label{eq_HxyGab}
p(a,b|x,y) = 0, \mbox{ if } {\rm rel}_G(x,y)\neq {\rm rel}_H(a,b) \mbox{ or } 
{\rm rel}_H(x,y)\neq {\rm rel}_G(a,b).
\end{equation}
We note that, for a given correlation type ${\rm t}$, 
two graphs $G$ and $H$ with vertex set $X$ are ${\rm t}$-isomorphic \cite{amrssv} if and only if 
there exists a bisynchronous bicorrelation $p$ of type $\rm t$ over the quadruple $(X,X,X,X)$, such that 
\begin{equation}\label{eq_gi1dir}
\omega\in \cl D_{XX}^+ \mbox{ and } \omega = P_G\omega P_G 
\ \Longrightarrow \ \Gamma(\omega) = P_H\Gamma(\omega) P_H
\end{equation}
and 
\begin{equation}\label{eq_gi2dir}
\sigma\in \cl D_{XX}^+  \mbox{ and }  \sigma = P_H\sigma P_H 
\ \Longrightarrow \ \Gamma^*(\sigma) = P_G\Gamma^*(\sigma) P_G.
\end{equation}
Indeed, condition (\ref{eq_gi1dir}) is equivalent to requiring that 
$p(a,b|x,y) = 0$ if $x\sim_G y$ but $a\not\sim_H b$, while (\ref{eq_gi2dir}) is equivalent to 
requiring that 
$p(a,b|x,y) = 0$ if $a\sim_H b$ but $x \not\sim_G y$, in conjunction, these two conditions 
are equivalent to (\ref{eq_HxyGab}).

Recall \cite{tt-QNS, bhtt} that, if 
$\cl U\subseteq \bb C^{X}\otimes\bb{C}^X$ and $\cl V\subseteq \bb C^{X}\otimes\bb{C}^X$ are quantum graphs, 
and $P = P_{\cl U}$ and $Q = P_{\cl V}$,  then
the perfect strategies for the 
\emph{quantum homomorphism game} $\cl U\to \cl V$
are the QNS correlations $\Gamma : M_{XX}\to M_{XX}$ such that  
$$\omega\in M_{XX}^+ \mbox{ and } \omega = P\omega P \ \Longrightarrow \ \Gamma(\omega) = Q\Gamma(\omega) Q.$$

\begin{definition}\label{d_qgig}
Let ${\rm t} \in \{{\rm loc}, {\rm q}, {\rm qa}, {\rm qc}, {\rm ns}\}$. 
We say that $\cl U$ and $\cl V$ are \emph{${\rm t}$-isomorphic}, 
and write $\cl U\cong_{\rm t} \cl V$, if there exists 
$\Gamma \in \cl Q_{\rm t}^{\rm bic}$ such that 
\begin{itemize}
\item[(i)] $\Gamma$ is a perfect strategy for $\cl U\to\cl V$, and 
\item[(ii)] $\Gamma^*$ is a perfect strategy for $\cl V\to\cl U$.
\end{itemize}
\end{definition}

\begin{remark}\label{r_moregene}
\rm 
Although our main interest in this section 
lies in quantum graphs, 
it is important to note, for the development in 
Section \ref{s_connections}, that Definition \ref{d_qgig} can be stated in a greater generality, involving subspaces
$\cl U$ and $\cl V$ of $\bb{C}^X\otimes\bb{C}^X$ that are not necessarily quantum graphs. 
\end{remark}

In the next theorem, we give an operator algebraic characterisation of the 
relation $\cl U \cong_{\rm qc} \cl V$. 
We recall the leg numbering notation: 
if $\frak{F} : M_{XX}\otimes \cl B(H)\to M_{XX}\otimes \cl B(H)$ is the 
(unitarily implemented) isomorphism, given by 
$$\frak{F}(S\otimes T\otimes R) = T\otimes S\otimes R, \ \ \ S,T\in M_X, R\in \cl B(H),$$
for $U = (U_{a,x})_{a,x}\in M_X\otimes\cl B(H)$, 
we write $U_{2,3} = I_X\otimes U$, and 
$U_{1,3} = \frak{F}(I_X\otimes U)$.
Note that $U_{2,3}, U_{1,3}\in M_{XX}\otimes \cl B(H)$ and 
\begin{equation}\label{eq_legno2}
U^{\rm t}_{1,3} U^*_{2,3}=\sum_{x,y,a,b\in X}\epsilon_{x,a}\otimes\epsilon_{y,b}\otimes U_{a,x}U_{b,y}^*.
\end{equation}

For the formulation of the next theorem, we set $\bar{A} = A^{{\rm t }*}$, and call 
a von Neumann algebra \emph{tracial} if it admits a tracial state. 
If $H$ is a Hilbert space and $\cl N\subseteq\cl B(H)$ is a von Neumann algebra, 
an operator matrix $U = (U_{a,x})_{a,x\in X}$ will be called \emph{$\cl N$-aligned} if 
$U_{a,x}^*U_{b,y}\in\cl N$ for all $x,y,a,b\in X$.

\begin{theorem}\label{iso}
Let $\cl U$ and $\cl V$ be 
quantum graphs in $\bb{C}^{X}\otimes\bb{C}^X$, and set $P = P_{\cl U}$ and $Q = P_{\cl V}$. 

The following are equivalent: 

\begin{itemize}
\item[(i)]
$\cl U \cong_{\rm qc} \cl V$;

\item[(ii)]
there exists a tracial von Neumann algebra $\cl N\subseteq \cl B(H)$
and an $\cl N$-aligned bi-unitary $U = (U_{a,x})_{a,x}\in M_X(\cl B(H))$ such that  
$$(P\otimes I)U^{\rm t}_{1,3}U^*_{2,3}(Q^\perp\otimes I) = 0 
\ \mbox{ and } \ (\bar{P}^\perp\otimes I)U^{\rm t}_{1,3}  U_{2,3}^*(\bar{Q}\otimes I)=0;$$

\item[(iii)] there exists a tracial von Neumann algebra $\cl N\subseteq \cl B(H)$
and an $\cl N$-aligned bi-unitary $U = (U_{a,x})_{a,x}\in M_X(\cl B(H))$ such that  
$$U(\tilde{\cl S}_{\cl U}\otimes 1)U^*\subseteq \tilde{\cl S}_{\cl V} \otimes \cl B(H) \text{ and }U^{\rm t}
(\tilde{\cl S}_{\cl V} \otimes 1)U^{{\rm t}^*}\subseteq \tilde{\cl S}_{\cl U} \otimes \cl B(H).$$
\end{itemize}
\end{theorem}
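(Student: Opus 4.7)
The plan is to prove (i)$\Leftrightarrow$(iii) by reading off the bi-unitary from Theorem \ref{QNSbicorrelation}, and (ii)$\Leftrightarrow$(iii) through a direct algebraic translation between projection supports in $\bb{C}^X\otimes\bb{C}^X$ and operator subspaces in $M_X$ via the map $\theta$.

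For the implication (i)$\Rightarrow$(iii), suppose $\Gamma\in\cl Q^{\rm bic}_{\rm qc}$ satisfies conditions (i)--(ii) of Definition \ref{d_qgig}. By Theorem \ref{QNSbicorrelation}, there exists a tracial state $\tau$ on $C(\bb P\cl U_X^+)$ with
$$\Gamma(\epsilon_{x,x'}\otimes\epsilon_{y,y'}) = (\tau(u_{x,x',a,a'}u_{y',y,b',b}))_{a,a',b,b'}.$$
Passing to the GNS representation $\pi_\tau : C(\bb P\cl U_X^+)\to\cl B(H)$ and applying Lemma \ref{l_J1J2}(ii) yields a bi-unitary $U=(U_{a,x})_{a,x}\in M_X(\cl B(H))$ with $\pi_\tau(u_{x,x',a,a'}) = U_{a,x}^*U_{a',x'}$. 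Let $\cl N := \pi_\tau(C(\bb P\cl U_X^+))''$; then $\cl N$ is tracial (the vector state of the GNS cyclic vector is normal and tracial on $\cl N$) and $U$ is $\cl N$-aligned by construction. Using the formula \eqref{eq_legno2} for $U_{1,3}^{\rm t}U_{2,3}^*$, the perfect strategy condition $\Gamma(P\omega P) = Q\Gamma(P\omega P)Q$ for all $\omega$ translates, after tracing against arbitrary elements of $\cl N_*$ and using that $\tau$ is faithful on $\cl N$, into the identity $(P\otimes I)U_{1,3}^{\rm t}U_{2,3}^*(Q^\perp\otimes I)=0$; the analogous condition for $\Gamma^*$ (which is a perfect strategy for $\cl V\to\cl U$) yields the second equation of (ii). This simultaneously proves (i)$\Rightarrow$(ii).

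For (iii)$\Rightarrow$(i), given the bi-unitary $U$ and the trace $\tau$ on $\cl N$, one defines a tracial state on $C(\bb P\cl U_X^+)$ by $\hat{\tau}(u_{x,x',a,a'}) := \tau(U_{a,x}^*U_{a',x'})$ (well-defined by Lemma \ref{l_J1J2}(ii) and the universal property, tracial by the tracial property of $\tau$ on $\cl N$); applying Theorem \ref{QNSbicorrelation} produces $\Gamma\in\cl Q_{\rm qc}^{\rm bic}$, and the subspace containments of (iii) translate back (by the same translation as above) into the perfect strategy conditions. The equivalence (ii)$\Leftrightarrow$(iii) is then an algebraic verification: one observes that the map $\theta$ identifies $\cl U$ with $\tilde{\cl S}_{\cl U}$ (after postcomposing with $\partial_X^{-1}$), and that under the canonical shuffle $M_{XX}\otimes \cl B(H)\cong M_X\otimes M_X\otimes \cl B(H)$ the operator $U_{1,3}^{\rm t}U_{2,3}^*$ implements the map $m \otimes 1 \mapsto U(m\otimes 1)U^*$ on $M_X\otimes\cl B(H)$ in one leg and $m\otimes 1 \mapsto U^{\rm t}(m\otimes 1)U^{{\rm t}*}$ in the other. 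From this, $(P\otimes I)U_{1,3}^{\rm t}U_{2,3}^*(Q^\perp\otimes I)=0$ is equivalent to $U(\tilde{\cl S}_{\cl U}\otimes 1)U^*\subseteq \tilde{\cl S}_{\cl V}\otimes\cl B(H)$, and similarly for the second pair of conditions.

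The main obstacle will be the translation in the first paragraph: converting the perfect-strategy condition $\Gamma(P\omega P)\in Q M_{XX}Q$ into the precise leg-numbered projection identity in (ii). This requires careful bookkeeping with the Choi matrix of $\Gamma$, several transpose operations (coming from the fact that $u_{y',y,b',b}$ appears rather than $u_{y,y',b,b'}$), and the use of the tracial property of $\tau$ on $\cl N$ to reduce from testing against all states to a single operator identity; the second perfect-strategy condition (for $\Gamma^*$) is where the bar-complex-conjugates $\bar P$ and $\bar Q$ enter, since Remark \ref{r_dualal} shows that $\Gamma^*$ is represented by the bi-unitary $\bar U$, not $U$. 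Verifying that this second condition correctly produces the appearance of $\bar P^\perp$ and $\bar Q$ in (ii) will be the most delicate accounting task.
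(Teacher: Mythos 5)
Your proposal is correct and follows essentially the same route as the paper: (i)$\Leftrightarrow$(ii) via Theorem \ref{QNSbicorrelation}, the GNS construction (with the cyclic vector giving a faithful trace on $\pi_\tau(C(\mathbb P\cl U_X^+))''$), Lemma \ref{l_J1J2}(ii), and the vanishing of the operators $R_{\xi,\eta}$ for $\xi\in\cl U$, $\eta\in\cl V^\perp$; and (ii)$\Leftrightarrow$(iii) via the identity expressing $U(\theta(\xi)\partial_X^{-1}\otimes I)U^*$ through the leg-numbered operator $U_{2,3}\bar U_{1,3}$. You also correctly isolate the genuinely delicate points (the faithfulness argument reducing the strategy condition to an operator identity, and the transposes producing $\bar P^\perp$ and $\bar Q$ in the dual condition), which is exactly where the paper expends its effort.
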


\begin{proof}
(i)$\Rightarrow$(ii) 
For a vector 
$\xi = \sum_{x,y\in X} \alpha_{x,y}e_x\otimes e_y \in \bb{C}^{X} \otimes \bb{C}^{X}$, let 
$\overline{\xi} = \sum_{x,y\in X} \overline{\alpha}_{x,y}e_x\otimes e_y$ and set 
$$Y_\xi = \sum_{x,y\in X} \alpha_{x,y} \epsilon_{x,y};$$
note that $Y_\xi\in M_X$ (and that the use of the notation $\overline{\xi}$ agrees, up to 
a canonical identification, 
with the definition in the beginning of Subsection \ref{ss_qci}). 
Let $\Gamma : M_{XX}\to M_{XX}$ be a concurrent quantum commuting bicorrelation 
satisfying conditions (i) and (ii) in Definition \ref{d_qgig}.

By Theorem \ref{QNSbicorrelation}, there exists a tracial state $\tau:C(\mathbb P \cl U_X^+)\to\mathbb C$ such that 
$$\Gamma(e_{x,x'}\otimes e_{y,y'}) = \left(\tau(u_{x,x',a,a'}u_{y',y,b',b})\right)_{a,a',b,b'}, \ \ \ x,x',y,y'\in X.$$
Let $\pi_\tau$ be the *-representation, associated with $\tau$ via the GNS construction, 
and let $\zeta$ be the corresponding cyclic vector. Then 
$\cl N = \pi_\tau(C(\mathbb P \cl U_X^+))''$ is a finite von Neumann algebra, on which the vector state corresponding to $\zeta$ is faithful and tracial. 

Let $E = (\pi_\tau(u_{x,x',a,a'}))_{x,x',a,a'}$. 
As in the proof of \cite[Theorem 5.5]{bhtt}, 
we have that 
$$\langle\Gamma(\xi\xi^*)\eta,\eta\rangle=
\left(\Tr\otimes\tau\right) 
\left(E(Y_{\bar\xi}\otimes Y_{\eta}\otimes 1_{\cl A})E(Y_{\bar\xi}^*\otimes Y_\eta^*\otimes1_{\cl N})\right),$$
implying, by the faithfulness of $\tau$, that 
$$E\left(Y_{\bar\xi}\otimes Y_\eta\otimes I\right)E=0, \ \ \ \xi\in \cl U, \eta\in \cl V^\perp.$$

By Lemma \ref{l_J1J2} (ii), there exists a bi-unitary $U=(U_{a,x})_{a,x}$, such that  
$E = (U_{a,x}^* U_{a',x'})_{x,x',a,a'}$. Writing $\xi=\sum_{x,y\in X}\alpha_{x,y} e_x\otimes e_y$ and $\eta=\sum_{a,b\in X}\beta_{a,b}e_a\otimes e_b$, we calculate
\begin{eqnarray*}
&&E\left(Y_{\bar\xi}\otimes Y_\eta\otimes I\right)E = 
\left(\sum_{x',y',a',b'\in X}\overline{\alpha_{x',y'}}\beta_{a',b'}U_{a,x}^*U_{a',x'}U_{b',y'}^*U_{b,y}\right)_{x,y,a,b}.
\end{eqnarray*}
Hence 
$\sum_{x',y',a',b'}\overline{\alpha_{x',y'}}\beta_{a',b'}U_{a,x}^*U_{a',x'}U_{b',y'}^*U_{b,y}=0$ for any $x$, $y$, $a$, $b$. Letting $R_{\xi,\eta} = \sum_{x',y',a',b'}\overline{\alpha_{x',y'}}\beta_{a',b'}U_{a',x'}U_{b',y'}^*$, 
we have 
$$U_{a,x}^*R_{\xi,\eta} U_{b,y} = 0, \ \ \ x,y,a,b\in X.$$
It follows that 
\begin{equation}\label{eq_Rxieta}
R_{\xi,\eta} = 
\sum_{x,y\in X} U_{a,x} U_{a,x}^*R_{\xi,\eta} U_{b,y}U_{b,y}^* = 0.
\end{equation}

Let 
$F := U^{\rm t}_{1,3} U^*_{2,3}$; thus, 
$F \in M_{XX}\otimes \cl B(H)$.
By (\ref{eq_Rxieta}), the operator $F$ satisfies the conditions
$$ \langle F(\eta\otimes h), \xi\otimes g\rangle = 0, \ \ \ h,g\in H,$$
which imply
$(P\otimes I)F(Q^\perp\otimes I)=0$.

Let 
$\tilde{E} := (U_{a,x}^* U_{a',x'})_{a,a',x,x'}$. 
By symmetry, 
$$\tilde{E} (Y_{\bar\xi'}\otimes Y_{\eta'}\otimes I) \tilde{E} = 0, \ \ \ \xi'\in \cl V, \eta'\in \cl U^\perp.$$
Setting 
$$\tilde{F} := U_{1,3}\bar U_{2,3} = \sum_{x,y,a,b\in X} \epsilon_{a,x} \otimes \epsilon_{b,y} \otimes  U_{a,x} U_{b,y}^*,$$
we similarly obtain that 
$$ \langle \tilde{F}(\eta'\otimes h), \xi'\otimes g\rangle = 0, \ \ \ \xi'\in \cl V, \eta'\in \cl U^\perp, h,g\in H,$$
and hence 
\begin{equation}\label{eq_F'}
(Q\otimes I)\tilde{F} (P^\perp\otimes I) = 0.
\end{equation}

Let $\frak t : M_X\to M_X$ be the map, given by 
$\frak{t}(T) = T^{\rm t}$. 
Since the operators $P^{\perp}$ and $Q$ are self-adjoint, 
$(\frak t\otimes\frak t) (Q) = \bar{Q}$ and
$(\frak t\otimes\frak t) (P^{\perp})=\bar{P}^{\perp}$.
Thus, applying the map 
$\frak t\otimes\frak t\otimes {\rm id}$ to the relation (\ref{eq_F'}), we obtain 
$(\bar{P}^\perp \otimes I)F(\bar{Q}\otimes I)=0$

(ii)$\Rightarrow$(i) 
Assume that 
$(P\otimes I)U^{\rm t}_{1,3}U^*_{2,3}(Q^\perp\otimes I)=0$ and 
$(\bar{P}^\perp \otimes I)U^{\rm t}_{1,3}U^*_{2,3}(\bar{Q}\otimes I) = 0$. 
By Theorem \ref{QNSbicorrelation} (i), the linear map $\Gamma$, given by 
$\Gamma(\epsilon_{x,x'}\otimes \epsilon_{y,y'}) = 
\left(\tau((U_{a,x}^* U_{a',x'} U_{b',y'}^* U_{b,y})\right)_{a,a',b,b'}$, is a concurrent quantum commuting bicorrelation. 
Reversing the arguments from the previous paragraphs and using 
the proof of \cite[Theorem 5.5]{bhtt}, we obtain that, if $E = (U_{a,x}^*U_{a',x'})_{x,x',a,a'}$ then 
$$\langle \Gamma(\xi\xi^*),\eta\eta^*\rangle
= (\Tr\otimes\tau)\left(E(Y_{\bar\xi}\otimes Y_\eta\otimes I)E(Y_{\bar\xi}^*\otimes Y_\eta^*\otimes I)\right) = 0,$$
for all $\xi\in\cl U$ and all $\eta\in \cl V^\perp$.
Similarly, 
$$\langle \Gamma^*(\xi'\xi'^{*}),\eta'\eta'^{*}\rangle =0 \text{ for all }\xi'\in\cl V, \eta'\in\cl U^\perp.$$
It follows that $\cl U \cong_{\rm qc} \cl V$ via $\Gamma$. 

(ii)$\Rightarrow $(iii) For each $\xi\in \cl U$, $\eta\in \cl V^\perp$, $h$, $g\in H$, we have 
\begin{equation}\label{orth}
\langle U_{1,3}^{\rm t} U_{2,3}^*(\eta\otimes h),\xi\otimes g\rangle=\langle \eta\otimes h, U_{2,3}\bar U_{1,3}(\xi\otimes g)\rangle=0.
\end{equation}
Consider $U_{2,3}\bar U_{1,3}$ as a linear operator on $\bb{C}^{XX}\otimes \cl B(H)$ by letting
$$(U_{2,3}\bar U_{1,3})(\xi\otimes T) \hspace{-0.1cm}  := 
\hspace{-0.5cm} \sum_{x,y,a,b\in X}
\hspace{-0.2cm}  (\epsilon_{b,y}\otimes\epsilon_{a,x})\xi\otimes U_{a,x}U_{b,y}^*T,
\ \xi\in \bb C^{XX}, T\in \cl B(H).$$
Fix  $\xi\in \bb{C}^{XX}$. We have 
\begin{eqnarray*}
& & 
(\theta\otimes \text{id})(U_{2,3}\bar U_{1,3}(\xi\otimes I))\\
& = & 
\hspace{-0.6cm} \sum_{x,y,a,b\in X}
\theta((\epsilon_{b,y}\otimes\epsilon_{a,x})\xi)\otimes U_{a,x}U_{b,y}^*
= 
\hspace{-0.4cm} \sum_{x,y,a,b\in X}
\epsilon_{a,x}\theta(\xi)\epsilon_{b,y}^{\rm d}\otimes U_{a,x}U_{b,y}^*\\
& = &
\hspace{-0.3cm} \left(\sum_{a,x\in X}\epsilon_{a,x}\otimes U_{a,x}\right)(\theta(\xi)\otimes I)
\left(\sum_{b,y\in X}\epsilon_{b,y}^{\rm d}\otimes U_{b,y}^*\right).
\end{eqnarray*}
Note that $\partial_X\epsilon_{b,y}^{\rm d}\partial_X^{-1} = \epsilon_{y,b}$. Therefore,
\begin{eqnarray}\label{eq_U23tbu}
& &
(\theta\otimes \text{id})(U_{2,3}\bar U_{1,3}(\xi\otimes I))(\partial_X^{-1}\otimes I) \nonumber\\
& = &
\left(\sum_{a,x\in X}\epsilon_{a,x}\otimes U_{a,x}\right)
\left(\theta(\xi)\partial_X^{-1}\otimes I\right)
\left(\sum_{b,y\in X}\partial_X\epsilon_{b,y}^{\rm d}\partial_X^{-1}\otimes U_{b,y}^*\right)\\
& = &
U(\theta(\xi)\partial_X^{-1}\otimes I)U^*. \nonumber
\end{eqnarray}

To see that 
$U(\tilde{\cl S}_{\cl U} \otimes 1)U^*\subseteq \tilde{\cl S}_{\cl V} \otimes \cl B(H)$, let $\xi\in \cl U$, and 
fix orthonormal bases $(\eta_i)_{i\in \bb{I}}$ and $(\zeta_j)_{j\in \bb{J}}$ of 
$\cl V$ and $\cl V^{\perp}$, respectively.
Then $$U(\theta(\xi)\partial_X^{-1}\otimes I)U^*
= \sum_{i\in \bb{I}} \theta(\eta_i)\partial_X^{-1}\otimes R_i
+ \sum_{j\in \bb{J}} \theta(\zeta_j)\partial_X^{-1}\otimes S_j$$ for 
some $R_i$, $S_j\in\cl B(H)$, $i\in \bb{I}$, $j\in \bb{J}$.
From the previous arguments we obtain
$$(\theta\otimes \text{id})
(U_{2,3}\bar U_{1,3}(\xi\otimes I)) = 
(\theta\otimes\text{id}) \left(\sum_{i\in \bb{I}} \eta_i\otimes R_i 
+ \sum_{j\in \bb{J}} \zeta_j\otimes S_j\right)$$
and 
$$U_{2,3}\bar U_{1,3}(\xi\otimes I) = \sum_{i\in \bb{I}} \eta_i\otimes R_i 
+ \sum_{j\in \bb{J}} \zeta_j\otimes S_j.$$
Let $\omega_{g,h}$ be the vector functional on $\cl B(H)$, given by 
$\omega_{g,h}(T)=\langle Tg,h\rangle$
and, for $\eta\in \bb C^{XX}$, 
let $\ell_\eta$ be the linear functional on $\mathbb C^{XX}$, given by 
$\ell_\eta(\xi)=\langle \xi,\eta\rangle$. 
Then 
\begin{eqnarray*}
(\ell_\eta\otimes\omega_{g,h})(U_{2,3}\bar U_{1,3}(\xi\otimes I)))
& = &
\sum_{x,y,a,b \in X}\langle (\epsilon_{x,a}\otimes\epsilon_{y,b})\xi,\eta\rangle \langle U_{a,x}U_{b,y}^*g,h\rangle\\
& = &
\langle U_{2,3}\bar U_{1,3}(\xi\otimes g),\eta\otimes h\rangle, 
\end{eqnarray*}
while
$$(\ell_\eta\otimes\omega_{g,h})
\hspace{-0.15cm}\left(\hspace{-0.1cm}\sum_{i\in \bb{I}} \eta_i
\hspace{-0.02cm}\otimes \hspace{-0.02cm} R_i 
\hspace{-0.05cm}+\hspace{-0.1cm}\sum_{j\in \bb{J}}\zeta_j\hspace{-0.02cm}\otimes \hspace{-0.02cm}S_j\hspace{-0.1cm}\right)
\hspace{-0.15cm} = \hspace{-0.1cm} 
\sum_{i\in \bb{I}} \langle\eta_i,\hspace{-0.02cm}\eta\rangle\hspace{-0.05cm}\langle R_ig,\hspace{-0.02cm}h\rangle
\hspace{-0.05cm} + \hspace{-0.1cm} \sum_{j\in \bb{J}} \langle\zeta_j,\hspace{-0.02cm}\eta\rangle\hspace{-0.05cm}
\langle S_jg,\hspace{-0.02cm}h\rangle\hspace{-0.01cm}.$$
Taking now $\eta = \zeta_j$ we obtain from (\ref{orth}) that 
$$(\ell_\eta\otimes\omega_{g,h})\left(U_{2,3}\bar U_{1,3}(\xi\otimes I))\right) = 0$$
and that 
$$(\ell_\eta\otimes\omega_{g,h})
\left(\sum_{i\in \bb{I}} \eta_i\otimes R_i +\sum_{j\in \bb{J}} \zeta_j\otimes S_j\right)
= \|\zeta_j\|^2\langle S_jg,h\rangle;$$
thus, $\langle S_jg,h\rangle = 0$. As $g$ and $h$ can be chosen arbitrarily, $S_j=0$ 
for all $j\in \bb{J}$. 
Therefore 
$$U(\theta(\xi)\partial_X^{-1}\otimes I)U^*
= \sum_{i\in \bb{I}} \theta(\eta_i)\partial_X^{-1}\otimes R_i\subseteq \tilde{\cl S}_{\cl V}\otimes\cl B(H).$$
Similar arguments applied to $(Q\otimes I)\tilde{F}(P^\perp\otimes I)=0$, where $\tilde{F} = U_{1,3}\bar U_{2,3}$, give
$$U^{\rm t}(\tilde{\cl S}_{\cl V}\otimes 1)(U^{\rm t})^*\subseteq \tilde{\cl S}_{\cl U} \otimes \cl B(H).$$
(iii)$\Rightarrow$(ii)
follows, using (\ref{eq_U23tbu}), by reversing the arguments in the implication (ii)$\Rightarrow$(iii).
\end{proof}

\smallskip

\noindent {\bf Remarks. (i)}
The arguments in the proof of Theorem \ref{iso} can be used to conclude that $\cl U\to^{\rm qc}\cl V$ if and only if 
there exists a tracial von Neumann algebra $\cl N\subseteq \cl B(H)$
and an $\cl N$-aligned isometry  $V = (V_{a,x})_{a,x}$, $V_{a,x}\in \cl B(H)$, such that  
$$V(\tilde{\cl S}_{\cl U}\otimes 1)V^*\subseteq \tilde{\cl S}_{\cl V}\otimes \cl B(H).$$
This complements the characterisation obtained in \cite[Theorem 5.7]{bhtt}. 

\smallskip

{\bf (ii)} 
Similar results to those of Theorem \ref{iso} hold for $\cl U\simeq_{\rm q}\cl V$, 
in which case the space $H$ is finite-dimensional.
A treatment of the case $\cl U\simeq_{\rm loc}\cl V$ is presented in Subsection \ref{ss_locqiso} below.

\begin{corollary}\label{c_claco}
Let $G$ and $H$ be graphs with vertex set $X$.
The following are equivalent: 

\begin{itemize}
\item[(i)]
$\cl U_G \cong_{\rm qc} \cl U_H$;

\item[(ii)]
there exists a tracial von Neumann algebra $\cl N\subseteq \cl B(H)$
and an $\cl N$-aligned bi-unitary $U = (U_{a,x})_{a,x}\in M_X(\cl B(H))$ such that  
$U_{a,x} U_{b,y}^*=0$ if either $x\sim_G y$ and $a\not\sim_H b$, 
or $x\not\sim_Gy$ and $a\sim_H b$;

\item [(iii)]
there exists a tracial von Neumann algebra $\cl N\subseteq \cl B(H)$
and an $\cl N$-aligned bi-unitary $U = (U_{a,x})_{a,x}\in M_X(\cl B(H))$ such that 
$$(P_G\otimes I) U_{1,3}^{\rm t}U_{2,3}^* = U_{1,3}^{\rm t} U_{2,3}^*(P_H\otimes I);$$

\item [(iv)]
there exists a tracial von Neumann algebra $\cl N\subseteq \cl B(H)$
and a bi-unitary $U = (U_{a,x})_{a,x}\in M_X(\cl B(H))$ such that $U_{a,x}^*U_{b,y}\in\cl N$, $x,y,a,b\in X$, 
and 
$$ U(\cl S_G\otimes 1)U^*\subseteq \cl S_H\otimes \cl B(H)\text{ and }
U^{\rm t}(\cl S_H\otimes 1)U^{{\rm t}*}\subseteq \cl S_G\otimes\cl B(H).$$
\end{itemize}
\end{corollary}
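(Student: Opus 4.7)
The plan is to derive this corollary as a direct specialisation of Theorem \ref{iso}. For a classical graph $G$ on $X$, the projection $P_G = \sum_{x \sim_G y} \epsilon_{x,x} \otimes \epsilon_{y,y}$ lies in $\cl D_{XX}$, so $P_G^{\rm t} = P_G$ and hence $\bar{P}_G = P_G$; the same applies to $P_H$. Unpacking the definitions, one sees that $\tilde{\cl S}_{\cl U_G} = \cl S_G$ and likewise $\tilde{\cl S}_{\cl U_H} = \cl S_H$, so the equivalence (i)$\Leftrightarrow$(iv) will be immediate from Theorem \ref{iso}(iii).

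For (i)$\Leftrightarrow$(ii), I will substitute $\bar{P} = P_G$ and $\bar{Q} = P_H$ into Theorem \ref{iso}(ii) and use identity (\ref{eq_legno2}) to write $F := U_{1,3}^{\rm t} U_{2,3}^* = \sum_{x,y,a,b} \epsilon_{x,a}\otimes \epsilon_{y,b}\otimes U_{a,x} U_{b,y}^*$. Since $P_G$ and $P_H^{\perp}$ are sums of elementary diagonal tensors, multiplying $F$ on the left by $P_G \otimes I$ and on the right by $P_H^{\perp} \otimes I$ picks out precisely the terms with $x \sim_G y$ and $a \not\sim_H b$; this sum vanishes iff $U_{a,x} U_{b,y}^* = 0$ for all such $(x,y,a,b)$. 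An analogous computation applied to $(P_G^{\perp} \otimes I) F (P_H \otimes I) = 0$ will supply the remaining half of condition (ii).

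For (ii)$\Leftrightarrow$(iii), I will proceed formally. Given both $(P_G \otimes I) F (P_H^{\perp} \otimes I) = 0$ and $(P_G^{\perp} \otimes I) F (P_H \otimes I) = 0$, one computes $(P_G \otimes I) F = (P_G \otimes I) F (P_H \otimes I) = F (P_H \otimes I)$, yielding (iii). Conversely, starting from $(P_G \otimes I) F = F (P_H \otimes I)$, multiplying on the right by $P_H^{\perp} \otimes I$ gives the first vanishing identity, and multiplying on the left by $P_G^{\perp} \otimes I$ gives the second. There is no essential obstacle here beyond index bookkeeping; the key structural input is that the classical projections $P_G$ and $P_H$ are real diagonal, which collapses the conjugation and transposition appearing in Theorem \ref{iso} into trivial identities.
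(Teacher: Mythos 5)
Your overall approach coincides with the paper's own proof: both proceed by specialising Theorem~\ref{iso} to the diagonal, real projections $P_G$, $P_H$, observing $\bar{P}_G = P_G$, $\bar{P}_H = P_H$, unravelling the matrix $U^{\rm t}_{1,3}U^*_{2,3}$ entrywise to match condition (ii), and noting the standard algebraic equivalence between the pair of one-sided vanishing conditions and the intertwining relation (iii). This is exactly what the paper does.

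One small caution about your claim that $\tilde{\cl S}_{\cl U_G} = \cl S_G$. By definition, $\tilde{\cl S}_{\cl U_G} = \mathrm{span}\{\epsilon_{x,y} : x \sim_G y\}$, a \emph{traceless} subspace, whereas the operator system $\cl S_G$ as introduced in the proof of Proposition~\ref{p_localch} is $\mathrm{span}\{\epsilon_{x,y} : x \sim_G y \mbox{ or } x = y\}$, which contains the diagonal $\cl D_X$. These are genuinely different: the latter is the former plus $\cl D_X$. If $\cl S_G$ in item (iv) is read as the traceless span (i.e., as $\tilde{\cl S}_{\cl U_G}$), then (iv) is verbatim Theorem~\ref{iso}(iii) and your argument is complete. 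If instead $\cl S_G$ is the full operator system with diagonal, then the implication from condition (ii) is not automatic: condition (ii) only constrains $U(\epsilon_{x,x}\otimes 1)U^*$ to have support on $\{\epsilon_{a,b} : a \not\sim_H b\}$, which contains off-diagonal non-edges of $H$ lying outside $\cl S_H$, so the diagonal part of the claimed inclusion would require a separate justification (or would actually fail). The paper's proof is equally terse on this point, so this is not a defect specific to your argument, but you should not assert $\tilde{\cl S}_{\cl U_G} = \cl S_G$ without flagging which convention for $\cl S_G$ is in force.
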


\begin{proof}
We have 
$\cl U_G = \{e_x\otimes e_y: x\sim_G y\}$ and 
$\cl U_H = \{e_a\otimes e_b: a\sim_H b\}$. 
As $\bar P_G = P_G$ and $\bar P_H=P_H$, the conditions
\begin{equation}\label{defrel}
(P_{G}\otimes I)U^{\rm t}_{1,3}U^*_{2,3}(P_{H}^\perp\otimes I) = 0 \text{ and }(\bar{P}_{G}^\perp\otimes I)U^{\rm t}_{1,3}  U_{2,3}^*(\bar{P}_H\otimes I) = 0
\end{equation}
are equivalent to $(P_G\otimes I) U_{1,3}^{\rm t}U_{2,3}^* = U_{1,3}^{\rm t} U_{2,3}^*(P_H\otimes I)$, and also equivalent to $U_{a,x} U_{b,y}^* = 0$ if either $x\sim_G y$ and $a\not\sim_H b$ or $x\not\sim_Gy$ and $a\sim_H b$. The statement now follows from Theorem \ref{iso}.
\end{proof}

\begin{remark} \label{r_magic_vs_bi}
\rm
The conditions on the bi-unitary $U$ contained in Corollary \ref{c_claco} 
are equivalent to the conditions 
$A_{H^c}\ast U(A_G\otimes I)U^*=0$ and $A_{G^c}\ast U^{\rm t}(A_H\otimes I)\bar U = 0$, where  $G^c$ is the complement to $G$ and $\ast$ denotes the Schur product.  
We can formulate a similar characterisation for types $\rm loc$ and ${\rm q}$. 
In the case when the bi-unitary $U$ is actually a quantum permutation (that is, the entries $u_{i,j}$ of $U$ are all orthogonal projections), 
these conditions are equivalent to the condition that $U(A_G \otimes I)U^* = A_H \otimes I$. 
Indeed, if $U$ is a quantum permutation satisfying 
$A_{H^c} \ast U(A_G \otimes I)U^*=0$, then whenever $i \neq j$ and $i \not\sim_H j$, we have
\[0=(U(A_G \otimes I)U^*)_{i,j}=\sum_{k \sim_G \ell} u_{i,k}u_{j,\ell}.\]
Multiplying on the left by $u_{i,k}$ for any fixed $k$ satisfying $k \sim_G \ell$, we obtain $u_{i,k}u_{j,\ell}=0$ 
whenever $i \not\sim_H j$, $i \neq j$ and $k \sim_G \ell$. 
Similarly, if $i=j$ and $k \sim_G \ell$, then $k \neq \ell$, so that $u_{i,k}u_{j,\ell}=0$.

Next, if we interchange the roles of $G$ and $H$ in the above argument and replace $U$ with the magic unitary 
$U^{\rm t}$, the identity $A_{G^c} \ast U^{\rm t} (A_H \otimes I) \bar U=0$ yields  $u_{k,i}u_{\ell, j}=0$ whenever $i \not\sim_G j$, $i \neq j$ and $k \sim_H \ell$ or whenever $i=j$, and $k \sim_H \ell$.

It follows that, if $i \sim_H j$, then (assuming that $n = |X|$) we have 
\begin{align*}
(U (A_G \otimes I)U^*)_{i,j}
& =\sum_{k \sim_G \ell} u_{i,k}u_{j,\ell} \\
&=\sum_{k=1}^n u_{i,k}u_{j,k} + \sum_{k \sim_G \ell} u_{i,k}u_{j,\ell}
+ \sum_{\substack{k \not\sim_G \ell \\ k \ne l}} u_{i,k}u_{j,\ell}\\
&= \sum_{k,\ell=1}^n u_{i,k}u_{j,\ell} 
= \left(\sum_{k=1}^n u_{i,k}\right)\left(\sum_{\ell=1}^n u_{j,\ell}\right) =1=(A_H)_{i,j}.
\end{align*}
Similarly, if $i \not\sim_H j$, then either $i=j$ or $i \sim_{H^c}j$, and we obtain in either case
\[
(U (A_G \otimes I)U^*)_{i,j} = \sum_{k \sim_G \ell} u_{i,k}u_{j,\ell} = 0 = (A_H)_{i,j}.
\]
It follows that $U(A_G \otimes I)U^*=A_H \otimes I$. The converse is immediate.
\end{remark}

%%%%%%%%%%%%%%%%%%%%%%%%%%%%%%%%%%%%%%%%%%%%%%%%%%%%%

\subsection{Local isomorphisms}\label{ss_locqiso}

In this subsection, we restrict our attention to quantum graph isomorphisms of local type. 

\begin{proposition}\label{p_loctype}
Let $X$ be a finite set, and $\cl U$ and $\cl V$ be quantum graphs in $\bb{C}^X\otimes\bb{C}^X$. 
The following are equivalent: 
\begin{itemize}
\item[(i)] $\cl U\cong_{\rm loc} \cl V$;

\item[(ii)] there exists a unitary $U \in M_X$ such that $(U\otimes \bar{U})(\cl U) = \cl V$. 
\end{itemize}
\end{proposition}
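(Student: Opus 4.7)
The starting point is Theorem \ref{QNSbicorrelation}(iii), which identifies $\cl Q_{\rm loc}^{\rm bic}$ with the convex hull of channels of the form $\Phi \otimes \Phi^{\sharp}$, $\Phi$ a unitary channel. So the plan is to pin down exactly when such a $\Gamma = \sum_{\ell=1}^k \lambda_\ell \Phi_\ell \otimes \Phi_\ell^{\sharp}$, with $\Phi_\ell(\omega) = W_\ell^*\omega W_\ell$ for unitaries $W_\ell \in M_X$ and $\lambda_\ell > 0$, satisfies the two perfectness conditions of Definition \ref{d_qgig}.

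For the implication (i)$\Rightarrow$(ii), a direct computation gives $\Phi_\ell^{\sharp}(\omega) = W_\ell^{\rm t}\omega\bar{W}_\ell$ and hence, for any $\xi \in \bb{C}^X \otimes \bb{C}^X$,
\[
(\Phi_\ell \otimes \Phi_\ell^{\sharp})(\xi\xi^*) = \bigl((V_\ell \otimes \bar{V}_\ell)\xi\bigr)\bigl((V_\ell \otimes \bar{V}_\ell)\xi\bigr)^*, \qquad V_\ell := W_\ell^*.
\]
If $\xi \in \cl U$, then $\Gamma(\xi\xi^*) = \sum_\ell \lambda_\ell \eta_\ell \eta_\ell^*$ with $\eta_\ell = (V_\ell \otimes \bar{V}_\ell)\xi$, and the hypothesis $\Gamma(\xi\xi^*) = Q\Gamma(\xi\xi^*)Q$ forces $\sum_\ell \lambda_\ell |\langle \eta_\ell,\zeta\rangle|^2 = 0$ for every $\zeta \perp \cl V$. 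Since $\lambda_\ell > 0$, this yields $(V_\ell \otimes \bar{V}_\ell)(\cl U) \subseteq \cl V$ for every $\ell$.

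For the reverse inclusion, using the paper's duality convention one computes $\Phi_\ell^{*}(\omega) = \bar{W}_\ell\omega W_\ell^{\rm t}$ and $\Phi_\ell^{\sharp *}(\omega) = W_\ell\omega W_\ell^*$, whence
\[
(\Phi_\ell^* \otimes \Phi_\ell^{\sharp *})(\eta\eta^*) = \bigl((\bar{W}_\ell \otimes W_\ell)\eta\bigr)\bigl((\bar{W}_\ell \otimes W_\ell)\eta\bigr)^*.
\]
Applying the same positive-combination argument to $\Gamma^*$ and $\eta \in \cl V$ gives $(\bar{W}_\ell \otimes W_\ell)(\cl V) \subseteq \cl U$. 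The tensor factors are now ``swapped'' relative to the forward direction; this is where the symmetry of the quantum graphs enters. Using $\frak{f}\circ(\bar{W}_\ell\otimes W_\ell) = (W_\ell \otimes \bar{W}_\ell)\circ\frak{f}$ together with $\frak{f}(\cl U) = \cl U$ and $\frak{f}(\cl V) = \cl V$, one deduces $(W_\ell \otimes \bar{W}_\ell)(\cl V) \subseteq \cl U$, i.e.\ $\cl V \subseteq (V_\ell \otimes \bar{V}_\ell)(\cl U)$. Combining the two inclusions, $(V_\ell \otimes \bar{V}_\ell)(\cl U) = \cl V$, and any one of the $V_\ell$ from the convex decomposition serves as the required unitary $U$ in (ii).

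The converse (ii)$\Rightarrow$(i) then follows by taking $k=1$: given $U$ with $(U \otimes \bar U)(\cl U) = \cl V$, set $\Phi(\omega) = U\omega U^*$ and $\Gamma := \Phi \otimes \Phi^{\sharp}$. By the computation in the last part of the proof of Theorem \ref{QNSbicorrelation}(iii), $\Gamma$ is a (local) concurrent bicorrelation. Perfectness for $\cl U \to \cl V$ is the identity $\Gamma(\xi\xi^*) = ((U\otimes\bar U)\xi)((U\otimes\bar U)\xi)^*$ extended by the spectral decomposition of positive $\omega = P\omega P$; perfectness of $\Gamma^*$ for $\cl V \to \cl U$ comes from $(U^{\rm t} \otimes U^*)(\cl V) = \frak{f}((U^* \otimes U^{\rm t})(\cl V)) = \frak{f}(\cl U) = \cl U$, again invoking the symmetry of $\cl U$ and $\cl V$.

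The main obstacle is purely bookkeeping: the forward perfectness produces the tensor $V \otimes \bar V$ while the dual produces $\bar W \otimes W = V^{\rm t} \otimes V^*$; these are genuinely different operators and are reconciled only through the flip $\frak f$, so the hypothesis that $\cl U$ and $\cl V$ are symmetric (rather than arbitrary subspaces) is essential at this point.
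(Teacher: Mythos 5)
Your proof is correct and follows essentially the same route as the paper's: decompose $\Gamma$ via Theorem \ref{QNSbicorrelation}(iii), use strict positivity of the convex coefficients to reduce to a single term $\Phi\otimes\Phi^{\sharp}$, and translate the two perfectness conditions into the two inclusions whose conjunction gives $(U\otimes\bar U)(\cl U)=\cl V$. The one point where you are more careful than the paper is the dual-channel bookkeeping: with the pairing $\langle \rho,\omega\rangle=\Tr(\rho\omega^{\rm t})$ the dual of conjugation by $U\otimes\bar U$ is conjugation by $\bar U\otimes U$ (legs swapped), and your explicit use of $\frak{f}\circ(\bar W\otimes W)=(W\otimes\bar W)\circ\frak{f}$ together with $\frak{f}(\cl U)=\cl U$, $\frak{f}(\cl V)=\cl V$ is exactly the step the paper compresses into ``arguing by symmetry''; you are right that the symmetry of the quantum graphs is genuinely used there.
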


\begin{proof}
(i)$\Rightarrow$(ii) 
Let $\Gamma\in \cl Q_{\rm loc}^{\rm bic}$ is a correlation satisfying the conditions of Definition \ref{d_qgig}
for quantum graphs $\cl U$ and $\cl V$. By Theorem \ref{QNSbicorrelation} (iv), 
$\Gamma = \sum_{i=1}^k \lambda_i \Phi_i\otimes \Phi_i^{\sharp}$ as a convex combination, 
where $\Phi_i : M_X\to M_X$ is a unitary quantum channel, $i = 1,\dots,k$. 
Conditions (i) and (ii) in Definition \ref{d_qgig} are equivalent to
\begin{equation}\label{eq_PUPVperp}
\left\langle \Gamma(P_{\cl U}),P_{\cl V}^{\perp}\right\rangle = 0 
\ \mbox{ and } \ \left\langle \Gamma^*(P_{\cl V}),P_{\cl U}^{\perp}\right\rangle = 0.
\end{equation}
The monotonicity of the trace functional now implies that 
$\Phi_i\otimes \Phi_i^{\sharp}$ satisfies the conditions in Definition \ref{d_qgig} for every $i = 1,\dots,k$. 
We may thus assume that 
$\Gamma = \Phi\otimes \Phi^{\sharp}$, where $\Phi : M_X\to M_A$ is a
unitary quantum channel. Let $U\in M_X$ be a unitary such that 
$\Phi(\omega) = U^*\omega U$, $\omega\in M_X$. 
A direct verification shows that 
$$\Phi^{\sharp}(\omega) = \bar{U}^*\omega\bar{U}, \ \ \ \omega\in M_X.$$
Thus, 
$$\Gamma(\omega) = (U\otimes \bar{U})^*\omega(U\otimes \bar{U}), \ \ \ \omega\in M_{XX}.$$
The first condition in (\ref{eq_PUPVperp}) now implies that, for every $\xi\in \cl U$, we have 
$$\left((U\otimes \bar{U})^*\xi\right)\left((U\otimes \bar{U})^*\xi\right)^* 
= (U\otimes \bar{U})^*(\xi\xi^*) (U\otimes \bar{U}) \leq P_{\cl V},$$
that is, $(U\otimes \bar{U})^*(\cl U)\subseteq \cl V$. 
On the other hand, 
$$\Gamma^*(\omega) = (U\otimes \bar{U})\omega(U\otimes \bar{U})^*, \ \ \ \omega\in M_{XX},$$
and arguing by symmetry implies that $(U\otimes \bar{U})(\cl V)\subseteq \cl U$; thus, 
(ii) follows.

(ii)$\Rightarrow$(i) 
Given a unitary $U\in M_X$, let $\Phi(\omega) = U^*\omega U$, $\omega\in M_X$, 
and $\Gamma = \Phi\otimes\Phi^{\sharp}$. 
Then the arguments in the first part of the proof imply that $\cl U\cong_{\rm loc}\cl V$ via $\Gamma$. 
\end{proof}

\noindent {\bf Remark. }
Proposition \ref{p_loctype} 
can equivalently be seen as a consequence 
of Theorem \ref{iso}. 
Indeed, note that, by Theorem \ref{QNSbicorrelation} (iv) and its proof, $\Gamma\in \cl Q_{\rm loc}^{\rm bic}$ if and only if $\Gamma=\sum_{i=1}^k\lambda_i\Gamma_i$ as a convex combination,  where 
$\Gamma_i(e_{x,x'}\otimes e_{y,y'})
= (\pi_i(u_{x,x',a,a'}u_{y',y,b',b}))_{a,a',b,b'}$ for some $*$-representation
$\pi_i : C(\mathbb P \cl U_X^+)\to\mathbb C$. 
Using the fact that all $\Gamma_i$ are positive,  it can be easily seen that one can assume that $k = 1$.
Let $U = (u_{a,x})_{a,x}\in M_X$ be the unitary that corresponds to $\pi_1$ as in the proof of the implication 
(i)$\Rightarrow$(ii); we have that 
$U$ satisfies the corresponding conditions (ii) and (iii). 
In particular, 
$U\tilde{\cl S}_{\cl U}U^*\subseteq \tilde{\cl S}_{\cl V}$ and 
$U^{\rm t}\tilde{\cl S}_{\cl V}(U^{\rm t})^*\subseteq \tilde{\cl S}_{\cl U}$. As $\tilde{\cl S}_{\cl U}^{\rm t} = \tilde{\cl S}_{\cl U}$ and $\tilde{\cl S}_{\cl V}^{\rm t} = \tilde{\cl S}_{\cl V}$, we obtain that $U^* \tilde{\cl S}_{\cl V} U\subseteq \tilde{\cl S}_{\cl U}$, which implies $U^*\tilde{\cl S}_{\cl V} U = \tilde{\cl S}_{\cl U}$. This gives in particular that $(U\otimes\bar U)(\cl U)=\cl V$.

\begin{proposition}\label{p_localch}
Let $G$ and $H$ be graphs with vertex set $X$. Then
$\cl U_G\cong_{\rm loc}\cl U_H$ if and only if $G\cong H$. 
\end{proposition}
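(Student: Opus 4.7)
The plan is to use Proposition \ref{p_loctype}, which translates $\cl U_G \cong_{\rm loc} \cl U_H$ into the existence of a unitary $U \in M_X$ with $(U \otimes \bar U)(\cl U_G) = \cl U_H$, so that both implications reduce to statements about such a $U$.

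For the easier direction $G \cong H \Longrightarrow \cl U_G \cong_{\rm loc} \cl U_H$, I would take a graph isomorphism $\sigma:X\to X$ and set $U = P_\sigma$, the associated permutation matrix. Since $P_\sigma$ has real entries, $\bar P_\sigma = P_\sigma$, and $(P_\sigma\otimes P_\sigma)(e_x\otimes e_y) = e_{\sigma(x)}\otimes e_{\sigma(y)}$. For $x\sim_G y$, the pair $(\sigma(x),\sigma(y))$ is an edge of $H$, so the image lies in $\cl U_H$. A symmetric argument with $\sigma^{-1}$ gives the reverse inclusion, hence equality, and Proposition \ref{p_loctype} concludes.

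For the converse, I would start with a unitary $U$ satisfying $(U \otimes \bar U)(\cl U_G) = \cl U_H$. Expanding
\[
(U\otimes\bar U)(e_x\otimes e_y) = \sum_{a,b\in X} U_{a,x}\,\overline{U_{b,y}}\,e_a\otimes e_b
\]
for $x\sim_G y$ and requiring membership in $\cl U_H$ (vanishing of the coefficient of $e_a\otimes e_b$ whenever $a\not\sim_H b$) yields the condition
\[
(\mathrm{A})\qquad x\sim_G y,\ a\not\sim_H b \ \Longrightarrow\ U_{a,x}\,\overline{U_{b,y}} = 0.
\]
Running the same argument on the inverse map $(U^{-1}\otimes\bar U^{-1})(\cl U_H) = \cl U_G$ and taking a complex conjugate produces the dual condition
\[
(\mathrm{B})\qquad a\sim_H b,\ x\not\sim_G y \ \Longrightarrow\ U_{a,x}\,\overline{U_{b,y}} = 0.
\]

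The heart of the proof is to extract a genuine graph isomorphism from (A) and (B). I would form the matrix $D = (|U_{a,x}|^2)_{a,x}\in M_X$, which is doubly stochastic because the rows and columns of $U$ are unit vectors. Birkhoff's Theorem gives a convex decomposition $D = \sum_k \mu_k P_{\sigma_k}$ with $\mu_k > 0$ and each $\sigma_k$ a permutation of $X$. Fixing any $\sigma := \sigma_k$, one has $D_{\sigma(x),x}\geq\mu_k>0$, so $U_{\sigma(x),x}\neq 0$ for every $x\in X$. Substituting $a=\sigma(x),\ b=\sigma(y)$ for distinct $x,y$ into the contrapositives of (A) and (B) yields $x\sim_G y \Longleftrightarrow \sigma(x)\sim_H \sigma(y)$, i.e., $\sigma$ is a graph isomorphism $G\to H$. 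The main subtlety is that $U$ itself need not be a monomial matrix (for instance, isolated vertices of $G$ impose no constraints on the corresponding columns of $U$), so one cannot directly read off a permutation from $U$; the Birkhoff decomposition is precisely what produces a permutation lying within the support of $U$.
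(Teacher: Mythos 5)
Your proof is correct, and the easy direction coincides with the paper's (both use the permutation unitary arising from a graph isomorphism and the characterisation in Proposition \ref{p_loctype}). The hard direction, however, takes a genuinely different route. The paper observes that $(U\otimes\bar U)(\cl U_G) = \cl U_H$ gives $U\cl S_G U^* = \cl S_H$, where $\cl S_G = {\rm span}\{\epsilon_{x,y} : x\sim y \mbox{ or } x = y\}$, and then simply invokes \cite[Proposition 3.1]{op} to conclude $G\cong H$; the combinatorial extraction of the isomorphism is delegated to that cited result. You instead unpack the inclusion $(U\otimes\bar U)(\cl U_G) = \cl U_H$ into the explicit entrywise vanishing conditions (A) and (B), form the doubly stochastic matrix $D = (|U_{a,x}|^2)_{a,x}$ (doubly stochastic because the rows and columns of $U$ are unit vectors), apply Birkhoff's theorem to pick a permutation $\sigma$ in the support of $D$ so that $U_{\sigma(x),x}\neq 0$ for every $x$, and then read off from the contrapositives of (A) and (B) that $x\sim_G y \Leftrightarrow \sigma(x)\sim_H\sigma(y)$. (Note that since $G$ and $H$ are simple, $x\sim_G y$ forces $x\neq y$, hence $\sigma(x)\neq\sigma(y)$, so the needed non-vanishing product $U_{\sigma(x),x}\overline{U_{\sigma(y),y}}$ is available in every relevant case.) What your approach buys is self-containedness: you essentially reprove the special case of \cite[Proposition 3.1]{op} that is needed here, via Birkhoff, rather than appealing to it as a black box. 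The paper's route is shorter on the page but leans on external machinery; yours is slightly longer but elementary and transparent about where the permutation comes from, which is a nice clarification given that, as you correctly observe, $U$ itself need not be monomial.
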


\begin{proof}
A graph isomorphism $\nph : X\to X$ between $G$ and $H$ gives rise to a permutation unitary 
operator $U_{\nph} : \bb{C}^X\to \bb{C}^X$; letting $\Phi : M_X\to M_A$ be the conjugation by $U_{\nph}$, 
we have that the correlation $\Phi\otimes\Phi^{\sharp}$ implements an isomorphism $\cl U_G\cong_{\rm loc}\cl U_H$. 

Conversely, suppose that $\cl U_G\cong_{\rm loc}\cl U_H$. 
By Proposition \ref{p_loctype}, there exists a unitary $U\in M_X$ such that 
$(U\otimes \bar{U})(\cl U_G) = \cl U_H$. 
Letting 
$$\cl S_G = {\rm span}\{\epsilon_{x,y} : x\sim y \mbox{ or } x = y\},$$
we now have that $U\cl S_G U^* = \cl S_H$. By \cite[Proposition 3.1]{op}, $G\cong H$. 
\end{proof}

\begin{corollary}\label{c_qnotloc}
There exist quantum graphs $\cl U$ and $\cl V$ such that $\cl U\cong_{\rm q}\cl V$ but $\cl U\not\cong_{\rm loc}\cl V$.
\end{corollary}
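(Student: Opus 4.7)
The plan is to produce $\cl U, \cl V$ arising from classical graphs. By \cite{amrssv}, there exist graphs $G, H$ on a common vertex set $X$ with $G \not\cong H$, together with a finite-dimensional Hilbert space $K$ and a quantum permutation $(p_{a,x})_{a,x \in X} \subseteq \cl B(K)$ (a magic unitary) satisfying
\[ p_{a,x} p_{b,y} = 0 \quad \text{whenever} \quad {\rm rel}_G(x,y) \ne {\rm rel}_H(a,b). \]
Setting $\cl U = \cl U_G$ and $\cl V = \cl U_H$, Proposition \ref{p_localch} immediately rules out $\cl U \cong_{\rm loc} \cl V$ since $G \not\cong H$. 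It remains to construct a concurrent quantum bicorrelation $\Gamma$ implementing $\cl U \cong_{\rm q} \cl V$.

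Since the entries of a magic unitary in each row and column form a PVM, the matrix $U = (p_{a,x})_{a,x}$ is automatically a bi-unitary. The universal property of $C(\cl U_X^+)$ yields a unital $*$-representation $C(\cl U_X^+) \to \cl B(K)$; restricting to the generated subalgebra gives $\pi : C(\bb P \cl U_X^+) \to \cl B(K)$ with $\pi(u_{x,x',a,a'}) = p_{a,x}p_{a',x'}$. Composing with the normalised trace $\tau_0$ on $\cl B(K) \cong M_{\dim K}$ produces a tracial state $\tau := \tau_0 \circ \pi$ on $C(\bb P \cl U_X^+)$ that factors through a finite-dimensional C*-algebra. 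By Theorem \ref{QNSbicorrelation}(ii), $\tau$ gives rise to a concurrent quantum bicorrelation $\Gamma \in \cl Q_{\rm q}^{\rm bic}$ with
\[ \Gamma(\epsilon_{x,x'} \otimes \epsilon_{y,y'})_{a,a',b,b'} = \tau_0\bigl(p_{a,x}p_{a',x'}p_{b',y'}p_{b,y}\bigr). \]

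The key remaining step is to verify that $\Gamma$ implements $\cl U_G \cong_{\rm q} \cl U_H$. Fix $\xi = \sum_{x \sim_G y} \alpha_{x,y}\, e_x \otimes e_y \in \cl U_G$ and expand the $(a,a',b,b')$-entry of $\Gamma(\xi\xi^*)$ as a sum of terms $\alpha_{x,y}\bar\alpha_{x',y'}\tau_0(p_{a,x}p_{a',x'}p_{b',y'}p_{b,y})$ over indices with $x \sim_G y$ and $x' \sim_G y'$. If $a \not\sim_H b$, cyclicity of $\tau_0$ rewrites each summand as $\tau_0(p_{b,y}p_{a,x}p_{a',x'}p_{b',y'})$, whose leading product $p_{b,y}p_{a,x}$ vanishes either by the perfect-strategy hypothesis (when $a \ne b$) or by the magic-unitary PVM relation (when $a = b$ and $x \ne y$). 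If instead $a' \not\sim_H b'$, a cyclic shift to $\tau_0(p_{a',x'}p_{b',y'}p_{b,y}p_{a,x})$ produces the leading product $p_{a',x'}p_{b',y'}$, which vanishes for the same reasons. Hence $\Gamma(\xi\xi^*) = P_{\cl U_H}\Gamma(\xi\xi^*)P_{\cl U_H}$, so $\Gamma$ is a perfect strategy for $\cl U_G \to \cl U_H$. Since $\Gamma^*(\epsilon_{a,a'}\otimes\epsilon_{b,b'})_{x,x',y,y'} = \Gamma(\epsilon_{x,x'}\otimes\epsilon_{y,y'})_{a,a',b,b'}$, an entirely symmetric cyclic computation---now treating $a \sim_H b$, $x \not\sim_G y$ as the vanishing case---shows that $\Gamma^*$ is a perfect strategy for $\cl U_H \to \cl U_G$, establishing $\cl U_G \cong_{\rm q} \cl U_H$.

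The main obstacle is securing the seed pair of non-isomorphic, finite-dimensionally quantum-isomorphic classical graphs; this is supplied by the construction of \cite{amrssv}. Everything else is a formal transfer via the universal property of $C(\cl U_X^+)$, Theorem \ref{QNSbicorrelation}(ii), and the tracial cyclicity argument above.
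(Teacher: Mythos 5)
Your proof is correct and follows essentially the same route as the paper: start from a pair of non-isomorphic classical graphs $G \not\cong H$ with $G \cong_{\rm q} H$ from \cite{amrssv}, rule out $\cl U_G \cong_{\rm loc} \cl U_H$ via Proposition \ref{p_localch}, and promote the witnessing finite-dimensional quantum permutation to a bi-unitary giving a concurrent quantum bicorrelation. The only difference is that the paper delegates the verification that the resulting correlation is a perfect strategy to Remark \ref{r_magic_vs_bi} (together with the quantum analogue of Corollary \ref{c_claco}), whereas you carry out the same verification directly via cyclicity of the trace; both reductions rest on the same vanishing relations $p_{a,x}p_{b,y}=0$ for the magic unitary.
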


\begin{proof}
By \cite[Theorem 6.4]{amrssv}, there exists graphs $G$ and $H$ such that $G\cong_{\rm q} H$ but $G\not\cong_{\rm loc} H$. 
By Proposition \ref{p_localch}, 
$\cl U_G\not\cong_{\rm loc}\cl U_H$; to complete the proof, we show that $\cl U_G\cong_{\rm q}\cl U_H$.
By \cite[Theorem 2.1]{lmr}, there exists a quantum permutation matrix $(P_{x,a})_{x,a}$, acting on a finite dimensional Hilbert space $H$, such that 
$$P_{x,a}P_{y,b} = 0 \ \mbox{ if }
x\sim_G y \ \& \ a\not\sim_H b, \ \mbox{ or } \ 
x\not\sim_G y \ \& \ a\sim_H b.$$
By Remark \ref{r_magic_vs_bi}, $\cl U_G\cong_{\rm q}\cl U_H$ 
\end{proof}

%%%%%%%%%%%%%%%%%%%%%%%%%%%%%%%%%%%%%%%%%%%%%%%%%%%%%
%%%%%%%%%%%%%%%    Quantum groups of fuzzy symmetries     %%%%%%%%%%%%%%%%%
%%%%%%%%%%%%%%%%%%%%%%%%%%%%%%%%%%%%%%%%%%%%%%%%%%%%%

\subsection{The quantum isomorphism algebra}\label{qgfs}

Let $X$ be a finite set, and $\cl U\subseteq \bb{C}^{XX}$ and 
$\cl V\subseteq \bb{C}^{XX}$ be quantum graphs. 
We will introduce a C*-algebra whose tracial 
properties reflect the properties of the isomorphism game $\cl U\cong \cl V$. 
Let $P$ (resp. $Q$) be the projection from $\bb{C}^{XX}$ onto $\cl U$
(resp. from $\bb{C}^{XX}$ onto $\cl V$). 
For matrices $S,T\in M_{XX}$, define a 
linear map
$$\gamma_{S,T} : M_{XX}\otimes C(\mathbb P \cl U_X^+) \otimes M_{XX}
\otimes C(\mathbb P \cl U_X^+)^{\rm op} \to C(\mathbb P \cl U_X^+)$$
by letting 
$$\gamma_{S,T}(\omega
\otimes u \otimes v^{\rm op}) 
= \Tr(\omega(S\otimes T)) uv, \ \ \ 
\omega\in M_{XX}\otimes M_{XX}, 
\ u,v\in C(\mathbb P \cl U_X^+).$$
Set $W = (u_{x,x',a,a'})_{x,x',a,a'}\in M_{XX}\otimes C(\mathbb P \cl U_X^+)$, and let 
$$\cl I_{P,Q} = \left\langle 
\gamma_{P,Q^{\perp}}\left(W\otimes W^{\rm op}\right), 
\gamma_{P^{\perp},Q}\left(W\otimes W^{\rm op}\right)\right\rangle$$ 
be the closed ideal in $C(\mathbb P \cl U_X^+)$, 
generated by the elements 
$\gamma_{P,Q^{\perp}}(W\otimes W^{\rm op})$ and 
$\gamma_{P^{\perp},Q}(W\otimes W^{\rm op})$.
Set $\cl A_{P,Q} = C(\mathbb P \cl U_X^+)/\cl I_{P,Q}$. 
We write $\dot{u}$ for the image of an element $u\in C(\mathbb P \cl U_X^+)$ in $\cl A_{P,Q}$ 
under the quotient map.

\begin{theorem}\label{th_alg}
Let $X$ be a finite set, $\cl U\subseteq \bb{C}^{XX}$ (resp. $\cl V\subseteq \bb{C}^{XX}$)
be a quantum graph and $P\in M_{XX}$ (resp. $Q\in M_{XX}$) be the projection onto $\cl U$
(resp. $\cl V$). 
The following are equivalent for a QNS bicorrelation $\Gamma : M_{XX}\to M_{XX}$:
\begin{itemize}
\item[(i)] $\Gamma$ is a perfect quantum commuting (resp. quantum/local) strategy for the isomorphism game 
$\cl U\cong \cl V$; 
\item[(ii)] there exists a trace $\tau$ 
(resp. a trace $\tau$ that factors through a finite dimensional/abelian *-representation)
of $\cl A_{P,Q}$ such that  
\begin{equation}\label{eq_agains}
\Gamma(\epsilon_{x,x'} \otimes \epsilon_{y,y'}) 
= \left(\tau(\dot{u}_{x,x',a,a'}\dot{u}_{y',y,b',b})\right)_{a,a',b,b'}, \ \ \ x,x',y,y'\in X.
\end{equation}
\end{itemize}
\end{theorem}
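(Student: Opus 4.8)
The plan is to reduce Theorem~\ref{th_alg} to a combination of Theorem~\ref{QNSbicorrelation} and the characterisation of perfect strategies for the isomorphism game $\cl U\cong\cl V$ by unwinding the definition of $\cl A_{P,Q}$. First I would recall that, by Theorem~\ref{QNSbicorrelation}(i), the QNS bicorrelations $\Gamma$ of quantum commuting type that are \emph{concurrent} are precisely those of the form $\Gamma(\epsilon_{x,x'}\otimes\epsilon_{y,y'}) = \left(\tau(u_{x,x',a,a'}u_{y',y,b',b})\right)_{a,a',b,b'}$ for a tracial state $\tau$ on $C(\mathbb P\cl U_X^+)$. So the content of the theorem is that the extra constraint that $\Gamma$ be a \emph{perfect strategy} for $\cl U\cong\cl V$ (conditions (i) and (ii) of Definition~\ref{d_qgig}) corresponds exactly to $\tau$ annihilating the ideal $\cl I_{P,Q}$, i.e.\ to $\tau$ descending to a trace on $\cl A_{P,Q}$.

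The key computation is the translation of the perfectness conditions into ideal-annihilation. Starting from a trace $\tau$ on $C(\mathbb P\cl U_X^+)$ giving $\Gamma$ via \eqref{eq_UXrep}, I would use the GNS construction for $\tau$: let $\pi_\tau$ be the GNS representation on $H_\tau$ with cyclic tracial vector $\zeta$, set $\cl N = \pi_\tau(C(\mathbb P\cl U_X^+))''$, and by Lemma~\ref{l_J1J2}(ii) obtain a bi-unitary $U = (U_{a,x})_{a,x}$ with $\pi_\tau(u_{x,x',a,a'}) = U_{a,x}^*U_{a',x'}$. Exactly as in the proof of Theorem~\ref{iso}, the vector-state formula
$$\langle\Gamma(\xi\xi^*)\eta,\eta\rangle = (\Tr\otimes\tau)\left(E(Y_{\bar\xi}\otimes Y_\eta\otimes 1)E(Y_{\bar\xi}^*\otimes Y_\eta^*\otimes 1)\right),$$
where $E = (U_{a,x}^*U_{a',x'})_{x,x',a,a'}$, together with faithfulness of the trace on $\cl N$, shows that condition (i) of Definition~\ref{d_qgig} ($\Gamma(\xi\xi^*) = Q\Gamma(\xi\xi^*)Q$ for $\xi\in\cl U$) is equivalent to $E(Y_{\bar\xi}\otimes Y_\eta\otimes 1)E = 0$ for all $\xi\in\cl U$, $\eta\in\cl V^\perp$; but the latter, when $\tau$ is faithful, is equivalent to $\tau$ annihilating the ideal of $C(\mathbb P\cl U_X^+)$ generated by the coefficients of $\gamma_{P,Q^\perp}(W\otimes W^{\rm op})$. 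The point is that, unwinding the definitions of $\gamma_{S,T}$ and $W$, the element $\gamma_{P,Q^\perp}(W\otimes W^{\rm op})$ is exactly the sum $\sum \overline{P_{x,y;x',y'}}\, Q^\perp_{a,b;a',b'}\, u_{a,x}^*u_{a',x'}u_{b',y'}^*u_{b,y}$ which records the matrix entries of $E(Y_{\bar\xi}\otimes Y_\eta\otimes 1)E$ as $\xi,\eta$ range over $\cl U$ and $\cl V^\perp$ (using that $P$, $Q^\perp$ are the projections onto these spaces). Symmetrically, condition (ii) of Definition~\ref{d_qgig} for $\Gamma^*$ corresponds to $\tau$ annihilating the ideal generated by $\gamma_{P^\perp,Q}(W\otimes W^{\rm op})$; here one invokes the swap symmetry (as in Remark~\ref{r_dualal} and the $\partial$-isomorphism of Lemma~\ref{l_opos}) to see that $\Gamma^*$ is represented by the ``transposed'' bi-unitary data. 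Hence $\Gamma$ is a perfect qc-strategy for $\cl U\cong\cl V$ iff $\tau$ kills $\cl I_{P,Q}$, iff $\tau$ factors through $\cl A_{P,Q}$, which is exactly \eqref{eq_agains}. The reduction to the faithful case is handled as in the proof of Theorem~\ref{QNSbicorrelation}(ii): one may pass to the quotient of $C(\mathbb P\cl U_X^+)$ by the kernel of the GNS representation of $\tau$ and work there; for the converse direction, one simply lifts a trace on $\cl A_{P,Q}$ to $C(\mathbb P\cl U_X^+)$ and applies Theorem~\ref{QNSbicorrelation}.

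For the quantum and local refinements, I would run the same argument keeping track of the extra structure: by Theorem~\ref{QNSbicorrelation}(ii), $\Gamma\in\cl Q_{\rm q}^{\rm bic}$ iff the trace factors through a finite-dimensional C*-algebra, and such a trace descends to $\cl A_{P,Q}$ precisely when it factors through a finite-dimensional representation of $\cl A_{P,Q}$; similarly for the abelian case via Theorem~\ref{QNSbicorrelation}(iii). So the finite-dimensional/abelian clauses in (ii) follow formally once the qc-case is established, using that the quotient map $C(\mathbb P\cl U_X^+)\to\cl A_{P,Q}$ is a C*-epimorphism and hence carries finite-dimensional (resp.\ abelian) representations to finite-dimensional (resp.\ abelian) ones, and conversely lifts them.

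The main obstacle I expect is the bookkeeping identifying the generators $\gamma_{P,Q^\perp}(W\otimes W^{\rm op})$ and $\gamma_{P^\perp,Q}(W\otimes W^{\rm op})$ of $\cl I_{P,Q}$ with the operators $E(Y_{\bar\xi}\otimes Y_\eta\otimes 1)E$ (and the corresponding expressions for $\Gamma^*$), i.e.\ checking that ``$\tau$ annihilates the $\Gamma$-perfectness operators for all $\xi\in\cl U$, $\eta\in\cl V^\perp$'' is the same as ``$\tau$ annihilates the specific ideal $\cl I_{P,Q}$.'' This requires being careful about (a) the passage from the universal $C(\mathbb P\cl U_X^+)$ (where $\gamma_{S,T}$ lives) to the GNS representation, (b) the fact that $E(Y_{\bar\xi}\otimes Y_\eta\otimes 1)E = 0$ for \emph{all} such $\xi,\eta$ is equivalent, by linearity and the Cauchy--Schwarz/positivity trick already used for \eqref{eq_posm}--\eqref{eq_Gyybb} in the proof of Theorem~\ref{QNSbicorrelation}, to $\tau$ killing the \emph{ideal} (not merely the set) generated by those elements, and (c) the transpose/opposite identifications that relate the $\Gamma^*$-conditions to $\gamma_{P^\perp,Q}$. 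Everything else is a direct assembly of Theorem~\ref{QNSbicorrelation}, the perfectness criteria recalled before Definition~\ref{d_qgig}, and the proof technique of Theorem~\ref{iso}.
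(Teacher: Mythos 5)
Your proposal is correct in broad outline and lands in the same place, but the technical route you sketch for converting perfectness into ideal-annihilation is genuinely different from the paper's. The paper works entirely inside the universal C*-algebra $C(\mathbb P \cl U_X^+)$: it first proves the algebraic identity $\langle\Gamma(S),T\rangle = \tau(\gamma_{S,T}(W\otimes W^{\rm op}))$, deduces from perfectness that $\tau(g)=\tau(h)=0$ for $g=\gamma_{P,Q^{\perp}}(W\otimes W^{\rm op})$ and $h=\gamma_{P^{\perp},Q}(W\otimes W^{\rm op})$, and then -- this is the key step -- establishes that $g$ and $h$ are \emph{positive} elements by realising $M_{XX}(C(\mathbb P \cl U_X^+))$ and $M_{XX}(C(\mathbb P \cl U_X^+))^{\rm op}$ as mutually commuting C*-algebras on a common Hilbert space and using Lemma \ref{l_opos}; Cauchy--Schwarz then gives that $\tau$ annihilates $\cl I_{P,Q}$. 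You instead propose to pass through the GNS representation, import a bi-unitary via Lemma \ref{l_J1J2}(ii), and run the vector-state argument of Theorem \ref{iso} to obtain the operator identities $E(Y_{\bar\xi}\otimes Y_\eta\otimes 1)E = 0$, then translate these back to $\pi_\tau(g)=\pi_\tau(h)=0$. This works, and it sidesteps the positivity argument for $g$ and $h$ (replacing it with the $A^*A$-positivity trick implicit in the vector-state formula), but it imports more machinery and shifts the burden onto the translation step you call ``bookkeeping,'' which is essentially the same computation as the paper's identity \eqref{eq_ST} done in a concrete representation. Your treatment of the quantum/local refinements and of the implication (ii)$\Rightarrow$(i) agrees with the paper. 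The one thing to make explicit if you flesh this out: why the element $g$, expanded in terms of the $U_{a,x}$, genuinely lies in $\ker\pi_\tau$ once each $E(Y_{\bar\xi}\otimes Y_\eta\otimes 1)E$ vanishes -- you should write $P$ and $Q^\perp$ as sums of rank-one projections $\xi\xi^*$ and $\eta\eta^*$ over orthonormal bases of $\cl U$ and $\cl V^\perp$ and check that $\pi_\tau(g)$ is a (carefully-indexed) linear combination of the entries of those operators, taking care with the index permutation $u_{y',y,b',b}$ appearing in \eqref{eq_agains}.
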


\begin{proof}
(i)$\Rightarrow$(ii)
We consider first the quantum commuting case.
By Theorem \ref{QNSbicorrelation}, there exists 
a tracial state $\tau : C(\mathbb P \cl U_X^+)\to\bb{C}$ such that 
$\Gamma = \Gamma_{\tau}$. 
Writing 
$$\tilde{W} = (u_{x,x',a,a'}u_{y',y,b',b})\in M_{XXXX}\otimes C(\mathbb P \cl U_X^+),$$
we thus have
\begin{eqnarray*}
& & 
\langle\Gamma(\epsilon_{x,x'}\otimes \epsilon_{y,y'}),
\epsilon_{a,a'}\otimes \epsilon_{b,b'}\rangle
= 
\tau(u_{x,x',a,a'}u_{y',y,b',b})\\
&  =  & 
{\rm Tr}(((\epsilon_{x,x'}\otimes \epsilon_{y,y'})\otimes
(\epsilon_{a,a'}\otimes \epsilon_{b,b'}))
\tau^{(XXXX)}(\tilde{W}))\\
& = & 
\tau(\gamma_{\epsilon_{x,x'}\otimes \epsilon_{y,y'},\epsilon_{a,a'}\otimes \epsilon_{b,b'}}
(W\otimes W^{\rm op})).
\end{eqnarray*}
By linearity, 
\begin{equation}\label{eq_ST}
\left\langle\Gamma(S),T\right\rangle = 
\tau\left(\gamma_{S,T}(W\otimes W^{\rm op})\right), 
\ \ \ S,T\in M_{XX}.
\end{equation}
Since $\Gamma$ is a perfect strategy for the game $\cl U\cong \cl V$, equation (\ref{eq_ST}) implies that 
$$\tau\left(\gamma_{P,Q^{\perp}}(W\otimes W^{\rm op})\right) 
= 
\tau\left(\gamma_{P^{\perp},Q}(W\otimes W^{\rm op})\right)
= 0.$$

Set $g = \gamma_{P,Q^{\perp}}(W\otimes W^{\rm op})$; 
we claim that $g\in C(\mathbb P \cl U_X^+)^+$. 
To see this, let 
$$\frak{m} : M_{XX}(C(\mathbb P \cl U_X^+))\otimes_{\max} M_{XX}(C(\mathbb P \cl U_X^+))^{\rm op}\to M_{XX}(C(\mathbb P \cl U_X^+))$$
be the multiplication map, and note that, if 
$u\in M_{XX}(C(\mathbb P \cl U_X^+))^+$ and 
$v^{\rm op}\in M_{XX}(C(\mathbb P \cl U_X^+))^{{\rm op} +}$ then 
$$\frak{m}(u \otimes v^{\rm op})\in M_{XX}(C(\mathbb P \cl U_X^+))^+$$
(this can be seen by realising $M_{XX}(C(\mathbb P \cl U_X^+))$ and 
$M_{XX}(C(\mathbb P \cl U_X^+))^{\rm op}$ as mutually commuting 
C*-algebras acting on the same Hilbert space).
We have that $W\in M_{XX}(C(\mathbb P \cl U_X^+))^+$ and, by 
Lemma \ref{l_opos}, that $W^{\rm op}\in M_{XX}(C(\mathbb P \cl U_X^+))^{{\rm op}+}$.
It follows that 
$$\tilde{W}\in M_{XXXX}(C(\mathbb P \cl U_X^+))^+.$$
Taking partial trace against the positive matrix $P\otimes Q^{\perp}$ yields a positive operator; the claim is now proved after noticing that the latter operator coincides with $g$. 

Similarly, 
$$h := \gamma_{P^{\perp},Q}(W\otimes W^{\rm op})\in C(\mathbb P \cl U_X^+)^+.$$
We have that 
$$\tau(g) = \tau(h) = 0;$$
by a straightforward application of the Cauchy-Schwartz inequality, $\tau$ annihilated $\cl I_{P,Q}$ and hence induces a trace (denoted in the same way) $\tau : \cl A_{P,Q}\to \bb{C}$. The validity of equation (\ref{eq_agains}) persists on $\cl A_{P,Q}$. 

Now consider the case where $\Gamma$ is a quantum correlation. 
By Theorem \ref{QNSbicorrelation}, there exists 
a trace $\tau : C(\mathbb P \cl U_X^+)\to\bb{C}$ that factors through a finite dimensional 
C*-algebra, such that $\Gamma = \Gamma_{\tau}$. 
By the previous paragraphs, $\tau$ annihilates $\cl J_{P,Q}$. 
Thus $\tau$ induces a trace (denoted in the same way) $\tau : \cl A_{P,Q}\to \bb{C}$ 
that factors through a finite dimensional 
C*-algebra and, as before, $\Gamma = \Gamma_{\tau}$.
The case where $\Gamma$ is of local type are similar. 

(ii)$\Rightarrow$(i) follows in a straightforward way from relation (\ref{eq_ST}). 
\end{proof}

\begin{remark}\label{r_eqST}
\rm
It follows from identity (\ref{eq_ST}) and the proof of Theorem \ref{iso} that
$\cl A_{P,Q}$ is be the universal C$^\ast$-algebra
generated by elements $u_{a,x}^*u_{a',x'}$, where $U = (u_{x,a})_{a,x}$ is a bi-unitary matrix, subject to the relations  \begin{align}
\label{mb1}(P\otimes I)U^{\rm t}_{1,3}U^*_{2,3}(Q^\perp\otimes I) = 0 \quad \& \quad (\bar{P}^\perp\otimes I)U^{\rm t}_{1,3}  U_{2,3}^*(\bar{Q}\otimes I) = 0.
\end{align}
\end{remark}

\begin{remark}\label{r_P=Q}
{\rm
Let us consider the special  case $P = Q$; 
this is the case of {\it quantum automorphisms $\cl U \to \cl U$}.  
We would like to interpret $\cl A_{P,P}$ as a quantum group of automorphisms of the quantum graph $\cl U \subseteq \mathbb C^X \otimes \mathbb C^X$.  This intuition can be made precise by equipping $\cl A_{P,P}$ with a natural co-associative comultiplication $\Delta_P:\cl A_{P,P} \to \cl A_{P,P} \otimes \cl A_{P,P}$, which turns it into a C$^\ast$-algebraic compact quantum group.  

To construct such a comultiplication $\Delta_P$ on $\cl A_{P,P}$, we first consider $C(\cl U_X^+)$, the universal C$^\ast $-algebra generated by the entries of 
a bi-unitary $U = (u_{x,a}) \in M_X(C(\cl U_X^+))$.  
The C*-algebra 
$C(\cl U_X^+)$ is well-known to be a compact matrix quantum group when equipped with the comultiplication 
$\Delta:C(\cl U_X^+) \to C(\cl U_X^+) \otimes C(\cl U_X^+)$, given by  
$\Delta(u_{x,a}) = \sum_{c\in X} u_{x,c} \otimes u_{c,a}$ on $C(\cl U_X^+)$ \cite{wang0}.  Define a new C$^\ast$-algebra $\cl B$ obtained from $C(\cl U_X^+)$  by quotienting by the relations given in \eqref{mb1}.  Denote the canonical matrix of generators of $\cl B$ by $V = (v_{x,a}) \in M_X(\cl B)$.  
(Note that, by definition, $V$ is the universal $X \times X$ bi-unitary satisfying the relations \eqref{mb1}.) 
We claim that the assignment $\Delta_\cl B(v_{x,a}) := \sum_c v_{x,c} \otimes v_{c,a}$, ($x,a \in X$),   determines  a co-associative co-multiplication $\Delta_\cl B: \cl B \to \cl B \otimes \cl B$, turning 
$(\cl B,\Delta_B)$ into a compact matrix quantum group.  To see this, it suffices to check that matrix $\tilde V \in M_X \otimes \cl B \otimes \cl B$, given by 
\[\tilde V = \left(\sum_{c\in X} v_{x,c} \otimes v_{c,a}\right)_{x,a \in X} =  V_{1,2}V_{1,3},\] 
satisfies the defining relations for $V$ (that is, 
$\tilde V$ is bi-unitary and satisfies the equations \eqref{mb1} 
in $M_X \otimes M_X \otimes \cl B \otimes \cl B$).  
Indeed, if the above is verified, then the co-multiplication $\Delta$ on $C(\cl U_X^+)$ will have been shown to factor the quotient $C(\cl U_X^+) \to \cl B$, 
proving that $\Delta_\cl B$ is well defined and induces a quantum group structure on $\cl B$.

First note that fact that $\tilde V$ is bi-unitary follows immediately from the formula for $\tilde V$ and the bi-unitarity of $V$.  To check \eqref{mb1}, we first note that in $M_{X} \otimes M_X \otimes (\cl B \otimes \cl B)$ we have
\[\tilde V_{1,3}^{\rm t}\tilde V_{2,3}^* = (V_{1,3}V_{1,4})^{\rm t}(V_{2,3}V_{2,4})^* = V_{1,4}^{\rm t}V_{1,3}^{\rm t}V_{2,4}^*V_{2,3}^* = V^{\rm t}_{1,4}V^*_{2,4}V^{\rm t}_{1,3}V^*_{2,3},
\]  
and hence
\begin{eqnarray*}
&&
(P \otimes I) \tilde V_{1,3}^{\rm t}\tilde V_{2,3}^* (P^\perp \otimes I)
=
(P\otimes I \otimes I )V^{\rm t}_{1,4}V^*_{2,4}V^{\rm t}_{1,3}V^*_{2,3}(P^\perp\otimes I \otimes I) \\
& = & 
(P\otimes I \otimes I )V^{\rm t}_{1,4}V^*_{2,4}(P \otimes I \otimes I) V^{\rm t}_{1,3}V^*_{2,3}(P^\perp\otimes I \otimes I)
= 0,
\end{eqnarray*}
where in the last line we have used relation \eqref{mb1} for $V$ to insert the extra copy of $(P \otimes I \otimes I)$ in the middle.  This shows that the first relation in \eqref{mb1} holds for $\tilde V$.  The second relation in \eqref{mb1}  is verified similarly.

Finally, we note that $\cl A_{P,P}$ is, by construction, the C$^\ast$-subalgebra of $\cl B$ 
generated by order two elements of $\cl B$ of the form $v_{x,a}^*v_{x',a'}$, $x,x',a,a' \in X$.  
The natural co-multiplication $\Delta_P$ on $\cl A_{P,P}$ is then the restriction of 
$\Delta_\cl B$ to $\cl A_{P,P}$ (note that  $\Delta_\cl B(\cl A_{P,P}) \subseteq \cl A_{P,P} \otimes \cl A_{P,P})$.  
}
\end{remark}

\begin{remark}
{\rm 
Note that, by Proposition \ref{p_loctype}, any character on $\cl A_{P,P}$ corresponds to a unitary $U \in \cl U_X$ such that $(U \otimes \bar U)\cl U = \cl U$.  In other words, the abelianisation of $\cl A_{P,P} $ corresponds via Gelfand duality 
to the classical compact group of unitary matrices \[G = \{U \otimes \bar U: U \in \cl U_X \ \mbox{and} \ (U \otimes \bar U)\cl U = \cl U\} \subseteq M_{X} \otimes M_X.\]  
The pair $(\cl A_{P,P}, \Delta_P)$ is therefore the 
quantisation of this very natural matrix group of automorphisms of $\cl U$.}
\end{remark}

%%%%%%%%%%%%%%%%%%%%%%%%%%%%%%%%%%%%%%%%%%%%%%%%%%%%
%%%%%%%%%%%%%%%%%%%%%%%%%%%%%%%%%%%%%%%%%%%%%%%%%%%

\section{Connection with algebraic quantum isomorphisms}
\label{s_connections}

The purpose of this section is to clarify the connection between the notion of a quantum graph isomorphism defined and characterised in Section \ref{s_qgig} 
and the notion, defined and studied in \cite{bcehpsw}. Our main reference for the latter concept will be \cite{daws}, and we follow its notation as closely as possible.

%%%%%%%%%%%%%%%%%%%%%%%%%%%%%%%%%%%%%%%%%%%%%%%%%%%%%%

\subsection{Algebraic isomorphism as a tighter equivalence}\label{ss_dir1}

We fix throughout the section a finite set $X$ and let $n = |X|$. 
We denote by ${\rm tr}$ the normalised trace on $M_X$; thus, ${\rm tr} = \frac{1}{|X|} {\rm Tr}$. 
In order to simplify the notation, we will write $1$ in the place of $I_X$. 

Denote by $L^2(M_X)$ the Hilbert space with underlying linear space $M_X$ and inner product arising from the GNS construction applied to the pair $(M_X,{\rm tr})$. 
More specifically, if
$\Lambda:M_X\to L^2(M_X)$ is the GNS map, we set 
$\langle\Lambda(a),\Lambda(b)\rangle = \tr(a^*b)$
(note that the inner product is linear in the second variable). 
In what follows, we view $M_X$ as a subalgebra of $\cl B(L^2(M_X))$, where an element $a\in M_X$ gives rise to 
the operator (denoted in the same way and given by)
$$a\Lambda(b)=\Lambda(ab), \ \ \ a,b\in M_X.$$
Note that $\Lambda(a)=a\Lambda(1)$, $a\in M_X$. 

Let $m : L^2(M_X)\otimes L^2(M_X)\to L^2(M_X)$ be the multiplication map, that is, the map, defined by letting 
$m(\Lambda(a)\otimes\Lambda(b))=\Lambda(ab)$, and
$m^*: L^2(M_X)\to L^2(M_X)\otimes L^2(M_X)$ be its Hilbert space adjoint.
For notational simplicity, we will often suppress the use of $\Lambda$, and consider $m$ (resp. $m^*$) as a map from $M_X\otimes M_X$ to $M_X$
(resp. from $M_X$ to $M_X\otimes M_X$).
We note that 
\begin{equation}\label{eq_ijsp}
m^*(\epsilon_{i,j}) = 
n\sum_{k=1}^n \epsilon_{i,k}\otimes \epsilon_{k,j}. 
\end{equation}
Indeed, for $p,q,s,t = 1,\dots,n$, we have 
\begin{equation}\label{eq_mstar1}
\langle m^*(\epsilon_{i,j}), \epsilon_{p,q}\otimes \epsilon_{s,t}\rangle 
= \langle \epsilon_{i,j},\epsilon_{p,q}\epsilon_{s,t}\rangle
= 
\tr(\epsilon_{j,i}\epsilon_{p,q}\epsilon_{s,t}),
\end{equation}
while 
\begin{eqnarray}\label{eq_mstar2}
\left\langle n\sum_{k=1}^n \epsilon_{i,k}\otimes \epsilon_{k,j}, \epsilon_{p,q}\otimes \epsilon_{s,t}\right\rangle
& = &  
n \sum_{k=1}^n 
\tr(\epsilon_{k,i}\epsilon_{p,q})
\tr(\epsilon_{j,k}\epsilon_{s,t})\\
& = & 
n\tr(\epsilon_{s,i}\epsilon_{p,q})
\tr(\epsilon_{j,t}).\nonumber
\end{eqnarray}
The right hand sides of (\ref{eq_mstar1}) and (\ref{eq_mstar2}) are thus equal, establishing 
(\ref{eq_ijsp}) which, further, implies that 
\begin{equation}\label{eq_mstar3}
m^*(1) = n\sum_{i,j=1}^n \epsilon_{i,j}\otimes \epsilon_{j,i}.
\end{equation}

Let $\eta : \mathbb C\to L^2(M_X)$ be the map, given by $\eta(\lambda) = \lambda\Lambda(1)$.
Recall \cite[Definition 2.4]{daws} that a self-adjoint 
linear map $A : L^2(M_X)\to L^2(M_X)$ is called a \emph{quantum adjacency matrix} if it has the following properties:
\begin{enumerate}
    \item $m(A\otimes A)m^* =  A$;
    \smallskip
    \item $({\rm id}\otimes\eta^*m)(1\otimes A\otimes 1)(m^*\eta\otimes {\rm id}) = A$;
    \smallskip
    \item $m(A\otimes 1)m^*= 0$. 
\end{enumerate}
We stress that condition (3) reflects the fact that we work with a quantum version of graphs 
without loops (graphs with loops are quantised in this context by requiring the condition 
$m(A\otimes 1)m^*= 1$ instead of (3) \cite[p. 6]{daws}).
A triple $G = (M_X,\tr,A)$, where $A$ is a quantum adjacency matrix, is called in \cite{bcehpsw, daws}
a quantum graph. 
In order to distinguish this notion from the one used in the present paper, we will hereafter refer to it as an \emph{algebraic quantum graph}.

We fix an algebraic quantum graph $G = (M_X, {\rm tr}, A)$.
We associate with $G$ the $M_X$-bimodule $S'$ 
in $\cl B(L^2(M_X))$ generated by $A$
(its dependence on $G$ is suppressed for notational simplicity); thus, recalling that the elements of $M_X$ are viewed as operators on $L^2(M_X)$, we have that 
\begin{equation}\label{eq_S'}
S' = {\rm span}\left\{aAb : a,b\in M_X\right\}.
\end{equation}
If $x,y\in M_X$, we write 
$\Theta_{\Lambda(x),\Lambda(y)}$ for the rank one operator, given by $$\Theta_{\Lambda(x),\Lambda(y)}(\xi) = \left\langle\Lambda(x),\xi\right\rangle \Lambda(y), \ \ \ 
\xi\in L^2(M_X).$$
Let $\Psi : \cl B(L^2(M_X))\to M_X\otimes M_X$ 
be the linear map, given by 
$$\Psi\left(\Theta_{\Lambda(x),\Lambda(y)}\right) = x^*\otimes y, \ \ \ x,y\in M_X;$$
by finite dimensionality, $\Psi$ is bijective. 
Set $e = (1\otimes A)(m^*(1))$; 
recalling (\ref{eq_mstar3}), we have that 
\begin{equation}\label{eq_idfore}
e = n\sum_{i,j=1}^n
\epsilon_{i,j}\otimes A(\epsilon_{j,i}).
\end{equation}

\begin{lemma}\label{l_PsiS'}
Let $G = (M_X,\tr,A)$ be an algebraic quantum graph. 
Then 
\begin{itemize}
\item[(i)] $\Psi(A)= e$, 
\item[(ii)] $e=e^*$, and 
\item[(iii)]
$\Psi(S') = {\rm span}\{(1\otimes a) e (b\otimes 1): a,b\in M_X\}\subseteq M_X\otimes M_X$.
\end{itemize}
\end{lemma}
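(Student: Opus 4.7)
For (i), I would expand $A$ against the orthonormal basis $\{\sqrt{n}\Lambda(\epsilon_{i,j})\}_{i,j}$ of $L^2(M_X)$. The standard rank-one decomposition $T=\sum_\alpha \Theta_{e_\alpha, Te_\alpha}$ of a finite-rank operator in an orthonormal basis gives $A = n\sum_{i,j}\Theta_{\Lambda(\epsilon_{i,j}),\, A\Lambda(\epsilon_{i,j})}$. Applying $\Psi$ and using the defining identity $\Psi(\Theta_{\Lambda(x),\Lambda(y)}) = x^*\otimes y$ yields $\Psi(A) = n\sum_{i,j}\epsilon_{j,i}\otimes A(\epsilon_{i,j})$; relabeling $i\leftrightarrow j$ in the sum recovers $e$ as displayed in (\ref{eq_idfore}).

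For (ii), I would first note that the two involutions at play -- operator self-adjointness of $A$ in $\cl B(L^2(M_X))$ and the $*$-structure of $M_X\otimes M_X$ -- are not automatically compatible through $\Psi$, so the identity $e=e^*$ is substantive. Writing $A(\epsilon_{i,j})=\sum_{k,l}a^{k,l}_{i,j}\epsilon_{k,l}$, I would translate each hypothesis into a coefficient identity: directly computing $\tr(A(\epsilon_{i,j})^*\epsilon_{p,q})$ and $\tr(\epsilon_{j,i}A(\epsilon_{p,q}))$ in matrix units shows that self-adjointness of $A$ is equivalent to the Hermitian symmetry $a^{i,j}_{p,q}=\overline{a^{p,q}_{i,j}}$, while axiom (2), unfolded on $x=\epsilon_{p,q}$ (whose content reads $A(x) = n\sum_{i,j}\tr(A(\epsilon_{j,i})x)\epsilon_{i,j}$), gives $a^{i,j}_{p,q}=a^{q,p}_{j,i}$. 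Combining the two identities produces $a^{k,l}_{j,i}=\overline{a^{l,k}_{i,j}}$, which is exactly the coefficient-wise expression of $e=e^*$ once one expands $e^*$ using $(\epsilon_{i,j}\otimes\epsilon_{k,l})^*=\epsilon_{j,i}\otimes\epsilon_{l,k}$ and relabels indices. An equivalent conceptual phrasing is that self-adjointness of $A$ together with axiom (2) force $A$ to commute with the modular conjugation $J\Lambda(x)=\Lambda(x^*)$ on $L^2(M_X)$, i.e., $A$ is $*$-preserving, which is the precise content of $\Psi(A)$ being self-adjoint in $M_X\otimes M_X$.

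For (iii), I would compute $\Psi(aAb)$ for arbitrary $a,b\in M_X$ and identify it with $(1\otimes a)e(b\otimes 1)$; the assertion then follows by linearity together with the definition (\ref{eq_S'}) of $S'$. The formula from (i) yields $\Psi(aAb) = n\sum_{i,j}\epsilon_{i,j}\otimes aA(b\epsilon_{j,i})$; expanding $b=\sum_{k,l}b_{k,l}\epsilon_{k,l}$ in matrix units and using $b\epsilon_{j,i}=\sum_k b_{k,j}\epsilon_{k,i}$ produces $\Psi(aAb) = n\sum_{i,j,k}b_{k,j}\epsilon_{i,j}\otimes aA(\epsilon_{k,i})$. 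A parallel expansion of $(1\otimes a)e(b\otimes 1) = n\sum_{i,j}\epsilon_{i,j}b\otimes aA(\epsilon_{j,i})$ via $\epsilon_{i,j}b=\sum_l b_{j,l}\epsilon_{i,l}$, followed by the relabeling $(i,j,l)\mapsto(i,k,j)$, yields the same triple sum.

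The main obstacle is (ii): neither the operator self-adjointness of $A$ nor the undirectedness axiom (2) alone suffices for the $*$-preservation of $A$, and they must be combined in coordinates to obtain the key coefficient identity. Parts (i) and (iii) reduce to matrix-unit bookkeeping once the rank-one decomposition of $A$ in the orthonormal basis $\{\sqrt{n}\Lambda(\epsilon_{i,j})\}$ is in place.
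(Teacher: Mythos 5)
Your proposal is correct, and parts (i) and (iii) follow essentially the same route as the paper: the rank-one expansion of $A$ in the orthonormal basis $\{\sqrt n\,\Lambda(\epsilon_{i,j})\}$ and an application of (\ref{eq_LL}) (your expansion of $b$ in matrix units in (iii) is an unnecessary detour, since $\Psi(aAb)=n\sum_{i,j}\epsilon_{i,j}b\otimes aA(\epsilon_{j,i})$ is already manifestly $(1\otimes a)e(b\otimes 1)$). Part (ii) is where you diverge. The paper argues intrinsically: it shows that the composite $({\rm id}\otimes\eta^*m)(1\otimes\,\cdot\,\otimes 1)(m^*\eta\otimes{\rm id})$ conjugates $\Theta_{\Lambda(a),\Lambda(b)}$ to $\Theta_{\Lambda(b^*),\Lambda(a^*)}$, so that under $\Psi$ axiom (2) becomes $e=\frak{f}(e)$, while the identity $A=A^*$ becomes $e=\frak{f}(e^*)$, whence $e=e^*=\frak{f}(e)$. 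Your argument instead translates both hypotheses into coefficient identities for $a^{k,l}_{i,j}$ and combines them; the combined identity $a^{k,l}_{j,i}=\overline{a^{l,k}_{i,j}}$ is precisely $e=e^*$, so your argument is complete and correct. Your closing remark that these hypotheses amount to $JAJ=A$ is also right and is a nice conceptual summary of the coordinate identity. One thing worth noting: the paper's proof is deliberately over-proving the stated lemma, since the extra relation $e=\frak{f}(e)$ is needed later (in Remark \ref{proj} and the proposition that converts pseudo-graphs with bimodule symbol back into algebraic quantum graphs). Your coefficient identity from axiom (2) alone, namely $a^{i,j}_{p,q}=a^{q,p}_{j,i}$, does yield $\frak{f}(e)=e$ directly (relabel $(p,q)=(l,k)$), so the information is available from your computation too; you just don't state it because the lemma doesn't ask for it.
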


\begin{proof}
(i)
Note that $\{\sqrt{n}\Lambda(\epsilon_{i,j})\}_{1 \le i,j \le n}$ is an orthonormal basis for $L^2(M_X)$; thus,
\[
A = \sum_{i,j=1}^n \Theta_{\sqrt{n} \Lambda(\epsilon_{i,j}), \sqrt{n}\Lambda( A (\epsilon_{i,j}))}, 
\]
and the claim now follows from (\ref{eq_idfore}). 

(ii) 
Let $R = \Theta_{\Lambda(a),\Lambda(b)}$, $a$, $b\in M_X$, and 
$T := ({\rm id}\otimes\eta^*m)(1\otimes R\otimes 1)(m^*\eta\otimes {\rm id})$. For notational simplicity write 
$m^*\eta(1) = m^*(1)=\sum_{i=1}^m\xi_i\otimes\eta_i$. If $x$, $y\in M_X$, then 
\begin{eqnarray*}
& & 
\langle\Lambda(x),T\Lambda(y)\rangle\\
&=&
\sum_{i=1}^m\langle\Lambda(x)\otimes\Lambda(1), (1\otimes m)(1\otimes\theta_{\Lambda(a),\Lambda(b)}\otimes 1)(\xi_i\otimes\eta_i\otimes\Lambda(y)\rangle\\
&=&
\sum_{i=1}^m\langle\Lambda(x)\otimes\Lambda(1),(1\otimes m)(\xi_i\otimes\langle\Lambda(a),\eta_i\rangle\Lambda(b)\otimes\Lambda(y)\rangle\\
&=&
\sum_{i=1}^m\langle\Lambda(x)\otimes\Lambda(1),\xi_i\otimes\Lambda(by)\rangle\langle\Lambda(a),\eta_i\rangle\\
&=&
\sum_{i=1}^m \langle\Lambda(x),\xi_i\rangle\langle\Lambda(a),\eta_i \rangle\langle\Lambda(1), \Lambda(by)\rangle\\
&=& 
\langle\Lambda(x)\otimes\Lambda(a),m^*(1)\rangle\langle\Lambda(b^*),\Lambda(y)\rangle
=
\langle\Lambda(xa),\Lambda(1)\rangle\langle\Lambda(b^*),\Lambda(y)\rangle\\
&=&
\langle\Lambda(x),\Lambda(a^*)\rangle\langle\Lambda(b^*),\Lambda(y)\rangle,
\end{eqnarray*}
showing that $T = \Theta_{\Lambda(b^*),\Lambda(a^*)}$, and 
hence that $\Psi(T) = b\otimes a^*=\frak{f}(\Psi(R))$, where $\frak{f}$ is the flip map.
By linearity, we obtain
$$\Psi({\rm id}\otimes\eta^*m)(1\otimes A\otimes 1)(m^*\eta\otimes {\rm id}))=\frak{f}(\Psi(A))$$
and therefore by (i) and condition (2), $e=\frak{f}(e)$.

Furthermore, 
$$A^* = \left(\sum_{i,j=1}^n \Theta_{\sqrt{n}\Lambda(\epsilon_{i,j}),\sqrt{n}\Lambda(A(e_{i,j}))}\right)^*
= \sum_{i,j=1}^n \Theta_{\sqrt{n}\Lambda(A(e_{i,j})), \sqrt{n}\Lambda(\epsilon_{i,j})},$$ 
and therefore
$$e=\Psi(A)=\Psi(A^*)=n\sum_{i,j=1}^n A(\epsilon_{i,j})^*\otimes \epsilon_{i,j}=\frak{f}(e^*),$$
giving $e=e^*=\frak{f}(e)$.

(iii)
The claim follows from the fact that 
\begin{equation}\label{eq_LL}
\Psi\left(a \Theta_{\Lambda(x), \Lambda (y)}b\right) = \Psi\left(\Theta_{\Lambda(b^*x), \Lambda(ay)}\right) = x^*b \otimes ay, \ \ \ a,b,x,y \in M_X. 
\end{equation}
\end{proof}

We set $\Lambda^{\otimes 2} = \Lambda\otimes\Lambda$ and 
write $\mathcal U_G = \Lambda^{\otimes 2}(\Psi(S'))$; thus,  $\mathcal U_G\subseteq L^2(M_X)\otimes L^2(M_X)$
(we note that, in the case $G$ is classical, the space $\mathcal U_G$ is closely related to, 
although not identical, to the space 
denoted in the same way in Section \ref{s_qgig}). 
Throughout this section, we fix an orthonormal basis $\{\Lambda(f_j)\}_{j=1}^{n^2}$ of $L^2(M_X)$; 
we note that
$\{\Lambda(f_i^*)\}_{i=1}^{n^2}$ is also an orthonormal basis. 
Let $\partial : L^2(M_X)\to L^2(M_X)$ be the linear operator with  
\begin{equation}\label{eq_fjst}
\partial\left(\Lambda(f_j^*)\right) = \Lambda(f_j), \ \ \ j = 1,\dots,n^2, 
\end{equation}
and set 
$\tilde{\mathcal U}_G = (\partial \otimes 1)(\mathcal U_G)$.
We next record the properties of the spaces of the form $\tilde{\cl U}_G$, akin to 
the properties of quantum graphs in the sense of 
Definition \ref{d_ss}. 

We write $\frak{d}$ for the conjugate-linear map on $L^2(M_X)\otimes L^2(M_X)$, given by 
$$
\frak{d}\left(\sum_{i,j=1}^{n^2}\alpha_{i,j}\Lambda(f_i)\otimes\Lambda(f_j)\right)  = \sum_{i,j=1}^{n^2} 
\bar\alpha_{i,j}\Lambda(f_i)\otimes\Lambda(f_j)$$ 
and recall that 
$\frak{f}$ is the flip map on $L^2(M_X)\otimes L^2(M_X)$.
We note that the definitions of the maps $\partial$ and $\frak{d}$ 
depend on the basis, but the 
concrete basis we are working with will be fixed or clear from the context.
The same comment applies for the notion we define next.

\begin{definition}\label{d_qpseufogr}
A subspace $\cl W\subseteq L^2(M_X)\otimes L^2(M_X)$ is called a \emph{quantum pseudo-graph} if
$\cl W$ is skew and $(\frak{d}\circ \frak{f})(\cl W) = \cl W$.
\end{definition}

Let $J_0: L^2(M_X)\otimes L^2(M_X)\to L^2(M_X)\otimes L^2(M_X)$ 
be the anti-linear map given by 
$J_0(\Lambda(x)\otimes\Lambda(y))=\Lambda(y^*)\otimes\Lambda(x^*)$.

\begin{lemma}\label{pseudograph}
Let $\cl W\subseteq L^2(M_X)\otimes L^2(M_X)$ and $\cl U=(\partial^{-1}\otimes 1)(\cl W)$. The following are equivalent: 
\begin{enumerate}
\item [(i)]$(\frak{d}\circ \frak{f})(\cl W) = \cl W$; 
\item [(ii)]$J_0(\cl U)=\cl U$; 
\item [(iii)]$\Psi^{-1}(\cl U)$ is self-adjoint.
\end{enumerate}
\end{lemma}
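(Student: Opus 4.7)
The plan is to prove the three-way equivalence by realising each of the three involutive conditions as a transform of a single canonical anti-involution. Concretely, I would establish (ii)$\Leftrightarrow$(iii) by showing that $J_0$ corresponds, under the bijection $\Psi$ (combined with the linear identification $M_X\otimes M_X \cong L^2(M_X)\otimes L^2(M_X)$ induced by $\Lambda^{\otimes 2}$), to the adjoint operation on $\cl B(L^2(M_X))$; and I would establish (i)$\Leftrightarrow$(ii) by verifying the intertwining identity $(\partial\otimes 1)\circ J_0\circ (\partial^{-1}\otimes 1) = \frak{d}\circ \frak{f}$.

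For (ii)$\Leftrightarrow$(iii): under the canonical identification of $M_X\otimes M_X$ with $L^2(M_X)\otimes L^2(M_X)$, we have $\Psi^{-1}(\Lambda(a)\otimes\Lambda(b)) = \Theta_{\Lambda(a^*),\Lambda(b)}$ for all $a,b\in M_X$ (since $\Psi(\Theta_{\Lambda(a^*),\Lambda(b)}) = a\otimes b$ by definition of $\Psi$). A direct computation gives
\[
\Theta_{\Lambda(a^*),\Lambda(b)}^* = \Theta_{\Lambda(b),\Lambda(a^*)},
\]
whose $\Psi$-image is $b^*\otimes a^*$, which under the identification is precisely $\Lambda(b^*)\otimes \Lambda(a^*) = J_0(\Lambda(a)\otimes\Lambda(b))$. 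By (conjugate) linearity this extends to $\Psi^{-1}(J_0(\xi)) = \Psi^{-1}(\xi)^*$ for all $\xi\in L^2(M_X)\otimes L^2(M_X)$; since $\Psi^{-1}$ is bijective, $J_0(\cl U) = \cl U$ iff $\Psi^{-1}(\cl U)^* = \Psi^{-1}(\cl U)$.

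For (i)$\Leftrightarrow$(ii): fix the basis $\{\Lambda(f_j)\}_j$ and write $w = \sum_{i,j}\alpha_{i,j}\Lambda(f_i)\otimes\Lambda(f_j)$. Then $(\partial^{-1}\otimes 1)(w) = \sum_{i,j}\alpha_{i,j}\Lambda(f_i^*)\otimes\Lambda(f_j)$, and using that $J_0$ is conjugate-linear,
\[
J_0\bigl((\partial^{-1}\otimes 1)(w)\bigr) = \sum_{i,j}\overline{\alpha_{i,j}}\,\Lambda(f_j^*)\otimes\Lambda(f_i).
\]
Applying $\partial\otimes 1$ yields $\sum_{i,j}\overline{\alpha_{i,j}}\,\Lambda(f_j)\otimes\Lambda(f_i)$, which agrees with $(\frak{d}\circ\frak{f})(w)$ by the definitions of $\frak{d}$ and $\frak{f}$. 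Hence $(\partial\otimes 1)\circ J_0\circ(\partial^{-1}\otimes 1) = \frak{d}\circ\frak{f}$ as maps on $L^2(M_X)\otimes L^2(M_X)$, and the equivalence (i)$\Leftrightarrow$(ii) follows upon substituting $\cl U = (\partial^{-1}\otimes 1)(\cl W)$.

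There is no genuine obstacle here; the main task is careful bookkeeping of the two distinct (conjugate-)linear involutions, distinguishing the basis-dependent operators $\partial$ and $\frak{d}$ from the adjoint-type operations $J_0$ and the flip $\frak{f}$. Once the correspondences above are recorded, both equivalences reduce to simple bijective image computations on subspaces.
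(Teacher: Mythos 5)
Your proof is correct and follows essentially the same route as the paper's: both equivalences are reduced to the two identities $(\partial\otimes 1)\circ J_0\circ(\partial^{-1}\otimes 1)=\frak{d}\circ\frak{f}$ and $\Psi(T^*)=J_0(\Psi(T))$ (you state the latter in its inverse form $\Psi^{-1}(J_0(\xi))=\Psi^{-1}(\xi)^*$, which is equivalent), each verified on rank-one/basis elements and extended by conjugate-linearity. The bookkeeping of the antilinear maps and of the identification $\Lambda^{\otimes 2}$ is handled correctly.
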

\begin{proof}
Let $x$, $y\in M_X$. Then
\begin{eqnarray*}
&&
(J_0\circ(\partial^{-1}\otimes 1))(\Lambda(x)\otimes\Lambda(y))\\
&&=
(J_0\circ (\partial^{-1}\otimes 1))\left(\sum_{i=1}^{n^2} \langle\Lambda(f_i),\Lambda(x)\rangle\Lambda(f_i)\otimes\Lambda(y)\right)\\
&&=
J_0\hspace{-0.1cm} 
\left(\hspace{-0.08cm}\sum_{i=1}^{n^2} \langle\Lambda(f_i),\Lambda(x)\rangle\Lambda(f_i^*)
\hspace{-0.05cm} \otimes\hspace{-0.05cm} \Lambda(y)
\hspace{-0.15cm}\right)
\hspace{-0.1cm} = \hspace{-0.05cm} \sum_{i=1}^{n^2}\overline{\langle\Lambda(f_i),\Lambda(x)\rangle}\Lambda(y^*)\hspace{-0.05cm} \otimes\hspace{-0.05cm} \Lambda(f_i)\\
&&=
\sum_{i,j=1}^{n^2} \overline{\langle\Lambda(f_i),\Lambda(x)\rangle}\langle\Lambda(f_j^*),\Lambda(y^*)\rangle\Lambda(f_j^*)\otimes\Lambda(f_i).
\end{eqnarray*}
Therefore,
\begin{eqnarray*}
&&
((\partial\otimes 1)\circ J_0\circ (\partial^{-1}\otimes 1))(\Lambda(x)\otimes\Lambda(y))\\
&&=
\sum_{i,j=1}^{n^2} 
\overline{\langle\Lambda(f_i),\Lambda(x)\rangle}\overline{\langle\Lambda(f_j),\Lambda(y)\rangle}\Lambda(f_j)\otimes\Lambda(f_i)\\
&&=
(\frak{d}\circ\frak{f})\left(\sum_{i,j=1}^{n^2} \langle\Lambda(f_i),\Lambda(x)\rangle\langle\Lambda(f_j),\Lambda(y)\rangle\Lambda(f_i)\otimes\Lambda(f_j)\right)\\
&&=
(\frak{d}\circ\frak{f})(\Lambda(x)\otimes\Lambda(y)),
\end{eqnarray*}
giving the equivalence (i)$\Leftrightarrow$(ii). 
As 
\begin{eqnarray*}
\Psi(\Theta_{\Lambda(x),\Lambda(y)}^*)
& = &
\Psi(\Theta_{\Lambda(y),\Lambda(x)}) = \Lambda(y^*)\otimes\Lambda(x)\\
&&=
J_0(\Lambda(x^*)\otimes\Lambda(y)) = J_0(\Psi(\Theta_{\Lambda(x),\Lambda(y)}),
\end{eqnarray*}
we obtain the equivalence (ii)$\Leftrightarrow$(iii). 
\end{proof}
\begin{proposition}\label{p_flipnew}
Let $G = (M_X,\tr,A)$ be an algebraic quantum graph. 
Then $\tilde{\mathcal U}_G$ is a quantum pseudo-graph. 
\end{proposition}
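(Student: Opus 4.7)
The plan is to verify the two defining conditions of a quantum pseudo-graph (Definition \ref{d_qpseufogr}) separately, handling the symmetry first and then reducing the skew condition to axiom (3) of the quantum adjacency matrix.

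For the invariance $(\frak{d} \circ \frak{f})(\tilde{\cl U}_G) = \tilde{\cl U}_G$, I would appeal directly to Lemma \ref{pseudograph}: since $\tilde{\cl U}_G = (\partial \otimes 1)(\cl U_G)$ with $\cl U_G = \Lambda^{\otimes 2}(\Psi(S'))$, it suffices to show that $\Psi^{-1}(\cl U_G) = S'$ is self-adjoint in $\cl B(L^2(M_X))$. Self-adjointness of $S'$ is immediate from $A^* = A$: for $a, b \in M_X$, the identity $(aAb)^* = b^* A^* a^* = b^* A a^*$ shows that $(aAb)^* \in S'$, so $S'$ is closed under taking adjoints.

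For the skew condition, the strategy is to translate axiom (3), $m(A \otimes 1) m^* = 0$, into the skew property of $\tilde{\cl U}_G$. Using Lemma \ref{l_PsiS'}, I would write $\Psi(aAb) = (1 \otimes a) e (b \otimes 1)$, and then, using the explicit form $e = n \sum_{i,j} \epsilon_{i,j} \otimes A(\epsilon_{j,i})$ from (\ref{eq_idfore}), apply $\Lambda^{\otimes 2}$ and $\partial \otimes 1$ to obtain a concrete expression for a generator $\xi$ of $\tilde{\cl U}_G$. The skew functional $\mm$ on $L^2(M_X) \otimes L^2(M_X)$ is, by analogy with Definition \ref{d_ss}, pairing with the canonical ``maximally entangled'' vector in $L^2(M_X) \otimes L^2(M_X)$. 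After choosing the matrix-unit basis $\{\sqrt{n}\epsilon_{i,j}\}$ -- in which $\partial$ acts as the transpose map $\Lambda(x) \mapsto \Lambda(x^{\rm t})$ -- the pairing $\mm(\xi)$ telescopes via cyclicity of the trace into a scalar multiple of the $(r,s)$-entries of $\sum_{p,k} A(\epsilon_{p,k}) \epsilon_{k,p}$, weighted by entries of $a$ and $b$. Unwinding $m(A \otimes 1) m^*$ on a vector $\Lambda(x)$ via the formula $m^*(\Lambda(x)) = n \sum_{p,k} \Lambda(\epsilon_{p,k}) \otimes \Lambda(\epsilon_{k,p}x)$ shows that axiom (3) is equivalent to $\sum_{p,k} A(\epsilon_{p,k})\epsilon_{k,p} = 0$, so the skew condition follows.

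The hard part will be the bookkeeping in the second half: carefully handling the action of $\partial$ in the chosen orthonormal basis, keeping the indices straight through the composition of $\Psi$, $\Lambda^{\otimes 2}$, and $\partial \otimes 1$, and organising the trace computation so that axiom (3) applies cleanly. The underlying intuition is reassuring -- axiom (3) is the algebraic ``no self-loops'' condition for $G$, mirroring the role of skewness in the classical Definition \ref{d_ss} -- but the identification of the two conditions passes through several layers of identifications, which is what makes the translation technical rather than automatic.
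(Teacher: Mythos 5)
Your proposal is correct and follows the same overall strategy as the paper (symmetry from $A=A^*$, skewness from axiom (3)), but the two halves are organised differently. For the symmetry, you invoke Lemma \ref{pseudograph} to reduce $(\frak{d}\circ\frak{f})(\tilde{\cl U}_G)=\tilde{\cl U}_G$ to the self-adjointness of $S'$, which is immediate from $(\pi(a)A\pi(b))^*=\pi(b^*)A\pi(a^*)$; the paper instead redoes the computation by hand, writing $A=\sum_i\lambda_i\Theta_{\Lambda(x_i),\Lambda(x_i)}$ with $\lambda_i\in\bb R$ and checking that $\frak{d}\circ\frak{f}$ carries the generator coming from $aAb$ to the one coming from $b^*Aa^*$. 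Your route is shorter and is exactly what Lemma \ref{pseudograph} is there to deliver. For the skewness, the paper again works with the spectral decomposition: it shows that $m(A\otimes 1)m^*=0$ forces $\sum_j\lambda_jx_jx_j^*=0$, and that the pairing of a generator of $\tilde{\cl U}_G$ with $\sum_k\Lambda(f_k)\otimes\Lambda(f_k)$ equals $\tr\bigl(a\bigl(\sum_i\lambda_ix_ix_i^*\bigr)b\bigr)$ (up to conjugation). Your matrix-unit version is equivalent: one checks that $m(A\otimes1)m^*$ is left multiplication by $n\sum_{p,k}A(\epsilon_{p,k})\epsilon_{k,p}=\sum_j\lambda_jx_jx_j^*$, so axiom (3) is indeed equivalent to the vanishing of this single operator, as you claim, and your reduction of the pairing to its entries is the same cancellation the paper performs.

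One point you must make explicit. The maps $\partial$ and $\frak d$, hence both $\tilde{\cl U}_G$ and the skew pairing, are defined relative to the orthonormal basis $\{\Lambda(f_j)\}$ fixed at the start of the section, whereas you switch to the matrix-unit basis $\{\sqrt n\,\epsilon_{i,j}\}$. As written, your computation proves the skewness only for that particular choice. This is harmless, but only because the composite quantity being tested is basis-independent: for any orthonormal basis one has
\begin{equation*}
\left\langle(\partial\otimes 1)(\Lambda(x)\otimes\Lambda(y)),\ \sum_{k=1}^{n^2}\Lambda(f_k)\otimes\Lambda(f_k)\right\rangle=\langle\Lambda(y),\Lambda(x^*)\rangle=\overline{\tr(xy)},
\end{equation*}
so skewness of $\tilde{\cl U}_G$ amounts to $\tr(xy)=0$ for $\Lambda(x)\otimes\Lambda(y)$ ranging over a spanning set of $\cl U_G$, independently of the basis. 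Either insert this observation (which, incidentally, shortcuts most of your bookkeeping: applied to the generators $\sum_i\lambda_i\Lambda(x_i^*b)\otimes\Lambda(ax_i)$ it yields $\overline{\tr(ba\sum_i\lambda_ix_ix_i^*)}$ directly) or carry out the computation in the given basis, as the paper does.
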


\begin{proof}
As $A$ is self-adjoint, there exist $x_i\in M_X$ and $\lambda_i\in\mathbb R$ such that $A=\sum_{i=1}^{n^2} \lambda_i\Theta_{\Lambda(x_i),\Lambda(x_i)}$. 
Using (\ref{eq_LL}), we have 
\begin{eqnarray*}
&&(\partial\otimes 1)((\Lambda^{\otimes 2}\circ\Psi)(aAb))
= \sum_{i=1}^{n^2}\lambda_i(\partial\otimes 1)(\Lambda(x_i^*b)\otimes\Lambda(ax_i))\\
&&
=\sum_{i=1}^{n^2}\lambda_i(\partial\otimes 1)
\left(\sum_{j,k=1}^{n^2} \langle\Lambda(f_k^*),\Lambda(x_i^*b)\rangle\langle \Lambda(f_j),\Lambda(ax_i)\rangle \Lambda(f_k^*)\otimes\Lambda(f_j)\right)\\
&&
=\sum_{i=1}^{n^2}\lambda_i\sum_{j,k=1}^{n^2} \overline{\langle\Lambda(f_k),\Lambda(b^*x_i)\rangle}\langle \Lambda(f_j),\Lambda(ax_i)\rangle \Lambda(f_k)\otimes\Lambda(f_j).
\end{eqnarray*}
Hence 
$$(\frak{d}\circ\frak{f}) ((\partial\otimes 1)((\Lambda^{\otimes 2}\circ\Psi)(aAb))) = (\partial\otimes 1)
(((\Lambda^{\otimes 2}\circ\Psi)(b^*Aa^*)),$$
implying the condition $(\frak{d}\circ \frak{f})(\tilde{\mathcal U}_G) = \tilde{\mathcal U}_G$.

Using (\ref{eq_mstar3}), we have 
$$0 = 
m(A\otimes 1)m^*(\epsilon_{i,i})
= n\sum_{k=1}^{n} A((\epsilon_{i,k}))\epsilon_{k,i}
= n\sum_{j,k=1}^{n} \lambda_j \tr(x_j^* \epsilon_{i,k})x_j \epsilon_{k,i},$$
and hence, for all $y\in M_X$, we have 
\begin{eqnarray*}
&&
0 = n\sum_{i,k,j=1}^{n^2}\lambda_j
\tr(x_j^*\epsilon_{i,k})\tr(x_j\epsilon_{k,i}y)\\
&&
= n \sum_{i,k,j=1}^{n^2}\lambda_j\tr(x_j^*\epsilon_{i,k})\tr(\epsilon_{i,k}^* y x_j)
= n \sum_{i,k,j=1}^{n^2} \lambda_j 
\langle x_j,\epsilon_{i,k}\rangle \langle \epsilon_{i,k}, y x_j\rangle\\
&&
= \sum_{j=1}^{n^2} \lambda_j 
\langle x_j, y x_j\rangle
= \sum_{j=1}^{n^2}\lambda_j\tr(yx_jx_j^*)
= \tr\left(y\sum_{j=1}^{n^2} \lambda_jx_jx_j^*\right).
\end{eqnarray*}
Therefore, $\sum_{j=1}^{n^2} \lambda_jx_jx_j^*=0$.
By the previous paragraph, we have 
\begin{eqnarray*}
&&
\hspace{-0.7cm} 
\left\langle (\partial\otimes 1)(((\Lambda^{\otimes 2}\circ\Psi)(aAb)), \sum_{k=1}^{n^2} \Lambda(f_k)\otimes\Lambda(f_k)\right\rangle\\
&&
\hspace{0.7cm} = \hspace{0.05cm}\sum_{i=1}^{n^2}\lambda_i\sum_{k=1}^{n^2} \overline{\langle\Lambda(f_k),\Lambda(b^*x_i)\rangle}\langle \Lambda(f_k),\Lambda(ax_i)\rangle \\
&&
\hspace{0.7cm} =\hspace{0.05cm}
\sum_{i=1}^{n^2}\lambda_i\langle\Lambda(b^*x_i),\Lambda(ax_i)\rangle
=\tr\left(\sum_{i=1}^{n^2}\lambda_iax_ix_i^*b\right)
= 0,
\end{eqnarray*}
showing that $\tilde{\mathcal U}_G$ is skew. 
\end{proof}

\begin{remark}\label{r_diffsetup}
\rm 
Proposition \ref{p_flipnew} shows that 
an algebraic quantum graph $G = (M_X,\tr, A)$ gives rise to a 
canonical quantum pseudo-graph $\tilde{\cl U}_G\subseteq L^2(M_X)\otimes L^2(M_X)$. 
The reason we are led to work with quantum pseudo-graphs instead of quantum graphs
in the sense of our Definition \ref{d_ss} lies in the
setup of QNS correlations, which is borrowed from \cite{dw}.
In defining QNS correlations, 
instead of no-signalling quantum channels $\Gamma : M_X\otimes M_Y\to M_A\otimes M_B$, 
one could start with no-signalling quantum channels 
$\Gamma' : M_X\otimes M_Y^{\rm op}\to M_A\otimes M_B^{\rm op}$. For the class of 
quantum commuting no-signalling correlations, this would lead to Choi matrices of the form 
$(\tau(e_{x,x',a,a'}e_{y,y',b,b'}))$, as opposed to 
the matrices $(\tau(e_{x,x',a',a}e_{y',y,b',b}))$ that arise through the current setup. 
As we will shortly see, 
in order to obtain a neat connection between the 
two types of quantum isomorphisms, one also needs to 
work with a slightly different concept of quantum isomorphism than the one 
employed in Section \ref{s_qgig}. 
We make this discussion rigorous in Theorem \ref{th_qcalgqc}. 
\end{remark}

Let $G_r = (M_X,\tr, A_r)$ be an algebraic quantum graph, $r = 1,2$.
Let $\cl O(G_1,G_2)$ be the universal (unital) C$^*$-algebra 
with generators $p_{i,j}$, $i,j = 1,\dots,n^2$, and relations that turn the map 
$\rho : M_X\to M_X\otimes \cl O(G_1,G_2)$,
given by 
\begin{equation}\label{eq_defrgo2}
\rho(f_i) = \sum_{j=1}^{n^2} f_j\otimes p_{j,i}, \ \ \ i = 1,\dots,n^2,  
\end{equation}
into a unital $*$-homomorphism such that 
\begin{equation}\label{eq_rhoint}
    (A_2\otimes {\rm id})\circ \rho = \rho\circ A_1,
\end{equation} and 
\begin{equation}\label{eq_rhotp}
    (\tr\otimes {\rm id})\circ \rho = \tr(\cdot) 1
\end{equation}

\begin{remark}\label{l_intertwi}
\rm
It follows from the proof of \cite[Theorem 4.7]{dr-vv} that the matrix
$P = (p_{i,j})_{i,j=1}^{n^2}\in M_{n^2}(\cl O(G_1,G_2))$
is automatically unitary.
Identifying $A_i$ with its corresponding matrix in $M_{n^2}$ with respect to the basis $\{f_j\}_{j=1}^{n^2}$, 
one can further check that equation \eqref{eq_rhoint} is equivalent to
\begin{equation}\label{eq_rhoint2}
(A_2\otimes 1_{\cl O(G_1,G_2)})P = P(A_1\otimes 1_{\cl O(G_1,G_2)}).
\end{equation}
Indeed, we have that 
\[
(\rho\circ A_1)(f_i) = \sum_{k,j=1}^{n^2} (A_1)_{k,i}f_j \otimes p_{j,k} = \sum_{j=1}^{n^2} f_j \otimes (P(A_1 \otimes I_H))_{j,i}
\]
and 
\begin{eqnarray*}
(A_2 \otimes 1_{\cl O(G_1,G_2)})\rho(f_i) 
\hspace{-0.3cm} & = & 
\hspace{-0.3cm} \sum_{k=1}^{n^2} A_2(f_k) \otimes p_{k,i}\\
\hspace{-0.3cm} & = & 
\hspace{-0.3cm} \sum_{k,j=1}^{n^2} (A_2)_{j,k}f_j \otimes p_{k,i} 
\hspace{-0.05cm} = \hspace{-0.05cm} \sum_{j=1}^{n^2} f_j \otimes ((A_2 \otimes I_H)P)_{j,i}.
\end{eqnarray*}
Identity (\ref{eq_rhoint2}) now follows by comparing the corresponding coefficients.
We note that reversing these arguments shows that relations (\ref{eq_rhoint2}) and (\ref{eq_rhoint}) are equivalent.  

Note that if $\{\Lambda(g_j)\}_{j=1}^{n^2}\subset L^2(M_X)$ is another orthonormal basis and $U\in M_{n^2}$ is unitary such that $\Lambda(g_j)=\Lambda(f_j)$, $j=1,\ldots, n^2$, then 
$$\rho(g_i)=\sum_{j=1}^{n^2} g_j\otimes ((U^*\otimes 1)P(U\otimes 1))_{j,i}.$$
\end{remark}

For the remainder of this section, we make the 
underlying assumption that the C$^*$-algebra $\cl O(G_1,G_2)$ is non-trivial.

\begin{proposition}
Let $G_r = (M_X,\tr, A_r)$ be an 
algebraic quantum graph, $r= 1,2$. 
Then the matrix $P\in M_{n^2}\otimes \cl O(G_1,G_2)$ is bi-unitary.
\end{proposition}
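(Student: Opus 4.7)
The statement to prove is that $P^{\rm t}$ is unitary in $M_{n^2}(\cl O(G_1,G_2))$; the unitarity of $P$ itself has already been recorded in Remark~\ref{l_intertwi}. My plan is to apply the same argument that yields the unitarity of $P$ to a different, but equally natural, orthonormal basis of $L^2(M_X)$.

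First I would observe that $\{\Lambda(f_j^*)\}_{j=1}^{n^2}$ is another orthonormal basis of $L^2(M_X)$, since
$$\langle\Lambda(f_i^*),\Lambda(f_j^*)\rangle = \tr(f_i f_j^*) = \tr(f_j^* f_i) = \delta_{ij}$$
by the trace property and the orthonormality of $\{\Lambda(f_j)\}$. Next, since $\rho$ is a $*$-homomorphism,
$$\rho(f_i^*) = \rho(f_i)^* = \sum_{k=1}^{n^2} f_k^*\otimes p_{k,i}^*,$$
which shows that the coefficient matrix of $\rho$ relative to $\{\Lambda(f_j^*)\}$ is exactly $\bar P := (p_{i,j}^*)_{i,j}$; equivalently, via the change-of-basis formula quoted at the end of Remark~\ref{l_intertwi}, $\bar P = (U^*\otimes 1)P(U\otimes 1)$ for the unitary $U\in M_{n^2}$ mediating the change of basis between the two orthonormal bases.

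Since $\rho$ is, irrespective of the chosen basis, a unital $*$-homomorphism satisfying the trace preservation (\ref{eq_rhotp}) and the intertwining (\ref{eq_rhoint}), the same appeal to the automatic-unitarity result (\cite[Theorem 4.7]{dr-vv}) used in Remark~\ref{l_intertwi}, now applied in the basis $\{\Lambda(f_j^*)\}$, yields that $\bar P$ is unitary. Finally, entry-wise one checks
$$(\bar P^*)_{i,j}=(\bar P_{j,i})^*=p_{j,i}=(P^{\rm t})_{i,j},$$
so $\bar P^* = P^{\rm t}$; as the adjoint of a unitary is unitary, $P^{\rm t}$ is unitary, and $P$ is bi-unitary. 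There is no serious obstacle here; the entire argument reduces to the observation that $\{\Lambda(f_j^*)\}$ is orthonormal and that the matrix of $\rho$ with respect to this basis is precisely $\bar P$.
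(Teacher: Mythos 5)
Your proof is correct and follows essentially the same route as the paper: both arguments rest on the identity $\rho(f_i^*)=\rho(f_i)^*$ together with the scalar unitary implementing the change of orthonormal basis $\Lambda(f_j)\mapsto\Lambda(f_j^*)$, which exhibits $P^{{\rm t}*}$ (your $\bar P$) as a unitary conjugate of $P$. The paper merely carries this out concretely in the basis $\{\sqrt{n}\,\Lambda(\epsilon_{i,j})\}$ with the explicit matrix $W_{(i,j),(k,l)}=\delta_{i,l}\delta_{j,k}$, which is exactly your change-of-basis unitary $U$; note also that your second appeal to the automatic-unitarity theorem of \cite{dr-vv} is redundant, since $\bar P=(U^*\otimes 1)P(U\otimes 1)$ is already unitary as a product of unitaries.
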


\begin{proof}
We verify that $P^{\rm t} = (p_{j,i})_{i,j=1}^{n^2}$ is unitary. By the previous remark we may assume that $\{\Lambda(f_i)\}_{i=1}^{n^2}$ is $\{\sqrt{n}\Lambda(\epsilon_{i,j})\}_{i,j=1}^{n}$.
Following the proof of \cite[Lemma 9.4]{daws}, 
let $W \in M_{n^2}$ be the matrix with entries
$$W_{(i,j), (k,l)} := 
n\left\langle \Lambda(\epsilon_{i,j}),\Lambda(\epsilon_{k,l}^*)\right\rangle
= \Tr(\epsilon_{j,i}\epsilon_{l,k})=\delta_{i,l}\delta_{j,k}.$$ 
Then 
$$(W^*W)_{(i,j),(k,l)}=\sum_{p,q=1}^{n} W_{(p,q),(i,j)}W_{(p,q),(k,l)} = 1$$ 
if $(i,j)=(k,l)$ and zero otherwise; thus, $W^*W = I_{n^2}$. 
As $\rho$ is $*$-preserving, 
we obtain
\begin{eqnarray*}
& & 
\sum_{j,l=1}^{n^2} f_l\otimes \left\langle\Lambda(f_j),\Lambda(f_i^*)\right\rangle p_{l,j}
= 
\sum_{j=1}^{n^2}
\left\langle\Lambda(f_j),\Lambda(f_i^*)\right\rangle\rho(f_j)\\
& = &
\rho(f_i^*) = \rho(f_i)^*
= \sum_{j=1}^{n^2} f_j^*\otimes p_{j,i}^*
= 
\sum_{j,l=1}^{n^2} f_l\otimes
\left\langle \Lambda(f_l),\Lambda(f_j^*)\right\rangle p_{j,i}^*.
\end{eqnarray*}
Thus
\begin{eqnarray*}
\sum_{j=1}^{n^2} p_{l,j}W_{j,i} 
& = & 
n \sum_{j=1}^{n^2}
\left\langle\Lambda(f_j),\Lambda(f_i^*)\right\rangle p_{l,j}\\
& = & 
n \sum_{j=1}^{n^2}\left\langle\Lambda(f_l),\Lambda(f_j^*)\right\rangle p_{j,i}^*
= \sum_{j=1}^{n^2}
W_{l,j}p_{j,i}^*
\end{eqnarray*}
for all $i,l = 1,\dots,n^2$; 
equivalently, $P(W\otimes 1) = (W\otimes 1)P^{{\rm t}*}$. 
It follows that 
$P^{{\rm t}*} = (W^{-1}\otimes 1)P(W\otimes 1)$, and hence 
\begin{eqnarray*}
P^{{\rm t}*}P^{\rm t} 
& = & 
(W^{-1}\hspace{-0.05cm} \otimes \hspace{-0.05cm} 1)P(W\hspace{-0.05cm} \otimes \hspace{-0.05cm} 1)(W^*\hspace{-0.05cm} \otimes \hspace{-0.05cm} 1)P^*((W^*)^{-1}\hspace{-0.05cm} \otimes \hspace{-0.05cm} 1)\\
& = & 
(W^{-1}\hspace{-0.05cm} \otimes \hspace{-0.05cm} 1)PP^*((W^*)^{-1}\hspace{-0.05cm} \otimes \hspace{-0.05cm} 1) = 1
\end{eqnarray*}
and 
\begin{eqnarray*}
P^{\rm t} P^{{\rm t}*} 
& = & 
(W^*\hspace{-0.05cm} \otimes \hspace{-0.05cm} 1)P^*((W^*)^{-1}\hspace{-0.05cm} \otimes\hspace{-0.05cm} 1)(W^{-1} \hspace{-0.05cm} \otimes \hspace{-0.05cm} 1)P(W \hspace{-0.05cm} \otimes \hspace{-0.05cm} 1)\\
& = & 
(W^* \hspace{-0.05cm} \otimes \hspace{-0.05cm} 1)P^*P(W \hspace{-0.05cm} \otimes \hspace{-0.05cm} 1) = 1.
\end{eqnarray*}

\end{proof}

Let $G_r = (M_X,\tr, A_r)$ be an algebraic quantum graph, $r = 1,2$. 
We will write $S_r'$ for the space corresponding to $G_r$ via (\ref{eq_S'}), $r = 1,2$. 
We say 
\cite[Definition 4.4]{bcehpsw} 
that 
$G_1$ and $G_2$ are \emph{quantum commuting isomorphic},
denoted $G_1\simeq_{\rm qc} G_2$, 
if the C$^*$-algebra
$\cl O(G_1,G_2)$ admits a tracial state, say $\tau$.
We assume, unless specified otherwise, that 
$G_1\simeq_{\rm qc} G_2$. 
Let $H$ be the Hilbert space, arising from the GNS construction applied to $\tau$ 
and, by abuse of notation, continue to write 
$p_{i,j}$ for the image of the corresponding canonical generator of $\cl O(G_1,G_2)$ under the 
$\ast$-representation arising from $\tau$. 
By (\ref{eq_rhoint2}), we have 
\begin{equation}\label{eq_A1intoA2}
A_2\otimes I_H = P(A_1\otimes I_H)P^*. 
\end{equation}
We view $P = (p_{i,j})_{i,j=1}^{n^2}$ as an operator on 
$L^2(M_X)\otimes H$ and note that, by (\ref{eq_defrgo2}), we have
\begin{equation}\label{eq_PL2}
P(\Lambda(b)\otimes\xi)=\rho(b)(\Lambda(1)\otimes\xi), \ \ \ b\in M_X, \xi\in H.
\end{equation}
Moreover, for $a,d\in M_X$ and $\xi\in H$ we have
 \begin{eqnarray*}
&&P(a\otimes 1)P^*P(\Lambda(d)\otimes\xi)=P(a\otimes 1)(\Lambda(d)\otimes\xi)=P(\Lambda(ad)\otimes\xi)\\&&=\rho(ad)(\Lambda(1)\otimes\xi)=\rho(a)(\rho(d)(\Lambda(1)\otimes\xi)=\rho(a)P(\Lambda(d)\otimes\xi), 
\end{eqnarray*}
and hence 
\begin{equation}\label{p_rho} 
P(a\otimes 1)P=\rho(a), a\in M_X,
\end{equation}
as maps on $L^2(M_X)\otimes\cl B(H)$.

We define $\tilde P\in \cl B(L^2(M_X)\otimes H)$ by letting 
$$\tilde P(\Lambda(f_j^*)\otimes\eta)= \sum_{k=1}^{n^2} 
\Lambda(f_k^*)\otimes p_{k,j}^*\eta, 
\ \ \ \eta \in H.$$
Using leg-notation, we write $P_{2,3}$ and $P_{1,3}$ for the 
 corresponding operators on $L^2(M_X)\otimes L^2(M_X)\otimes H$, arising from $P$.

\begin{lemma}\label{l_UiSi}
We have
$((\Lambda^{\otimes 2}\circ\Psi)\otimes {\rm id})(P(S_1'\otimes 1)P^*) = P_{2,3}\tilde P_{1,3}(\mathcal U_{G_1}\otimes 1).$
\end{lemma}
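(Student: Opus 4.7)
The plan is to prove the stronger identity
\[((\Lambda^{\otimes 2}\circ\Psi)\otimes{\rm id})(P(T\otimes 1_H)P^*) = P_{2,3}\tilde P_{1,3}((\Lambda^{\otimes 2}\circ\Psi)(T)\otimes 1_H)\]
for every $T\in\cl L(L^2(M_X))$, from which the lemma follows immediately by restricting to $T\in S_1'$ and recalling that $\mathcal U_{G_1}=(\Lambda^{\otimes 2}\circ\Psi)(S_1')$. Note that the sought identity involves no reference to the adjacency maps $A_1,A_2$; it is really a statement about how $\Lambda^{\otimes 2}\circ\Psi$ intertwines the conjugation $T\mapsto P(T\otimes 1_H)P^*$ with the left action of $P_{2,3}\tilde P_{1,3}$.

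First I would record an auxiliary formula for the right-hand side on elementary tensors. Starting from (\ref{eq_PL2}) and the analogous identity $\tilde P(\Lambda(w)\otimes\eta)=\rho(w)(\Lambda(1)\otimes\eta)$ (which follows by linearly extending the defining formula of $\tilde P$ on the basis $\{\Lambda(f_j^*)\}$ and invoking $\rho(w^*)=\rho(w)^*$), one verifies that for $u,v\in M_X$ with $\rho(u)=\sum_p c_p\otimes\gamma_p$ and $\rho(v)=\sum_q d_q\otimes\delta_q$,
\[P_{2,3}\tilde P_{1,3}(\Lambda(u)\otimes\Lambda(v)\otimes 1_H)=\sum_{p,q}\Lambda(c_p)\otimes\Lambda(d_q)\otimes\delta_q\gamma_p.\]

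Since rank-one operators span $\cl L(L^2(M_X))$, by linearity it suffices to verify the stronger identity for $T=\Theta_{\Lambda(x),\Lambda(y)}$ with $x,y\in M_X$. The key manoeuvre is the factorisation
\[T\otimes 1_H = (\Lambda(y)\otimes 1_H)(\Lambda(x)\otimes 1_H)^*,\]
where $\Lambda(z)\otimes 1_H:H\to L^2(M_X)\otimes H$ denotes the operator $\eta\mapsto\Lambda(z)\otimes\eta$. Conjugation by $P$ preserves this factorisation, and expanding each factor via (\ref{eq_PL2}) with $\rho(x)=\sum_p c_p\otimes\gamma_p$ and $\rho(y)=\sum_q d_q\otimes\delta_q$ yields
\[P(T\otimes 1_H)P^*=\sum_{p,q}\Theta_{\Lambda(c_p),\Lambda(d_q)}\otimes\delta_q\gamma_p^*.\]
Applying $(\Lambda^{\otimes 2}\circ\Psi)\otimes{\rm id}$ and using $\Psi(\Theta_{\Lambda(c_p),\Lambda(d_q)})=c_p^*\otimes d_q$ produces $\sum_{p,q}\Lambda(c_p^*)\otimes\Lambda(d_q)\otimes\delta_q\gamma_p^*$. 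On the other hand, $(\Lambda^{\otimes 2}\circ\Psi)(T)=\Lambda(x^*)\otimes\Lambda(y)$, and the auxiliary formula with $u=x^*$ (so that $\rho(x^*)=\rho(x)^*=\sum_p c_p^*\otimes\gamma_p^*$) and $v=y$ delivers precisely the same expression, completing the verification.

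The main delicacy throughout is the operator ordering in the third leg: the adjoint in the factorisation on the left produces $\delta_q\gamma_p^*$ rather than $\gamma_p^*\delta_q$, and this coincides with the ordering on the right, where in the composition $P_{2,3}\tilde P_{1,3}$ the operator $\tilde P$ acts on legs $(1,3)$ before $P$ acts on legs $(2,3)$. Once these orderings are pinned down, the rest of the argument is a routine matching of indices.
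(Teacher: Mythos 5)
Your proof is correct and follows essentially the same route as the paper's: both reduce by linearity to the rank-one operators $\Theta_{\Lambda(x),\Lambda(y)}$ and match the two sides by direct computation, the paper working with the matrix entries $p_{i,j}$ and basis expansions where you instead package the same data through $\rho$, the factorisation $\Theta_{\Lambda(x),\Lambda(y)}\otimes 1=(\Lambda(y)\otimes 1)(\Lambda(x)\otimes 1)^*$ and the identity $\tilde P(\Lambda(w)\otimes\eta)=\rho(w)(\Lambda(1)\otimes\eta)$. All of your intermediate formulas, including the operator ordering $\delta_q\gamma_p^*$ in the third leg, agree with the expressions obtained in the paper's computation.
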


\begin{proof}
Let $x,y\in M_X$. We have 
\begin{eqnarray*}
& & 
\hspace{-0.4cm} 
P\left(\Theta_{\Lambda(x),\Lambda(y)}\otimes 1\right)P^*(\Lambda(f_k)\otimes\eta)\\
& &  
\hspace{-0.9cm} = P\left(\Theta_{\Lambda(x),\Lambda(y)}\otimes 1\right)
\left(\sum_{j=1}^{n^2} \Lambda(f_j)\otimes p_{k,j}^*\eta\right)\\
& & 
\hspace{-0.9cm} =
P\left(\sum_{j=1}^{n^2}
\langle\Lambda(x),\Lambda(f_j)\rangle\Lambda(y)\otimes p_{k,j}^*\eta\right)\\
& & 
\hspace{-0.9cm} = 
P\left(\sum_{i,j=1}^{n^2}
\langle\Lambda(x),\Lambda(f_j)\rangle\langle\Lambda(f_i),\Lambda(y)\rangle\Lambda(f_i)\otimes p_{k,j}^*\eta\right)\\
& &
\hspace{-0.9cm} = 
\hspace{-0.1cm} \sum_{m=1}^{n^2}\Lambda(f_m)\otimes \left(\sum_{i=1}^{n^2}
\langle\Lambda(f_i),\Lambda(y)\rangle p_{m,i} \sum_{j=1}^{n^2}
\langle\Lambda(x),\Lambda(f_j)\rangle p_{k,j}^*\right)(\eta)\\
& &
\hspace{-0.9cm} = 
\hspace{-0.25cm} \sum_{l,m=1}^{n^2}
\hspace{-0.1cm} \Theta_{\Lambda(f_l), \Lambda(f_m)}
\hspace{-0.1cm}\otimes\hspace{-0.15cm}
\left(\hspace{-0.05cm} \sum_{i=1}^{n^2}
\langle\Lambda(f_i),\hspace{-0.05cm} \Lambda(y)\rangle p_{m,i} \hspace{-0.1cm} \sum_{j=1}^{n^2}
\langle\Lambda(x),\hspace{-0.05cm} \Lambda(f_j)\rangle p_{l,j}^*\hspace{-0.1cm} \right)
\hspace{-0.15cm}(\hspace{-0.04cm}\Lambda(f_k)\hspace{-0.05cm}\otimes\hspace{-0.05cm}\eta)
\end{eqnarray*}
and hence
\begin{eqnarray*}
& & 
\hspace{-0.9cm} P(\Theta_{\Lambda(x),\Lambda(y)}\otimes 1)P^*\\
& & 
\hspace{-0.5cm} 
= \hspace{-0.2cm}\sum_{l,m=1}^{n^2}\Theta_{\Lambda(f_l),\Lambda(f_m)}\otimes \left(\sum_{i=1}^{n^2}
\langle\Lambda(f_i),\Lambda(y)\rangle p_{m,i}\right)
\hspace{-0.2cm}\left(\sum_{j=1}^{n^2}\langle\Lambda(x),\Lambda(f_j)\rangle p_{l,j}^*\right)\hspace{-0.15cm}.
\end{eqnarray*}
It follows that 
\begin{eqnarray*}
&&
\hspace{-1cm}
((\Lambda^{\otimes 2}\circ\Psi)\otimes \text{id})
\left(P(\Theta_{\Lambda(x),\Lambda(y)}\otimes 1)P^*\right)\\
&& 
\hspace{-1cm} = \hspace{-0.2cm} \sum_{l,m=1}^{n^2}\Lambda(f_l^*)
\hspace{-0.05cm}\otimes\hspace{-0.05cm}\Lambda(f_m)
\hspace{-0.05cm}\otimes\hspace{-0.1cm} \left(\sum_{i=1}^{n^2}\langle\Lambda(f_i),\Lambda(y)\rangle p_{m,i}\right)
\hspace{-0.2cm}\left(\sum_{j=1}^{n^2}\langle\Lambda(x),\Lambda(f_j)\rangle p_{l,j}^*\right)\hspace{-0.15cm}.
\end{eqnarray*}
As $P(\Lambda(y)\otimes\xi)=\sum_{i,m=1}^{n^2}
\langle \Lambda(f_i),\Lambda(y)\rangle\Lambda(f_m)\otimes p_{m,i}\xi$ and
\begin{eqnarray*}
\tilde P(\Lambda(x^*)\otimes\eta)
& = & \sum_{j=1}^{n^2}\tilde P(\langle\Lambda(f_j^*),\Lambda(x^*)\rangle(\Lambda(f_j^*)\otimes\eta))\\
& = & 
\sum_{j,l=1}^{n^2}\langle\Lambda(x),\Lambda(f_j)\rangle\Lambda(f_l^*)\otimes p_{l,j}^*\eta,
\end{eqnarray*}
we obtain
$$((\Lambda^{\otimes 2}\circ\Psi)\otimes \text{id})(P(\Theta_{\Lambda(x),\Lambda(y)}\otimes 1)P^*)=P_{2,3}\tilde P_{1,3}(\Lambda(x^*)\otimes\Lambda(y)\otimes 1).$$
The statements now follow by linearity 
from the definition of $\cl U_{G_1}$.
\end{proof}

Let $\cl N\subseteq \cl B(H)$ be a von Neumann algebra, equipped with a faithful trace $\tilde{\tau}$, 
and let $U = (u_{i,j})_{i,j}\in M_{n^2}(\cl N)$ be a bi-unitary block operator matrix
(with entries in $\cl N$). 
Suppose that $\Gamma : M_{n^2}\otimes M_{n^2}\to M_{n^2}\otimes M_{n^2}$ is a QNS correlation, given by 
\begin{equation}\label{eq_twistGa}
\Gamma(\epsilon_{i,i'}\otimes \epsilon_{j,j'}) = (\tilde{\tau}(u_{k,i}^*u_{k',i'}u_{l',j'}^*u_{l,j}))_{k,k',l,l'}.
\end{equation}
We let $\tilde\Gamma : M_{n^2}\otimes M_{n^2}\to M_{n^2}\otimes M_{n^2}$ 
be the unital completely positive map, given by 
$$\tilde\Gamma(\epsilon_{k,k'}\otimes \epsilon_{l,l'}) = (\tilde{\tau}(u_{l,j}^*u_{k,i}u_{k',i'}^*u_{l',j'}))_{i,i',j,j'}.$$
If $f_{k,k',i,i'} = u_{k,i}u_{k',i'}^*$ then $\tilde\Gamma$ has Choi matrix
$(\tilde{\tau}(f_{k,k',i,i'}f_{l',l,j',j}))$ and is hence a quantum commuting QNS correlation. 
We remark that, as can be verified in a straightforward way, 
if $\sigma: M_{n^2}\otimes M_{n^2}\to M_{n^2}\otimes M_{n^2}$ is the map, given by 
$\sigma(\epsilon_{k,k'}\otimes\epsilon_{l,l'}) = \epsilon_{k',l}\otimes\epsilon_{l',k}$, 
then $\tilde\Gamma=\sigma\circ\Gamma^*\circ\sigma$.

We call two quantum pseudo-graphs $\cl W_1$ and $\cl W_2$ \emph{qc-pseudo-isomorphic} if
there exists $\Gamma\in \cl Q_{\rm qc}^{\rm bic}$ of the form described in the previous paragraph, 
such that 
\begin{itemize}
\item[(i)] 
$\Gamma$ is a perfect strategy for $\cl W_1\to \cl W_2$, and 
\item[(ii)] 
$\tilde{\Gamma}$ is a perfect strategy for $\cl W_2\to \cl W_1$.
\end{itemize}

\begin{theorem}\label{th_qcalgqc}
Let  $G_r = (M_X,\tr, A_r)$, $r = 1,2$, 
be algebraic quantum graphs with $G_1\simeq_{\rm qc} G_2$. Then the quantum pseudo-graphs 
$\tilde{\cl U}_{G_1}$ and $\tilde{\cl U}_{G_2}$ are qc-pseudo-isomorphic. 
\end{theorem}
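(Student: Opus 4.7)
The plan is to use the GNS construction applied to the tracial state $\tau$ on $\cl O(G_1,G_2)$ to obtain a tracial von Neumann algebra $\cl N\subseteq \cl B(H)$ with faithful trace $\tilde\tau$, and represent the generators as a bi-unitary $P=(p_{i,j})_{i,j=1}^{n^2}\in M_{n^2}(\cl N)$. Setting $u_{i,j}:=p_{i,j}$ in formula~(\ref{eq_twistGa}) produces a concurrent quantum commuting QNS bicorrelation $\Gamma\in \cl Q_{\rm qc}^{\rm bic}$ (by Theorem~\ref{QNSbicorrelation} combined with Lemma~\ref{l_J1J2}); this will be the candidate strategy, together with its twin $\tilde\Gamma$ defined just before the statement of the theorem.

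Next, I would translate the defining relations of $\cl O(G_1,G_2)$ into an algebraic intertwining at the level of the $M_X$-bimodules $S'_r$ of (\ref{eq_S'}). From~(\ref{eq_A1intoA2}) one has $P(A_1\otimes 1)P^*=A_2\otimes 1$, while~(\ref{p_rho}) gives $P(a\otimes 1)P^*=\rho(a)\in M_X\otimes \cl N$ for every $a\in M_X$. Composing these, for $a,b\in M_X$,
\[
P(aA_1b\otimes 1)P^*=\rho(a)(A_2\otimes 1)\rho(b)\in S'_2\otimes \cl N,
\]
so by linearity $P(S'_1\otimes 1)P^*\subseteq S'_2\otimes \cl N$. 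Applying Lemma~\ref{l_UiSi} and the identification $\Lambda^{\otimes 2}\circ\Psi$ then yields
\[
P_{2,3}\tilde P_{1,3}(\cl U_{G_1}\otimes 1)\subseteq \cl U_{G_2}\otimes \cl N.
\]

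The heart of the argument is showing that this inclusion is the operator-algebraic incarnation of the perfect strategy condition for $\Gamma$ on the game $\tilde{\cl U}_{G_1}\to \tilde{\cl U}_{G_2}$. I would apply the intertwiner $\partial\otimes 1\otimes 1$ on the first leg: a direct computation shows that $(\partial\otimes 1)\tilde P(\partial^{-1}\otimes 1)$ equals the bi-unitary $\bar P$ with entries $p_{k,j}^*$ in the $\Lambda(f_j)$-basis, so the previous inclusion transforms into $P_{2,3}\bar P_{1,3}(\tilde{\cl U}_{G_1}\otimes 1)\subseteq \tilde{\cl U}_{G_2}\otimes \cl N$. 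Expanding $P_{2,3}\bar P_{1,3}$ as a sum of elementary tensors reveals that its matrix coefficients in the first two legs produce exactly the Choi entries of $\Gamma$; mimicking the argument in the proof of Theorem~\ref{iso}, the inclusion translates to the vanishing $(P^\perp_{\tilde{\cl U}_{G_2}}\otimes I)(P_{2,3}\bar P_{1,3})(P_{\tilde{\cl U}_{G_1}}\otimes I)=0$, which by the faithfulness of $\tilde\tau$ gives $\langle \Gamma(\eta\eta^*),\zeta\zeta^*\rangle=0$ for all $\eta\in \tilde{\cl U}_{G_1}$, $\zeta\in \tilde{\cl U}_{G_2}^\perp$, i.e., the perfect strategy condition (i). For condition (ii), I would invoke the identity $\tilde\Gamma=\sigma\circ\Gamma^*\circ\sigma$ recorded just before the definition of qc-pseudo-isomorphism, combined with the bi-unitarity of $P$: conjugating~(\ref{eq_A1intoA2}) yields $P^*(A_2\otimes 1)P=A_1\otimes 1$, and a symmetric application of the Lemma~\ref{l_UiSi}-style argument delivers the corresponding intertwining $P^*_{2,3}\bar P^*_{1,3}(\tilde{\cl U}_{G_2}\otimes 1)\subseteq \tilde{\cl U}_{G_1}\otimes \cl N$, giving the perfect strategy property of $\tilde\Gamma$.

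The main obstacle is the careful bookkeeping in the third step. The intertwining $P_{2,3}\tilde P_{1,3}$ produced by Lemma~\ref{l_UiSi} differs a priori from the transposed leg expression $U_{1,3}^{\rm t}U_{2,3}^*$ that controls perfect strategies in Theorem~\ref{iso}; one must verify that the basis change $\partial$, together with the pseudo-graph symmetry $\frak d\circ\frak f$ (which here replaces the ordinary flip $\frak f$ appearing in Definition~\ref{d_ss}), mediate exactly the right identifications between the two viewpoints. Reconciling these leg conventions and confirming that the reverse direction is controlled by the same single trace $\tau$ (rather than a separate tracial state on $\cl O(G_2,G_1)$) is the delicate computational heart of the proof.
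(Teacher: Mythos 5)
Your proposal tracks the paper's proof step for step: GNS on $\tau$, the QNS correlation via~(\ref{eq_twistGa}), the bimodule intertwining $P(aA_1b\otimes 1)P^*=\rho(a)(A_2\otimes 1)\rho(b)$, the passage to $\cl U_{G_r}$ via Lemma~\ref{l_UiSi}, the basis change $(\partial\otimes 1)\tilde P(\partial\otimes 1)=\bar P$, and the symmetric argument from $P^*(A_2\otimes 1)P=A_1\otimes 1$ for the reverse inclusion. The one step you leave as "mimicking Theorem~\ref{iso}" — translating the orthogonality $(P^\perp_{\tilde{\cl U}_2}\otimes 1)U_{2,3}\bar U_{1,3}(P_{\tilde{\cl U}_1}\otimes 1)=0$ (and its counterpart) into the perfect-strategy conditions for $\Gamma$ and $\tilde\Gamma$ — is carried out in the paper for $\tilde\Gamma$ by the explicit $R_{\xi,\eta}$-computation with $\tilde\tau(R_{\xi,\eta}^*R_{\xi,\eta})$, but this is exactly the reconciliation you flag as the computational heart, so the plan is sound.
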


\begin{proof}
Set $\tilde{\cl U}_r = \tilde{\cl U}_{G_r}$ for brevity, $r = 1,2$. 
By assumption, the C*-algebra $\cl O(G_1,G_2)$ has a tracial state, say $\tau$. 
Let $\cl N\subseteq\cl B(H)$ be the von Neumann algebra associated with $\tau$ via the GNS construction, and 
$\tilde{\tau}$ be the (faithful) trace on $\cl N$, corresponding to $\tau$.
Write $u_{i,j}$ for the images of the canonical generators $p_{i,j}$ under the Gelfand map, and let
$U$ and $\tilde{U}$ be the matrices, corresponding to $P$ and $\tilde{P}$, respectively. 
Let $\Gamma$ be the QNS correlation given by (\ref{eq_twistGa}). 
Note that, by (\ref{p_rho}) $P(a\otimes 1)P^* = \rho(a)$, $a\in M_{X}$, 
and, by 
(\ref{eq_A1intoA2}), $P(A_1\otimes 1)P^*=A_2\otimes 1$; thus, 
\begin{eqnarray*}
P(aA_1b\otimes 1)P^*
& = & 
P(a\otimes 1)(A_1\otimes 1)(b\otimes 1)P^*\\
& = & P(a\otimes 1)P^*P(A_1\otimes 1)P^*P(b\otimes 1)P^*\\
& = & 
\rho(a)(A_2\otimes 1)\rho(b).
\end{eqnarray*}
It now follows from (\ref{eq_defrgo2}) that
$U(S_1'\otimes 1)U^*\subseteq S_2'\otimes \cl B(H)$.
By Lemma \ref{l_UiSi}, 
\begin{equation}\label{eq_2313}
U_{2,3}\tilde U_{1,3}(\mathcal U_1\otimes 1)\subseteq \mathcal U_2\otimes \cl B(H).
\end{equation}
Recalling the map $\partial$ defined in (\ref{eq_fjst}), 
note that 
$$(\partial\hspace{-0.05cm} \otimes\hspace{-0.05cm} 1)
\tilde U
(\partial\hspace{-0.05cm}\otimes\hspace{-0.05cm} 1)(\Lambda(f_i)\hspace{-0.05cm}\otimes\hspace{-0.05cm} \eta)
\hspace{-0.05cm} = \hspace{-0.05cm} 
(\partial \hspace{-0.05cm}\otimes\hspace{-0.05cm} 1)
\hspace{-0.1cm}\left(\hspace{-0.02cm}\sum_{k=1}^{n^2}\hspace{-0.05cm}\Lambda(f_k^*)\hspace{-0.05cm}\otimes\hspace{-0.05cm} u_{k,i}^*\eta\hspace{-0.05cm}\right)
\hspace{-0.1cm} = \hspace{-0.05cm}
\sum_{k=1}^{n^2}\Lambda(f_k)
\hspace{-0.05cm}\otimes \hspace{-0.05cm} u_{k,i}^*\eta,$$ that is, 
$$(\partial \otimes 1)\tilde U(\partial \otimes 1) = U^{{\rm t}*} =: \bar{U}.$$
Let 
$P_{\tilde{\mathcal U}_r}$ be the projection onto $\tilde{\mathcal U}_r$, $r = 1,2$.
Then condition (\ref{eq_2313}) implies that 
\begin{equation}\label{eq_Utilde}
(P_{\tilde{\mathcal U}_2}^{\perp}\otimes 1)U_{2,3}\bar U_{1,3}(P_{\tilde{\mathcal  U}_1}\otimes 1) = 0.
\end{equation}
The arguments in the proof of Theorem \ref{iso} 
(see also the subsequent Remark)
now imply that 
$\Gamma$ is a perfect strategy for the quantum graph homomorphism game 
$\tilde{\cl U}_1\to \tilde{\cl U}_2$. 

Relation (\ref{eq_A1intoA2}) implies
$U^*(A_2\otimes 1)U = A_1\otimes 1$.
By symmetry, we obtain the relation
\begin{equation}\label{eq_P23bars}
(P_{\tilde{\mathcal U}_1}^{\perp}\otimes 1) U_{2,3}^*\bar U_{1,3}^*(P_{\tilde{\mathcal U}_2}\otimes 1) = 0.
\end{equation}
We now show that 
$\tilde\Gamma$ is a perfect strategy for $\tilde{\cl U}_2\to \tilde{\cl U}_1$. 
Let $\xi = \sum_{k,l=1}^{n^2}\alpha_{k,l} f_k\otimes f_l \in \tilde {\cl U}_2$ and 
$\eta = \sum_{i,j=1}^{n^2} \beta_{i,j} f_i\otimes f_j\in \tilde{\cl U}_1^\perp$; then 
\begin{eqnarray*}
\langle\tilde\Gamma(\xi\xi^*)\eta,\eta\rangle
= \tilde{\tau}\left(\sum_{k,l,k',l'=1}^{n^2} \hspace{0.1cm} \sum_{i,i',j,j'=1}^{n^2} 
\alpha_{k,l}\overline{\alpha_{k',l'}}
u_{j,l}^*u_{k,i}u_{k',i'}^*u_{l',j'}\beta_{i',j'}\overline{\beta_{i,j}}\right).
\end{eqnarray*}
Writing 
$R_{\xi,\eta} = \sum_{k,l,i,j=1}^{n^2} \overline{\alpha_{k,l}}\beta_{i,j}u_{k,i}^*u_{l,j}$, 
we have
$\langle\tilde\Gamma(\xi\xi^*)\eta,\eta\rangle = \tilde{\tau}(R_{\xi,\eta}^*R_{\xi,\eta}),$
and using the fact that $\tilde{\tau}$ is faithful, this implies that 
$$\langle\tilde\Gamma(\xi\xi^*)\eta,\eta\rangle
= 0 \ \Longleftrightarrow \ R_{\xi,\eta}=0.$$
Taking into account (\ref{eq_legno2}), we now have
$$\langle R_{\xi,\eta}f,h\rangle = \langle \bar U_{1,3} U_{2,3}(\eta\otimes f),\xi\otimes h\rangle, \ \ \ f,h\in H,$$ 
and therefore 
$$(P_{\tilde{\cl U}_2}\otimes 1)\bar U_{1,3}U_{2,3}(P_{\tilde{\cl U}_1}^\perp\otimes 1) = 0 \ \Longleftrightarrow \ \tilde{\Gamma} \mbox{ is a  perfect strategy for } \tilde{\cl U}_2\to \tilde{\cl U}_1.$$
The proof is complete in view of (\ref{eq_P23bars}). 
\end{proof}

\begin{remark}
\rm
For a classical graph $G$ with vertex set $X$, let $A_G : M_X\to M_X$ be Schur multiplication map 
against the adjacency matrix of $G$. Then $(M_X,{\rm tr},A_G)$ is an algebraic quantum graph. 
Let
$$\cl W_G 
= \text{span}\{\epsilon_{x,x}\otimes \epsilon_{y,y}: x\sim y \mbox{ in } G\} \subseteq M_X\otimes M_X.$$  
Then $\cl W_G$ is a quantum pseudo-graph in $L^2(M_X)\otimes L^2(M_X)$. 

Let $G_1$, $G_2$ be classical graphs with vertex set $X$.
We have the following three types of quantum commuting isomorphism for the graphs $G_1$ and $G_2$:
\begin{itemize}
\item[(a)] quantum commuting isomorphism in the sense of classical non-local games \cite{amrssv};
\item[(b)] quantum commuting isomorphism of the algebraic quantum graphs $(M_X,{\rm tr},A_{G_1})$ and 
$(M_X,{\rm tr},A_{G_2})$; 
\item[(c)] quantum commuting isomorphism in the sense of quantum non-local games (Section \ref{s_qgig}), 
employing the quantum pseudo-graphs $\cl W_1$ and $\cl W_2$.
\end{itemize}
We have that (a) implies (b), and that (b) implies (c). We do not know if these implications are reversible. 
\end{remark}

%%%%%%%%%%%%%%%%%%%%%%%%%%%%%%%%%%%%%%%%%%%%%%%%%%%%

\subsection{A partial converse}\label{ss_dir2}

In the remainder of this section, we discuss to what extent the 
implication established in Theorem \ref{th_qcalgqc} can be reversed.
We first note that the quantum pseudo-graphs of the form $\tilde{\cl U} = \tilde{\cl U}_G$, 
for an algebraic quantum graph $G = (M_X,\tr,A)$, automatically have some extra structure, and hence 
a full reversal of 
Theorem \ref{th_qcalgqc} cannot be expected.  Indeed, let 
$\mathcal U = (\partial^{-1}\otimes 1)(\tilde{\mathcal U})$, and recall that
$S' = \Psi^{-1}(\mathcal U)\subseteq \cl B(L^2(M_X))$ is an $M_X$-bimodule. 
We first show that any quantum pseudo-graph 
$\tilde{\mathcal U}$, for which $\Psi^{-1}(\mathcal U)$ is a $M_X$-bimodule, arises in this way.
In what follows we fix a basis $\{\Lambda(f_i)\}_i$ in $L^2(M_X)$ when define pseudo-graphs $\tilde {\cl U}$.

Let $M_X^{\rm op}$ be the opposite algebra to $M_X$. 
For notational simplicity, we will 
consider $M_X^{\rm op}$ as having the same underlying 
vector space as $M_X$, and will denote its product by $\cdot_{\rm op}$; thus, 
$a\cdot_{\rm op}b = ba$, $a,b\in M_X$. 
Let $(L^2(M_X^{\rm op}), \Lambda^{\rm op})$ be the GNS construction applied to $(M_X^{\rm op},\tr)$. As
$$\langle\Lambda^{\rm op}(a),\Lambda^{\rm op}(b)\rangle=\tr(a^*\cdot_{\rm op}b)=\tr(ba^*)=\langle\Lambda(a),\Lambda(b)\rangle,$$
we have that $L^2(M_X^{\rm op})\otimes L^2(M_X)$ and $L^2(M_X)\otimes L^2(M_X)$ can be identified also as Hilbert spaces.  
Recall that $L^2(M_X)^{\rm d}$ is the Banach space dual of 
$L^2(M_X)$ (equivalently, the conjugate Hilbert space to $L^2(M_X)$).  
If $\cl A\subseteq \cl B(L^2(M_X))$ is a $*$-subalgebra, then the map
$T^{\rm op}\mapsto T^{\rm d}$, where $T^{\rm d}\overline{\xi}=\overline{T^*\xi}$, $\xi\in L^2(M_X)$, is  a 
$*$-isomorphism. In what follows we will often identify $T^{\rm op}$ with $T^{\rm d}$. 
For a linear operator $T : L^2(M_X)\to L^2(M_X)$, 
we define  $\bar T:L^2(M_X)^{\rm d}\to L^2(M_X)^{\rm d}$, by letting
$\bar T\bar\xi = \overline{T\xi}$, $\xi\in L^2(M_X)$.

\begin{lemma}\label{idempotent}
Let $G=(M_X,\tr,A)$ be an algebraic quantum graph. Then $e=(1\otimes A)(m^*(1))$ is a projection when considered as element in $M_X^{\rm op}\otimes M_X$.
\end{lemma}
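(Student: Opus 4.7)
The plan is to verify the two defining relations for a projection directly from the formula
\[ e = n\sum_{i,j=1}^n \epsilon_{i,j}\otimes A(\epsilon_{j,i}) \]
established in (\ref{eq_idfore}), keeping careful track of the fact that the product on the first leg is reversed.

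First, self-adjointness is essentially free: the involution on $M_X^{\rm op}$ is the same map as on $M_X$, so the involution on $M_X^{\rm op}\otimes M_X$ agrees with that on $M_X\otimes M_X$. Hence the identity $e^* = e$ already proven in Lemma \ref{l_PsiS'}(ii) transfers without change to $M_X^{\rm op}\otimes M_X$.

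Next, for idempotency, I would compute $e\cdot e$ in $M_X^{\rm op}\otimes M_X$, where $(a\otimes b)(c\otimes d) = (ca)\otimes(bd)$ because of the reversed product on the first tensor factor. This gives
\[
e\cdot e \;=\; n^2\sum_{i,j,k,l}(\epsilon_{k,l}\epsilon_{i,j})\otimes \bigl(A(\epsilon_{j,i})A(\epsilon_{l,k})\bigr)
\;=\; n^2\sum_{i,j,k}\epsilon_{k,j}\otimes A(\epsilon_{j,i})A(\epsilon_{i,k}),
\]
after using $\epsilon_{k,l}\epsilon_{i,j}=\delta_{l,i}\epsilon_{k,j}$.

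The core of the argument is then to apply the first axiom $m(A\otimes A)m^* = A$ of a quantum adjacency matrix. Using formula (\ref{eq_ijsp}), $m^*(\epsilon_{j,k}) = n\sum_i \epsilon_{j,i}\otimes\epsilon_{i,k}$, so
\[
A(\epsilon_{j,k}) \;=\; m(A\otimes A)m^*(\epsilon_{j,k}) \;=\; n\sum_{i} A(\epsilon_{j,i})A(\epsilon_{i,k}).
\]
Substituting this into the previous display yields
\[
e\cdot e \;=\; n\sum_{k,j}\epsilon_{k,j}\otimes A(\epsilon_{j,k}) \;=\; e,
\]
completing the proof. There is no real obstacle here — the only subtlety worth flagging in the write-up is to make explicit that the multiplication is taken in $M_X^{\rm op}\otimes M_X$ (which is exactly what forces the indices on the $A$-terms to line up in the order needed to invoke axiom (1)); in $M_X\otimes M_X$ with the ordinary product, one would instead get $n^2\sum \epsilon_{i,l}\otimes A(\epsilon_{j,i})A(\epsilon_{l,k})$, which does not simplify to $e$.
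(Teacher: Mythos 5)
Your proof is correct and follows essentially the same route as the paper's: expand $e^2$ using the formula $e = n\sum_{i,j}\epsilon_{i,j}\otimes A(\epsilon_{j,i})$, carry out the $\cdot_{\rm op}$ product on the first leg to collapse one index, and then invoke the axiom $m(A\otimes A)m^* = A$ together with the formula for $m^*$ on matrix units. The only stylistic difference is that you spell out the transfer of self-adjointness from $M_X\otimes M_X$ to $M_X^{\rm op}\otimes M_X$ and the closing remark about why the opposite multiplication is necessary, which is a helpful clarification rather than a change of method.
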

\begin{proof}
By Lemma \ref{l_PsiS'}, we only have to show that $e$ is an idempotent. Using (\ref{eq_idfore}), we have 
\begin{eqnarray*}
e^2
& = &
n^2\left(\sum_{i,j=1}^n\epsilon_{i,j}\otimes A(\epsilon_{j,i})\right)\left(\sum_{k,l=1}^n\epsilon_{k,l}\otimes A(\epsilon_{l,k})\right)\\
& = & 
n^2\sum_{i,j,k,l=1}^n \epsilon_{i,j}\cdot_{\rm op}\epsilon_{k,l}\otimes A(\epsilon_{j,i})A(\epsilon_{l,k})\\
\end{eqnarray*}
\begin{eqnarray*}
& = &
n^2\sum_{i,j,k=1}^n \epsilon_{k,j}\otimes A(\epsilon_{j,i})A(\epsilon_{i,k})
= n \sum_{j,k=1}^n \epsilon_{k,j}\otimes m(A\otimes A)m^*(\epsilon_{j,k})\\
& = &
n\sum_{j,k=1}^n\epsilon_{k,j}\otimes A(\epsilon_{j,k}) = e.
\end{eqnarray*}
\end{proof}

\begin{remark}\label{proj}\rm
We remark that reversing the arguments of Lemmas \ref{l_PsiS'} and \ref{idempotent}, we can easily see that any projection $e\in M_X^{\rm op}\otimes M_X$, such that $e=\frak{f}(e)$, gives rise to self-adjoint  
operator $A : L^2(M_X)\to L^2(M_X)$ satisfying the conditions 
(1) and (2) of quantum adjacency matrix and linked to $e$ through the identity (\ref{eq_idfore}).
\end{remark}

Let $J : L^2(M_X)\to L^2(M_X)$ be the conjugate-linear map, given by $J(\Lambda(a)) = \Lambda(a^*)$, 
and the map $\kappa : \cl B(L^2(M_X))\to\cl B(L^2(M_X))$ be given by $\kappa(x) = Jx^*J$. 
We have that 
$\kappa$ is an anti-$*$-homomorphism such that $\kappa^2=\id$;
writing  $\pi: L^2(M_X)\to L^2(M_X)$ for the $*$-homomorphism given by  
$\pi(x)\Lambda(a) = \Lambda(xa)$, we have that $\kappa(\pi(M_X)) = \pi(M_X)'$.

\begin{proposition}
Let $\tilde{\cl U}$ be a quantum pseudo-graph such that $\Psi^{-1}((\partial^{-1}\otimes 1)(\tilde{\cl U}))$ is an $M_X$-bimodule. Then there exists an algebraic quantum graph $G=(M_X,\tr, A)$ such that $\tilde{\cl U}=\tilde{\cl U}_G$.
\end{proposition}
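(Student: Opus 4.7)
The plan is to reverse the construction $G\mapsto \tilde{\cl U}_G$, producing an algebraic quantum graph $(M_X,\tr,A)$ directly from the bimodule $S' := \Psi^{-1}((\partial^{-1}\otimes 1)(\tilde{\cl U}))$. First, by Lemma \ref{pseudograph}, the $(\frak{d}\circ\frak{f})$-invariance of $\tilde{\cl U}$, built into the definition of a quantum pseudo-graph, is equivalent to $J_0(\cl U)=\cl U$, which in turn is equivalent to $S'$ being self-adjoint; hence $S'$ is a self-adjoint $M_X$-bimodule in $\cl B(L^2(M_X))$.

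The main step is to extract from $S'$ a projection $e$ in $M_X^{\rm op}\otimes M_X$ (in the sense of Lemma \ref{l_PsiS'}(iii) and Lemma \ref{idempotent}) with $\frak{f}(e)=e$, such that $\Psi(S')={\rm span}\{(1\otimes a)e(b\otimes 1):a,b\in M_X\}$. Using (\ref{eq_LL}), the operations $T\mapsto aT$ and $T\mapsto Tb$ on $S'$ correspond under $\Psi$ to $v\mapsto (1\otimes a)v$ and $v\mapsto v(b\otimes 1)$ on $\Psi(S')\subseteq M_X\otimes M_X$, so the bimodule condition makes $\Psi(S')$ into a submodule for the natural action of $\cl A':=M_X\otimes M_X^{\rm op}$ on $M_X\otimes M_X$ given by $(a,b^{\rm op})\cdot v = (1\otimes a)v(b\otimes 1)$. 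This action is faithful, and $\dim\cl A' = \dim(M_X\otimes M_X)=n^4$, so $M_X\otimes M_X$ is free of rank one as an $\cl A'$-module (cyclic vector $1\otimes 1$). Sub-$\cl A'$-modules therefore correspond to principal left ideals in $\cl A'\cong M_{n^2}$, each generated by a self-adjoint projection $p\in\cl A'$; setting $e:=p\cdot(1\otimes 1)\in\Psi(S')$ yields a self-adjoint cyclic generator with the required spanning property, and the $J_0$-invariance of $\Psi(S')$ (combined with the identity $J_0(e)=\frak{f}(e)^*$) is used to arrange $\frak{f}(e)=e$.

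With $e$ constructed, Remark \ref{proj} (the reverse direction of Lemmas \ref{l_PsiS'} and \ref{idempotent}) produces a self-adjoint operator $A:L^2(M_X)\to L^2(M_X)$ with $\Psi(A)=e$, satisfying conditions (1) and (2) of the definition of a quantum adjacency matrix. The remaining no-loops condition (3) then follows by reversing the second half of the proof of Proposition \ref{p_flipnew}: the skewness of $\tilde{\cl U}$ forces
$$\left\langle (\partial\otimes 1)(\Lambda^{\otimes 2}\circ\Psi)(aAb),\ \textstyle\sum_k\Lambda(f_k)\otimes\Lambda(f_k)\right\rangle=0$$
for all $a,b\in M_X$, and expanding via a spectral decomposition $A=\sum_j\lambda_j\Theta_{\Lambda(x_j),\Lambda(x_j)}$ reduces this to $\tr(a(\sum_j\lambda_j x_jx_j^*)b)=0$ for all $a,b$, hence $\sum_j\lambda_j x_jx_j^*=0$, i.e., $m(A\otimes 1)m^*=0$. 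Thus $G:=(M_X,\tr,A)$ is an algebraic quantum graph, and the identity $\Psi(S')={\rm span}\{(1\otimes a)e(b\otimes 1)\}$ gives $S'={\rm span}\{aAb:a,b\in M_X\}$, from which $\tilde{\cl U}_G=\tilde{\cl U}$.

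The hard part is securing the symmetry $\frak{f}(e)=e$ of the cyclic generator. Principal generators of sub-$\cl A'$-modules exist abstractly and can be chosen self-adjoint in $M_X\otimes M_X$, but arranging simultaneously that such a projection is flip-invariant requires a genuine use of the $J_0$-invariance of $\Psi(S')$, for instance via an averaging/symmetrisation against the involution $v\mapsto\frak{f}(v)^*$, followed by a verification that the symmetrised element is still a projection that generates the same $\cl A'$-submodule.
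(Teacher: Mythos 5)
Your skeleton is the right one — reduce to producing a self-adjoint, flip-invariant idempotent $e\in M_X^{\rm op}\otimes M_X$ generating $\Psi(S')$, recover $A$ from $e$ via Remark \ref{proj}, and get condition (3) by reversing the skewness computation of Proposition \ref{p_flipnew} — and the last two steps are exactly what the paper does. But the central step is left open, and you say so yourself: the principal generator $e=p\cdot(1\otimes 1)$ of the $\cl A'$-submodule is not canonical, and nothing in its construction forces $\frak f(e)=e$. The proposed repair by ``averaging/symmetrisation against $v\mapsto \frak f(v)^*$'' does not work as stated: the average of two idempotents is in general not an idempotent, so the symmetrised element need not fall under Remark \ref{proj}. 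As written, the proof is therefore incomplete at precisely the point you flag as hard.

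Two genuine ways to close it. The paper makes the choice of $e$ canonical: it passes to $\kappa(S')=JS'^*J$, which is a $\pi(M_X)'$-bimodule, identifies it under the Hilbert--Schmidt bijection $\cl B(L^2(M_X))\cong L^2(M_X)^{\rm d}\otimes L^2(M_X)$ with a subspace $\cl U'$ invariant under $(\pi(M_X)')^{\rm op}\otimes\pi(M_X)'$, and takes $e$ to be the \emph{orthogonal} projection onto $\cl U'$, which lies in the commutant $M_X^{\rm op}\otimes M_X$ by invariance. Since this $e$ is determined by the subspace alone, the symmetry of $S'$ supplied by Lemma \ref{pseudograph} transfers automatically to $e$, giving $e=e^*=\frak f(e)$ and then $JA^*J=A$. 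Alternatively, within your module-theoretic framework the correct mechanism is uniqueness rather than averaging: a left ideal of $M_{n^2}$ has a \emph{unique} self-adjoint idempotent generator, and one checks (using that $J_0$ intertwines the $\cl A'$-action with the twisted action $(a,b)\mapsto(b^*,a^*)$ and that $\frak f$ is a $*$-preserving anti-automorphism of $M_X^{\rm op}\otimes M_X$) that $J_0(e)=\frak f(e)$ is again a self-adjoint idempotent generating the same submodule, whence $\frak f(e)=e$. Either argument must actually be supplied; your proposal currently contains neither.
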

\begin{proof}
Let $\cl U=(\partial^{-1}\otimes 1)(\tilde{\cl U})$ and $S'=\Psi^{-1}(\cl U)$. By assumption, $S'$ is an $M_X$-bimodule and hence $\kappa(S')$ is a $\pi(M_X)'$-bimodule. Under the canonical bijection between $\cl B(L^2(M_X))$  and 
$L^2(M_X)^{\rm d} \otimes L^2(M_X)$, the $\pi(M_X)'$-bimodule $\kappa(S')$ corresponds to the
$(\pi(M_X)')^{\rm op}\otimes \pi(M_X)'$-invariant subspace $\cl U'$.  Thus it gives rise to the  
projection $e\in M_X^{\rm op}\otimes M_X$ onto $\cl U'$.   
By Lemma \ref{pseudograph}, $S'$ is self-adjoint and hence so is $\kappa(S')$, which implies, again by Lemma \ref{pseudograph}, that 
$e = \frak{f}(e)$ and $J_0(\cl U')=\cl U'$. 

Let $A : L^2(M_X)\to L^2(M_X)$ be the  linear map corresponding to $e$ as in Remark \ref{proj}. 
We have that $\kappa(S')$ is the $\pi(M_X)'$-bimodule generated by $A$. 
It follows that $S'$ is the $\pi(M_X)$-bimodule generated by $A$. 
In fact, since $\kappa(\pi(M_X)')=\pi(M_X)$, it suffices to verify that $JA^*J = A$. 
Write $A=\sum_{i=1}^m \lambda_i \Theta_{\Lambda(x_i),\Lambda(x_i)}$, $\lambda_i\in\mathbb R$, $x_i\in M_X$, 
$i = 1,\dots,m$. 
Then $e=\Psi(A)=\sum_{i=1}^m \lambda_i x_i^*\otimes x_i$. 
On the other hand,
$$JA^*J = JAJ = \sum_{i=1}^m \lambda_i\Theta_{\Lambda(x_i^*),\Lambda(x_i^*)}.$$
Thus 
$\Psi(JA^*J) = \sum_{i=1}^m \lambda_i x_i\otimes x_i^* = \frak{f}(e)$.
As $e=\frak{f}(e)$, we get $\Psi(JA^*J) = \Psi(A)$, implying that $JA^*J = A$.

Finally, reversing arguments in Proposition \ref{p_flipnew} we see that skewness of $\tilde{\cl  U}$ 
implies that $m(A\otimes 1)m^*=0$, showing that $A$ is a quantum adjacency matrix.
Letting $G = (M_X,A,{\rm tr})$, we have that $\tilde{\cl  U} = \tilde{\cl  U}_G$.  
\end{proof}

We now fix a quantum pseudo-graph $\tilde{\cl U}_r$ in $L^2(M_X)\otimes L^2(M_X)$, for which 
the corresponding space $S_r'$ is an $M_X$-bimodule, and let 
$\cl U_r := (\partial^{-1}\otimes 1)(\tilde{\cl U}_r)$, $r = 1,2$. 
We assume that $\tilde{\cl U}_1$ and $\tilde{\cl U}_2$ are qc-pseudo-isomorphic, and let 
$\mathcal N$ be a von Neumann algebra with trace $\tau$, and 
$U = (u_{k,i})_{k,i=1}^{n^2}$ be a bi-unitary, with $u_{k,i}\in \cl N$, $k,i=1,\dots,n^2$,  
such that $U$ gives rise, via (\ref{eq_twistGa}), to a QNS correlation implementing 
a qc-pseudo-isomorphism between $\tilde{\cl U}_{G_1}$ and $\tilde{\cl U}_{G_2}$.
The proof of Theorem \ref{th_qcalgqc} implies that 
\begin{equation}\label{pu}
(P_{\tilde{\mathcal U}_2}^{\perp}\otimes 1)U_{2,3}\bar U_{1,3}(P_{\tilde{\mathcal U}_1}\otimes 1) = 0 \text{ and } (P_{\tilde{\mathcal U}_1}^{\perp}\otimes 1) U_{2,3}^*\bar U_{1,3}^*(P_{\tilde{\mathcal U}_2}\otimes 1) = 0;
\end{equation}
reversing the arguments in its proof, %of Theorem \ref{th_qcalgqc}, 
we obtain the equivalent conditions
\begin{equation}\label{su}
U(S_1'\otimes 1)U^*\subseteq S_2'\otimes\cl N \text{ and } U^*(S_2'\otimes 1)U\subseteq S_1'\otimes\cl N.
\end{equation}

Note that the map $\alpha_U : \cl B(L^2(M_X))\to  \cl B(L^2(M_X))\otimes\cl N$, 
given by 
$\alpha_U(x) = U(x\otimes 1)U^*$, is trace preserving, that is, satisfies the identities 
$(\tr\otimes \id)(\alpha_U(x))=\tr(x)I$, $x\in \cl B(L^2(M_X))$. Indeed, 
for $i,j\in X\times X$, we have 
\begin{eqnarray*}
(\tr\otimes \id)(\alpha_U(\epsilon_{i,j})))
& = & 
(\tr\otimes \id)((u_{k,i}u_{l,j}^*)_{k,l})
=
\frac{1}{n}\sum_{k=1}^n u_{k,i}u_{k,j}^*\\
& = & 
\frac{1}{n}\delta_{i,j}I
=
\tr(\epsilon_{i,j})I.
\end{eqnarray*}
Assume that there exists a $*$-homomorphism $\rho: M_X\to M_X\otimes \cl N$, such that 
$U(\Lambda(b)\otimes\xi)=(\pi\otimes \text{id})(\rho(b))(\Lambda(1)\otimes\xi)$. 
According to \cite[Section 9.1]{daws} (see also (\ref{p_rho})), 
$$\alpha_U(\pi(a)) = (\pi\otimes \text{id}) (\rho(a)) \subseteq \pi(M_X)\otimes\cl N, \ \ \ a\in M_X;$$
we call $U$ the unitary implementation of $\rho$. Writing $\rho(f_i)=\sum f_j\otimes v_{j,i}$, we have 
$$U(\Lambda(f_i)\otimes\xi) = \sum_{j=1}^{n^2} \Lambda(f_j)\otimes v_{j,i}\xi, \ \ \ i = 1,2,\dots,n^2,$$
and hence $v_{i,j}=u_{i,j}$ for all $i,j$. 
The elements $u_{i,j}$ satisfy all of the relations of the generators of $\cl O(G_1, G_2)$, except for, possibly, 
relation \eqref{eq_rhoint} (equivalently, \eqref{eq_rhoint2}). The following theorem establishes this last relation.

\begin{theorem} \label{converse}
Let  $G_r = (M_X,\tr, A_r)$ be an algebraic quantum graph, $r = 1,2$.
Let $\cl N$ be a tracial von Neumann algebra and 
$U$ be a bi-unitary with entries in $\cl N$
giving rise, via (\ref{eq_twistGa}), to a QNS correlation $\Gamma$ that implements a 
qc-pseudo-isomorphism between $\tilde{\cl U}_{G_1}$ and $\tilde{\cl U}_{G_2}$.
Assume that $U$ is the unitary implementation of a trace-preserving 
$*$-homomorphism $\rho : M_X\to M_X\otimes\cl N$.
Then 
$U(A_1\otimes I) = (A_2\otimes I)U$ and hence $G_1\simeq_{\rm qc} G_2$.
\end{theorem}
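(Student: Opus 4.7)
The plan is to reduce the theorem to the identity $U(A_1 \otimes I) U^* = A_2 \otimes I$ in $\cl B(L^2(M_X)) \otimes \cl N$, and then to establish this identity using the interaction of $U$ with the multiplication $m$ on $L^2(M_X)$.

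Granted the identity, Remark \ref{l_intertwi} shows that $U = (u_{i,j})$ satisfies the defining relation \eqref{eq_rhoint2} (equivalently \eqref{eq_rhoint}) of $\cl O(G_1,G_2)$; together with the hypothesis that the $u_{i,j}$ make $\rho$ into a unital $*$-homomorphism satisfying the trace-preservation \eqref{eq_rhotp}, this means the $u_{i,j}$ verify all the defining relations of $\cl O(G_1,G_2)$. Universality then supplies a unital $*$-homomorphism $\pi:\cl O(G_1,G_2)\to\cl N$ with $\pi(p_{i,j}) = u_{i,j}$, and pulling back the trace $\tau$ of $\cl N$ along $\pi$ yields a tracial state on $\cl O(G_1,G_2)$, so that $G_1 \simeq_{\rm qc} G_2$ by definition.

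For the identity itself, set $B := U(A_1 \otimes I)U^*$. The hypothesis that $\Gamma$ implements a qc-pseudo-isomorphism, rephrased as \eqref{su} in the discussion preceding the statement of the theorem, gives $U(S_1'\otimes 1)U^*\subseteq S_2'\otimes\cl N$, and hence $B \in S_2' \otimes \cl N$. Next, the equality $\rho(ab) = \rho(a)\rho(b)$ translates, via a direct computation on elementary tensors $\Lambda(a)\otimes\Lambda(b)\otimes\xi$, into the Hilbert-space intertwining
\[
(m \otimes I_H)\, U_{1,3} U_{2,3} \;=\; U\,(m \otimes I_H),
\]
whose adjoint gives $(m^* \otimes I_H)\, U^* = U_{2,3}^* U_{1,3}^* (m^* \otimes I_H)$. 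Substituting property (1) of the quantum adjacency matrix, $A_1 = m(A_1 \otimes A_1) m^*$, into the expression $B = U(m \otimes I_H)(A_1 \otimes A_1 \otimes I)(m^* \otimes I_H) U^*$ and using these intertwinings yields a ``coaction-like'' expression for $B$ in terms of $U_{1,3}, U_{2,3}$ and $A_1 \otimes A_1$. Comparing with the parallel rewriting of $A_2 \otimes I$ arising from $A_2 = m(A_2 \otimes A_2) m^*$, and invoking properties (2), (3), self-adjointness of the $A_r$, and the bimodule constraint $B \in S_2' \otimes \cl N$, this should force $B = A_2 \otimes I$.

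The principal obstacle is this final uniqueness step. Because $\cl N$ is non-commutative, conjugation by $U$ does not preserve the subalgebra $1 \otimes \cl N$ of $\cl B(L^2(M_X)) \otimes \cl N$, and consequently the expression $U_{1,3}U_{2,3}(A_1 \otimes A_1 \otimes I)U_{2,3}^* U_{1,3}^*$ does not factor naively as a tensor product built from $B$. I expect to handle this obstruction by testing the difference $B - A_2 \otimes I$ against $\cl N$-valued slice maps $\id\otimes\tau(\cdot\,n)$, $n \in \cl N$, exploiting faithfulness of $\tau$ together with the trace-preservation identity $(\tr\otimes\id)(U(X\otimes I)U^*) = \tr(X)\,1$ (a direct consequence of bi-unitarity of $U$ via the relation $\sum_i u_{ip} u_{iq}^* = \delta_{pq}$), the projection identification $e_r = (I \otimes A_r) m^*(1) \in M_X^{\rm op} \otimes M_X$ from Lemma \ref{idempotent}, and the left-ideal description $\Psi(S_r') = (M_X^{\rm op} \otimes M_X) e_r$ implicit in Lemma \ref{l_PsiS'}.
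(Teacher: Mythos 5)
Your reduction is sound and matches the paper's setup: once $U(A_1\otimes I)=(A_2\otimes I)U$ is established, the generators $u_{i,j}$ satisfy all defining relations of $\cl O(G_1,G_2)$ (the remaining ones coming from the hypothesis that $U$ implements a trace-preserving $*$-homomorphism $\rho$), and pulling back the trace of $\cl N$ gives $G_1\simeq_{\rm qc}G_2$. The problem is that the heart of the theorem --- the intertwining identity itself --- is not proved. You set $B=U(A_1\otimes I)U^*$, observe $B\in S_2'\otimes\cl N$, derive the comultiplication-type relation $(m\otimes I)U_{1,3}U_{2,3}=U(m\otimes I)$, and substitute the Schur-idempotent axiom $A_1=m(A_1\otimes A_1)m^*$; but you then correctly identify that $U_{1,3}U_{2,3}(A_1\otimes A_1\otimes I)U_{2,3}^*U_{1,3}^*$ does not factor through $B$ because $U_{1,3}$ and the copy of $B$ on legs $(2,3)$ share the third leg, and you leave the resolution at the level of ``this should force $B=A_2\otimes I$'' plus a vague plan involving slice maps and faithfulness of $\tau$. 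That is precisely the step that carries the content of the theorem, and nothing in your sketch shows how the comparison with $A_2=m(A_2\otimes A_2)m^*$ would close; properties (2) and (3) of the adjacency matrices are invoked only nominally.

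The paper's route avoids the leg-sharing obstruction entirely and you should compare your ingredients against it. Instead of exploiting $m(A\otimes A)m^*=A$, it characterises $A_r$ as the projection of the canonical trace vector onto the bimodule: Lemma \ref{l_daws3}(iii) gives $p_r\omega^*(\overline{\Lambda(1)}\otimes\Lambda(1))=\tilde\Lambda(A_r)$, where $p_r$ is the orthogonal projection of $L^2(\cl B(H))$ onto $\tilde\Lambda(S_r')$. The bimodule inclusions \eqref{su} (which you do have) upgrade to the operator intertwining $(p_2\otimes 1)\tilde U=\tilde U(p_1\otimes 1)$ for the induced map $\tilde U$ on $L^2(\cl B(H))\otimes K$, and Lemma \ref{l_UV13} identifies $(\omega\otimes 1)\tilde U(\omega^*\otimes 1)=U_{2,3}V_{1,3}$ with $V=(R\otimes 1)(U^*)$. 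The final input is that both $U$ and $V$ fix the vectors $\Lambda(1)\otimes\xi$ and $\overline{\Lambda(1)}\otimes\xi$ respectively (this uses $\rho(1)=1$), so $\tilde U$ fixes $\tilde\Lambda(1)\otimes\xi$, whence $\alpha_U(A_1)(\tilde\Lambda(1)\otimes\xi)=(A_2\otimes I)(\tilde\Lambda(1)\otimes\xi)$ for all $\xi$, and a slice-map argument yields $\alpha_U(A_1)=A_2\otimes I$. You cite the projection $e_r=(1\otimes A_r)m^*(1)$ and the bimodule description of $\Psi(S_r')$, so you have some of the raw material, but the mechanism --- ``$A_r$ is $p_r$ applied to the trace vector, $\tilde U$ intertwines the $p_r$ and fixes the trace vector'' --- is absent from your proposal, and without it (or a genuine substitute) the proof is incomplete.
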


The proof of Theorem \ref{converse} uses arguments from 
\cite{daws} and some auxiliary statements
which we now establish. 
Set $H = L^2(M_X)$ (equipped with the inner product associated with ${\rm tr}$).
We identify $L^2(M_X^{\rm op})$ with $L^2(M_X)^{\rm d}$
via the unitary map $\Lambda^{\rm op}(x)\mapsto\overline{\Lambda(x^*)}$. 

We write $\tilde\Lambda: \cl B(H)\to L^2(\cl B(H))$ for the GNS-map corresponding to 
(non-normalised trace) $\Tr$. 
We have 
\begin{eqnarray*}
&&\left\langle\tilde\Lambda(\Theta_{\Lambda(x),\Lambda(y)}),\tilde\Lambda(\Theta_{\Lambda(x'),\Lambda(y')})
\right\rangle
= \Tr\left(\Theta_{\Lambda(y),\Lambda(x)}\Theta_{\Lambda(x'),\Lambda(y')}\right)\\
&&
= \left\langle \Lambda(y),\Lambda(y')\rangle_H\langle \Lambda(x'),\Lambda(x)\right\rangle_H
= \left\langle \Lambda(y),\Lambda(y')\right\rangle_H 
\left\langle \overline{\Lambda(x)},\overline{\Lambda(x')}\right\rangle_{H^{\rm d}}.
\end{eqnarray*}
Hence the linear map $\omega: L^2(\cl B(H))\to H^{\rm d}\otimes H$, defined by 
$$\omega\left(\tilde\Lambda(\Theta_{\Lambda(x),\Lambda(y)})\right) = \overline{\Lambda(x)}\otimes\Lambda(y),$$
is a unitary operator. 

We now fix algebraic quantum graphs,  $G_r = (M_X,\tr, A_r)$, $r = 1,2$,
a von Neumann algebra $\cl N$ and a bi-unitary $U$ as in the statement of Theorem \ref{converse}. 
Assume that $\cl N$ acts on a Hilbert space $K$.
Let $e_r\in M_X^{\rm op}\otimes M_X$ be the projection associated with the 
adjacency matrix $A_r : M_X\to M_X$ of $G_r$ via (\ref{eq_idfore}), $r = 1,2$ 
(see the paragraph after the proof of Theorem \ref{th_qcalgqc}), and
let $p_r$ be the orthogonal projections from the Hilbert space 
$L^2(\cl B(H))$ (equipped with the inner product corresponding to $\Tr$) onto its subspace $\tilde\Lambda(S_r')$, 
$r = 1,2$. 
The following lemma specialises \cite[Lemma 9.17]{daws}; 
we include a direct proof for the convenience of the reader.

\begin{lemma}\label{l_daws3}
The following hold:
\begin{itemize}
    \item[(i)] $(\omega\circ\tilde\Lambda)(A_r) = (\Lambda^{\rm op}\otimes\Lambda)(e_r)$;
    \item[(ii)] $\omega p_r\omega^* = (\bar J\otimes J)e_r(\bar J\otimes J)$;
    \item[(iii)] $p_r\omega^*(\overline{\Lambda(1)}\otimes\Lambda(1)) = \tilde\Lambda(A_r)$. 
\end{itemize}
\end{lemma}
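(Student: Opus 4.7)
The plan is to verify the three assertions in order, exploiting the basic identification $\omega\circ\tilde\Lambda$ with the canonical Hilbert--Schmidt isomorphism $\mu:\cl B(H)\to H^{\rm d}\otimes H$ sending $\Theta_{\Lambda(x),\Lambda(y)}\mapsto \overline{\Lambda(x)}\otimes\Lambda(y)$, and the identification $L^2(M_X^{\rm op})\cong H^{\rm d}$ via $\Lambda^{\rm op}(x)\leftrightarrow \overline{\Lambda(x^*)}$.

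For (i), I would expand $A_r$ in the orthonormal basis $\{\sqrt n\Lambda(\epsilon_{i,j})\}_{i,j}$ to write $A_r=n\sum_{i,j}\Theta_{\Lambda(\epsilon_{i,j}),\Lambda(A_r(\epsilon_{i,j}))}$ and apply $\omega\circ\tilde\Lambda$ to obtain $n\sum_{i,j}\overline{\Lambda(\epsilon_{i,j})}\otimes\Lambda(A_r(\epsilon_{i,j}))$. On the other hand, using formula (\ref{eq_idfore}) for $e_r=n\sum_{i,j}\epsilon_{i,j}\otimes A_r(\epsilon_{j,i})$ and the identification $\Lambda^{\rm op}(\epsilon_{i,j})\leftrightarrow\overline{\Lambda(\epsilon_{j,i})}$, one has $(\Lambda^{\rm op}\otimes\Lambda)(e_r)=n\sum_{i,j}\overline{\Lambda(\epsilon_{j,i})}\otimes\Lambda(A_r(\epsilon_{j,i}))$, and relabelling $(i,j)\mapsto(j,i)$ gives equality.

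For (ii), the aim is to compare projections. Since $\omega$ is unitary, $\omega p_r\omega^*$ is the projection in $H^{\rm d}\otimes H$ onto $\mu(S_r')$, whereas $(\bar J\otimes J)e_r(\bar J\otimes J)$ is the projection onto $(\bar J\otimes J)(\mu(\kappa(S_r')))$ (recalling from the discussion after Lemma \ref{idempotent} that $e_r$ projects onto $\mu(\kappa(S_r'))$). The crucial identity is
\[
\mu(JTJ)=(\bar J\otimes J)\mu(T),\qquad T\in\cl B(H),
\]
which I would verify on rank-one operators $\Theta_{\Lambda(x),\Lambda(y)}$ by the direct computation $J\Theta_{\Lambda(x),\Lambda(y)}J=\Theta_{\Lambda(x^*),\Lambda(y^*)}$, yielding $\overline{\Lambda(x^*)}\otimes\Lambda(y^*)$ on both sides. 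Since $S_r'$ is self-adjoint by Lemma \ref{pseudograph}, we have $\kappa(S_r')=JS_r'^*J=JS_r'J$, hence $(\bar J\otimes J)\mu(S_r')=\mu(JS_r'J)=\mu(\kappa(S_r'))$, giving the equality of ranges, and thus of projections.

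For (iii), the key observation is that for any $c\in M_X^{\rm op}\otimes M_X$, acting on $\Lambda^{\rm op}(1)\otimes \Lambda(1)$ produces $(\Lambda^{\rm op}\otimes\Lambda)(c)$; specialising to $c=e_r$ and combining with (i) yields $e_r(\overline{\Lambda(1)}\otimes\Lambda(1))=\omega\tilde\Lambda(A_r)$. Next, $(\bar J\otimes J)$ fixes $\overline{\Lambda(1)}\otimes\Lambda(1)$; and it also fixes $\omega\tilde\Lambda(A_r)$, which can be seen from the self-adjointness of $e_r$ in $M_X^{\rm op}\otimes M_X$ (Lemma \ref{l_PsiS'}(ii))---comparing $e_r=e_r^*$ coefficient-by-coefficient yields the symmetry $A_r(\epsilon_{i,j})^*=A_r(\epsilon_{j,i})$, which is exactly what is needed to see that the $(\bar J\otimes J)$-conjugate of $\omega\tilde\Lambda(A_r)=n\sum_{i,j}\overline{\Lambda(\epsilon_{i,j})}\otimes\Lambda(A_r(\epsilon_{i,j}))$ equals itself after a change of summation index. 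Assembling: by (ii),
\[
\omega p_r\omega^*(\overline{\Lambda(1)}\otimes\Lambda(1))=(\bar J\otimes J)e_r(\bar J\otimes J)(\overline{\Lambda(1)}\otimes\Lambda(1))=(\bar J\otimes J)\omega\tilde\Lambda(A_r)=\omega\tilde\Lambda(A_r),
\]
so applying $\omega^*$ gives (iii). The principal bookkeeping obstacle throughout is tracking the three conjugations ($J$ on $H$, $\bar J$ on $H^{\rm d}$, and the identification of $L^2(M_X^{\rm op})$ with $H^{\rm d}$), together with ensuring the compatibility $\mu(JTJ)=(\bar J\otimes J)\mu(T)$ that links (ii) to the self-adjointness of $S_r'$.
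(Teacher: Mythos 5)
Your treatment of (i) is correct and matches the paper; your treatment of (iii) is a valid alternative to the paper's (the paper applies its equation (\ref{barjerj}) with $u=1\otimes 1$, whereas you use the fixed-point observation about $\bar J\otimes J$, and both work).

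The gap is in (ii), in the step where you appeal to ``the discussion after Lemma \ref{idempotent}'' for the fact that $e_r$ projects onto $\mu(\kappa(S_r'))$. That discussion (the Proposition following Lemma \ref{idempotent}) runs in the \emph{opposite} direction: it starts from a quantum pseudo-graph for which $\Psi^{-1}(\cl U)$ is an $M_X$-bimodule, \emph{defines} $e$ to be the projection onto the subspace $\cl U'$ corresponding to $\kappa(S')$, and then builds a quantum adjacency matrix $A$ from $e$. It does not establish that the projection $e_r = (1\otimes A_r)(m^*(1))$ attached to a \emph{given} algebraic quantum graph has range $\mu(\kappa(S_r'))$ (to conclude that would require identifying your constructed $A$ with the original $A_r$, i.e. an inverse-construction argument that is never carried out there). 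In fact, the identity $\operatorname{ran}(e_r)=\mu(\kappa(S_r'))$ is equivalent to Lemma \ref{l_daws3}(ii) itself, modulo your (correct) observation that $(\bar J\otimes J)\mu(S_r')=\mu(JS_r'J)=\mu(\kappa(S_r'))$, so invoking it there makes the argument circular. What is actually needed is the computation which the paper records as (\ref{barjerj}): for $u\in M_X^{\rm op}\otimes M_X$,
\[
(\bar J\otimes J)\,e_r\,(\bar J\otimes J)\bigl((\Lambda^{\rm op}\otimes\Lambda)(u)\bigr) = u\,(\Lambda^{\rm op}\otimes\Lambda)(e_r).
\]
Together with the formula $\omega(\tilde\Lambda(aTb))=(b\otimes a)(\Lambda^{\rm op}\otimes\Lambda)\Psi(T)$ (so that $\omega\tilde\Lambda(S_r')=(M_X^{\rm op}\otimes M_X)(\Lambda^{\rm op}\otimes\Lambda)(e_r)$), this shows directly that $(\bar J\otimes J)e_r(\bar J\otimes J)$ has range $\omega\tilde\Lambda(S_r')$, which is what (ii) asserts. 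Your auxiliary identity $\mu(JTJ)=(\bar J\otimes J)\mu(T)$ is a clean reformulation, but it does not by itself replace (\ref{barjerj}); to close the gap you would need either that computation or an appeal to modular theory (that $\bar J\otimes J$ implements the commutant of $M_X^{\rm op}\otimes M_X$ with $\overline{\Lambda(1)}\otimes\Lambda(1)$ cyclic and separating, that $e_r$ commutes with the commutant, and that $\kappa(A_r)=A_r$), which is essentially the same content.
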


\begin{proof}
(i) 
Let $T=\Theta_{\Lambda(x),\Lambda(y)}$, $x$, $y\in M_X$. Then
$$\omega(\tilde\Lambda(T))=\overline{\Lambda(x)}\otimes\Lambda(y)=(\Lambda^{\rm op}\otimes\Lambda)(x^*\otimes y).$$
As $A_r = n \sum_{i,j=1}^{n} \Theta_{\Lambda(\epsilon_{i,j}), \Lambda(A_r(\epsilon_{i,j}))}$, we have 
\begin{eqnarray*}
(\omega\circ \tilde\Lambda) (A_r)
& = &
n\sum_{i,j=1}^{n} (\Lambda^{\rm op}\otimes\Lambda)(\epsilon_{i,j}^*\otimes A_r(\epsilon_{i,j}))\\
& = &
n\sum_{i,j=1}^{n} (\Lambda^{\rm op}\otimes\Lambda)(\epsilon_{j,i}\otimes A_r(\epsilon_{i,j}))
=(\Lambda^{\rm op}\otimes\Lambda)(e_r).
\end{eqnarray*}

(ii) 
Using (\ref{eq_LL}), for $a, b\in M_X$ we have 
\begin{eqnarray*}
\omega(\tilde\Lambda(aTb))&=&\omega(\tilde\Lambda(\Theta_{\Lambda (b^*x),\Lambda(ay)})=\overline{\Lambda(b^*x)}\otimes\Lambda(ay)
\\&=&\Lambda^{\rm op}(x^*b)\otimes\Lambda(ay)=(b\otimes a)(\Lambda^{\rm op}(x^*)\otimes\Lambda(y)),
\end{eqnarray*}
where the latter action is that of $M_X^{\rm op}\otimes M_X$ on $L^2(M_X^{\rm op})\otimes L^2(M_X)$. 
Thus 
\begin{equation}\label{eq_omsr'}
\omega(\tilde\Lambda(S_r')) = (M_X^{\rm op}\otimes M_X)(\Lambda^{\rm op}\otimes\Lambda)(e_r). 
\end{equation}

As $e_r\in M_X^{\rm op}\otimes M_X$, identifying it with its image under 
the map $\pi^{\rm op}\otimes\pi$ (which acts on 
$L^2(M_X)^{\rm d}\otimes L^2(M_X)$), 
for $a\otimes b,x\otimes y\in M_X^{\rm op}\otimes M_X$, we obtain that 
\begin{eqnarray}\label{eq_04}
&&
\hspace{0.4cm} (\overline{J}\otimes J)(x\otimes y)(\overline{J}\otimes J)((\Lambda^{\rm op}\otimes\Lambda)(a\otimes b))\\
&& 
= 
(\overline{J}\otimes J)(x\otimes y)(\overline{J\Lambda(a^*)}\otimes\Lambda(b^*))
=
(\overline{J}\otimes J)(x\otimes y)(\Lambda^{\rm op}(a^*)\otimes\Lambda(b^*)) \nonumber\\
&&
= (\overline{J}\otimes J)(\Lambda^{\rm op}(a^*x)\otimes\Lambda(yb^*))
= \overline{J\Lambda(x^*a)}\otimes\Lambda(by^*) \nonumber\\
&& 
=\Lambda^{\rm op}(x^*a)\otimes\Lambda(by^*)
=(a\otimes b)(\Lambda^{\rm op}\otimes\Lambda)(x^*\otimes y^*)\nonumber
\end{eqnarray}
which, together with the fact that $e_r$ is self-adjoint (see Lemma \ref{idempotent}),
implies that, for any $u\in M_X^{\rm op}\otimes M_X$, we have
\begin{equation}\label{barjerj}
(\overline{J}\otimes J)e_r(\overline{J}\otimes J)((\Lambda^{\rm op}\otimes\Lambda)(u))=u(\Lambda^{\rm op}\otimes\Lambda)(e_r).
\end{equation}
In particular, using (\ref{eq_omsr'}), 
$${\rm ran}\left((\overline{J}\otimes J)e_r(\overline{J}\otimes J)\right) = (M_X^{\rm op}\otimes M_X) (\Lambda^{\rm  op}\otimes\Lambda)(e_r) = (\omega\circ\tilde\Lambda)(S_r').$$ 
Statement (ii) now follows. 

(iii) Using (i), (ii), \eqref{barjerj} and the calculation (\ref{eq_04}) for $a = b = 1$, 
we have 
\begin{eqnarray*}
(\omega p_r\omega^*)(\overline{\Lambda(1)}\otimes\Lambda(1))
& = &
(\overline{J}\otimes J)e_r(\overline{J}\otimes J)((\overline{\Lambda(1)}\otimes\Lambda(1))\\
& = & 
(\Lambda^{\rm op}\otimes\Lambda)(e_r)
= (\omega\circ\tilde\Lambda)(A_r).
\end{eqnarray*}
\end{proof}

Let $\tilde U: L^2(\cl B(H))\otimes K \to  L^2(\cl B(H))\otimes K$ 
be the operator, given by 
$$\tilde U(\tilde\Lambda(b)\otimes\xi) = \alpha_U(b)(\tilde\Lambda(1)\otimes\xi), \ \ \ \xi \in K.$$  
For a Hilbert space $L$, let $j : L \to L^{\rm d}$ the anti-linear isomorphism, given by 
$j(g) = \overline{g}$, and 
$R : \cl B(L)\to \cl B(L^{\rm d})$ be the map, given by 
$R(x) = jx^*j$, $x\in \cl B(L)$.   
Note that, if $(g_i)_i$ is an orthonormal basis for $L$, 
$\epsilon_{i,j}\in \cl B(L)$ are the matrix units corresponding to $(g_i)_i$, 
and $\{\bar{\epsilon}_{j,i}\}$ is the matrix unit system for $\cl B (L^{\rm d})$
with respect to the orthonormal basis $(\bar g_i)_i$, then 
$$R(\epsilon_{i,j}) = j(g_ig_j^*)^* j = j(g_jg_i^*) j
= j(g_j) j(g_i)^* = \bar{\epsilon}_{j,i}.$$
In the following, we let $V = (R \otimes 1)(U^*)$.  Thus, if $U = (u_{i,j})_{i,j=1}^{n^2}$
with respect to the orthonormal basis $\left\{\Lambda(f_i)\right\}_{i=1}^{n^2}$ of $L^2(M_X)$, then $V$ 
is the operator on $L^2(M_{X})^{\rm d}\otimes K$ whose matrix with respect 
to the orthonormal basis $\left\{\overline{\Lambda(f_i)}\right\}_{i=1}^{n^2}$ is 
$(v_{i,j})_{i,j =1}^{n^2} := (u_{i,j}^*)_{i,j = 1}^{n^2}$.

\begin{lemma}\label{l_UV13}
We have that 
$(\omega\otimes 1)\tilde U(\omega^*\otimes 1)
=U_{2,3}V_{1,3}$.
\end{lemma}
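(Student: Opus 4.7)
The statement asserts equality of two bounded operators on $L^2(\cl B(H))\otimes K$, so by linearity and continuity it suffices to verify it on the spanning family $\{\tilde\Lambda(\Theta_{\Lambda(f_i),\Lambda(f_j)})\otimes\xi\}$, which is sent by $\omega\otimes 1$ onto the orthonormal family $\{\overline{\Lambda(f_i)}\otimes\Lambda(f_j)\otimes\xi\}$ in $H^{\rm d}\otimes H\otimes K$. The plan is to compute both sides on such a test vector and check that they agree term by term.

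For the left-hand side, I first unpack $\alpha_U$ in coordinates. Writing $U=\sum_{k,l}\Theta_{\Lambda(f_k),\Lambda(f_l)}\otimes u_{l,k}$ and $U^{*}=\sum_{k,l}\Theta_{\Lambda(f_l),\Lambda(f_k)}\otimes u_{l,k}^{*}$, and applying the product rule $\Theta_{\xi,\eta}\Theta_{\zeta,\rho}=\langle\xi,\rho\rangle\Theta_{\zeta,\eta}$ twice together with the orthonormality of $\{\Lambda(f_m)\}$, one obtains
\[
\alpha_U(\Theta_{\Lambda(f_i),\Lambda(f_j)})=\sum_{l,l'}\Theta_{\Lambda(f_{l'}),\Lambda(f_l)}\otimes u_{l,j}u_{l',i}^{*}.
\]
Next, expanding $\tilde\Lambda(1)=\sum_m \tilde\Lambda(\Theta_{\Lambda(f_m),\Lambda(f_m)})$ and using that the left action of $\cl B(H)$ on $L^2(\cl B(H))$ satisfies $a\cdot\tilde\Lambda(\Theta_{\xi,\eta})=\tilde\Lambda(\Theta_{\xi,a\eta})$, the Kronecker $\delta_{l',m}$ collapses the triple sum to
\[
\tilde U(\tilde\Lambda(\Theta_{\Lambda(f_i),\Lambda(f_j)})\otimes\xi)=\sum_{l,l'}\tilde\Lambda(\Theta_{\Lambda(f_{l'}),\Lambda(f_l)})\otimes u_{l,j}u_{l',i}^{*}\xi,
\]
on which $\omega\otimes 1$ produces $\sum_{l,l'}\overline{\Lambda(f_{l'})}\otimes\Lambda(f_l)\otimes u_{l,j}u_{l',i}^{*}\xi$.

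For the right-hand side, rewriting $U^{*}=\sum_{p,q}\epsilon_{p,q}\otimes u_{q,p}^{*}$ and applying $R(\epsilon_{p,q})=\bar\epsilon_{q,p}$ shows that $V=(R\otimes 1)(U^{*})$ has matrix entries $v_{l',i}=u_{l',i}^{*}$ with respect to the basis $\{\overline{\Lambda(f_m)}\}$. Hence $V_{1,3}(\overline{\Lambda(f_i)}\otimes\Lambda(f_j)\otimes\xi)=\sum_{l'}\overline{\Lambda(f_{l'})}\otimes\Lambda(f_j)\otimes u_{l',i}^{*}\xi$, and a subsequent application of $U_{2,3}$, through $U(\Lambda(f_j)\otimes\eta)=\sum_l\Lambda(f_l)\otimes u_{l,j}\eta$, yields precisely $\sum_{l,l'}\overline{\Lambda(f_{l'})}\otimes\Lambda(f_l)\otimes u_{l,j}u_{l',i}^{*}\xi$, matching the left-hand side.

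The argument is a short but index-heavy calculation; the only real obstacle is bookkeeping. Keeping the three tensor legs, the conjugate-linear swap built into $R$, and the convention by which $\omega$ places the ``bra'' on the first factor and the ``ket'' on the second all consistent is delicate, since the matrices $U$ and $V$ carry the coefficients $u_{l,k}$ and $u_{l',i}^{*}$ respectively, and it is easy to transpose an index or to place a star on the wrong factor. Once the conventions are pinned down on one basis vector as above, the verification is immediate.
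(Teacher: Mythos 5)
Your computation is correct and tracks the paper's own proof nearly step for step: both compute $(\omega\otimes1)\tilde U(\omega^*\otimes1)$ on a basis vector by unpacking $\alpha_U(\Theta_{\Lambda(f_i),\Lambda(f_j)})$ via $U(\cdot\otimes1)U^*$, collapsing the sums against $\tilde\Lambda(1)\otimes\xi$, and then applying $\omega\otimes1$; the only cosmetic difference is that you work with $\Theta$-notation and compute $\alpha_U$ as an explicit operator before applying it, whereas the paper uses $\epsilon_{i,j}$ matrix units and carries out the full application in one display.
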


\begin{proof}
For $1\leq s,t\leq n^2$ and $\xi\in K$ we have 
\begin{eqnarray*}
&&
(\omega\otimes 1)\tilde U(\omega^*\otimes 1)
(\overline{\Lambda(f_t)}\otimes\Lambda(f_s)\otimes\xi)\\
& = & 
(\omega\otimes 1)\tilde U(\tilde\Lambda(\Theta_{\Lambda(f_t),\Lambda(f_s)})\otimes\xi)\\
& = &
(\omega\otimes 1)\alpha_U(\Theta_{\Lambda(f_t),\Lambda(f_s)})(\tilde\Lambda(1)\otimes\xi)\\
& = &
(\omega\otimes 1)U(\Theta_{\Lambda(f_t),\Lambda(f_s)}\otimes 1)U^*(\tilde\Lambda(1)\otimes\xi)\\
& = &
(\omega\hspace{-0.03cm}\otimes \hspace{-0.03cm}1)
\hspace{-0.15cm}\left(\hspace{-0.05cm}\sum_{i,j=1}^{n^2} \hspace{-0.05cm}\epsilon_{i,j}\hspace{-0.07cm}\otimes \hspace{-0.07cm}u_{i,j}\hspace{-0.1cm}\right)
\hspace{-0.15cm}(\hspace{-0.02cm}\Theta_{\Lambda(f_t),\Lambda(f_s)} \hspace{-0.07cm}\otimes \hspace{-0.07cm}1\hspace{-0.02cm})\hspace{-0.15cm}
\left(\hspace{-0.05cm}\sum_{k,l=1}^{n^2} \hspace{-0.05cm}\epsilon_{l,k} \hspace{-0.07cm}\otimes \hspace{-0.07cm}u_{k,l}^*\hspace{-0.1cm}\right)
(\tilde\Lambda(1)\hspace{-0.07cm}\otimes\hspace{-0.07cm}\xi)\\
& = &
(\omega\otimes 1)\left(\sum_{i,j,k,l = 1}^{n^2}
\epsilon_{i,j}\Theta_{\Lambda(f_t),\Lambda(f_s)}\epsilon_{k,l}\otimes u_{i,j}u_{l,k}^*\right)(\tilde\Lambda(1)\otimes\xi)\\
& = &
\sum_{i,l = 1}^{n^2}\overline{\Lambda(f_l)}\otimes\Lambda(f_i)\otimes u_{i,s}u_{l,t}^*\xi
= 
U_{2,3}V_{1,3}(\overline{\Lambda(f_t)}\otimes\Lambda(f_s)\otimes\xi)
\end{eqnarray*}
The statement follows by linearity.
\end{proof}

\begin{proof}[Proof of Theorem  \ref{converse}] 
We recall that the von Neumann algebra $\cl N$ acts on the Hilbert space $K$, and that  
$p_r$ is the orthogonal projections from the Hilbert space $L^2(\cl B(H))$
(equipped with the inner product coming from $\Tr$) onto $\tilde\Lambda(S_r')$. 
By (\ref{su}), 
\begin{equation}\label{eq_withti}
\tilde U(\tilde\Lambda(b)\otimes\xi) = \alpha_U(b)(\tilde\Lambda(1)\otimes\xi)\in \tilde\Lambda(S_2')\otimes \cl N\xi, 
\ \ \ b\in S_1',
\end{equation} 
and hence 
$$\tilde U(p_1\otimes 1)=(p_2\otimes 1)\tilde U(p_1\otimes 1).$$ Similarly, $\tilde U^*(p_2\otimes 1)=(p_1\otimes 1)\tilde U^*(p_2\otimes 1)$, from which we get $$(p_2\otimes 1)\tilde U=\tilde U(p_1\otimes 1).$$ 
Using Lemmas \ref{l_daws3} and \ref{l_UV13}, for $\xi\in \cl B(K)$ we therefore have 
\begin{eqnarray*}
&&\alpha_U(A_1)(\tilde\Lambda(1)\otimes\xi)=\tilde U(\tilde\Lambda(A_1)\otimes\xi)\\&&=\tilde U(p_1\omega^*(\overline{\Lambda(1)}\otimes\Lambda(1))\otimes\xi)=(p_2\otimes 1)\tilde U(\omega^*(\overline{\Lambda(1)}\otimes\Lambda(1))\otimes\xi))\\
&&
= 
(p_2\omega^*\otimes 1)U_{2,3}V_{1,3}(\overline{\Lambda(1)}\otimes\Lambda(1)\otimes\xi).
\end{eqnarray*}

From the definition of $U$, we have 
\begin{equation} \label{fix1}
U(\Lambda(1)\otimes\xi)=\rho(1)(\Lambda(1)\otimes\xi) = \Lambda(1)\otimes\xi.
\end{equation}
Observe that 
\begin{align} \label{V-U-relation}
V = (F^{-1} \otimes 1)U(F \otimes 1)   
\end{align}
where $F : H^{\rm d} \to H$ is the unitary given by $F\overline{{\Lambda(x)}} = \Lambda(x^*)$.  
Indeed, to establish \eqref{V-U-relation}, we note that 
$F^{-1} = F^*$ and, for $\xi \in K$ and $i = 1,\dots,n^2$, we compute:
\begin{align*}
    (F^{-1} \otimes 1)U(F \otimes 1)(\overline{\Lambda(f_i)}\otimes\xi)&=(F^{-1} \otimes 1)U(\Lambda(f_i^*)\otimes\xi) \\
    &=(F^{-1} \otimes 1)(\pi\otimes \text{id})(\rho(f_i^*))(\Lambda(1)\otimes\xi) \\ 
    &= (F^{-1} \otimes 1)(\pi\otimes \text{id})\Big(\sum_{j =1}^{n^2} f_j^* \otimes u_{j,i}^*\Big)(\Lambda(1)\otimes\xi) \\
    &=(F^{-1} \otimes 1)\Big(\sum_{j =1}^{n^2} \Lambda(f_j^*) \otimes u_{j,i}^*\xi\Big)\\
    &=\sum_{j =1}^{n^2} \overline{\Lambda(f_j)} \otimes u_{j,i}^*\xi 
    = V(\overline{\Lambda(f_i)} \otimes \xi).
\end{align*}
By \eqref{V-U-relation}, and the 
identities $F(\overline{\Lambda(1)}) = \Lambda(1)$ and $F^{-1} (\Lambda(1)) =  \overline{\Lambda(1)}$, 
we have 
\begin{equation} \label{fix2}
V(\overline{\Lambda(1)}\otimes\xi)= \overline{\Lambda(1)}\otimes\xi.
\end{equation}
Using (\ref{eq_withti}), \eqref{fix1}, \eqref{fix2}, and Lemmas \ref{l_daws3} and  \ref{l_UV13}, we finally obtain
\begin{eqnarray*}
\alpha_U(A_1)(\tilde\Lambda(1)\otimes\xi)
& = & 
\tilde U(\tilde\Lambda(A_1)\otimes\xi)\\
& = & 
(p_2\omega^*\otimes 1)U_{2,3}V_{1,3}(\overline{\Lambda(1)}\otimes\Lambda(1)\otimes\xi) \\
& = &
(p_2\omega^* \otimes 1)(\overline{\Lambda(1)}\otimes\Lambda(1)\otimes\xi)\\
& = &
\tilde\Lambda(A_2)\otimes\xi = (A_2\otimes I)(\tilde\Lambda(1)\otimes\xi),
\end{eqnarray*}
where we consider $A_2$ in the left regular representation of $\cl B(L^2(M_X))$, that is, 
as an operator on $L^2(\cl B(L^2(M_X)))$. 
Thus, 
$$(\alpha_U(A_1) - A_2\otimes I)(\tilde\Lambda(1)\otimes\xi) = 0, \ \ \ \xi\in K.$$
This implies that 
$$(\id\otimes L_{\xi\eta^*})((\alpha_U(A_1) - A_2\otimes I))\tilde\Lambda(1) = 0, \ \ \ \xi,\eta\in K;$$
thus, 
$$(\id\otimes L_{\xi\eta^*})((\alpha_U(A_1) - A_2\otimes I)) = 0, \ \ \ \xi,\eta\in K.$$
Hence $\alpha_U(A_1) = A_2 \otimes I$ which, in turn, 
means that $U(A_1\otimes I) = (A_2\otimes I)U$. The proof is complete. 
\end{proof}

\end{document}